\DeclareMathSizes{\@xpt}{\@xpt}{6}{5}																																	
\def\namedlabel#1#2{\begingroup
    #2%
    \def\@currentlabel{#2}%
    \phantomsection\label{#1}\endgroup
}
\newenvironment{invisible}[1][\unskip]{
	\noindent
	\color{red}
	[{\textbf{\color{blue}TBH}: \textit{#1}}
}{]}
\newenvironment{personal}{{\noindent \textbf{\color{green}Personal notes (To Be Hidden)}:\quad}\color{green}}{}
\theoremstyle{plain}
\newtheorem{theorem}{Theorem}[section]
\newtheorem{lemma}[theorem]{Lemma}
\newtheorem{proposition}[theorem]{Proposition}
\newtheorem{corollary}[theorem]{Corollary}
\newtheorem*{theorem*}{Theorem}
\theoremstyle{definition}
\newtheorem{example}[theorem]{Example}
\theoremstyle{remark}
\newtheorem{remark}[theorem]{Remark}
\newcommand{\Z}{\mathbb{Z}}
\newcommand{\C}{\mathbb{C}}
\newcommand{\K}{\Bbbk}
\newcommand{\quasihopfmod}[1]{{}_{\bullet}^{\phantom{\bullet}}{#1}_{\bullet}^{\bullet}} 
\newcommand{\bimod}[1]{{}_{\bullet}^{}{#1}_{\bullet}^{}} 
\newcommand{\lmod}[1]{{}_{\bullet}^{}{#1}} 
\newcommand{\rmod}[1]{{#1}_{\bullet}^{}} 
\newcommand{\cl}[1]{\overline{#1}} 
\newcommand{\coinv}[2]{{#1}{}^{\mathrm{co}{#2}}} 
\newcommand{\inv}[2]{{\overline{#1}{}^{#2}}}
\newcommand{\op}[1]{{#1}^{\mathrm{op}}} 
\newcommand{\mf}[1]{\mathfrak{#1}} 
\newcommand{\rdual}[1]{{#1}^*} 
\newcommand{\tensor}[1]{\otimes_{{#1}}} 
\newcommand{\what}[1]{\widehat{#1}} 
\newcommand{\cmptens}[1]{\,\what{\otimes}_{#1}\,} 
\newcommand{\tildetens}[1]{\,\widetilde{\otimes}_{#1}\,}
\newcommand{\Nat}{\mathsf{Nat}} 
\renewcommand{\ker}{\mathsf{ker}} 
\newcommand{\id}{\mathsf{Id}} 
\newcommand{\Hom}[6]{{_{#1}^{#2}\mathsf{Hom}_{#3}^{#4}}\left({#5},{#6}\right)} 
\newcommand{\Homk}{\mathsf{Hom}} 
\newcommand{\End}[2]{\mathsf{End}_{#1}(#2)} 
\newcommand{\quasihopf}[1]{{}_{#1}^{\phantom{#1}}\M{}_{#1}^{#1}} 
\newcommand{\rhopf}[1]{\M_{#1}^{#1}}
\newcommand{\Bim}[2]{{}_{#1}\M{}_{#2}} 
\newcommand{\Rmod}[1]{\M_{#1}} 
\newcommand{\Rcomod}[1]{\M^{#1}} 
\newcommand{\Lmod}[1]{{}_{#1}\M} 
\newcommand{\Bimod}[1]{\Bim{#1}{#1}} 
\newcommand{\cC}{{\mathcal C}}
\newcommand{\cD}{{\mathcal D}}
\newcommand{\cF}{{\mathcal F}}
\newcommand{\cG}{{\mathcal G}}
\newcommand{\cL}{{\mathcal L}}
\newcommand{\cM}{{\mathcal M}}
\newcommand{\cR}{{\mathcal R}}
\newcommand{\cT}{{\mathcal T}}
\newcommand{\bH}{{\mathbb H}}
\newcommand{\M}{\mathfrak{M}} 
\newcommand{\I}{\mathbb{I}} 
\newcommand{\calpha}{\mf{a}}
\newcommand{\clambda}{\mf{l}}
\newcommand{\crho}{\mf{r}}
\newcommand{\cpsi}{\uppsi} 
\newcommand{\cvarphi}{\upvarphi} 
\newcommand{\ie}{i.e.~}
\newcommand{\eg}{e.g.~}
\renewcommand{\t}{~}
\newcommand{\can}{\mathfrak{can}} 
\title{Antipodes, preantipodes and Frobenius functors}
\author{Paolo Saracco}
\address{D\'epartement de Math\'ematique, Universit\'e Libre de Bruxelles, Boulevard du Triomphe, B-1050 Brussels, Belgium.}
\thanks{This paper was written while P. Saracco was member of the ``National Group for Algebraic and Geometric Structures and their Applications'' (GNSAGA-INdAM). He acknowledges FNRS support through a postdoctoral fellowship within the framework of the MIS Grant ``ANTIPODE'' (MIS F.4502.18, application number 31223212). He is also grateful to Alessandro Ardizzoni and Joost Vercruysse for their willingness in discussing the content of the present paper and to the anonymous referee for the careful reading of this work and the numerous valuable suggestions that contributed to improve it. In particular, for pointing out the possible connections with \cite{BulacuCaenepeelTorrecillas2}, that led to considerably improve \S\ref{ssec:unimodularity}.}
\keywords{Frobenius functors, preantipodes, quasi-bialgebras, quasi-Hopf bimodules, Hopf algebras, Hopf modules, monoidal categories, bimonads, Hopf monads}
\subjclass[2010]{Primary: 16T05, 18A22; Secondary: 18C20, 18D10, 18D15} 
\urladdr{sites.google.com/view/paolo-saracco}
\email{paolo.saracco@ulb.ac.be}
\begin{document}

\begin{abstract}
We prove that a quasi-bialgebra admits a preantipode if and only if the associated free quasi-Hopf bimodule functor is Frobenius, if and only if the related (opmonoidal) monad is a Hopf monad. The same results hold in particular for a bialgebra, tightening the connection between Hopf and Frobenius properties.
\end{abstract}

\maketitle

\fancyhf{}
\renewcommand{\headrulewidth}{0pt}
\thispagestyle{fancy}
\lfoot{\smallskip\footnotesize Electronic version of an article published as {\em Journal of Algebra and Its Applications}, Vol. \textbf{20}, No. 7, 2021, 2150214 (DOI: \href{https://doi.org/10.1142/S0219498821501243}{10.1142/S0219498821501243}) \copyright\, World Scientific Publishing Company (\href{https://www.worldscientific.com/worldscinet/jaa}{www.worldscientific.com/worldscinet/jaa}).}

\tableofcontents


\section*{Introduction}

It has been known for a long time that Hopf algebras (with some additional finiteness condition) are strictly related with Frobenius algebras. In fact, Larson and Sweedler proved in \cite{LarsonSweedler} that any finite-dimensional Hopf algebra over a PID is automatically Frobenius and Pareigis extended this result in \cite{Pareigis} by proving that a bialgebra $B$ over a commutative ring $\K$ is a finitely generated and projective Hopf algebra with $\int\rdual{B}\cong\K$ 
if and only if it is Frobenius as an algebra with Frobenius homomorphism $\psi\in\int\rdual{B}$. 
Afterwards, great attention has been devoted to those bialgebras that are also Frobenius and whose Frobenius homomorphism is an integral (see \eg \cite{KadisonStolin,Pareigis-cohomology}) and to the interactions between Frobenius and Hopf algebra theory in general (see \cite{BeidarFongStolin1,BeidarFongStolin2,KadisonFrobenius,Lorenz}). In particular, there exist a number of results that extend Larson-Sweedler's and Pareigis' theorems to more general classes of Hopf-like structures (\cite{IovanovKadison,Kadison2,vanDaele1,vanDaele2}).

Following their increasing importance, many extensions of Hopf and Frobenius algebras have arisen. Let us mention (co)quasi-Hopf algebras, Hopf algebroids, Hopf monads, Frobenius monads, Frobenius monoids and Frobenius functors. At the same time new results appeared (\cite{Balan,Heunen,Kadison2}), showing that there is a deeper connection between the Hopf and the Frobenius properties that deserves to be uncovered. In \cite{Saracco-Frobenius} we realised that Frobenius functors may play an important role in this. In fact, we proved that a bialgebra $B$ is a one-sided Hopf algebra (in the sense of \cite{GreenNicholsTaft}) with anti-(co)multiplicative one-sided antipode if and only if the free Hopf module functor $-\otimes B:\M\to\rhopf{B}$ (key ingredient of the renowned Structure Theorem of Hopf modules) is Frobenius. In the finitely generated and projective case, this allowed us to prove a categorical extension of Pareigis' theorem (see \cite[Theorem 3.14]{Saracco-Frobenius}). In the present paper we continue our investigation in this direction by analysing another important adjoint triple strictly connected with bialgebras and their representations, namely the one associated with the free two-sided Hopf module functor $-\otimes B:\Lmod{B}\to\quasihopf{B}$. The study of the Frobenius property for this latter functor has proved to be more significant than the previous one for two main reasons. The first one is that being Frobenius for $-\otimes B:\Lmod{B}\to\quasihopf{B}$ has proven to be in fact equivalent to $B$ being a Hopf algebra. Even more generally, the following is our first main result.

\begin{theorem*}[{Theorem \ref{thm:mainThm}}]
The following are equivalent for a quasi-bialgebra $A$.
\begin{enumerate}[label=(\arabic*), ref=\emph{(\arabic*)}, leftmargin=1cm, labelsep=0.3cm]
\item $A$ admits a preantipode;
\item $-\otimes A:\left(\Lmod{A},\otimes,\K\right)\to \left(\quasihopf{A},\tensor{A},A\right)$ is a monoidal equivalence of categories;
\item $-\otimes A:\Lmod{A}\to\quasihopf{A}$ is Frobenius;
\item $\sigma_M:\Hom{A}{}{A}{A}{A\otimes A}{M}\to \cl{M}, f\mapsto \cl{f(1\otimes 1)}$ is an isomorphism for every $M\in\quasihopf{A}$, where $\cl{M}\cong M\tensor{A}{_\varepsilon \K}$.
\end{enumerate}
\end{theorem*}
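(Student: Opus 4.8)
The plan is to run the cycle $(1)\Rightarrow(2)\Rightarrow(3)\Rightarrow(4)\Rightarrow(1)$, the decisive and least formal links being $(3)\Rightarrow(4)$ and, above all, $(4)\Rightarrow(1)$. Everything rests on identifying the two one-sided adjoints of the free functor $F:=-\otimes A\colon\Lmod{A}\to\quasihopf{A}$: its right adjoint is the coinvariants functor $\coinv{(-)}{A}$, with $\coinv{M}{A}\cong\Hom{A}{}{A}{A}{A\otimes A}{M}$ via $f\mapsto f(1\otimes1)$ (note that $1\otimes1$ is coinvariant in $F(A)=A\otimes A$, so $f(1\otimes1)\in\coinv{M}{A}$), while its left adjoint is $\cl{(-)}=(-)\tensor{A}{}_{\varepsilon}\K$. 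Under these identifications $\sigma_M$ is exactly the canonical comparison $\coinv{M}{A}\hookrightarrow M\twoheadrightarrow\cl{M}$ from the right to the left adjoint, so that $(4)$ asserts precisely that $\sigma$ realizes an isomorphism between them.

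For $(1)\Rightarrow(2)$ I would invoke the Structure Theorem for quasi-Hopf bimodules: a preantipode makes the unit and counit of $F\dashv\coinv{(-)}{A}$ invertible, so that $F$ is an equivalence with quasi-inverse $\coinv{(-)}{A}\cong\cl{(-)}$, and one upgrades it to a monoidal equivalence by checking the structure isomorphisms $\cl{A}\cong\K$ and $\cl{M\tensor{A}N}\cong\cl{M}\otimes\cl{N}$. The step $(2)\Rightarrow(3)$ is then immediate, a monoidal equivalence being in particular an equivalence, and any equivalence being Frobenius with its quasi-inverse as two-sided adjoint.

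The link $(3)\Rightarrow(4)$ I would obtain from the second adjunction supplied by the Frobenius hypothesis, $\coinv{(-)}{A}\dashv F$, whose unit is a natural morphism $\gamma_M\colon M\to\coinv{M}{A}\otimes A$ of quasi-Hopf bimodules. Setting $\rho_M:=(\id\otimes\varepsilon)\circ\gamma_M\colon M\to\coinv{M}{A}$, one checks that $\rho_M$ kills $MA^+$, hence descends to $\bar\pi_M\colon\cl{M}\to\coinv{M}{A}$, and that $\bar\pi_M\circ\sigma_M=\id$ follows formally from the triangle identities. The reverse identity $\sigma_M\circ\bar\pi_M=\id$ is equivalent to the composite $M\xrightarrow{\gamma_M}\coinv{M}{A}\otimes A\xrightarrow{x\otimes a\mapsto xa}M$ being the identity; as this composite is a natural endomorphism of $\id_{\quasihopf{A}}$, the key point is the rigidity lemma $\mathsf{End}(\id_{\quasihopf{A}})\cong\K$, after which it is a scalar, equal to $1$ by evaluation on a free object $F(N)$, where $\coinv{F(N)}{A}\cong N\cong\cl{F(N)}$. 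Thus $\sigma_M$ is invertible for every $M$.

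Finally, $(4)\Rightarrow(1)$ is the main obstacle. From $\sigma$ invertible I would form the natural retraction $M\twoheadrightarrow\cl{M}\xrightarrow{\sigma_M^{-1}}\coinv{M}{A}$ and extract the preantipode by evaluating it on $A\otimes A$ endowed with a suitable (non-free) quasi-Hopf bimodule structure, reading off a $\K$-linear map $S\colon A\to A$ from the resulting coinvariant. The hard part is the verification that this $S$ satisfies the defining identities of a preantipode: this amounts to translating the $A$-bilinearity and $A$-colinearity of $\sigma_M^{-1}$, together with the naturality squares of $\sigma$, into the element-wise axioms, all the while bookkeeping the associator $\Phi$ and its inverse. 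I expect exactly this passage, from the single global invertibility of $\sigma$ to the finite list of identities defining $S$, to be where the genuine work lies, the surrounding steps being formal or routine.
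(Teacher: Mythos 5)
Your overall skeleton matches the paper's (Structure Theorem for $(1)\Rightarrow(2)$, trivial $(2)\Rightarrow(3)$, an adjoint-triple argument for $(3)\Leftrightarrow(4)$, and extraction of $S$ from $\sigma^{-1}$ on $A\cmptens{}A$ for $(4)\Rightarrow(1)$), but two of your key identifications are false and the decisive step is only sketched. First, the right adjoint of $-\otimes A$ is $\Hom{A}{}{A}{A}{A\otimes A}{-}$, and it is \emph{not} the coinvariants functor: in the quasi setting $1\otimes 1$ is not coinvariant in $A\otimes A$ (its coaction is $\Phi^{-1}$, not $1\otimes1\otimes1$), $\coinv{M}{A}$ is not even a left $A$-submodule, and already for an ordinary bialgebra $B$ the comparison $\Hom{B}{}{B}{B}{B\otimes B}{M}\to\coinv{M}{B}$, $f\mapsto f(1\otimes1)$, is a natural isomorphism \emph{if and only if} $B$ is Hopf (this is exactly the content of \S\ref{ssec:previous}). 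Building your argument on $\coinv{(-)}{A}\cong\Hom{A}{}{A}{A}{A\otimes A}{-}$ is therefore circular. Second, the rigidity claim $\End{}{\id_{\quasihopf{A}}}\cong\K$ on which your proof of $(3)\Rightarrow(4)$ rests is false: when $A$ has a preantipode, $\quasihopf{A}\simeq\Lmod{A}$ and natural endomorphisms of the identity correspond to the center of $A$, which is generally larger than $\K$. The correct statement, used in the paper as Lemma \ref{lemma:frobenius}, is that for an adjoint triple $\cL\dashv\cF\dashv\cR$ with $\cF$ fully faithful, $\cF$ is Frobenius precisely when the canonical transformation $\sigma=\cL\theta\circ(\epsilon\cR)^{-1}$ is invertible; this is a formal fact about ambidextrous adjunctions and does not pass through any computation of $\End{}{\id}$.

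For $(4)\Rightarrow(1)$ you correctly locate both the candidate $S(a)=(A\otimes\varepsilon)\bigl(\sigma_{\widehat{A}}^{-1}(\cl{1\otimes1})(a\otimes1)\bigr)$ and the nature of the difficulty, but you do not carry out the verification, and the hardest identity $\Phi^1S(\Phi^2)\Phi^3=1$ cannot be obtained from the single component $\sigma_{\widehat{A}}$: the paper needs the full naturality of $\sigma^{-1}$, transported along the auxiliary quasi-Hopf bimodule $A_2={}_\bullet A\otimes A_\bullet\otimes{}_\bullet A_\bullet^\bullet$, the maps $f_n\otimes A$, and the coaction $\delta_M$ viewed as a morphism in $\quasihopf{A}$, to establish the closed formula $\sigma_M^{-1}(\cl{m})(x\otimes y)=\Phi^1x_1\cdot m_0\cdot S(\Phi^2x_2m_1)\Phi^3y$ (Lemma \ref{lemma:sigmagen}) before evaluating at $M=A$, $m=x=y=1$. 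The two convolution-type identities $S(a_1b)a_2=\varepsilon(a)S(b)=a_1S(ba_2)$ likewise require the $A\otimes A$-linearity of $\sigma_{\widehat{A}}^{-1}$ with respect to the auxiliary actions \eqref{eq:triang}, which you do not introduce. As it stands the proposal is a plausible roadmap whose two supporting lemmas are wrong and whose central computation is missing.
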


The second one is that the monad $T\coloneqq \cl{(-)}\otimes B$ on $\quasihopf{B}$ induced by the adjunction $\cl{(-)}\dashv -\otimes B$, being the functor $-\otimes B:\Lmod{B}\to\quasihopf{B}$ a strong monoidal functor between monoidal categories, turns out to be an opmonoidal monad in the sense of \cite{McCrudden}. As such, it allows us to relate our approach by means of Frobenius functors with the theory of Hopf monads developed by Brugui\`{e}res, Lack and Virelizier in \cite{BruguieresLackVirelizier,BruguieresVirelizier}. In concrete, the following is our second main result.

\begin{theorem*}[{Theorem \ref{thm:main2}}]
The following are equivalent for a quasi-bialgebra $A$.
\begin{enumerate}[label=(\alph*), ref=\emph{(\alph*)}, leftmargin=1cm, labelsep=0.3cm]
\item $A$ admits a preantipode;
\item the natural transformation $\cpsi_{M,N}:\overline{M\tensor{A}N} \to \overline{M}\otimes \overline{N}, \, \overline{m\tensor{A}n}\mapsto \overline{m_0}\otimes \overline{m_1n}$, for $M,N\in\quasihopf{A}$, is a natural isomorphism;
\item the component $\cpsi_{A\widehat{\otimes}A,A\otimes A}$ of $\cpsi$, where $A\cmptens{}A=A\otimes A$ with a suitable quasi-Hopf bimodule structure, is invertible;
\item $\left(\cl{(-)},-\otimes A\right)$ is a lax-lax adjunction;
\item $\left(\cl{(-)},-\otimes A\right)$ is a Hopf adjunction;
\item $T=\cl{(-)}\otimes A$ is a Hopf monad on $\quasihopf{A}$.
\end{enumerate}
\end{theorem*}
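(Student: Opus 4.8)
The plan is to organise the six conditions into a single cycle of implications, relegating the formal categorical content to the machinery of opmonoidal monads and Hopf adjunctions and isolating the one genuinely computational core. Throughout I use that $-\otimes A\colon\Lmod{A}\to\quasihopf{A}$ is strong monoidal, so that its left adjoint $\cl{(-)}$ acquires a canonical opmonoidal structure whose comparison morphism is precisely $\cpsi$, and that the induced monad $T=\cl{(-)}\otimes A$ is therefore an opmonoidal monad in the sense of \cite{McCrudden}.

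First I would settle the core equivalence among (a), (b) and (c). The implication (a)$\Rightarrow$(b) is immediate: by Theorem \ref{thm:mainThm} a preantipode makes $-\otimes A$ a monoidal equivalence, whence $\cl{(-)}$ is strong monoidal and its structure morphism $\cpsi$ is invertible. The implication (b)$\Rightarrow$(c) is the trivial restriction to one component. The substance lies in (c)$\Rightarrow$(a), which I would attack in two stages. For the first stage, note that both bifunctors $(M,N)\mapsto\overline{M\tensor{A}N}$ and $(M,N)\mapsto\overline{M}\otimes\overline{N}$ preserve reflexive coequalizers in each variable, since $\overline{(-)}\cong(-)\tensor{A}{}_\varepsilon\K$ and the tensor products involved are all left adjoints; because every quasi-Hopf bimodule is a reflexive coequalizer of free ones, naturality of $\cpsi$ together with these colimit arguments reduces the invertibility of the whole natural transformation to its invertibility on free generators, and a further naturality argument in the $\Lmod{A}$-variables pins this down to the single universal pair $(A\cmptens{}A,A\otimes A)$. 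For the second stage I would compute $\overline{(A\cmptens{}A)\tensor{A}(A\otimes A)}$ and $\overline{A\cmptens{}A}\otimes\overline{A\otimes A}$ explicitly and read off, from a chosen inverse of $\cpsi_{A\cmptens{}A,A\otimes A}$, a $\K$-linear endomorphism of $A$ satisfying the defining identities of a preantipode.

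The remaining conditions are then woven into the cycle by categorical generalities. From (b) I obtain (d) by doctrinal adjunction: once $\cpsi$ is invertible, $\cpsi^{-1}$ endows $\cl{(-)}$ with a lax monoidal structure, so $(\cl{(-)},-\otimes A)$ is a lax-lax adjunction. A (co)monoidal adjunction whose left adjoint is strong monoidal has automatically invertible Hopf operators, giving (d)$\Rightarrow$(e), and by \cite{BruguieresLackVirelizier,BruguieresVirelizier} the monad induced by a Hopf adjunction is a Hopf monad, giving (e)$\Rightarrow$(f). To close the cycle I would prove (f)$\Rightarrow$(c): a Hopf monad has invertible left and right fusion operators, and a direct inspection identifies one fusion component, after the canonical identifications, with $\cpsi_{A\cmptens{}A,A\otimes A}$, forcing the latter to be invertible.

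The main obstacle I expect is the step (c)$\Rightarrow$(a). The reduction from one component to the full natural transformation rests on the monadic presentation of $\quasihopf{A}$ by free objects and the colimit-preservation of the two bifunctors, while the extraction of a preantipode demands the explicit description of $\overline{(-)}$ on the two distinguished bimodules; keeping track of the quasi-bialgebra reassociator through these identifications is where the computation is most delicate and where the argument genuinely uses the quasi-Hopf structure rather than mere formalities.
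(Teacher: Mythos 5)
Your categorical skeleton — (b)$\Leftrightarrow$(d) by doctrinal adjunction, (d)$\Rightarrow$(e)$\Rightarrow$(f) via the Hopf operators and \cite{BruguieresLackVirelizier}, and (f)$\Rightarrow$(c) by reading a component of $\cpsi$ off the fusion operators — is essentially the route the paper takes (Lemma \ref{lemma:monAdj} and Proposition \ref{prop:Hopfmonad}), modulo the caveat that the invertibility of the Hopf operators uses not only that $\cl{(-)}$ is strong monoidal but also that the counit $\epsilon$ is invertible, which holds here because $-\otimes A$ is fully faithful. The genuine divergence, and the gap, is in how you tie (c) back to (a).

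Your first stage of (c)$\Rightarrow$(a) rests on the claim that every quasi-Hopf bimodule is a reflexive coequalizer of ``free'' ones, so that colimit-preservation of the two bifunctors propagates invertibility from one component to all of $\cpsi$. This is the wrong presentation of $\quasihopf{A}$: it is a category of \emph{comodules} over the comonoid $A$ in $\Bim{A}{A}$, so its objects are (split, coreflexive) equalizers of the \emph{cofree} objects $\bimod{N}\otimes\quasihopfmod{A}$, not reflexive coequalizers of objects in the image of $-\otimes A$. Worse, essential surjectivity of $-\otimes A:\Lmod{A}\to\quasihopf{A}$ is itself equivalent to condition (a), so assuming a colimit presentation by such free objects would be circular. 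The reductions that actually work in this setting (see the proofs of Lemma \ref{lemma:sigmagen} and of Proposition \ref{prop:Hopfmonad}) exploit naturality with respect to the coaction $\delta_M:M\to \bimod{M}\otimes\quasihopfmod{A}$, which exhibits $M$ as a retract of a cofree object, together with naturality in the bimodule variable via the maps $a\otimes b\mapsto a\cdot n\cdot b$. Your second stage (extracting $S$ from an inverse of $\cpsi_{A\cmptens{}A,A\otimes A}$ by hand) would then amount to redoing the Structure Theorem from scratch.

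The paper avoids all of this with one observation you are missing: Proposition \ref{prop:PreLaxLax} shows that $\cpsi$ and the unit $\eta$ of the adjunction determine each other, namely $\cpsi_{M,N}=\kappa_{\cl{M},\cl{N}}\circ(\eta_M\tensor{A}\cl{N})\circ\chi_{M,N}$ and, for $N=\lmod{A}\otimes\quasihopfmod{A}$, $\eta_M\circ(M\tensor{A}\epsilon_A)\circ\chi_{M,A\otimes A}=(\cl{M}\otimes\epsilon_A)\circ\cpsi_{M,A\otimes A}$. Hence $\cpsi$ is a natural isomorphism iff $\eta$ is, and $\cpsi_{A\cmptens{}A,A\otimes A}$ is invertible iff $\eta_{A\cmptens{}A}$ is; at that point the equivalence with (a), including the single-component criterion, is imported wholesale from the Structure Theorem for quasi-Hopf bimodules \cite[Theorem 4]{Saracco}. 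I would strongly recommend replacing your stage-one reduction by this comparison of $\cpsi$ with $\eta$: it is both correct and far shorter.
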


Let us highlight that a consequence of the previous theorem is that $\cl{(-)}\otimes A$ is an example of an opmonoidal monad which is Hopf if and only if it is Frobenius (see Remark \ref{rem:HopfFrob}).

Even if we are mainly interested in the Hopf algebra case, there are valid motivations for us to work in the more general context of quasi-bialgebras and preantipodes, despite the slight additional effort. Quasi-bialgebras, and in particular quasi-Hopf algebras (\ie quasi-bialgebras with a quasi-antipode), naturally arise from the study of quantum groups and hence they are of general interest for the scientific community as well. Preantipodes, in turn, are proving to be in many situations a much better behaved analogue of antipodes for (co)quasi-bialgebras than quasi-antipodes (see \cite{ArdiPavaBosonization, ArdiPava, Saracco-Tannaka, Saracco}). The results of the present paper are an additional confirmation of this fact and hence, either in case their existence turns out to be equivalent to the existence of quasi-antipodes or in case they prove to be a more general notion, preantipodes also are expected to be of interest for the community and they deserve to be investigated further.

\medskip

The paper is organized as follows. In Section \ref{sec:preliminaries} we recall some general facts about adjoint triples, Frobenius functors, monoidal categories and quasi-bialgebras that will be needed in the sequel. Section \ref{sec:HopfBimod} is devoted to the study of when the free quasi-Hopf bimodule functor $-\otimes A:\Lmod{A}\to\quasihopf{A}$ for a quasi-bialgebra $A$ is Frobenius. The main results of this section are Theorem \ref{thm:mainThm}, characterizing quasi-bialgebras $A$ with preantipode as those for which $-\otimes A$ is Frobenius, and its consequence, Theorem \ref{thm:mainThmHopf}, rephrasing this fact for bialgebras. A detailed example, in a context where computations are easily handled, follows and then the section is closed by a collection of results connecting the theory developed herein with some of those in \cite{Saracco-Frobenius} (\S\ref{ssec:previous}) and in \cite{BulacuCaenepeelTorrecillas1,BulacuCaenepeelTorrecillas2} (\S\ref{ssec:unimodularity}). Finally, in Section \ref{sec:HopfMonads} we investigate the connection between the Frobenius property for $-\otimes A:\Lmod{A}\to\quasihopf{A}$ and the fact of being Hopf for the induced monad $T=\cl{(-)}\otimes A$. The main results here are Theorem \ref{thm:main2} and its consequence, Corollary \ref{cor:main2Hopf}.

\subsection*{Notations and conventions}

Throughout the paper, $\K$ denotes a base commutative ring (from time to time a field) and $A$ a quasi-bialgebra over $\K$ with unit $u:\K\to A$ (the unit element of $A$ is denoted by $1_A$ or simply $1$), multiplication $m:A\otimes A\to A$ (often denoted by simple juxtaposition), counit $\varepsilon:A\to \K$ and comultiplication $\Delta:A\to A\otimes A$. We write $A^+\coloneqq \ker(\varepsilon)$ for the augmentation ideal of $A$. The category of all (central) $\K$-modules is denoted by $\M$ and by $\Rmod{A}$ (resp. $\Lmod{A}$) and $\Bimod{A}$ we mean the categories of right (resp. left) modules and bimodules over $A$. The unadorned tensor product $\otimes$ is the tensor product over $\K$ and the unadorned $\Homk$ stands for the space of $\K$-linear maps. The coaction of a comodule is usually denoted by $\delta$ and the action of a module by $\mu$, $\cdot$ or simply juxtaposition. In order to handle comultiplications and coactions, we resort to the following variation of Sweedler's sigma notation:
\[
\Delta(a) = a_1\otimes a_2 \qquad \text{and} \qquad \delta(n) = n_0 \otimes n_1 \qquad \text{(summation understood)}
\] 
for all $a\in A$, $n\in N$ comodule. We often shorten iterated tensor products $A\otimes A\otimes \cdots \otimes A$ of $n$ copies of $A$ by $A^{\otimes n}$. When specializing to the coassociative framework, we use $B$ to denote a bialgebra over $\K$.


\section{Preliminaries}\label{sec:preliminaries}

\subsection{Adjoint triples}\label{sec:adjtriples}

\begin{personal}
{\color{blue}
This section contains observations and properties of adjoint triples that already appeared in literature or that are folklore. It doesn't seem that any result here is new.

\textbf{References}: 
\begin{itemize}
\item Street, \emph{Frobenius Monads and Pseudomonoids};
\item Lauda, \emph{Frobenius Algebras and Ambidextrous Adjunctions};
\item http://comonad.com/reader/2016/adjoint-triples/;
\item https://ncatlab.org/nlab/show/adjoint+triple; 
\item ...
\end{itemize}
}
\end{personal}

Let us recall quickly some facts about adjoint triples and Frobenius functors that we are going to use in the paper. For further details on these objects in connection with our setting, see for example \cite[\S1]{Saracco-Frobenius}. Given categories $\cC$ and $\cD $, we say that functors $\cL,\cR:\cC \rightarrow \cD $, $\cF:\cD \rightarrow \cC $ form an \emph{adjoint triple} if $\cL$ is left adjoint to $\cF$ which is left adjoint to $\cR$, in symbols $\cL\dashv \cF\dashv \cR$. They form an \emph{ambidextrous adjunction} if there is a natural isomorphism $\cL\cong \cR$. As a matter of notation, we set $\eta :\id \rightarrow \cF\cL,\epsilon :\cL\cF \rightarrow \id $ for the unit and counit of the left-most adjunction and $\gamma :\id \rightarrow \cR\cF,\theta :\cF\cR \rightarrow \id $ for the right-most one. If in addition $\cF$ is fully faithful, that is, if $\epsilon $ and $\gamma $ are natural isomorphisms (see \cite[Theorem IV.3.1]{MacLane}), then we have a distinguished natural transformation
\begin{equation*}
\sigma \coloneqq \left( \xymatrix{\cR \ar[r]^-{\left( \epsilon \cR\right) ^{-1}} & \cL\cF\cR \ar[r]^-{\cL\theta} & \cL}\right) .
\end{equation*}
A \emph{Frobenius pair} for the categories $\cC $ and $\cD $ is a couple of functors $\cF:\cC \rightarrow \cD $ and $\cG:\cD \rightarrow \cC $ such that $\cG $\ is left and right adjoint to $\cF$. A functor $\cF$ is said to be \emph{Frobenius} if there exists a functor $\cG $ which is at the same time left and right adjoint to $\cF$. The subsequent lemma collects some rephrasing of the Frobenius property for future reference.

\begin{lemma}\label{lemma:frobenius}
The following are equivalent for a functor $\cF:\cC \rightarrow \cD $
\begin{enumerate}[label=(\arabic*), ref=\emph{(\arabic*)}, leftmargin=1cm, labelsep=0.3cm]
\item\label{item:frobenius1} $\cF$ is Frobenius;
\item\label{item:frobenius2} there exists $\cR:\cD \rightarrow \cC $ such that $\left( \cF,\cR\right) $ is a Frobenius pair;
\item\label{item:frobenius3} there exists $\cL:\cD \rightarrow \cC $ such that $\left( \cL,\cF\right) $ is a Frobenius pair;
\item\label{item:frobenius4} there exist $\cL,\cR:\cD \rightarrow \cC $ such that $\cL\dashv \cF\dashv \cR$ is an ambidextrous adjunction.
\end{enumerate}
Moreover, if $\cF$ is fully faithful, anyone of the above conditions is equivalent to
\begin{enumerate}[resume, leftmargin=1cm, labelsep=0.3cm]
\item\label{item:frobenius5} there exist $\cL,\cR:\cD \rightarrow \cC $ such that  $\cL\dashv \cF\dashv \cR$ is an adjoint triple and $\sigma:\cR\to\cL$ is a natural isomorphism.
\end{enumerate}
\end{lemma}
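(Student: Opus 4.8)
The plan is to treat the equivalences among \ref{item:frobenius1}--\ref{item:frobenius4} as purely formal, resting on two standard facts---the symmetry of the relation ``being a two-sided adjoint'' and the transport of an adjunction along a natural isomorphism---and to locate the only genuine content in the equivalence with \ref{item:frobenius5} under the full faithfulness hypothesis.

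First I would dispose of \ref{item:frobenius1} $\Leftrightarrow$ \ref{item:frobenius2} $\Leftrightarrow$ \ref{item:frobenius3} by unwinding the definition of Frobenius pair: read in either order, a Frobenius pair built on $\cF$ asserts precisely the existence of a functor which is simultaneously a left and a right adjoint of $\cF$, which is verbatim \ref{item:frobenius1}. (For \ref{item:frobenius3} one also uses that ``$\cF$ is left, resp.\ right, adjoint to $\cL$'' means the same as ``$\cL$ is right, resp.\ left, adjoint to $\cF$''.) For \ref{item:frobenius1} $\Rightarrow$ \ref{item:frobenius4} I would simply take $\cL=\cR=\cG$, so that $\cL\dashv\cF\dashv\cR$ holds and $\cL\cong\cR$ trivially; and for \ref{item:frobenius4} $\Rightarrow$ \ref{item:frobenius1} I would transport structure along the given isomorphism, noting that from $\cF\dashv\cR$ and $\cR\cong\cL$ one obtains $\cF\dashv\cL$, so that $\cL$ is at once left and right adjoint of $\cF$, giving \ref{item:frobenius1} with $\cG=\cL$.

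Under the extra assumption that $\cF$ is fully faithful---equivalently, as recalled above, that $\epsilon$ and $\gamma$ are natural isomorphisms---I would prove the cycle \ref{item:frobenius1} $\Rightarrow$ \ref{item:frobenius5} $\Rightarrow$ \ref{item:frobenius4}. The implication \ref{item:frobenius5} $\Rightarrow$ \ref{item:frobenius4} is immediate, an invertible $\sigma\colon\cR\to\cL$ being an explicit witness of $\cR\cong\cL$. The heart of the argument is \ref{item:frobenius1} $\Rightarrow$ \ref{item:frobenius5}: choosing once more $\cL=\cR=\cG$, I must check that the distinguished map $\sigma=(\cL\theta)\circ(\epsilon\cR)^{-1}=(\cG\theta)\circ(\epsilon\cG)^{-1}$ is invertible. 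Now $(\epsilon\cG)^{-1}$ is an isomorphism because $\epsilon$ is; and the triangle identity $(\cG\theta)\circ(\gamma\cG)=\id_{\cG}$ for the adjunction $\cF\dashv\cG$, together with the invertibility of $\gamma$, forces $\cG\theta=(\gamma\cG)^{-1}$ to be an isomorphism as well, whence $\sigma$ is a composite of isomorphisms.

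I do not expect a serious obstacle here, since this is a folklore statement in which four of the five conditions are essentially mutual rephrasings; the single computation with content is the triangle-identity step above. The only points demanding care are bookkeeping: tracking the variances when whiskering $\epsilon$, $\gamma$ and $\theta$, and applying the correct one of the two triangle identities for $\cF\dashv\cG$. Note moreover that, \ref{item:frobenius5} being an existential statement, it suffices to exhibit the single triple $\cG\dashv\cF\dashv\cG$ with $\sigma$ invertible, so that no independence-of-choice verification is required.
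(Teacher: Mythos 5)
Your proof is correct. The paper states this lemma without proof, deferring to \cite[\S 1]{Saracco-Frobenius} as folklore, so there is nothing to diverge from; your argument is the standard one, and the only step with content --- deducing invertibility of $\cG\theta$ from the triangle identity $(\cG\theta)\circ(\gamma\cG)=\id_{\cG}$ and the invertibility of $\gamma$, so that $\sigma=(\cG\theta)\circ(\epsilon\cG)^{-1}$ is a composite of isomorphisms --- is exactly the computation implicit in the whiskering identities $\sigma\cF = \epsilon^{-1}\circ\gamma^{-1}$ and $\cF\sigma=\eta\circ\theta$ that the paper records immediately after the lemma.
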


Since we are interested in adjoint triples whose middle functor is fully faithful, Lemma \ref{lemma:frobenius} allows us to study the Frobenius property by simply looking at the invertibility of the canonical map $\sigma$. Observe that
\begin{equation}\label{eq:sigmaF}
\sigma\cF = \epsilon^{-1} \circ \gamma^{-1} \qquad \text{and} \qquad \cF\sigma = \eta\circ\theta,
\end{equation}
whence, in particular, $\sigma\cF$ is always a natural isomorphism.

\begin{invisible}
As a matter of terminology: if a morphism is a split epimorphism, then we say that it admits a \emph{section} (\ie a right inverse). If it is a split monomorphism, we say that it admits a \emph{retraction} (\ie a left inverse).

{\color{blue} Add analogue result with comonadic and right adjoint, in order to have that $\cF$ is both.}
\end{invisible}


\subsection{Monoidal categories}

Recall that a \emph{monoidal category} $\left( \cM ,\otimes ,\I ,\calpha ,\clambda ,\crho \right) $ is a category $\cM $ endowed with a functor $\otimes :\cM \times \cM \to \cM $ (the \emph{tensor product}), with a distinguished object $\I $ (the \emph{unit}) and with three natural isomorphisms 
\begin{gather*}
	\calpha :\otimes \circ (\otimes \times \id_{\cM})\to \otimes \circ (\id_{\cM }\times \otimes)\quad \text{(\emph{associativity constraint})} \\
	\clambda :\otimes \circ  (\I \times \id _{\cM })\to \id _{\cM }\quad \text{(\emph{left unit constraint})} \\
	\crho :\otimes \circ  (\id _{\cM }\times \I )\to \id _{\cM }\quad \text{(\emph{right unit constraint})} 
\end{gather*}
that satisfy the \emph{Pentagon} and the \emph{Triangle Axioms}, that is,
\begin{gather*}
\calpha_{X,Y,Z\otimes W} \circ \calpha_{X\otimes Y,Z,W} = \left(X\otimes \calpha_{Y,Z,W}\right) \circ \calpha_{X,Y\otimes Z,W} \circ \left(\calpha_{X,Y,Z}\otimes W\right), \\
\left(X\otimes \clambda_Y\right) \circ \calpha_{X,\I ,Y} = \crho_X\otimes Y,
\end{gather*}
for all $X,Y,Z,W$ objects in $\cM$.

If the endofunctor $X\otimes-:Y\mapsto X\otimes Y$ (resp. $-\otimes X:Y\mapsto Y\otimes X$) has a right adjoint for every $X$ in $\cM$, then $\cM$ is called a \emph{left-closed} (resp. \emph{right-closed}) monoidal category.

Given two monoidal categories $\left( \cM ,\otimes ,\I ,\calpha,\clambda ,\crho \right) $ and $\left( \cM ^{\prime },\otimes^{\prime} ,\I^{\prime},\calpha ^{\prime },\clambda ^{\prime },\crho ^{\prime }\right) $, a \emph{quasi-monoidal functor} $(\cF,\cvarphi_0,\cvarphi)$ between $\cM$ and $\cM'$ is a functor $\cF:\cM \to \cM ^{\prime }$ together with an isomorphism $\cvarphi _{0}:\I^{\prime}\to \cF\left( \I \right) $ and a family of isomorphisms $\cvarphi _{X,Y}:\cF \left( X\right) \otimes^{\prime} \cF\left( Y\right) \to \cF\left( X\otimes Y\right) $ for $X,Y$ objects in $\cM $, which are natural in both entrances. A quasi-monoidal functor $\cF$ is said to be \emph{neutral} if 
\begin{gather}\label{eq:neutral}
\begin{gathered}
\cF\left( \clambda_{ {X}} \right) \circ \cvarphi_{ {\I,X}} \circ \left(\cvarphi _{0}\otimes^{\prime}\cF\left( X\right)\right) = \clambda ^{\prime }_{ {\cF(X)}}, \\
\cF\left(\crho_{ {X}} \right) \circ \cvarphi_{ {X,\I}} \circ \left(\cF\left(X\right) \otimes^{\prime}\cvarphi _{0} \right) = \crho ^{\prime }_{ {\cF(X)}},
\end{gathered}
\end{gather}
and it is said to be \emph{strong monoidal} if, in addition,
\begin{equation}\label{eq:monoidal}
\cvarphi_{X,Y\otimes Z} \circ \left(\cF(X)\otimes^{\prime} \cvarphi_{Y,Z}\right)  \circ \calpha^{\prime }_{\cF(X),\cF(Y),\cF(Z)} = \cF \left( \calpha_{X,Y,Z}\right) \circ \cvarphi_{X\otimes Y,Z} \circ \left(\cvarphi_{X,Y} \otimes^{\prime} \cF(Z)\right)
\end{equation}
for all $X,Y,Z$ in $\cM$. Furthermore, it is said to be \emph{strict} if $\cvarphi_0$ and $\cvarphi$ are the identities. A strong monoidal functor $(\cF,\cvarphi_0,\cvarphi)$ such that $\cF$ is an equivalence of categories is called a \emph{monoidal equivalence}.

If $\cF$ comes together with a morphism $\cvarphi_0:\I'\to\cF(\I)$ and a natural transformation $\cvarphi _{X,Y}:\cF \left( X\right) \otimes^{\prime} \cF\left( Y\right) \to \cF\left( X\otimes Y\right) $ that are not necessarily invertible but that satisfy \eqref{eq:neutral} and \eqref{eq:monoidal} then it is called a \emph{lax monoidal functor} in \cite[Definition 3.1]{Aguiar} (also termed \emph{monoidal functor} in \cite{BruguieresLackVirelizier, BruguieresVirelizier}). If instead $\cF$ comes together with a morphism $\cpsi_0:\cF(\I)\to\I'$ and a natural transformation $\cpsi _{X,Y}: \cF\left( X\otimes Y\right) \to \cF \left( X\right) \otimes^{\prime} \cF\left( Y\right)$ (not necessarily invertible) satisfying the analogues of \eqref{eq:neutral} and \eqref{eq:monoidal} then it is called a \emph{colax monoidal functor} in \cite[Definition 3.2]{Aguiar} (also termed \emph{opmonoidal functor} in \cite{McCrudden} and \emph{comonoidal functor} in \cite{BruguieresLackVirelizier, BruguieresVirelizier}).

In \cite[Definition 3.8]{Aguiar}, a natural transformation $\upgamma$ between monoidal functors $(\cF,\cvarphi_0,\cvarphi)$ and $(\cG,\upphi_0,\upphi)$ from a monoidal category $(\cM,\otimes,\I,\calpha,\clambda,\crho)$ to $(\cM',\otimes',\I',\calpha',\clambda',\crho')$ is said to be a \emph{morphism of lax monoidal functors} (also called \emph{monoidal natural transformation} in \cite{BruguieresLackVirelizier, BruguieresVirelizier}) if 
\begin{equation}
\left(\upgamma_X\otimes'\upgamma_Y\right) \circ \cvarphi_{X,Y} = \upphi_{X,Y} \circ \upgamma_{X\otimes Y} \quad \text{and} \quad \upgamma_\I\circ \cvarphi_0 = \upphi_0.
\end{equation} 
Similarly, one defines \emph{morphisms of colax monoidal functors} (also called \emph{transformations of opmonoidal functors} in \cite{McCrudden} and \emph{comonoidal natural transformations} in \cite{BruguieresLackVirelizier, BruguieresVirelizier}). An adjoint pair of monoidal functors is called a \emph{lax-lax adjunction} in \cite[Definition 3.87]{Aguiar} (also termed a \emph{monoidal adjunction} in \cite{BruguieresVirelizier}) if the unit and the counit are morphisms of lax monoidal functors. Analogously, see \cite[Definition 3.88]{Aguiar}, one defines \emph{colax-colax adjunctions} (also termed \emph{comonoidal adjunctions} in \cite{BruguieresLackVirelizier}) as those for which the unit and the counit are morphisms of colax monoidal functors.

By adhering to the suggestions of the referee and because results from \cite{Aguiar} are widely used, we will adopt the terminology of \cite{Aguiar} all over the paper. The unique exception will be the use of the term \emph{opmonoidal monad} in \S\ref{sec:HopfMonads}, because the latter is, as far as the author knows, the most widely used in the study of Hopf monads and related constructions (see for example \cite{Bohm}, in particular \cite[Chapter 3]{Bohm}, and the references therein).

Henceforth, we will often omit the constraints when referring to a monoidal category.


\subsection{Quasi-bialgebras and quasi-Hopf bimodules}

Let $\K$ be a commutative ring. Recall from \cite[\S1, Definition]{Drinfel'd} that a \emph{quasi-bialgebra} over $\K$ is an algebra $A$ endowed with two algebra maps $\Delta:A\to A\otimes A$, $\varepsilon:A\to \K$ and a distinguished invertible element $\Phi\in A\otimes A\otimes A$ such that 
\begin{gather}
(A\otimes A\otimes \Delta)(\Phi)(\Delta\otimes A\otimes A)(\Phi)=(1\otimes \Phi)(A\otimes \Delta\otimes A)(\Phi)(\Phi\otimes 1), \label{eq:PhiCocycle} \\
(A\otimes \varepsilon\otimes A)(\Phi)= 1 \otimes 1, \label{eq:PhiCounital}
\end{gather}
$\Delta$ is counital with counit $\varepsilon$ and it is coassociative up to conjugation by $\Phi$, that is,
\begin{equation}\label{eq:quasi-coassociativity} 
\Phi(\Delta\otimes A)(\Delta(a))=(A\otimes\Delta)(\Delta(a))\Phi. 
\end{equation}
As a matter of notation, we will write $\Phi=\Phi^1\otimes \Phi^2\otimes \Phi^3=\Psi^1\otimes \Psi^2\otimes \Psi^3=\cdots$ and $\Phi^{-1} = \varphi^1\otimes \varphi^2\otimes \varphi^3 = \phi^1\otimes \phi^2\otimes \phi^3=\cdots$ (summation understood). A \emph{preantipode} (see \cite[Definition 1]{Saracco}) for a quasi-bialgebra is a $\K$-linear endomorphism $S:A\to A$ such that 
\begin{equation}\label{eq:defpreantipode}
S(a_1b)a_2 = \varepsilon(a)S(b) = a_1S(ba_2) \quad \text{and} \quad \Phi^1S(\Phi^2)\Phi^3 = 1
\end{equation}
for all $a,b\in A$. An \emph{antipode} (from time to time also called \emph{quasi-antipode}, to distinguish it from the ordinary antipode of Hopf algebras) is a triple $(s,\alpha,\beta)$, where $s:A\to A$ is an algebra anti-homomorphism and $\alpha,\beta\in A$ are elements, such that for all $a\in A$ we have
\[
\xymatrix @R=0pt @C=10pt{
s(a_1)\alpha a_2 = \varepsilon(a)\alpha, & a_1\beta s(a_2) = \varepsilon(a)\beta, \\
\Phi^1\beta s(\Phi^2)\alpha \Phi^3 = 1, & s(\varphi^1)\alpha \varphi^2 \beta s(\varphi^3) = 1.
}
\]
A quasi-bialgebra admitting an antipode is called a \emph{quasi-Hopf algebra} (see \cite[Definition, page 1424]{Drinfel'd}). By comparing \cite[Theorem 4]{Saracco} and \cite[Theorem 3.1]{Schauenburg-TwoChar}, we have that the following holds.

\begin{proposition}\label{prop:PreQuasiHopf}
Over a field $\K$, a finite-dimensional quasi-bialgebra $A$ admits a preantipode if and only if it is a quasi-Hopf algebra.
\end{proposition}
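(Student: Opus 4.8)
The statement is presented as a direct consequence of two existing characterizations, so the plan is to exhibit a single structural condition to which both sides reduce and then close the argument by transitivity. The common ground is the Structure (or Fundamental) Theorem for quasi-Hopf bimodules, namely the assertion that the free functor $-\otimes A:\Lmod{A}\to\quasihopf{A}$ is an equivalence of categories, equivalently that every quasi-Hopf bimodule $M$ is isomorphic to $\cl{M}\otimes A$ through the canonical comparison morphism.

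First I would recall \cite[Theorem 4]{Saracco}, which holds for an arbitrary quasi-bialgebra over $\K$ with no finiteness hypothesis: $A$ admits a preantipode if and only if the above Structure Theorem holds. (This is exactly the equivalence between conditions (1) and (2) anticipated in Theorem \ref{thm:mainThm}.) In particular this already yields the implication ``quasi-Hopf $\Rightarrow$ preantipode'' in full generality, since a quasi-antipode $(s,\alpha,\beta)$ allows one to build a preantipode explicitly, so only the converse will require the standing hypotheses.

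Next I would invoke \cite[Theorem 3.1]{Schauenburg-TwoChar}, which concerns precisely finite-dimensional quasi-bialgebras over a field: such an $A$ is a quasi-Hopf algebra if and only if the same Structure Theorem holds. Here finiteness over the field $\K$ is indispensable, as it is what permits the reconstruction of an algebra anti-endomorphism $s$ together with the distinguished elements $\alpha,\beta$ out of the categorical equivalence. Combining the two characterizations, under the hypotheses of the proposition both the existence of a preantipode and the quasi-Hopf property are equivalent to one and the same condition, and therefore to each other.

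The only real obstacle is a matter of bookkeeping: one must verify that the Structure Theorem as formulated in \cite{Schauenburg-TwoChar} is literally the statement used in \cite{Saracco} --- the same category $\quasihopf{A}$ of two-sided quasi-Hopf bimodules, the same free functor, and the same notion of equivalence --- after reconciling the side conventions (which factor carries the module structure and which the comodule structure) adopted in each source. Once these formulations are matched there is nothing left to compute, and the equivalence follows at once.
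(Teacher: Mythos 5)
Your proposal is correct and follows exactly the route the paper takes: the result is obtained by comparing \cite[Theorem 4]{Saracco} (preantipode $\Leftrightarrow$ Structure Theorem for quasi-Hopf bimodules, with no finiteness assumption) with \cite[Theorem 3.1]{Schauenburg-TwoChar} (quasi-Hopf $\Leftrightarrow$ Structure Theorem, in the finite-dimensional case over a field), so both conditions reduce to the same categorical statement. Your additional remarks on where finiteness enters and on matching the side conventions are accurate but not needed beyond what the paper itself records.
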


The subsequent lemma gives an equivalent characterization of quasi-bialgebras in terms of their categories of modules (see \cite[Theorem 1]{ArdiBulacuMen}).

\begin{lemma}
A $\K$-algebra $A$ is a quasi-bialgebra if and only if its category of left (equivalently, right) modules is a monoidal category with neutral quasi-monoidal underlying functor to $\K$-modules. The associativity constraint is given by $\calpha_{M,N,P}(m\otimes n\otimes p)=\Phi\cdot (m\otimes n\otimes p)$ for every $M,N,P\in\Lmod{A}$ and for all $m\in M, n \in N, p\in P$.
\end{lemma}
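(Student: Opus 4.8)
The plan is to prove the two implications separately, the non-trivial point being a Tannaka-type reconstruction in which natural transformations between (tensor powers of) the forgetful functor are represented by left multiplication by fixed elements of (tensor powers of) $A$. I will write the argument for left modules; the one for right modules is entirely symmetric. Throughout I denote by $U\colon\Lmod{A}\to\M$ the forgetful functor.

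\emph{From a quasi-bialgebra to a monoidal category.} Given $(A,\Delta,\varepsilon,\Phi)$, I would endow $\Lmod{A}$ with the tensor product $M\otimes N:=M\otimes_\K N$ carrying the diagonal action $a\cdot(m\otimes n)=a_1m\otimes a_2 n$, which is a well-defined $A$-action precisely because $\Delta$ is an algebra map, and with unit object $\I=\K$ acted upon through $\varepsilon$, which is an action because $\varepsilon$ is an algebra map. I define $\calpha_{M,N,P}$ as left multiplication by $\Phi$ and let $\clambda,\crho$ be the unit constraints inherited from $\M$. The verifications then follow one axiom at a time: that $\calpha_{M,N,P}$ is $A$-linear is exactly \eqref{eq:quasi-coassociativity}; that it is an isomorphism follows from the invertibility of $\Phi$; naturality is automatic since $\Phi$ is a fixed element; the Pentagon Axiom is precisely the cocycle condition \eqref{eq:PhiCocycle}; and the Triangle Axiom, together with the fact that $\clambda,\crho$ descend from $\M$, is exactly the counital normalization \eqref{eq:PhiCounital}. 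Finally, $U$ is quasi-monoidal with $\cvarphi_0=\id_\K$ and $\cvarphi_{M,N}=\id_{M\otimes N}$, and it is neutral because the constraints $\clambda,\crho$ are by construction those of $\M$, so that \eqref{eq:neutral} holds trivially.

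\emph{From the monoidal category back to a quasi-bialgebra.} Conversely, assume $\Lmod{A}$ is monoidal with $U$ neutral quasi-monoidal; using the isomorphisms $\cvarphi_0,\cvarphi$ one may assume $U(M\otimes N)=M\otimes_\K N$ and $U(\I)=\K$ with identity coherence data. The key observation is that, for $x\in A$, right multiplication $r_x\colon A\to A$, $a\mapsto ax$, is left $A$-linear; hence $r_x\otimes r_y\colon A\otimes A\to A\otimes A$ is again left $A$-linear, and evaluating it at $1\otimes 1$ shows that the action on the regular module $A\otimes A$ is determined by $\Delta(a):=a\cdot(1\otimes 1)\in A\otimes A$, namely $a\cdot(x\otimes y)=\Delta(a)(x\otimes y)$. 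Transporting this along the module maps $A\to M,\ 1\mapsto m$, yields $a\cdot(m\otimes n)=a_1m\otimes a_2n$ on arbitrary modules, and $\Delta$ is an algebra map since the action is unital and associative; likewise the action on $\I=\K$ defines an algebra map $\varepsilon\colon A\to\K$ through $\End{\K}{\K}\cong\K$. Representing $\calpha_{A,A,A}$ in the same fashion produces $\Phi:=\calpha_{A,A,A}(1\otimes 1\otimes 1)$, invertible because $\calpha$ is invertible, with $\calpha_{M,N,P}$ left multiplication by $\Phi$ on every module. Reading the axioms backwards, $A$-linearity of $\calpha$ gives \eqref{eq:quasi-coassociativity}, the Pentagon Axiom gives \eqref{eq:PhiCocycle}, and the Triangle Axiom together with neutrality \eqref{eq:neutral} gives counitality of $\Delta$ and \eqref{eq:PhiCounital}; thus $(A,\Delta,\varepsilon,\Phi)$ is a quasi-bialgebra.

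\emph{Main obstacle.} The delicate point is the representability step in the converse: showing that the action on tensor products and the associativity constraint, a priori only natural families of $\K$-linear maps, are given by multiplication by the fixed elements $\Delta(a)$ and $\Phi$. This rests on evaluating at the regular module and exploiting the left $A$-linearity of right multiplications, a Yoneda-type argument. Once it is in place, each quasi-bialgebra axiom becomes the literal transcription of a monoidal coherence condition, so the remaining verifications are routine but must be matched carefully, in particular aligning the Triangle Axiom and the neutrality conditions \eqref{eq:neutral} with the counital identities.
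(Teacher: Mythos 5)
The paper does not actually prove this lemma: it is quoted from \cite[Theorem 1]{ArdiBulacuMen} with no argument supplied, so there is nothing internal to compare against. Your proof is correct and is, in substance, the standard reconstruction argument one expects to find in that reference. The forward direction is a routine translation of axioms, and you match them correctly: $A$-linearity of $\calpha$ is \eqref{eq:quasi-coassociativity}, the Pentagon is \eqref{eq:PhiCocycle}, and the Triangle (with the unit constraints inherited from $\M$) is \eqref{eq:PhiCounital}. The converse, which you rightly isolate as the only delicate point, is handled properly: after strictifying along $\cvarphi_0,\cvarphi$, the observation that $r_x\otimes r_y$ and the maps $f_m\colon A\to M$, $1\mapsto m$, are morphisms of $\Lmod{A}$ lets you represent the a priori unknown action on tensor products and the associator by the fixed elements $\Delta(a)=a\cdot(1\otimes 1)$ and $\Phi=\calpha_{A,A,A}(1\otimes 1\otimes 1)$, with invertibility of $\Phi$ obtained by applying the same representation to $\calpha^{-1}$. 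One small imprecision worth fixing: counitality of $\Delta$ does not come from the Triangle Axiom but from the $A$-linearity of $\clambda_M$ and $\crho_M$ combined with neutrality (which identifies them with the canonical isomorphisms $\K\otimes M\cong M\cong M\otimes\K$); the Triangle Axiom then only contributes $(A\otimes\varepsilon\otimes A)(\Phi)=1\otimes 1$. This is a matter of bookkeeping and does not affect the validity of the argument.
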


As a matter of notation, if the context requires to stress explicitly the (co)module structures on a particular $\K$-module $V$, we will adopt the following conventions. With a full bullet, such as $V_\bullet$ or $V^\bullet$, we will denote a given right action or coaction respectively  (analogously for the left ones). For example, a left comodule $V$ over a bialgebra $B$ in the category of $B$-bimodules will be also denoted by ${^\bullet_\bullet V^{\phantom{\bullet}}_\bullet}$. With $V^u\coloneqq V\otimes \K^u$ and $V_\varepsilon\coloneqq V\otimes \K_\varepsilon$ we will denote the trivial right comodule and right module structures on $V$, respectively (analogously for the left ones). 

\begin{remark}\label{rem:quasihopf}
Also the category $\Bim{A}{A}$ of $A$-bimodules over a quasi-bialgebra $A$ is monoidal with neutral quasi-monoidal underlying functor to $\K$-modules. In particular, the tensor product of two $A$-bimodules $M,N$ is (up to isomorphism) their tensor product over $\K$ with bimodule structure given by the \emph{diagonal actions}
\begin{equation}\label{eq:diagactions}
a\cdot (m\otimes n)\cdot b = a_1\cdot m\cdot b_1 \otimes a_2\cdot n\cdot b_2
\end{equation}
for all $a,b\in A$, $m\in M$, $n\in N$. The unit is $\K$ with two-sided action given by restriction of scalars along $\varepsilon$. The associativity constraint is given by conjugation by $\Phi$: for every $M,N,P\in\Bim{A}{A}$ and for all $m\in M,n\in N,p\in P$,
\begin{equation}\label{eq:asociativity}
\calpha_{M,N,P}(m\otimes n\otimes p) = \Phi\cdot(m\otimes n\otimes p)\cdot \Phi^{-1}.
\end{equation}
One may check that $(\varepsilon \otimes A\otimes A)(\Phi)=1\otimes 1 = (A\otimes A \otimes \varepsilon)(\Phi)$ and the same for $\Phi^{-1}$:
\begin{equation}\label{eq:epsivarphi}
(\varepsilon \otimes A\otimes A)\left(\Phi^{-1}\right)=1\otimes 1 = (A\otimes \varepsilon\otimes A )\left(\Phi^{-1}\right)=1\otimes 1 = (A\otimes A \otimes \varepsilon)\left(\Phi^{-1}\right).
\end{equation}
As a consequence, if for example ${_\bullet M}$ is a left $A$-module and ${_\bullet N_\bullet}, {_\bullet P_\bullet}$ are $A$-bimodules, then we may look at ${_\bullet M_\varepsilon}\in\Bim{A}{A}$ and $\calpha_{M,N,P}(m\otimes n\otimes p) = \Phi\cdot (m\otimes n\otimes p)$ for all $m\in M,n\in N,p\in P$. Therefore, we will use the notation $\calpha$ for the associativity constraint in the category of left, right and $A$-bimodules indifferently. In the same way, the tensor product of a left $A$-module $\lmod{M}$ and an $A$-bimodule $\bimod{N}$ is a bimodule with two-sided action given by \eqref{eq:diagactions}, \ie
\[
a\cdot (m\otimes n)\cdot b = a_1\cdot m\varepsilon(b_1) \otimes a_2\cdot n\cdot b_2 = a_1\cdot m \otimes a_2\cdot n\cdot b
\]
for all $a,b\in A$, $m\in M$, $n\in N$. We will denote $M\otimes N$ with the latter structures by $\lmod{M}\otimes\bimod{N}$. Furthermore, it can be checked that $A$, as a bimodule over itself and together with $\Delta$ and $\varepsilon$, is a comonoid in $\Bim{A}{A}$, so that we may consider the category $\quasihopf{A}=\left(\Bim{A}{A}\right)^A$ of the so-called \emph{quasi-Hopf bimodules}.
It is important to highlight that:
\begin{enumerate}[label=(\alph*), ref={(\alph*)}, leftmargin=1cm, labelsep=0.3cm]
\item The coassociativity of the coaction $\delta:M\to M\otimes A$ of a quasi-Hopf bimodule $M\in\quasihopf{A}$ is expressed by $\calpha_{M,A,A}\circ (\delta\otimes A)\circ \delta = (M \otimes \Delta)\circ \delta$, i.e., for all $m\in M$,
\begin{equation}\label{eq:coasscomod}
\Phi^1\cdot m_{0_0} \cdot \varphi^1 \otimes \Phi^2 m_{0_1}\varphi^2 \otimes \Phi^3 m_1 \varphi^3 = m_0 \otimes m_{1_1} \otimes m_{1_2}.
\end{equation}
\item If $N\in \Bimod{A}$ then ${_\bullet^{\phantom{\bullet}}N_\bullet^{\phantom{\bullet}}}\otimes \quasihopfmod{A} \in \quasihopf{A}$ with diagonal actions and $\delta \coloneqq \calpha_{N,A,A}^{-1}\circ (N\otimes \Delta)$, i.e., for all $n\in N$, $a,b,c\in A$,
\begin{equation}\label{eq:freecomod}
\begin{gathered}
a\cdot (n\otimes b) \cdot c = a_1\cdot n\cdot c_1 \otimes a_2bc_2 \qquad \text{and} \\
\delta(n\otimes a) = \varphi^1\cdot n\cdot \Phi^1 \otimes \varphi^2a_1\Phi^2 \otimes \varphi^3a_2\Phi^3.
\end{gathered}
\end{equation}
\end{enumerate}
\end{remark}

It is straightforward to check that the category of left modules over a quasi-bialgebra $A$ is not only monoidal, but in fact a right (and left) closed monoidal category with internal hom-functor $\Hom{A}{}{}{}{A\otimes N}{-}$ for all $N\in \Lmod{A}$ (for a proof, see \cite[Lemma 2.1.2]{PhD}). 

\begin{lemma}\label{lemma:modclosed}
Let $A$ be a quasi-bialgebra. Then the category ${_A\M}$ of left $A$-modules is left and right-closed. Namely, we have bijections
\begin{gather}
\xymatrix{
\Hom{A}{}{}{}{M\otimes N}{P} \ar@<+0.5ex>[r]^-{\varpi} & \Hom{A}{}{}{}{M}{\Hom{A}{}{}{}{A\otimes N}{P}}  \ar@<+0.5ex>[l]^-{\varkappa},
} \label{eq:lmodclosed}\\
\xymatrix{
\Hom{A}{}{}{}{N\otimes M}{P} \ar@<+0.5ex>[r]^-{\varpi'} & \Hom{A}{}{}{}{M}{\Hom{A}{}{}{}{N\otimes A}{P}}  \ar@<+0.5ex>[l]^-{\varkappa'},
}\notag
\end{gather}
natural in $M$ and $P$, given explicitly by
\begin{gather*}
\varpi(f)(m):a\otimes n \mapsto f(a\cdot m\otimes n), \qquad \varkappa(g):m\otimes n \mapsto g(m)(1\otimes n), \\
\varpi'(f)(m):n\otimes a \mapsto f(n\otimes a\cdot m), \qquad \varkappa'(g):n\otimes m \mapsto g(m)(n\otimes 1),
\end{gather*}
where the left $A$-module structures on $\Hom{A}{}{}{}{N\otimes A}{P}$ and $\Hom{A}{}{}{}{A\otimes N}{P}$ are induced by the right $A$-module structure on $A$ itself:
\begin{equation}\label{eq:actionhom}
(b\cdot f)(n\otimes a) = f(n\otimes ab) \qquad \text{and} \qquad (b\cdot g)(a\otimes n) = g(ab\otimes n)
\end{equation}
for all $a,b\in A$, $n\in N$, $f\in \Hom{A}{}{}{}{N\otimes A}{P}$ and $g\in \Hom{A}{}{}{}{A\otimes N}{P}$.
\end{lemma}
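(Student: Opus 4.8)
The plan is to verify that the two displayed pairs of maps are well-defined, mutually inverse, and natural, treating only the first adjunction \eqref{eq:lmodclosed} in detail since the second is obtained by the evident symmetry that swaps the two tensor factors (so that $M\otimes N$ and $A\otimes N$ become $N\otimes M$ and $N\otimes A$), the module structure \eqref{eq:actionhom} on the hom-spaces being formally the same in both cases. The crucial observation, which keeps the argument entirely formal, is that the left $A$-action on a tensor product $M\otimes N$ of left modules is the \emph{diagonal} action $a\cdot(m\otimes n)=a_1 m\otimes a_2 n$; since $\Delta$ is an algebra map this is a genuine module structure, and the associativity constraint $\Phi$ never intervenes, because only the underlying tensor-product functor $-\otimes N$ (not the monoidal constraints) is involved. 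In this respect the computation is identical to the bialgebra case.

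First I would check that $\varpi$ and $\varkappa$ land in the prescribed spaces. For $\varpi$, given $f\in\Hom{A}{}{}{}{M\otimes N}{P}$ and $m\in M$, the map $\varpi(f)(m)\colon a\otimes n\mapsto f(a\cdot m\otimes n)$ is $A$-linear for the diagonal action on $A\otimes N$: evaluating $f$ at $b\cdot(a\otimes n)=b_1a\otimes b_2 n$ and using $A$-linearity of $f$ together with $b\cdot(a\cdot m\otimes n)=b_1a\cdot m\otimes b_2 n$ yields $b\cdot\varpi(f)(m)(a\otimes n)$. That $\varpi(f)\colon M\to\Hom{A}{}{}{}{A\otimes N}{P}$ is itself $A$-linear for the action \eqref{eq:actionhom} is the computation $\varpi(f)(b\cdot m)(a\otimes n)=f(ab\cdot m\otimes n)=\varpi(f)(m)(ab\otimes n)=(b\cdot\varpi(f)(m))(a\otimes n)$. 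For $\varkappa$, given $g$ in the right-hand space, the only point is that $\varkappa(g)\colon m\otimes n\mapsto g(m)(1\otimes n)$ is $A$-linear; this uses $A$-linearity of $g$ to rewrite $g(b_1 m)=b_1\cdot g(m)$, then \eqref{eq:actionhom} to turn $(b_1\cdot g(m))(1\otimes b_2 n)$ into $g(m)(b_1\otimes b_2 n)$, and finally $A$-linearity of $g(m)$ together with $b\cdot(1\otimes n)=b_1\otimes b_2 n$.

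Next I would compute the two composites. The identity $\varkappa\varpi=\id$ is immediate from $\varkappa(\varpi(f))(m\otimes n)=\varpi(f)(m)(1\otimes n)=f(1\cdot m\otimes n)=f(m\otimes n)$, using $\Delta(1)=1\otimes 1$ only through $1\cdot m=m$. For $\varpi\varkappa=\id$ I would evaluate $\varpi(\varkappa(g))(m)(a\otimes n)=\varkappa(g)(a\cdot m\otimes n)=g(a\cdot m)(1\otimes n)$ and then move $a$ across using $A$-linearity of $g$ and \eqref{eq:actionhom}: $g(a\cdot m)(1\otimes n)=(a\cdot g(m))(1\otimes n)=g(m)(a\otimes n)$. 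Naturality in $M$ and $P$ is a routine check, since $\varpi$ and $\varkappa$ are defined purely by substitution, so precomposition with an $A$-linear map $M'\to M$ and postcomposition with an $A$-linear map $P\to P'$ commute with the stated formulas.

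I do not expect a genuine obstacle here; the statement is essentially bookkeeping. The only thing requiring care is consistently distinguishing the two module structures at play---the diagonal action on the tensor products $M\otimes N$ and $A\otimes N$ on one hand, and the action \eqref{eq:actionhom} on the hom-spaces, induced by right multiplication on the distinguished copy of $A$, on the other---and checking that each $A$-linearity verification invokes the identity $b\cdot(x\otimes n)=b_1 x\otimes b_2 n$ at the right moment. Confirming that $\Phi$ plays no role is precisely what makes the proof short and reduces it verbatim to the coassociative setting.
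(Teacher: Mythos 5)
Your proposal is correct and follows essentially the same route as the paper's proof: a direct verification that $\varpi$ and $\varkappa$ are well defined, mutually inverse and natural, with the observation that only the diagonal action (and never $\Phi$) intervenes, and with the second bijection handled by the mirror-image computation. The paper additionally offers an optional alternative derivation of the second bijection from the classical hom--tensor adjunction via $(N\otimes A)\tensor{A}M\cong N\otimes(A\tensor{A}M)$, but it too states that the second claim can be proved analogously to the first, exactly as you do.
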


\begin{invisible}[Proof.]
Let $M,N,P$ be left $A$-modules. To make the exposition clearer, we will denote them via ${_\bullet M}$, ${_\bullet N}$ and ${_\bullet P}$ to underline the given actions. We are claiming that there is a bijection
\begin{equation*}
\xymatrix{
\Hom{A}{}{}{}{{_\bullet M}\otimes {_\bullet N}}{{_\bullet P}} \ar@<+0.5ex>[r]^-{\varpi} & \Hom{A}{}{}{}{{_\bullet M}}{\Hom{A}{}{}{}{{{_\bullet A}}\otimes {_\bullet N}}{{_\bullet P}}}   \ar@<+0.5ex>[l]^-{\varkappa}
}
\end{equation*}
natural in $M$ and $P$. Consider a generic $f\in \Hom{A}{}{}{}{{_\bullet M}\otimes {_\bullet N}}{{_\bullet P}}$. For all $m\in M$, $n\in N$ and $a,b,c\in A$ we have that
\begin{align*}
\big(\varpi(f)(c \cdot m)\big)\big(b\cdot(a\otimes n)\big) & = \sum f\Big(\big(b_1a\cdot (c\cdot m)\big)\otimes (b_2\cdot n)\Big) = f\Big(b\cdot \big((ac\cdot m)\otimes n\big)\Big) \\
 & =b \cdot \big(\varpi(f)(m)\big)\left(ac\otimes n\right)= b \cdot \big(c\cdot \varpi(f)(m)\big)\left(a\otimes n\right).
\end{align*}
Taking $c=1$ gives the left $A$-linearity of $\varpi(f)(m)$ while taking $b=1$ gives the left $A$-linearity of $\varpi(f)$, whence $\varpi$ is well-defined. On the other hand, for all $g\in \Hom{A}{}{}{}{{_\bullet M}}{\Hom{A}{}{}{}{{{_\bullet A}}\otimes {_\bullet N}}{{_\bullet P}}} $, $m\in M$, $n\in N$ and $a\in A$ we have also
\begin{align*}
\varkappa(g) & (a\cdot (m\otimes n)) = \sum g(a_1\cdot m)(1\otimes (a_2\cdot n)) = \sum \left(a_1\cdot g(m)\right)(1\otimes (a_2\cdot n)) \\
 &  = \sum g(m)(a_1\otimes (a_2\cdot n)) = g(m)(a\cdot (1\otimes n)) = a\cdot g(m)(1\otimes n) = a\cdot \varkappa(g)(m\otimes n),
\end{align*}
which implies that $\varkappa(g)$ is left $A$-linear and $\varkappa$ is well-defined as well. To check the naturality in $M$ and $P$ consider two left $A$-linear morphisms $h:M'\to M$ and $l:P\to P'$. Then
\begin{align*}
\Big(\big(\Hom{A}{}{}{}{A\otimes N}{l}\circ\varpi(f)\circ h\big)(m)\Big)(a\otimes n) & = \big(l\circ \varpi(f)(h(m))\big)(a\otimes n)= l\Big(\big(\varpi(f)(h(m))\big)(a\otimes n)\Big) \\
 & = l\big(f(a\cdot h(m)\otimes n)\big) = l\Big(f\big(h(a\cdot m)\otimes n\big)\Big) \\
 & = \Big(\varpi\big(l\circ f\circ (h\otimes N)\big)(m)\Big)(a\otimes n)
\end{align*}
To conclude, it is enough to check that $\varpi$ and $\varkappa$ are inverses each other. To this aim, we may compute directly
\begin{gather*}
\left(\varpi\varkappa(g)(m)\right)(a\otimes n) = \varkappa(g)(a\cdot m\otimes n) = g(a\cdot m)(1\otimes n) = g(m)(a\otimes n), \\
\left(\varkappa\varpi(f)\right)(m\otimes n) = \left(\varpi(f)(m)\right)(1\otimes n) = f(m\otimes n)
\end{gather*}
for all $m\in N$, $n\in N$, $a\in A$, $f\in \Hom{A}{}{}{}{{_\bullet M}\otimes {_\bullet N}}{{_\bullet P}}$ and $g\in \Hom{A}{}{}{}{{_\bullet M}}{\Hom{A}{}{}{}{{{_\bullet A}}\otimes {_\bullet N}}{{_\bullet P}}}$. Therefore, the first claim holds. The second claim can be proved analogously or may be deduced as follows. Consider the $\K$-modules $(N\otimes A)\tensor{A}M$ and $N\otimes (A\tensor{A}M)$ endowed with the $A$-actions
\begin{gather*}
a\cdot\big(\left(n\otimes b\right)\tensor{A}m\big) \coloneqq  \sum \left(\left(a_1\cdot n\right)\otimes a_2b\right)\tensor{A}m, \\
a\cdot\big(n\otimes \left(b\tensor{A}m\right)\big) \coloneqq  \sum \left(a_1\cdot n\right)\otimes \left(a_2b\tensor{A}m\right),
\end{gather*}
for all $a,b\in A$, $m\in M$ and $n\in N$. The canonical isomorphism $(N\otimes A)\tensor{A}M \cong N\otimes (A\tensor{A}M)$ (``the identity'') is a morphism in ${_A\M}$. Therefore, by the classical hom-tensor adjunction (see \cite[\S3]{Pareigis} for a very general approach), we have a chain of natural isomorphisms
\begin{align*}
\Hom{A}{}{}{}{{_\bullet N}\otimes {_\bullet M}}{{_\bullet P}} & \cong \Hom{A}{}{}{}{{_\bullet N}\otimes \left( {_\bullet A} \tensor{A} {M}\right)}{{_\bullet P}} \cong \Hom{A}{}{}{}{\left( {_\bullet N}\otimes {_\bullet A}\right) \tensor{A} {M}}{{_\bullet P}} \\
 & \cong \Hom{A}{}{}{}{{_\bullet M}}{\Hom{A}{}{}{}{{_\bullet N}\otimes {_\bullet A}}{_\bullet P}}
\end{align*}
whose composition gives exactly $\varpi'$ and $\varkappa'$.
\end{invisible}

Finally, let us recall that the category $\quasihopf{A}$ is a monoidal category in such a way that the forgetful functor $\quasihopf{A}\to\Bim{A}{A}$ is strong monoidal, that is to say, the tensor product is $\tensor{A}$ and the unit object $A$ itself. Given $M,N\in \quasihopf{A}$, the quasi-Hopf bimodule structure on $M\tensor{A}N$ is the following: for every $a,b\in A$, $m\in M$ and $n\in N$
\[
a\cdot (m\tensor{A}n)\cdot b = (a\cdot m) \tensor{A} (n\cdot b) \qquad \text{and} \qquad \delta(m\tensor{A}n) = m_0\tensor{A}n_0\otimes m_1n_1.
\]
Moreover, in light of \cite[Proposition 3.6]{Schauenburg-HopfDouble} the functor $-\otimes A$ is a strong monoidal functor from $(\Lmod{A},\otimes,\K)$ to $\left(\quasihopf{A},\tensor{A},A\right)$. In a nutshell, the argument revolves around the fact that
\begin{equation}\label{eq:tensAstrmon}
\begin{gathered}
\xymatrix@R=0pt{
(V\otimes A)\tensor{A}(W\otimes A) \ar[r]^-{\xi_{V,W}} & (V\otimes W) \otimes A \\
(v\otimes a)\tensor{A}(w\otimes b) \ar@{|->}[r] & \varphi^1\cdot v\otimes \varphi^2a_1\cdot w\otimes \varphi^3a_2b \\
(\Phi^1\cdot v\otimes 1)\tensor{A}(\Phi^2\cdot w\otimes \Phi^3a) & v\otimes w\otimes a \ar@{|->}[l]
}
\end{gathered}
\end{equation}
is an isomorphism of quasi-Hopf bimodules, natural in $V$ and $W$ objects of $\Lmod{A}$. 

\begin{personal}
{\color{blue} Main reference: \cite{Aguiar}. See also \cite[Example 3.2]{Aguiar-LopezFranco}.}
\end{personal}

\begin{remark}
For the sake of the interested reader, there is a categorical reason behind the monoidality of $\quasihopf{A}$. For a quasi-bialgebra $A$, the category $\Bim{A}{A}$ is a \emph{duoidal} (or \emph{2-monoidal}, in the terminology of \cite[Definition 6.1]{Aguiar}) category with monoidal structures $(\tensor{A},A)$ and $(\otimes,\K)$. The structure morphisms connecting the two are
\begin{gather*}
\varepsilon:A\to \K, \qquad \Delta: A\to A\otimes A, \qquad \K\tensor{A}\K\to\K:h\tensor{A}k\mapsto hk, \\
\zeta:(M\otimes P)\tensor{A}(N\otimes Q)\to (M\tensor{A}N)\otimes (P\tensor{A}Q) \\
(m\otimes p)\tensor{A}(n\otimes q)\mapsto (m\tensor{A}n)\otimes (p\tensor{A}q).
\end{gather*}
The quintuple $(A,\mu:A\tensor{A}A\cong A,\id_A,\Delta,\varepsilon)$ is a \emph{bimonoid} in $(\Bim{A}{A},\tensor{A},A,\otimes,\K)$, that is to say, $(A,\mu,\id_A)$ is a monoid in $(\Bim{A}{A},\tensor{A},A)$\begin{invisible}(it is the unit object)\end{invisible} and $(A,\Delta,\varepsilon)$ is a comonoid in $(\Bim{A}{A},\otimes,\K)$, plus certain compatibility conditions between the two structures. 
\begin{invisible}
Associativity:
\begin{gather*}
\def\objectstyle{\scriptstyle}
\def\labelstyle{\scriptscriptstyle}
\xymatrix{
((m\otimes n) \tensor{A} (p\otimes q)) \tensor{A} (x\otimes y) \ar[r]^-{\calpha} \ar[d]_-{\zeta\tensor{A} (X\otimes Y)} & (m\otimes n) \tensor{A} ((p\otimes q) \tensor{A} (x\otimes y)) \ar[d]^-{(M\otimes N)\tensor{A}\zeta} \\
((m \tensor{A}p ) \otimes  (n\tensor{A}q)) \tensor{A} (x\otimes y) \ar[d]_-{\zeta} & (m\otimes n) \tensor{A} ((p\tensor{A} x) \otimes (q\tensor{A} y)) \ar[d]^-{\zeta} \\
((m \tensor{A}p )\tensor{A} x) \otimes ((n\tensor{A}q) \tensor{A}y) \ar[r]_-{\calpha\otimes\calpha} & (m \tensor{A}(p \tensor{A} x)) \otimes (n\tensor{A}(q \tensor{A}y))
} \\
\def\objectstyle{\scriptstyle}
\def\labelstyle{\scriptscriptstyle}
\xymatrix{
((m\otimes p)\otimes x) \tensor{A} ((n\otimes q)\otimes y) \ar[r]^-{\calpha\tensor{A}\calpha} \ar[d]_-{\zeta} & (\Phi^1m\varphi^1\otimes (\Phi^2p\varphi^2\otimes \Phi^3x\varphi^3)) \tensor{A} (\Psi^1n\psi^1\otimes (\Psi^2q\psi^2\otimes \Psi^3y\psi^3)) \ar[d]^-{\zeta} \\
((m\otimes p) \tensor{A} (n\otimes q)) \otimes (x \tensor{A} y) \ar[dd]_-{\zeta\otimes (X\tensor{A}Y)} &  (\Phi^1m\varphi^1\tensor{A}\Psi^1n\psi^1)\otimes  ((\Phi^2p\varphi^2\otimes \Phi^3x\varphi^3) \tensor{A} (\Psi^2q\psi^2\otimes \Psi^3y\psi^3)) \ar[d]_-{(M\tensor{A}N)\otimes \zeta} \\
 & (\Phi^1m\varphi^1\tensor{A}\Psi^1n\psi^1)\otimes  ((\Phi^2p\varphi^2\tensor{A} \Psi^2q\psi^2) \otimes (\Phi^3x\varphi^3\tensor{A} \Psi^3y\psi^3)) \ar@{=}[d] \\
((m \tensor{A}n ) \otimes (p\tensor{A} q)) \otimes (x \tensor{A} y) \ar[r]_-{\calpha} & (\Phi^1m\tensor{A}n\psi^1)\otimes  ((\Phi^2p\tensor{A} q\psi^2) \otimes (\Phi^3x\tensor{A} y\psi^3))
}
\end{gather*}
commutes. Unitality:
\begin{gather*}
\def\objectstyle{\scriptstyle}
\def\labelstyle{\scriptscriptstyle}
\xymatrix{
a\tensor{A} (m\otimes n) \ar[d]_-{\clambda} \ar[r]^-{\Delta\tensor{A}(M\otimes N)} & (a_1\otimes a_2)\tensor{A} (m\otimes n) \ar[d]^-{\zeta} \\
\Delta(a)(m\otimes n) & (a_1\tensor{A} m)\otimes (a_2\tensor{A} n) \ar[l]^-{\clambda\otimes \clambda}
} \quad
\def\objectstyle{\scriptstyle}
\def\labelstyle{\scriptscriptstyle}
\xymatrix{
(m\otimes n)\tensor{A} a \ar[d]_-{\crho} \ar[r]^-{(M\otimes N)\tensor{A}\Delta} & (m\otimes n)\tensor{A} (a_1\otimes a_2) \ar[d]^-{\zeta} \\
(m\otimes n)\Delta(a) & (m\tensor{A} a_1)\otimes (n\tensor{A} a_2) \ar[l]^-{\crho\otimes \crho}
} \\
\def\objectstyle{\scriptstyle}
\def\labelstyle{\scriptscriptstyle}
\xymatrix{
hk\otimes (m\tensor{A}n) \ar[d]_-{\clambda} & (h\tensor{A}k)\otimes (m\tensor{A}n) \ar[l]_-{m\otimes (M\tensor{A}N)} \\
hkm\tensor{A}n & (h\otimes m)\tensor{A} (k\otimes n) \ar[u]_-{\zeta} \ar[l]^-{\clambda\tensor{A}\clambda}
} \quad
\def\objectstyle{\scriptstyle}
\def\labelstyle{\scriptscriptstyle}
\xymatrix{
(m\tensor{A}n)\otimes hk \ar[d]_-{\crho} & (m\tensor{A}n)\otimes (h\tensor{A}k) \ar[l]_-{(M\tensor{A}N)\otimes m} \\
m\tensor{A}nhk & (m\otimes h)\tensor{A} (n\otimes k) \ar[u]_-{\zeta} \ar[l]^-{\crho\tensor{A}\crho}
} 
\end{gather*}
commute. $(A,\Delta,\varepsilon)$ is a comonoid in $(\Bim{A}{A},\otimes,\K)$ and $(\K,m,\varepsilon)$ is a monoid in $(\Bim{A}{A},\tensor{A},A)$.

\medskip 

Compatibilities:
\begin{gather*}
\xymatrix{
(a_1\otimes a_2)\tensor{A}(b_1\otimes b_2) \ar[rr]^-{\zeta} & & (a_1\tensor{A}b_1)\otimes (a_2\tensor{A}b_2) \ar[d]^-{\mu\otimes\mu} \\
a\tensor{A}b \ar[u]^-{\Delta\tensor{A}\Delta} \ar[r]_-{\mu} & ab \ar[r]_-{\Delta} & a_1b_1\otimes a_2b_2
} \\
\xymatrix{
a\tensor{A}b \ar[d]_-{\mu} \ar[r]^-{\varepsilon\tensor{A}\varepsilon} & \varepsilon(a)\tensor{A}\varepsilon({b}) \ar[d]^-{m} \\
ab \ar[r]_-{\varepsilon} & \varepsilon(ab)
} \quad
\xymatrix{
a \ar[d]_-{\Delta} \ar[r]^-{\id_{A}} & a \ar[d]^-{\Delta} \\
a_1\otimes a_2 \ar[r]_-{\id_A\otimes \id_A} & a_1\otimes a_2
} \\
\xymatrix{
 & A \ar[dr]^-{\varepsilon} & \\
A \ar[ur]^-{\id_A} \ar[rr]_-{\varepsilon} & & \K 
}
\end{gather*}
commute.
\end{invisible}
By \cite[Proposition 6.41]{Aguiar}, the category $\quasihopf{A}$ of right comodules over the bimonoid $A$ in $\Bim{A}{A}$ is a monoidal category with tensor product, unit object and constraints induced from $(\Bim{A}{A},\tensor{A},A)$.
\begin{invisible}
It is not true that $-\otimes A$ becomes a monoidal comonad on $(\Bim{A}{A},\otimes,\K)$. The problem is that $-\otimes A : (\Bim{A}{A},\otimes,\K) \to (\Bim{A}{A},\tensor{A},A)$ is monoidal, while $-\otimes A: \Bim{A}{A}\to \Bim{A}{A}$ is a comonad. To be a monoidal comonad it should use only one monoidal structure.
\end{invisible}
\begin{personal}
In light of \cite[Example 3.2]{Aguiar-LopezFranco} this may be called a \emph{linear comonad}.
[This is not enough to conclude also that $-\otimes A$ is monoidal: I want it to be a kind of monoidal comonad on $\Lmod{A}$ or similar.]
\end{personal}
\end{remark}

\begin{personal}
Since bialgebroids over a commutative base are particular examples of bimonoids in a category of bimodules, maybe the present procedure can be adapted to that case.
\end{personal}




\section{Preantipodes and Frobenius functors}\label{sec:HopfBimod}

This section is devoted to the study of a distinguished adjoint triple that naturally arises when dealing with the so-called Structure Theorem for quasi-Hopf bimodules over a quasi-bialgebra $A$ (see \cite[\S3]{HausserNill}, \cite[\S2.2.1]{PhD}, \cite[\S2.1]{Saracco}). We will see that being Frobenius for the functors involved is equivalent to being equivalences and hence to the existence of a preantipode for $A$. As a by-product, we will find a new equivalent condition for a bialgebra to admit an antipode.

\subsection{The main result} 
For every quasi-Hopf bimodule $M$, the quotient $\overline{M}=M/MA^+$ is a left $A$-module with $a\cdot \overline{m}\coloneqq \overline{a\cdot m}$ for all $a\in A,m\in M$. On the other hand, for every left $A$-module $N$ the tensor product $N\otimes A$ is a quasi-Hopf bimodule with
\begin{equation}\label{eq:quasihopfbim}
a\cdot (n\otimes b) \cdot c = a_1\cdot n \otimes a_2bc \quad \text{and} \quad \delta(n\otimes b) = \varphi^1\cdot n \otimes \varphi^2b_1 \otimes \varphi^3b_2
\end{equation}
for all $m\in M, n\in N$ and $a,b,c\in A$ (see Remark \ref{rem:quasihopf} and \cite[\S2.2.1]{PhD}, \cite[\S2.1]{Saracco} for additional details). It is known that these constructions induce an adjunction 
\begin{equation*}
\begin{gathered}
\xymatrix @R=18pt{
\quasihopf{A} \ar@<-0.4ex>@/_/[d]_-{\overline{(-)}} \\
\Lmod{A} \ar@<-0.4ex>@/_/[u]_-{-\otimes A}
}
\end{gathered}.
\end{equation*}
Moreover, the bijection
$
\Hom{A}{}{}{}{{_\bullet M}\otimes {_\bullet N}}{{_\bullet P}}\cong \Hom{A}{}{}{}{{_\bullet M}}{\Hom{A}{}{}{}{{_\bullet A}\otimes {_\bullet N}}{{_\bullet P}}}
$
from \eqref{eq:lmodclosed} induces a natural bijection 
\begin{equation*}
\Hom{A}{}{A}{A}{{_\bullet M}\otimes {{}^{\phantom{\bullet}}_\bullet N_{\bullet}^{\bullet}}}{{{}^{\phantom{\bullet}}_\bullet P_{\bullet}^{\bullet}}}\cong \Hom{A}{}{}{}{{_\bullet M}}{\Hom{A}{}{A}{A}{{_\bullet A}\otimes {{}^{\phantom{\bullet}}_\bullet N_{\bullet}^{\bullet}}}{{{}^{\phantom{\bullet}}_\bullet P_{\bullet}^{\bullet}}}}
\end{equation*}
that makes of $\Hom{A}{}{A}{A}{A\otimes A}{-}$ the right adjoint of the free quasi-Hopf bimodule functor $-\otimes A$. Therefore we have an adjoint triple
\begin{equation}\label{eq:adjtrQuasi}
\overline{\left( -\right) } \ \dashv \ - \otimes A \ \dashv \ \Hom{A}{}{A}{A}{A\otimes A}{-}
\end{equation}
between $\Lmod{A}$ and $\quasihopf{A}$, with units and counits given by
\begin{equation}\label{eq:unitscounitsquasi}
\begin{gathered}
\eta_M:M\to \overline{M}\otimes A, \quad m\mapsto \overline{m_0}\otimes m_1, \qquad \epsilon_N: \overline{(N\otimes A)} \to N, \quad \overline{n\otimes a}\mapsto n\varepsilon(a), \\
\gamma_N: N \to \Hom{A}{}{A}{A}{A\otimes A}{N\otimes A}, \quad n\mapsto \left[a\otimes b\mapsto a\cdot n\otimes b\right], \\
\theta_M:\Hom{A}{}{A}{A}{A\otimes A}{M}\otimes A \to M, \quad f\otimes a \mapsto f(1\otimes 1)\cdot a.
\end{gathered}
\end{equation}

\begin{remark}\label{rem:fullyfaith}
By \cite[Proposition 3.6]{Schauenburg-HopfDouble}, the functor $-\otimes A:\Lmod{A}\to \quasihopf{A}$ is fully faithful. Even if therein $\K$ is assumed to be a field, this hypothesis is not used in the proof, which therefore can be adapted to our context without additional effort. As a consequence, by \cite[Theorem IV.3.1]{MacLane} both $\epsilon$ and $\gamma$ are natural isomorphisms. The interested reader may check directly that, for every $N$ in $\Lmod{A}$,
\begin{gather*}
\epsilon_N^{-1}:N \to \overline{(N \otimes A)}, \quad n \mapsto \overline{n \otimes 1}, \qquad \text{and} \\
\gamma_N^{-1} : \Hom{A}{}{A}{A}{A\otimes A}{N\otimes A} \to N, \quad f \mapsto (N\otimes \varepsilon)(f(1\otimes 1)),
\end{gather*}
as claimed in \cite[Propositions 2.2.1 and 2.2.3]{PhD}.
\end{remark}

Since we are in the situation of \S\ref{sec:adjtriples}, we may consider the natural transformation $\sigma$ whose component at $M\in\quasihopf{A}$ is the $A$-linear map
\begin{equation}\label{eq:sigma1}
\sigma _{M}: \Hom{A}{}{A}{A}{A\otimes A}{M} \rightarrow \overline{M};\quad f\mapsto \overline{f\left( 1\otimes 1\right) }.
\end{equation}

\begin{remark}\label{rem:Hopfclassic}
Three things deserve to be observed before proceeding.
\begin{enumerate}[label=(\alph*), ref={(\alph*)}, leftmargin=0.85cm, labelsep=0.15cm]
\item\label{item1:Hclassic} $A$ admits a preantipode $S$ if and only if either the left-most or the right-most adjunction in \eqref{eq:adjtrQuasi} is an equivalence (whence both are). See \cite[Theorem 2.2.7]{PhD} and \cite[Theorem 4]{Saracco} for further details. In particular, an inverse for $\sigma_M$ in this case is given by
\begin{equation*}
\sigma_M^{-1}:\cl{M} \to \Hom{A}{}{A}{A}{A\otimes A}{M}, \ \overline{m}\mapsto \left[ \left( a\otimes b\right) \mapsto \Phi ^{1}a_{1}\cdot m_{0}\cdot S\left( \Phi ^{2}a_{2}m_{1}\right) \Phi ^{3}b \right].
\end{equation*}
\item\label{item2:Hclassic} In light of equation \eqref{eq:sigmaF} with $\cF=-\otimes A$ and since $A \cong \K\otimes A$ in $\quasihopf{A}$, the component $\sigma_A:\Hom{A}{}{A}{A}{A\otimes A}{A}\to\cl{A}$ is always an isomorphism with inverse given by $\K\to\Hom{A}{}{A}{A}{A\otimes A}{A}, 1_\K\mapsto \left[x\otimes y\mapsto \varepsilon(x)y\right]$.
\item\label{item3:Hclassic} For every $M\in\quasihopf{A}$, the relation $\cl{m\cdot a} = \cl{m}\,\varepsilon(a)$ holds in $\cl{M}$ for all $a\in A$, $m\in M$. We will make often use of it in what follows.
\end{enumerate}
\end{remark}

By Lemma \ref{lemma:frobenius}, the functor $-\otimes A$ is Frobenius if and only if $\sigma$ of \eqref{eq:sigma1} is a natural isomorphism. Thus, let us start by having a closer look at $\Hom{A}{}{A}{A}{A\otimes A}{M}$.

\begin{remark}\label{rem:tau}
Let $M\in\quasihopf{A}$ and consider $f\in \Hom{A}{}{A}{A}{A\otimes A}{M}$. Due to right $A$-linearity, $f$ is uniquely determined by the elements $f(a\otimes 1)$ for $a\in A$. Consider the assignment $T_f:A\to M, \, a\mapsto f(a\otimes 1)$, so that $f(a\otimes b)=T_f(a)\cdot b$ for all $a,b\in A$. From $A$-colinearity of $f$ it follows that 
\begin{equation}\label{eq:deltaf}
\delta_M(T_f(a)) \stackrel{\eqref{eq:quasihopfbim}}{=} f\left(\varphi^1a \otimes \varphi^2\right) \otimes \varphi^3 = T_f(\varphi^1a)\cdot \varphi^2\otimes \varphi^3
\end{equation}
and from left $A$-linearity it follows that
\begin{equation}\label{eq:Tpreant}
T_f(a_1b)\cdot a_2 = f\left(a_1b\otimes a_2\right) \stackrel{\eqref{eq:quasihopfbim}}{=} a\cdot T_f(b)
\end{equation}
for all $a,b\in A$. Denote by $\Hom{}{\dagger}{}{}{A}{M}$ the $\K$-submodule of $\Hom{}{}{}{}{A}{M}$ of those $\K$-linear maps satisfying \eqref{eq:deltaf} and \eqref{eq:Tpreant}. It is an $A$-submodule with respect to the action $(a\triangleright g)(b)\coloneqq g(ba)$ for $a,b\in A$, $g\in \Hom{}{\dagger}{}{}{A}{M}$. The assignment
\[
\Hom{A}{}{A}{A}{A\otimes A}{M} \to \Hom{}{\dagger}{}{}{A}{M}, \quad f\mapsto T_f
\]
is an isomorphism of left $A$-modules. Let now $N$ be any right $A$-module and let $N\cmptens{}A$ be the quasi-Hopf bimodule ${N_\bullet^{\phantom{\bullet}}}\otimes \quasihopfmod{A}$. In light of Remark \ref{rem:quasihopf}, the coaction is given by the composition $\calpha_{N,A,A}^{-1}\circ (N\otimes \Delta)$, which means that
\[
\delta_{N\cmptens{}A}(n\otimes a) = n\cdot \Phi^1\otimes a_1\Phi^2 \otimes a_2\Phi^3.
\]
In light of \eqref{eq:deltaf}, for every $f\in \Hom{A}{}{A}{A}{A\otimes A}{N\cmptens{}A}$ we have
\begin{equation}\label{eq:Tf}
(N\otimes \Delta)\left(T_f(a)\right)\cdot \Phi = \delta_{N\cmptens{}A}\left(T_f(a)\right) = T_f(\varphi^1a)\cdot \varphi^2\otimes \varphi^3
\end{equation}
and, by applying $N\otimes \varepsilon\otimes A$ to both sides of \eqref{eq:Tf},
\begin{gather*}
T_f(a) \stackrel{\eqref{eq:PhiCounital}}{=} \left(N\otimes \varepsilon\otimes A\right)\left((N\otimes \Delta)\left(T_f(a)\right)\cdot \Phi\right) = \left(N\otimes \varepsilon\otimes A\right)\left(T_f(\varphi^1a)\cdot \varphi^2\otimes \varphi^3\right) \\
\stackrel{(*)}{=} \left(N\otimes \varepsilon\right)\left(T_f(\varphi^1a)\right)\cdot \varphi^2\otimes \varphi^3
\end{gather*}
where $(*)$ follows from the fact that the right $A$-action on $N\cmptens{}A$ is given by $(n\otimes a)\cdot b = n\cdot b_1\otimes ab_2$ for all $a,b\in A$, $n\in N$.
If we define $\tau_f:A\to N$ by $\tau_f(a)\coloneqq (N\otimes \varepsilon)T_f(a)$ for all $a\in A$, then it follows that
\begin{equation}\label{eq:ftau}
f(a\otimes b) = T_f(a)\cdot b = \tau_f(\varphi^1a)\cdot\varphi^2b_1\otimes \varphi^3b_2
\end{equation}
for all $a,b\in A$, $f\in \Hom{A}{}{A}{A}{A\otimes A}{N\cmptens{}A}$. Moreover, in view of \eqref{eq:epsivarphi} and since the left $A$-action on $N\cmptens{}A$ is given by $a\cdot(n\otimes b) = n\otimes ab$, applying $N\otimes \varepsilon$ to both sides of \eqref{eq:Tpreant} gives 
\begin{equation}\label{eq:quasiantipode}
\tau_f(a_1b)\cdot a_2 = \varepsilon(a)\tau_f(b)
\end{equation}
for all $a,b\in A$. Denote by $\Hom{}{\star}{}{}{A}{N}$ the family of $\K$-linear morphisms $g:A\to N$ that satisfy \eqref{eq:quasiantipode}. Then we have an isomorphism of left $A$-modules
\begin{equation}\label{eq:tau}
\begin{gathered}
\xymatrix @R=0pt {
{\tau:\Hom{A}{}{A}{A}{A\otimes A}{N\cmptens{}A}} \ar@{<->}[r] & \Hom{}{\star}{}{}{A}{N} \\
{\phantom{\tau:}f} \ar@{|->}[r] & \tau_f \\
{\phantom{\tau:}\big[a\otimes b\mapsto g(\varphi^1a)\cdot \varphi^2b_1\otimes \varphi^3b_2\big]} & g \ar@{|->}[l]
}
\end{gathered}
\end{equation}
where the $A$-module structure on $\Hom{}{\star}{}{}{A}{N}$ is the one induced by $\Hom{}{}{}{}{A}{N}$, that is, $(a\triangleright g)(b)=g(ba)$ for all $a,b\in A$ and $g\in \Hom{}{}{}{}{A}{N}$.
\end{remark}

Our first aim is to show that if $\sigma_M$ is invertible for every $M\in\quasihopf{A}$, then $A$ admits a preantipode. Let us keep the notation introduced in Remark \ref{rem:tau} and consider the distinguished quasi-Hopf bimodule $\widehat{A}=A\cmptens{}A\coloneqq {A_{\bullet }^{}} \otimes { _{\bullet }^{\phantom{\bullet}}A_{\bullet }^{\bullet}}$ and the component
\begin{equation}\label{eq:sigmahat}
\sigma_{\widehat{A }}: \Hom{A}{}{A}{A}{A\otimes A}{A\cmptens{}A} \rightarrow \overline{A\cmptens{}A}, \qquad f\mapsto \overline{\tau_f(\varphi^1)\varphi^2\otimes \varphi^3}.
\end{equation}
Observe that, in light of the structures on $A\otimes A$ and $A\cmptens{}A$, bilinearity and colinearity of $f$ can be expressed explicitly by
\begin{equation}\label{eq:alllin}
\begin{gathered}
f\left(a_1x\otimes a_2yb\right) = (1\otimes a)f\left(x\otimes y\right)\Delta(b) \quad \text{and} \\
 (A\otimes \Delta)\left(f\left(a\otimes b\right)\right)\cdot \Phi = f\left(\varphi^1a\otimes \varphi^2b_1\right) \otimes \varphi^3b_2
\end{gathered}
\end{equation}
for every $f\in \Hom{A}{}{A}{A}{A\otimes A}{A\cmptens{}A}$ and $a,b,x,y\in A$.

\begin{remark}\label{rem:sigmabal}
For all $a,b,x,y\in A$, $f\in\Hom{A}{}{A}{A}{A\otimes A}{A\cmptens{} A}$, the rules
\begin{equation}\label{eq:triang}
(a\otimes b)\triangleright \overline{x\otimes y}\coloneqq \overline{ax\otimes by} \quad \text{and} \quad ((a\otimes b)\triangleright f)(x\otimes y)\coloneqq \left( a\otimes 1\right) f\left( xb\otimes y\right)
\end{equation}
provide actions of $A\otimes A$ on $\overline{A \cmptens{} A}$ and $\Hom{A}{}{A}{A}{A\otimes A}{A\cmptens{} A}$, respectively, and the ordinary left $A$-action on $\overline{A\cmptens{}A}$ satisfies $a\cdot\left(\cl{x\otimes y}\right) = (1\otimes a)\triangleright\cl{x\otimes y}$.
\begin{personal}
The first one is well-defined. Let us check the second is well-defined
\begin{align*}
\left( (a\otimes b)\triangleright f\right) \left( c_{1}x\otimes c_{2}yd\right) & = \left( a\otimes 1\right) f\left( c_{1}xb\otimes c_{2}yd\right) =\left( a\otimes 1\right) \left( 1\otimes c\right) f\left( xb\otimes y\right) \Delta \left( d\right) \\
& =\left( 1\otimes c\right) \Big( \left( a\otimes 1\right) f\left( xb\otimes y\right) \Big) \Delta \left( d\right) = c\cdot \left( (a\otimes b)\triangleright f\right) \left( x\otimes y\right) \cdot d 
\end{align*}
whence it is still bilinear and
\begin{align*}
\big( & \left( (a\otimes b)\triangleright f\right) \left( x\otimes y\right)\big) _{0} \otimes \big( \left( (a\otimes b)\triangleright f\right) \left( x\otimes y\right) \big) _{1} = \big( \left( a\otimes 1\right) f\left( xb\otimes y\right) \big) _{0}\otimes \big( \left( a\otimes 1\right) f\left( xb\otimes y\right) \big) _{1} \\
&=\left( A\otimes \Delta \right) \left( \left( a\otimes 1\right) f\left( xb\otimes y\right) \right) \cdot \Phi = \left( a\otimes 1\otimes 1\right) \Big( \left( A\otimes \Delta \right) \left( f\left( xb\otimes y\right) \right) \cdot \Phi \Big) \\
&= \left( a\otimes 1\right) f\left( xb\otimes y\right) _{0}\otimes f\left( xb\otimes y\right) _{1} = \left( a\otimes 1\right) f\left( \left( xb\otimes y\right) _{0}\right) \otimes \left( xb\otimes y\right) _{1} \\
& = \left( a\otimes 1\right) f\left( \varphi^1xb\otimes \varphi^2y_1\right) \otimes \varphi^3y_2 = \left( (a\otimes b)\triangleright f\right) \left( \left( x\otimes y\right) _{0}\right) \otimes \left( x\otimes y\right) _{1}
\end{align*}
so that it is still colinear.
\end{personal}
Since
\begin{gather*}
\sigma_{\widehat{A }}\left( (a\otimes b)\triangleright f\right) \stackrel{\eqref{eq:sigma1}}{=} \overline{\left( (a\otimes b)\triangleright f\right)(1\otimes 1)} \stackrel{\eqref{eq:triang}}{=} \overline{\left( a\otimes 1\right) f\left( b\otimes 1\right) } \stackrel{\eqref{eq:actionhom}}{=} \left( a\otimes 1\right) \triangleright \left(\overline{ \left(b\cdot f\right)\left( 1\otimes 1\right) }\right) \\
\stackrel{\eqref{eq:sigma1}}{=} \hspace{-1pt} ( a\otimes 1) \hspace{-1pt}  \triangleright \hspace{-1pt}  \sigma_{\what{A}}(b\cdot f)  \hspace{-1pt} \stackrel{(*)}{=} \hspace{-1pt}  ( a\otimes 1) \hspace{-1pt}  \triangleright  \hspace{-1pt} \left(b\cdot \sigma_{\what{A}}(f)\right)  \hspace{-1pt} = \hspace{-1pt}  ( a\otimes 1) \triangleright\left( (1\otimes b)\triangleright\sigma_{\what{A}}(f)\right)  \hspace{-1pt}  =  \hspace{-1pt} (a\otimes b)\triangleright \sigma_{\what{A}}(f)
\end{gather*}
for all $a,b\in A$ (where $(*)$ follows by $A$-linearity of $\sigma_{\widehat{A }}$), we have that $\sigma_{\widehat{A }}$ is $A\otimes A$-linear with respect to these action. If $\sigma_{\widehat{A }}$ is invertible, then $\sigma_{\widehat{A }}^{-1}$ is $A\otimes A$-linear as well and hence
\begin{equation}\label{eq:sigmatriang}
\sigma_{\widehat{A }}^{-1}(\cl{a\otimes b})=(a\otimes b)\triangleright \sigma_{\widehat{A }}^{-1}(\cl{1\otimes 1}).
\end{equation}
As a consequence, and by right $A$-linearity of $\sigma_{\widehat{A }}^{-1}\left( \overline{1\otimes 1}\right)$,
\begin{equation}\label{eq:sigmahatbalanced0}
\begin{gathered}
\sigma_{\widehat{A }}^{-1}\left( \overline{a\otimes b}\right) \left(x\otimes y\right) \stackrel{\eqref{eq:sigmatriang}}{=} \left((a\otimes b)\triangleright \sigma_{\widehat{A }}^{-1}(\cl{1\otimes 1})\right) \left(x\otimes y\right) \\
\stackrel{\eqref{eq:triang}}{=} (a\otimes 1)\sigma_{\widehat{A }}^{-1}(\cl{1\otimes 1}) \left(xb\otimes y\right) \stackrel{\eqref{eq:alllin}}{=} (a\otimes 1)\sigma_{\widehat{A }}^{-1}(\cl{1\otimes 1}) \left(xb\otimes 1\right)\Delta \left(y\right)
\end{gathered}
\end{equation}
for all $a,b,x,y\in A$. In particular, we have
\begin{equation}\label{eq:sigmahatbalanced}
\sigma_{\widehat{A }}^{-1}\left( \overline{1\otimes a}\right) \left(1\otimes 1\right) = \sigma_{\widehat{A }}^{-1}\left( \overline{1\otimes 1}\right) \left( a\otimes 1\right) .  
\end{equation}
\end{remark}


\begin{proposition}\label{prop:prenatipode}
If $\sigma_{\widehat{A}}$ is invertible, then $S\coloneqq \tau\left( \sigma_{\widehat{A }}^{-1}\left( \overline{1\otimes 1}\right)\right)$, given by 
\begin{equation}\label{eq:S}
S(a)=(A\otimes \varepsilon)\left(\sigma_{\widehat{A }}^{-1}\left( \overline{1\otimes 1}\right)(a\otimes 1)\right)
\end{equation}
for every $a\in A$, satisfies $S(a_1b)a_2 = \varepsilon(a)S(b) = a_1S(ba_2)$ for all $a,b\in A$.
\end{proposition}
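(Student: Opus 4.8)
The plan is to verify the two displayed equalities separately: the left one $S(a_1b)a_2=\varepsilon(a)S(b)$ will come for free, while the right one $a_1S(ba_2)=\varepsilon(a)S(b)$ is the real work and will require the invertibility of $\sigma_{\widehat{A}}$ in an essential way. For the left identity, note that by construction $S=\tau_{g}$ with $g:=\sigma_{\widehat{A}}^{-1}(\overline{1\otimes 1})\in\Hom{A}{}{A}{A}{A\otimes A}{A\cmptens{}A}$, so $S$ lies in the image of the isomorphism $\tau$ of \eqref{eq:tau}, i.e. $S\in\Hom{}{\star}{}{}{A}{A}$. By the very definition of $\Hom{}{\star}{}{}{A}{A}$ this says that $S$ obeys \eqref{eq:quasiantipode}, which is precisely $S(a_1b)a_2=\varepsilon(a)S(b)$. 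Nothing beyond Remark \ref{rem:tau} is needed here.

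For the right identity I would argue by a rigidity/uniqueness trick. First I would observe that, for each fixed $a\in A$, the map $\Lambda_a\colon A\to A$, $\Lambda_a(b):=a_1S(ba_2)$, again belongs to $\Hom{}{\star}{}{}{A}{A}$: indeed, using the already-established left identity (with $x=c$, $y=ba_2$),
\[
\Lambda_a(c_1b)c_2=a_1\,S(c_1\,ba_2)\,c_2\overset{\eqref{eq:quasiantipode}}{=}a_1\,\varepsilon(c)\,S(ba_2)=\varepsilon(c)\,\Lambda_a(b).
\]
Hence, through the isomorphism $\tau$ of \eqref{eq:tau}, there is a unique $f_a\in\Hom{A}{}{A}{A}{A\otimes A}{A\cmptens{}A}$ with $\tau_{f_a}=\Lambda_a$, and the goal becomes to show $f_a=\varepsilon(a)g$.

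To this end I would compute $\sigma_{\widehat{A}}(f_a)$ and compare it with $\sigma_{\widehat{A}}(\varepsilon(a)g)=\varepsilon(a)\overline{1\otimes 1}$. By \eqref{eq:sigmahat} and then \eqref{eq:ftau} (which gives $g(a_2\otimes 1)=S(\varphi^1a_2)\varphi^2\otimes\varphi^3$),
\[
\sigma_{\widehat{A}}(f_a)=\overline{\Lambda_a(\varphi^1)\varphi^2\otimes\varphi^3}=\overline{a_1\,S(\varphi^1a_2)\varphi^2\otimes\varphi^3}=\overline{(a_1\otimes 1)\,g(a_2\otimes 1)}=(a_1\otimes 1)\triangleright\overline{g(a_2\otimes 1)}.
\]
Now the balancedness of Remark \ref{rem:sigmabal} (the fact that $\sigma_{\widehat{A}}$ is $A\otimes A$-linear for the actions \eqref{eq:triang}, together with $\sigma_{\widehat{A}}(g)=\overline{1\otimes 1}$) yields $\overline{g(a_2\otimes 1)}=(1\otimes a_2)\triangleright\overline{1\otimes 1}=\overline{1\otimes a_2}$, so by \eqref{eq:triang} and Remark \ref{rem:Hopfclassic}\ref{item3:Hclassic},
\[
\sigma_{\widehat{A}}(f_a)=(a_1\otimes 1)\triangleright\overline{1\otimes a_2}=\overline{a_1\otimes a_2}=\varepsilon(a)\,\overline{1\otimes 1}=\sigma_{\widehat{A}}\big(\varepsilon(a)g\big).
\]
Injectivity of $\sigma_{\widehat{A}}$ then forces $f_a=\varepsilon(a)g$, whence $\Lambda_a=\tau_{f_a}=\varepsilon(a)\tau_g=\varepsilon(a)S$; evaluating at $b$ gives $a_1S(ba_2)=\varepsilon(a)S(b)$.

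The main obstacle is exactly the right identity: one must recognize that it is \emph{not} a formal consequence of membership in $\Hom{}{\star}{}{}{A}{A}$ (repeatedly applying $\tau$, $A$-linearity, or colinearity only reproduces the left identity), and that the new information must be extracted from the invertibility of $\sigma_{\widehat{A}}$. The key idea making this possible is to repackage the candidate expression $b\mapsto a_1S(ba_2)$ as a \emph{new} element of $\Hom{}{\star}{}{}{A}{A}$ and to identify it with $\varepsilon(a)S$ by comparing images under $\sigma_{\widehat{A}}$, using the balancedness of Remark \ref{rem:sigmabal} together with injectivity. (Note that this argument establishes only the first two defining relations of a preantipode; the normalization $\Phi^1S(\Phi^2)\Phi^3=1$ would be a separate verification.)
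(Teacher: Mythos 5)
Your proof is correct and rests on the same two pillars as the paper's own argument: the left identity is immediate because $S=\tau_g$ satisfies \eqref{eq:quasiantipode}, and the right identity is extracted from the relation $\overline{a_1\otimes a_2}=\overline{(1\otimes 1)\cdot a}=\varepsilon(a)\,\overline{1\otimes 1}$ in $\overline{A\cmptens{}A}$ transported through the $A\otimes A$-equivariant bijection $\sigma_{\widehat{A}}$ of Remark \ref{rem:sigmabal}. The only difference is presentational: you push the candidate map $b\mapsto a_1S(ba_2)$ forward through $\tau^{-1}$ and $\sigma_{\widehat{A}}$ and conclude by injectivity (which additionally invokes the surjectivity of $\tau$ from Remark \ref{rem:tau}), whereas the paper evaluates the explicit formula for $\sigma_{\widehat{A}}^{-1}\left(\overline{a_1\otimes a_2}\right)$ at $b\otimes 1$ and applies $A\otimes\varepsilon$ directly.
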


\begin{proof}
Since $\sigma_{\widehat{A }}^{-1}\left( \overline{1\otimes 1}\right)$ belongs to $\Hom{A}{}{A}{A}{A\otimes A}{A\cmptens{}A}$, it follows from relation \eqref{eq:ftau} that $\sigma_{\widehat{A }}^{-1}\left(\overline{1\otimes 1}\right)\left(a\otimes 1\right) = S(\varphi^1a)\varphi^2\otimes \varphi^3 $ and hence
\begin{equation}\label{eq:sigma-1}
\sigma_{\widehat{A }}^{-1}\left(\overline{a\otimes b}\right)\left(x\otimes y\right)\stackrel{\eqref{eq:sigmahatbalanced0}}{=} (a\otimes 1)\sigma_{\widehat{A }}^{-1}\left( \overline{1\otimes 1}\right)(xb\otimes 1)\Delta(y) = aS(\varphi^1xb)\varphi^2y_1\otimes \varphi^3y_2
\end{equation}
for all $a,b,x,y\in A$. Now, $S\left( a_{1}b\right) a_{2} = \varepsilon(a)S(b)$ is relation \eqref{eq:quasiantipode} for $f=\sigma_{\widehat{A}}^{-1}(\cl{1\otimes 1})$. Moreover, since $\cl{a_1\otimes a_2} = \cl{\left(1\otimes 1\right)\cdot a} = \left(\cl{1\otimes 1}\right)\varepsilon(a)$ by definition of $\cl{A\cmptens{}A}$, we have
\begin{gather*}
a_{1}S\left(b a_{2} \right) \stackrel{\eqref{eq:epsivarphi}}{=} (A\otimes \varepsilon)\Big(a_1S(\varphi^1ba_2)\varphi^2\otimes \varphi^3\Big) \stackrel{\eqref{eq:sigma-1}}{=} (A\otimes \varepsilon)\Big(\sigma_{\widehat{A}}^{-1}\left(\overline{a_1\otimes a_2}\right)\left(b\otimes 1\right)\Big) \\
 = (A\otimes \varepsilon)\Big(\sigma_{\widehat{A}}^{-1}\left(\overline{1\otimes 1}\right)\left(b\otimes 1\right)\Big)\varepsilon(a) = \varepsilon(a)S(b)
\end{gather*}
for all $a,b\in A$ and the proof is complete.
\end{proof}

It view of Proposition \ref{prop:prenatipode}, the endomorphism $S=\tau\left( \sigma_{\widehat{A }}^{-1}\left( \overline{1\otimes 1}\right)\right)$ is a preantipode if and only if $\Phi^1S(\Phi^2)\Phi^3=1$ (see \eqref{eq:defpreantipode}). The forthcoming lemmata are intermediate steps toward the proof of this latter identity.

\begin{lemma}\label{lemma:cltensor}
For $M\in\Lmod{A}$ and $N\in\Bim{A}{A}$ we have $\cl{M\otimes N}\cong M\otimes \cl{N}$ in $\Lmod{A}$ via the assignment $\cl{m\otimes n}\mapsto m\otimes \cl{n}$.
\end{lemma}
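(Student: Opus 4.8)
The plan is to unwind the two module structures involved and thereby reduce the statement to the right-exactness of the tensor product over $\K$. First I would recall from Remark~\ref{rem:quasihopf} that, for $M\in\Lmod{A}$ and $N\in\Bim{A}{A}$, the $A$-bimodule structure on $M\otimes N$ is the one denoted $\lmod{M}\otimes\bimod{N}$ there, namely $a\cdot(m\otimes n)\cdot b = a_1\cdot m\otimes a_2\cdot n\cdot b$. The crucial observation is that the right action is concentrated on the second tensorand: $(m\otimes n)\cdot b = m\otimes(n\cdot b)$, since the right action of $M$ is trivialized through $\varepsilon$. Consequently the submodule $(M\otimes N)A^+$ generated by the right action of the augmentation ideal is exactly the image of $M\otimes(NA^+)$ in $M\otimes N$.

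Next I would apply right-exactness of the functor $M\otimes(-)$ over $\K$ to the exact sequence $NA^+\to N\to\cl{N}\to 0$. Tensoring on the left by $M$ yields an exact sequence $M\otimes(NA^+)\to M\otimes N\to M\otimes\cl{N}\to 0$, whose first map has image $(M\otimes N)A^+$ by the previous paragraph. Passing to cokernels therefore gives a $\K$-linear isomorphism $\cl{M\otimes N}=(M\otimes N)/(M\otimes N)A^+\cong M\otimes\cl{N}$, and tracing the identifications shows it is precisely $\cl{m\otimes n}\mapsto m\otimes\cl{n}$. (Equivalently, one may check by hand that this assignment is well defined, using $\cl{n\cdot a}=\cl{n}\,\varepsilon(a)$ for all $a\in A$, which holds for any right $A$-module straight from the definition of $\cl{(-)}$, and that $m\otimes\cl{n}\mapsto\cl{m\otimes n}$ provides a two-sided inverse.)

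Finally I would verify that this bijection is left $A$-linear. On $\cl{M\otimes N}$ the left action is $a\cdot\cl{m\otimes n}=\cl{a_1\cdot m\otimes a_2\cdot n}$, which the map sends to $a_1\cdot m\otimes\cl{a_2\cdot n}$. On the target $M\otimes\cl{N}$, carrying the diagonal left $A$-action of the monoidal category $\Lmod{A}$, one has $a\cdot(m\otimes\cl{n})=a_1\cdot m\otimes a_2\cdot\cl{n}=a_1\cdot m\otimes\cl{a_2\cdot n}$, so the two expressions coincide and the isomorphism is a morphism in $\Lmod{A}$.

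All the computations here are routine; the only point that I would regard as the crux is the bookkeeping of the module structures. Everything hinges on recognizing that the right $A$-action on $M\otimes N$ lives entirely on the bimodule factor $N$ — so that $(M\otimes N)A^+$ is just $M\otimes(NA^+)$ — and that the residual left actions on both $\cl{M\otimes N}$ and $M\otimes\cl{N}$ are the same diagonal action, which makes the evident $\K$-linear identification automatically $A$-linear.
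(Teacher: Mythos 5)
Your proof is correct and is essentially the paper's own argument in expanded form: the paper writes the one-line chain $\cl{M\otimes N}\cong (M\otimes N)\tensor{A}\K\cong M\otimes(N\tensor{A}\K)\cong M\otimes\cl{N}$, and your identification of $(M\otimes N)A^+$ with the image of $M\otimes(NA^+)$ together with right-exactness of $M\otimes(-)$ is exactly what makes that middle isomorphism work. Your explicit check of left $A$-linearity via the diagonal action is a detail the paper leaves implicit, but nothing differs in substance.
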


\begin{proof}
Since $\cl{N}=N/NA^+$ and $A/A^+\cong \K$, the thesis follows from the isomorphisms
\[
\cl{M\otimes N} \cong (M\otimes N)\tensor{A}\K\cong M\otimes (N\tensor{A}\K)\cong M\otimes \cl{N}. \qedhere
\]
\end{proof}

\begin{lemma}\label{lemma:sigmagen}
If $\sigma$ is a natural isomorphism, then for any $M\in\quasihopf{A}$, $m\in M$ and $x,y\in A$,
\begin{equation}\label{eq:sigmainv}
\sigma_M^{-1}(\cl{m})(x\otimes y) = \Phi^1x_1\cdot m_0 \cdot S(\Phi^2x_2m_1)\Phi^3y.
\end{equation}
\end{lemma}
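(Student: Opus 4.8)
The plan is to exhibit the formula \eqref{eq:sigmainv} as a genuine two-sided inverse of $\sigma_M$, exploiting that $\sigma_M$ is already invertible by hypothesis. Write $\rho_M(\overline{m})$ for the assignment $(x\otimes y)\mapsto \Phi^1x_1\cdot m_0\cdot S(\Phi^2x_2m_1)\Phi^3y$, with $S$ the endomorphism produced by Proposition \ref{prop:prenatipode}. Since $\sigma_M$ is an isomorphism, it suffices to check that $\rho_M(\overline m)$ is a well-defined element of $\Hom{A}{}{A}{A}{A\otimes A}{M}$ and that $\sigma_M\circ\rho_M=\mathrm{id}_{\overline M}$; then $\rho_M=\sigma_M^{-1}$ follows by injectivity of $\sigma_M$. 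I would deliberately verify the composite $\sigma_M\circ\rho_M$ rather than $\rho_M\circ\sigma_M$: the latter would force $\Phi^1S(\Phi^2)\Phi^3=1$ (in the bialgebra case, $S(1)=1$), i.e. exactly the preantipode axiom not yet available at this stage, whereas the former will only require the weaker counital consequence $\varepsilon(S(1))=1$.

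First I would record that $\rho_M(\overline m)$ is independent of the representative $m$: replacing $m$ by $m\cdot a$ and using $\delta(m\cdot a)=m_0a_1\otimes m_1a_2$ together with $a_1S(ba_2)=\varepsilon(a)S(b)$ from Proposition \ref{prop:prenatipode} collapses $a$ to the scalar $\varepsilon(a)$, so $\rho_M$ kills $MA^+$ and descends to $\overline M$. That $\rho_M(\overline m)$ is a morphism of quasi-Hopf bimodules, equivalently lies in $\Hom{A}{}{A}{A}{A\otimes A}{M}$, is a direct but lengthy check resting on quasi-coassociativity \eqref{eq:quasi-coassociativity}, the cocycle condition \eqref{eq:PhiCocycle}, the counital normalisations \eqref{eq:PhiCounital}--\eqref{eq:epsivarphi} and the two preantipode identities, but crucially on none of these steps the missing third axiom; it amounts to checking \eqref{eq:deltaf} and \eqref{eq:Tpreant} for the associated $T$. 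In parallel I would establish $\varepsilon(S(1))=1$: the functional $\varepsilon\otimes\varepsilon$ annihilates $\widehat{A}A^+$, hence descends to $\overline{\widehat A}$, so applying it to $\sigma_{\widehat A}\bigl(\sigma_{\widehat A}^{-1}(\overline{1\otimes 1})\bigr)=\overline{1\otimes 1}$ and recalling $S(1)=(A\otimes\varepsilon)\bigl(\sigma_{\widehat A}^{-1}(\overline{1\otimes 1})(1\otimes 1)\bigr)$ from \eqref{eq:S} gives $\varepsilon(S(1))=(\varepsilon\otimes\varepsilon)(\overline{1\otimes1})=1$.

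The heart of the argument is $\sigma_M\circ\rho_M=\mathrm{id}$, that is $\overline{\Phi^1m_0S(\Phi^2m_1)\Phi^3}=\overline m$. The endofunction $\Xi_M:=\sigma_M\circ\rho_M$ of $\overline{(-)}$ is natural in $M$ (it is computed from $m_0,m_1$ and commutes with morphisms of quasi-Hopf bimodules, which are colinear), so I would prove it on the free objects $V\otimes A$ and transport it to an arbitrary $M$ through the unit $\eta_M:M\to \overline M\otimes A$ of \eqref{eq:unitscounitsquasi}: by the triangle identity $\epsilon_{\overline M}\circ\overline{\eta_M}=\mathrm{id}$ the map $\overline{\eta_M}$ is a split monomorphism, and naturality of $\Xi$ at $\eta_M$ together with $\Xi_{\overline M\otimes A}=\mathrm{id}$ forces $\Xi_M=\mathrm{id}$. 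For the free case I would use Lemma \ref{lemma:cltensor} to identify $\overline{V\otimes A}\cong V$ via $\overline{v\otimes a}\mapsto v\varepsilon(a)$; writing out $\Phi^1(v\otimes a)_0S(\Phi^2(v\otimes a)_1)\Phi^3$ with the free coaction $\delta(v\otimes a)=\varphi^1v\otimes\varphi^2a_1\otimes\varphi^3a_2$ of \eqref{eq:quasihopfbim}, applying $a_1S(\Phi^2\varphi^3a_2)=\varepsilon(a)S(\Phi^2\varphi^3)$ and the counital normalisations reduces the expression to $\varepsilon(S(1))\cdot v\,\varepsilon(a)$, which equals $\overline{v\otimes a}$ precisely because $\varepsilon(S(1))=1$. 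Combining the pieces, $\rho_M(\overline m)\in\Hom{A}{}{A}{A}{A\otimes A}{M}$ and $\sigma_M(\rho_M(\overline m))=\overline m$, whence $\rho_M=\sigma_M^{-1}$. The main obstacle here is conceptual rather than computational: one must resist checking the "wrong" composite (which secretly encodes the not-yet-proven axiom) and instead isolate the genuinely available input $\varepsilon(S(1))=1$, feeding it in through the free objects and the split unit $\overline{\eta_M}$.
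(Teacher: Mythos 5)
Your overall strategy --- guess the formula, check that it lands in $\Hom{A}{}{A}{A}{A\otimes A}{M}$, and verify the single composite $\sigma_M\circ\rho_M=\id_{\cl{M}}$ --- is sound in outline, and several pieces are correct: the well-definedness of $\rho_M$ modulo $MA^+$ and its $A$-bilinearity do follow from the two identities of Proposition \ref{prop:prenatipode} together with \eqref{eq:quasi-coassociativity}, your derivation of $\varepsilon(S(1))=1$ is fine, and the composite check then goes through (in fact more directly than via free objects and $\cl{\eta_M}$: by Remark \ref{rem:Hopfclassic}\ref{item3:Hclassic}, $\varepsilon\circ S=\varepsilon(S(1))\varepsilon$ and \eqref{eq:PhiCounital} one gets $\cl{\Phi^1\cdot m_0\cdot S(\Phi^2m_1)\Phi^3}=\varepsilon(S(1))\,\cl{m}$ for every $M$ at once).

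The gap is the step you dismiss as ``a direct but lengthy check'': the right $A$-colinearity of $\rho_M(\cl{m})$, i.e. condition \eqref{eq:deltaf} for the associated $T$. Expanding $\delta_M\bigl(\Phi^1x_1\cdot m_0\cdot S(\Phi^2x_2m_1)\Phi^3y\bigr)$ forces you to evaluate $\Delta\bigl(S(\Phi^2x_2m_1)\bigr)$, and neither the quasi-bialgebra axioms nor the identities $S(a_1b)a_2=\varepsilon(a)S(b)=a_1S(ba_2)$ say anything about $\Delta\circ S$. This is not merely a presentational issue: on the group algebra $B=\C G$ of a finite group, the map $s(a)=\varepsilon(a)t$ (with $t$ the normalized integral) satisfies both identities, yet $m\mapsto m_0\,s(m_1)$ is not colinear (take $M=B$ with $\delta=\Delta$; one would need $\Delta(t)=t\otimes 1$). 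Hence colinearity cannot be a formal consequence of the inputs you list. For the particular $S$ of Proposition \ref{prop:prenatipode} it does hold, but establishing it amounts to an anti-comultiplicativity-type statement for $S$ which in this paper is only obtained \emph{a posteriori} (compare the corollary following Theorem \ref{thm:mainThm}, whose companion anti-multiplicativity formula is itself deduced from the present lemma). The paper avoids the problem entirely: it never verifies that a candidate formula is colinear, but instead computes $\sigma_M^{-1}(\cl{m})$ --- already known to lie in $\Hom{A}{}{A}{A}{A\otimes A}{M}$ --- by transporting the component $\sigma_{\what{A}}^{-1}$ of \eqref{eq:sigma-1} along the naturality of $\sigma^{-1}$: first through $\tilde{\iota}$ into $A_2$, then through $f_n\otimes A$, and finally through the coaction $\delta_M$ itself, which is a morphism in $\quasihopf{A}$. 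To close your gap you would have to replace the direct colinearity check by exactly such a naturality argument, at which point you have reconstructed the paper's proof.
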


\begin{proof}
Set $A_2 \coloneqq  \lmod{A}\otimes \rmod{A} \otimes \quasihopfmod{A} \in \quasihopf{A}$ with explicit structures
\begin{equation}\label{eq:strA2}
\begin{gathered}
a\cdot (u \otimes v\otimes w) \cdot b = a_1 u \otimes v b_1 \otimes a_2 w b_2 \qquad \text{and} \\
 \delta(u\otimes v\otimes w) = \varphi^1u \otimes v\Phi^1 \otimes \varphi^2w_1\Phi^2 \otimes \varphi^3w_2\Phi^3
\end{gathered}
\end{equation}
for all $a,b,u,v,w\in A$. 
Denote by $\iota: A\otimes \cl{A\cmptens{}A} \to \cl{A_2}$ the isomorphism of Lemma \ref{lemma:cltensor}. Consider also the left $A$-linear morphism
\begin{gather}\label{eq:homtensor}
\begin{gathered}
\tilde{\iota}: A \otimes \Hom{A}{}{A}{A}{A\otimes A}{A\cmptens{}A} \to \Hom{A}{}{A}{A}{A\otimes A}{A_2} \\
a\otimes f \longmapsto \left[x\otimes y\mapsto \Phi^1x_1a\otimes f(\Phi^2x_2\otimes \Phi^3y)\right].
\end{gathered}
\end{gather}
It is well-defined because the following direct computation 
\begin{align*}
\tilde{\iota}(a\otimes f)(b_1x\otimes b_2yc) & = \Phi^1b_{1_1}x_1a\otimes f(\Phi^2b_{1_2}x_2\otimes \Phi^3b_2yc) \stackrel{\eqref{eq:quasi-coassociativity}}{=} b_{1}\Phi^1x_1a\otimes f(b_{2_1}\Phi^2x_2\otimes b_{2_2}\Phi^3yc) \\
 & \hspace{-3pt}\stackrel{\eqref{eq:alllin}}{=} b_1\Phi^1x_1a\otimes (1\otimes b_2)f(\Phi^2x_2\otimes \Phi^3y)\Delta(c) \stackrel{\eqref{eq:strA2}}{=} b\cdot (\tilde{\iota}(a\otimes f)(x\otimes y))\cdot c
\end{align*}
entails that $\tilde{\iota}(a\otimes f)$ is $A$-bilinear and 
\begin{align*}
\delta(\tilde{\iota}(a\otimes f)(x\otimes y)) & \stackrel{\eqref{eq:homtensor}}{=} \delta\left(\Phi^1x_1a\otimes f(\Phi^2x_2\otimes \Phi^3y)\right) \\ 
& \stackrel{\eqref{eq:strA2}}{=} \varphi^1\Phi^1x_1a\otimes (1\otimes \varphi^2\otimes \varphi^3)(A\otimes \Delta)f(\Phi^2x_2\otimes \Phi^3y)\cdot \Phi \\
& \stackrel{\eqref{eq:alllin}}{=} \varphi^1\Phi^1x_1a\otimes (1\otimes \varphi^2\otimes \varphi^3)\left(f(\phi^1\Phi^2x_2\otimes \phi^2\Phi^3_1y_1)\otimes \phi^3\Phi^3_2y_2\right) \\
& \stackrel{\eqref{eq:alllin}}{=} \varphi^1\Phi^1x_1a\otimes f(\varphi^2_1\phi^1\Phi^2x_2\otimes \varphi^2_2\phi^2\Phi^3_1y_1)\otimes \varphi^3\phi^3\Phi^3_2y_2 \\
& \hspace{2pt}\stackrel{\eqref{eq:PhiCocycle}}{=} \Phi^1\varphi^1_1x_1a\otimes f(\Phi^2\varphi^1_2x_2\otimes \Phi^3\varphi^2y_1)\otimes \varphi^3y_2 \\
& \stackrel{\eqref{eq:homtensor}}{=} \tilde{\iota}(a\otimes f)(\varphi^1x\otimes \varphi^2y_1)\otimes \varphi^3y_2
\end{align*}
implies that it is colinear for all $a\in A$, $f\in \Hom{A}{}{A}{A}{A\otimes A}{A\cmptens{}A}$. The $A$-linearity of $\tilde{\iota}$ follows from
\begin{gather*}
\tilde{\iota}(b_1a\otimes b_2\cdot f)(x\otimes y) \stackrel{\eqref{eq:homtensor}}{=} \Phi^1x_1b_1a\otimes (b_2\cdot f)(\Phi^2x_2\otimes \Phi^3y) \\
\stackrel{\eqref{eq:actionhom}}{=} \Phi^1x_1b_1a\otimes f(\Phi^2x_2b_2\otimes \Phi^3y) \stackrel{\eqref{eq:homtensor}}{=} \tilde{\iota}(a\otimes f)(xb\otimes y) \stackrel{\eqref{eq:actionhom}}{=} (b\cdot\tilde{\iota}(a\otimes f))(x\otimes y)
\end{gather*}
for all $a,b,x,y\in A$ and $f\in \Hom{A}{}{A}{A}{A\otimes A}{A\cmptens{}A}$.
\begin{invisible}
Concretely, the idea is the following: if $\lmod{V}\in\Lmod{A}$ and $\quasihopfmod{M}\in\quasihopf{A}$ then $\lmod{V}\otimes \quasihopfmod{M}\in\quasihopf{A}$. If $\lmod{V}\in\Lmod{A}$ and $f:\quasihopfmod{M}\to\quasihopfmod{N}\in\quasihopf{A}$  then $\lmod{V}\otimes f:\lmod{V}\otimes\quasihopfmod{M}\to\lmod{V}\otimes \quasihopfmod{N}\in\quasihopf{A}$. Now, $\Delta:\lmod{A}\to\lmod{A}\otimes \lmod{A}\in\Lmod{A}$, $\calpha_{A,A,A}:\left(\lmod{A}\otimes \lmod{A}\right)\otimes \quasihopfmod{A} \to \lmod{A}\otimes \left(\lmod{A}\otimes \quasihopfmod{A}\right)\in\quasihopf{A}$ and the above map is given as the following composition
\begin{gather*}
A \otimes \Hom{A}{}{A}{A}{A\otimes A}{A\cmptens{}A} \to \Hom{A}{}{}{}{\lmod{A}}{\lmod{A}} \otimes \Hom{A}{}{A}{A}{\lmod{A}\otimes \quasihopfmod{A}}{\rmod{A}\otimes\quasihopfmod{A}} \to \\
\to \Hom{A}{}{A}{A}{\lmod{A}\otimes \left(\lmod{A}\otimes \quasihopfmod{A}\right)}{\lmod{A}\otimes \left(\rmod{A}\otimes\quasihopfmod{A}\right)} \to \\
\to \Hom{A}{}{A}{A}{\lmod{A}\otimes \quasihopfmod{A}}{(\lmod{A}\otimes \rmod{A})\otimes \quasihopfmod{A}}
\end{gather*}
that is to say, since $(\lmod{A}\otimes \rmod{A})\otimes \quasihopfmod{A} = \lmod{A}\otimes (\rmod{A}\otimes \quasihopfmod{A})$ in $\quasihopf{A}$,
\[
a\otimes f \mapsto \left(\lmod{A}\otimes f\right) \circ \left(f^r_a\otimes \left(\lmod{A}\otimes \quasihopfmod{A}\right)\right) \circ \calpha_{A,A,A} \circ \left(\Delta\otimes \quasihopfmod{A}\right)
\]
It can also be checked directly. 
\end{invisible}

Let us show that the diagram
\begin{equation}\label{eq:diagram}
\begin{gathered}
\xymatrix @C=40pt @R=15pt {
A \otimes \Hom{A}{}{A}{A}{A\otimes A}{A\cmptens{}A} \ar[r]^-{A\otimes \sigma_{\what{A}}} \ar[d]_-{\tilde{\iota}} & A \otimes \cl{A\cmptens{}A} \ar[d]^-{\iota} \\
\Hom{A}{}{A}{A}{A\otimes A}{A_2} \ar[r]_-{\sigma_{A_2}} & \cl{A_2}
}
\end{gathered}
\end{equation}
 is commutative. For every $a\in A$ and $f\in \Hom{A}{}{A}{A}{A\otimes A}{A\cmptens{}A}$ compute
\begin{gather*}
\sigma_{A_2}\left(\tilde{\iota}(a\otimes f)\right) \stackrel{\eqref{eq:sigma1}}{=} \cl{\tilde{\iota}(a\otimes f)(1\otimes 1)} \stackrel{\eqref{eq:homtensor}}{=} \cl{\Phi^1a\otimes f(\Phi^2\otimes \Phi^3)} \\
 \stackrel{\eqref{eq:alllin}}{=} \cl{\Phi^1a\otimes f(\Phi^2\otimes 1)\Delta(\Phi^3)} \stackrel{\eqref{eq:strA2}}{=} \cl{\left(\Phi^1a\otimes f(\Phi^2\otimes 1)\right) \cdot \Phi^3} \\
\stackrel{(*)}{=} \cl{a\otimes f(1\otimes 1)} = \iota((A\otimes \sigma_{\what{A}})(a\otimes f)),
\end{gather*}
where $(*)$ follows from the fact that $(A\otimes A\otimes \varepsilon)(\Phi)=1\otimes 1$ and the definition of $\cl{A_2}$. Therefore, in light of \eqref{eq:sigma-1}, for all $a,b,c,x,y\in A$ we have
\begin{equation}\label{eq:sigmaA2}
\begin{gathered}
\sigma_{A_2}^{-1}\left(\cl{a\otimes b\otimes c}\right)(x\otimes y) \stackrel{\eqref{eq:diagram}}{=} \left(\tilde{\iota} \circ \left(A\otimes \sigma_{\what{A}}^{-1}\right)\circ\iota^{-1}\right)\left(\cl{a\otimes b\otimes c}\right)(x\otimes y) \\
= \tilde{\iota}\left(a\otimes \sigma_{\what{A}}^{-1}\left(\cl{b\otimes c}\right)\right)(x\otimes y) \stackrel{\eqref{eq:homtensor}}{=} \Phi^1x_1a\otimes \sigma_{\what{A}}^{-1}\left(\cl{b\otimes c}\right)(\Phi^2x_2\otimes \Phi^3y) \\
\stackrel{\eqref{eq:sigma-1}}{=} \Phi^1x_1a\otimes bS(\varphi^1\Phi^2x_2c)\varphi^2\Phi^3_1y_1\otimes \varphi^3\Phi^3_2y_2.
\end{gathered}
\end{equation}
Now, for any $N\in\Bim{A}{A}$ consider $N\otimes A = \bimod{N}\otimes \quasihopfmod{A} \in\quasihopf{A}$. For every $n\in N$, the assignment $f_n:\lmod{A}\otimes \rmod{A}\to N, a\otimes b\mapsto a\cdot n\cdot b$, is a well-defined $A$-bilinear morphism. Naturality of $\sigma^{-1}$ implies then that
\begin{gather}\label{eq:sigmaN}
\begin{gathered}
\sigma_{N\otimes A}^{-1}(\cl{n\otimes b})(x\otimes y) \stackrel{(\text{nat})}{=} \left(f_n\otimes A\right)\left(\sigma_{A_2}^{-1}\left(\cl{1\otimes 1\otimes b}\right)(x\otimes y)\right) \\
 \stackrel{\eqref{eq:sigmaA2}}{=} \left(f_n\otimes A\right)\left(\Phi^1x_1\otimes S(\varphi^1\Phi^2x_2b)\varphi^2\Phi^3_1y_1\otimes \varphi^3\Phi^3_2y_2\right) \\
 = \Phi^1x_1\cdot n\cdot S(\varphi^1\Phi^2x_2b)\varphi^2\Phi^3_1y_1\otimes \varphi^3\Phi^3_2y_2
\end{gathered}
\end{gather}
for all $n\in N$, $b,x,y\in A$. Finally, the coaction $\delta_M:\quasihopfmod{M}\to \bimod{M}\otimes \quasihopfmod{A}$ is a well-defined morphism in $\quasihopf{A}$ and hence we may resort again to naturality of $\sigma^{-1}$ to get that
\begin{gather*}
\delta_M\left(\sigma_M^{-1}(\cl{m})(x\otimes y)\right) \stackrel{(\text{nat})}{=} \sigma_{M\otimes A}^{-1}(\cl{m_0\otimes m_1})(x\otimes y) \\
\stackrel{\eqref{eq:sigmaN}}{=} \Phi^1x_1\cdot m_0\cdot S(\varphi^1\Phi^2x_2m_1)\varphi^2\Phi^3_1y_1\otimes \varphi^3\Phi^3_2y_2
\end{gather*}
for all $m\in M$, $x,y\in A$. Applying $M\otimes \varepsilon$ to both sides and recalling \eqref{eq:epsivarphi} give the result.
\end{proof}

\begin{proposition}\label{prop:Phi}
If $\sigma$ is a natural isomorphism, then $\Phi^1S(\Phi^2)\Phi^3=1$.
\end{proposition}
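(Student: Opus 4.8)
The plan is to read off the required identity by comparing two descriptions of a single map, namely the inverse of the distinguished component $\sigma_A$. On the one hand, Remark \ref{rem:Hopfclassic}\ref{item2:Hclassic} tells us that $\sigma_A$ is \emph{always} invertible and that $\sigma_A^{-1}$ sends $1_\K\in\cl{A}\cong\K$ (equivalently the class $\cl{1}$, since $\cl{A}=A/A^+$ via $\varepsilon$) to the quasi-Hopf bimodule morphism $x\otimes y\mapsto\varepsilon(x)y$. On the other hand, under the standing hypothesis that $\sigma$ is a natural isomorphism, Lemma \ref{lemma:sigmagen} furnishes a second, explicit formula for the very same $\sigma_A^{-1}$ in terms of the endomorphism $S$ produced in Proposition \ref{prop:prenatipode}.

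Concretely, I would take $M=A$, viewed as a quasi-Hopf bimodule through its regular bimodule structure together with $\delta=\Delta$ (recall that $A$ is the unit object of $\quasihopf{A}$), and apply Lemma \ref{lemma:sigmagen} to the representative $m=1$, whose coaction is $\Delta(1)=1\otimes 1$, so that $m_0\otimes m_1=1\otimes 1$. This yields
\[
\sigma_A^{-1}\!\left(\cl{1}\right)(x\otimes y)=\Phi^1 x_1\, S\!\left(\Phi^2 x_2\right)\Phi^3 y \qquad\text{for all } x,y\in A.
\]
Equating this with the description $x\otimes y\mapsto\varepsilon(x)y$ coming from Remark \ref{rem:Hopfclassic}\ref{item2:Hclassic} gives $\varepsilon(x)y=\Phi^1 x_1 S(\Phi^2 x_2)\Phi^3 y$ for all $x,y\in A$, and specializing to $x=y=1$ collapses this to $1=\Phi^1 S(\Phi^2)\Phi^3$, which is exactly the claim. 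No further computation is needed.

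There is no serious obstacle here, but it is worth isolating the one point on which the brevity of the argument hinges: one must extract the identity from the composite $\sigma_A^{-1}\circ\sigma_A=\id$, which lives in the hom-module $\Hom{A}{}{A}{A}{A\otimes A}{A}$ and hence records a genuine equality of elements of $A$. Using instead the other composite $\sigma_A\circ\sigma_A^{-1}=\id$ would only produce an equality in the quotient $\cl{A}\cong\K$, i.e.\ the weaker statement $\varepsilon\!\left(\Phi^1 S(\Phi^2)\Phi^3\right)=1$, discarding precisely the information one needs. Thus the essential move is to compare the two formulas for $\sigma_A^{-1}$ \emph{before} passing to the quotient $\cl{(-)}$, so that the full normalization condition on $S$ survives.
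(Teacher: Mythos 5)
Your argument is correct and is essentially the paper's own proof: both apply Lemma \ref{lemma:sigmagen} with $M=A$ (so $m_0\otimes m_1=\Delta(1)=1\otimes 1$) and equate the resulting formula for $\sigma_A^{-1}(\cl{1})$ with the a priori description of $\sigma_A^{-1}$ from Remark \ref{rem:Hopfclassic}\ref{item2:Hclassic}, then evaluate at $x=y=1$. Your closing observation about why one must compare the two formulas for $\sigma_A^{-1}$ in $\Hom{A}{}{A}{A}{A\otimes A}{A}$ rather than after passing to $\cl{A}\cong\K$ is accurate but does not change the route.
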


\begin{proof}
By Lemma \ref{lemma:sigmagen}, for every $M\in\quasihopf{A}$ and for all $m\in M$, $x,y\in A$ we have $\sigma_M^{-1}(\cl{m})(x\otimes y) = \Phi^1x_1\cdot m_0 \cdot S(\Phi^2x_2m_1)\Phi^3y$. For $M=A$ and $m=x=y=1$ this implies
\[
1\stackrel{(*)}{=}\sigma_A^{-1}(\cl{1})(1\otimes 1) = \Phi^1S(\Phi^2)\Phi^3
\]
where $(*)$ follows by \ref{item2:Hclassic} of Remark \ref{rem:Hopfclassic}.
\end{proof}

Summing up, we have the following central result.

\begin{theorem}\label{thm:mainThm}
The following are equivalent for a quasi-bialgebra $A$:
\begin{enumerate}[label=(\arabic*), ref=\emph{(\arabic*)}, leftmargin=1cm, labelsep=0.3cm]
\item\label{item:main1} $A$ admits a preantipode;
\item\label{item:main2} $-\otimes A:\left(\Lmod{A},\otimes,\K\right)\to \left(\quasihopf{A},\tensor{A},A\right)$ is a monoidal equivalence of categories;
\item\label{item:main3} $-\otimes A:\Lmod{A}\to\quasihopf{A}$ is Frobenius;
\item\label{item:main4} $\sigma_M:\Hom{A}{}{A}{A}{A\otimes A}{M}\to \cl{M}, f\mapsto \cl{f(1\otimes 1)}$ is an isomorphism for every $M\in\quasihopf{A}$.
\end{enumerate}
\end{theorem}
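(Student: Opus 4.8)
The plan is to prove the four equivalences by assembling the machinery already developed, organizing them into the two self-contained pairs \ref{item:main1}$\Leftrightarrow$\ref{item:main2} and \ref{item:main3}$\Leftrightarrow$\ref{item:main4}, and then bridging the pairs through \ref{item:main1}$\Leftrightarrow$\ref{item:main4}. Together these close the cycle and deliver all four equivalences simultaneously.

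First I would dispatch \ref{item:main3}$\Leftrightarrow$\ref{item:main4}. By Remark~\ref{rem:fullyfaith} the middle functor $-\otimes A$ of the adjoint triple \eqref{eq:adjtrQuasi} is fully faithful, so the last clause of Lemma~\ref{lemma:frobenius} (item~\ref{item:frobenius5}) applies verbatim: $-\otimes A$ is Frobenius precisely when the canonical comparison $\sigma\colon \Hom{A}{}{A}{A}{A\otimes A}{-}\to\cl{(-)}$ is a natural isomorphism. Since the map displayed in \ref{item:main4} is exactly the component $\sigma_M$ of \eqref{eq:sigma1}, the two statements coincide and nothing further is needed.

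Next, for \ref{item:main1}$\Leftrightarrow$\ref{item:main2} I would invoke Remark~\ref{rem:Hopfclassic}\ref{item1:Hclassic}, which says that $A$ admits a preantipode if and only if the leftmost adjunction $\cl{(-)}\dashv -\otimes A$ in \eqref{eq:adjtrQuasi} is an equivalence, equivalently if and only if $-\otimes A\colon\Lmod{A}\to\quasihopf{A}$ is an equivalence of categories. Because $-\otimes A$ is a strong monoidal functor from $\left(\Lmod{A},\otimes,\K\right)$ to $\left(\quasihopf{A},\tensor{A},A\right)$ (as recalled before the statement, via the isomorphism \eqref{eq:tensAstrmon}), a strong monoidal functor that is an equivalence of the underlying categories is by definition a monoidal equivalence, and conversely a monoidal equivalence is in particular an equivalence of categories. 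Hence \ref{item:main1} and \ref{item:main2} are equivalent.

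It remains to bridge the two pairs through \ref{item:main1}$\Leftrightarrow$\ref{item:main4}, where the genuine content resides. The implication \ref{item:main1}$\Rightarrow$\ref{item:main4} is immediate from Remark~\ref{rem:Hopfclassic}\ref{item1:Hclassic}: if $A$ has preantipode $S$, the explicit assignment $\cl{m}\mapsto\left[a\otimes b\mapsto \Phi^1a_1\cdot m_0\cdot S(\Phi^2a_2m_1)\Phi^3b\right]$ furnishes a two-sided inverse of $\sigma_M$ for every $M\in\quasihopf{A}$. For the converse \ref{item:main4}$\Rightarrow$\ref{item:main1} I would assemble the two propositions already established: assuming $\sigma$ is a natural isomorphism, in particular its component $\sigma_{\widehat{A}}$ is invertible, so Proposition~\ref{prop:prenatipode} produces the candidate $S\coloneqq\tau\!\left(\sigma_{\widehat{A}}^{-1}\left(\cl{1\otimes 1}\right)\right)$ satisfying $S(a_1b)a_2=\varepsilon(a)S(b)=a_1S(ba_2)$, while Proposition~\ref{prop:Phi} supplies the remaining normalization $\Phi^1S(\Phi^2)\Phi^3=1$; these are precisely the three defining identities \eqref{eq:defpreantipode} of a preantipode. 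The real difficulty of the whole argument is concentrated upstream, in the naturality reduction of Lemma~\ref{lemma:sigmagen} that expresses a general $\sigma_M^{-1}$ in terms of $S$ and ultimately feeds Proposition~\ref{prop:Phi}; at the level of the theorem, however, that work is already discharged, and only the bookkeeping of matching hypotheses to the three preantipode axioms remains.
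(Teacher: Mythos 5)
Your proposal is correct and assembles exactly the same ingredients as the paper's proof: \ref{item:main3}$\Leftrightarrow$\ref{item:main4} via Lemma~\ref{lemma:frobenius} and the full faithfulness of $-\otimes A$ from Remark~\ref{rem:fullyfaith}, \ref{item:main1}$\Leftrightarrow$\ref{item:main2} via the Structure Theorem (Remark~\ref{rem:Hopfclassic}\ref{item1:Hclassic}), and \ref{item:main4}$\Rightarrow$\ref{item:main1} via Propositions~\ref{prop:prenatipode} and~\ref{prop:Phi}. The only cosmetic difference is how you close the cycle: you establish \ref{item:main1}$\Rightarrow$\ref{item:main4} through the explicit inverse of $\sigma_M$ recorded in Remark~\ref{rem:Hopfclassic}\ref{item1:Hclassic}, whereas the paper uses the equally immediate implication \ref{item:main2}$\Rightarrow$\ref{item:main3} (an equivalence of categories is Frobenius); both are one-line observations and the logic is complete either way.
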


\begin{proof}
The proof of the equivalence between \ref{item:main1} and \ref{item:main2} is contained in \cite[Theorem 3 and subsequent discussion]{Saracco}, but without explicit mention to the monoidality of the functor $-\otimes A:\left(\Lmod{A},\otimes,\K\right)\to \left(\quasihopf{A},\tensor{A},A\right)$. A more exhaustive proof can be found in \cite[Theorem 2.2.7]{PhD}. The implication from \ref{item:main2} to \ref{item:main3} is clear and the equivalence between \ref{item:main3} and \ref{item:main4} follows from Lemma \ref{lemma:frobenius}. Finally, the implication \ref{item:main4} $\Rightarrow$ \ref{item:main1} follows from Proposition \ref{prop:prenatipode} and Proposition \ref{prop:Phi}.
\end{proof}

The subsequent corollary improves considerably \cite[Proposition A.3]{Saracco-Tannaka}.

\begin{corollary}
Let $A$ be a quasi-bialgebra with preantipode $S$. For all $a,b\in A$ we have $S(ab)=S(\varphi^1b)\varphi^2S(a\varphi^3)$.
\end{corollary}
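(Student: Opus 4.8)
The plan is to reproduce, in the quasi-bialgebra setting, the classical argument that an antipode is anti-multiplicative, suitably twisted by the associator $\Phi$. The key auxiliary identity is the consequence of Lemma~\ref{lemma:sigmagen} obtained by evaluating \eqref{eq:sigmainv} at $M=A$, $m=1$ and $y=1$ (so that $m_0\otimes m_1=\Delta(1)=1\otimes1$) and comparing the result with Remark~\ref{rem:Hopfclassic}\,\ref{item2:Hclassic}, namely
\[
\Phi^1 a_1\, S(\Phi^2 a_2)\, \Phi^3 = \varepsilon(a)1 \qquad (a\in A).
\]
This is the quasi-analogue of the Hopf relation $a_1S(a_2)=\varepsilon(a)$ and it plays the role of the ``$\varepsilon$-producing'' identity in the convolution proof; note that the naive relation $S(1)=1$ is \emph{not} available here, since the preantipode is normalised only through $\Phi^1S(\Phi^2)\Phi^3=1$ in \eqref{eq:defpreantipode}. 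Together with the two defining relations $S(a_1b)a_2=\varepsilon(a)S(b)$ and $a_1S(ba_2)=\varepsilon(a)S(b)$ of \eqref{eq:defpreantipode}, this is what I would use to connect $S$ on a product with the two-fold expression on the right-hand side.

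Concretely, I would start from the right-hand side $S(\varphi^1 b)\varphi^2 S(a\varphi^3)$ and resolve the identity into the argument of a single $S$. First I would split the factors $a$ and $b$ via their comultiplications and insert the displayed identity applied to the product, replacing a counit by the $\Phi$-twisted expression; the quasi-coassociativity \eqref{eq:quasi-coassociativity} is then used to move $\Delta(a)$ and $\Delta(b)$ past the relevant copies of $\Phi$ so that they can be treated independently. Next I would apply the left and right preantipode relations repeatedly to collapse the ``middle'' tensorands $a_2,b_2$, exactly as $S(a_1)a_2=\varepsilon(a)=a_1S(a_2)$ are used in the Hopf case; each such collapse is responsible for the appearance of one copy of $S$, and threading $\Phi^{-1}=\varphi^1\otimes\varphi^2\otimes\varphi^3$ through these steps accounts precisely for the correction factors $\varphi^1,\varphi^2,\varphi^3$ in the statement, the outcome being a single $S(ab)$.

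The hard part will be the bookkeeping of the several copies of $\Phi$ and $\Phi^{-1}$: because $A$ is only quasi-coassociative, every time a comultiplication is transported across a product one must invoke the pentagon cocycle \eqref{eq:PhiCocycle} (or its inverse), and the Sweedler indices of the various copies have to be matched so that all but the intended ones disappear under the counit normalisations \eqref{eq:PhiCounital} and \eqref{eq:epsivarphi}. Keeping this combinatorics under control, rather than any conceptual subtlety, is the crux. Conceptually, the formula is forced by Theorem~\ref{thm:mainThm}\,\ref{item:main2}: since $-\otimes A$ is a strong monoidal equivalence whose structure isomorphism $\xi$ of \eqref{eq:tensAstrmon} carries the associator, its quasi-inverse $\cl{(-)}$ is monoidal as well, and the displayed identity is nothing but the compatibility of the induced map $S$ with the associativity constraint $\Phi$; I would use this monoidal picture only as a guide for the index bookkeeping, carrying out the actual verification by the explicit computation above together with the characterisation in Theorem~\ref{thm:mainThm}\,\ref{item:main4}.
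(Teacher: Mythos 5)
Your proposal heads down a genuinely different road from the paper, so let me first record what the paper actually does: there is no Sweedler-type computation with the preantipode axioms at all. The paper takes an arbitrary $f\in\Hom{A}{}{A}{A}{A\otimes A}{A\cmptens{}A}$ and evaluates $f(ab\otimes c)$ in two ways --- once through the correspondence $f\mapsto\tau_f$ of \eqref{eq:ftau}, and once by writing $f=\sigma_{\what{A}}^{-1}\sigma_{\what{A}}(f)$ and using the $A\otimes A$-equivariance \eqref{eq:sigmatriang} of $\sigma_{\what{A}}^{-1}$ with respect to the action \eqref{eq:triang} to transport the factor $b$ from the argument of $f$ into $\cl{A\cmptens{}A}$, where the explicit formula \eqref{eq:sigma-1} for $\sigma_{\what{A}}^{-1}$ produces the term $S(\varphi^1a\psi^3)$. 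Applying $A\otimes\varepsilon$, setting $c=1$ and choosing $f$ with $\tau_f=S$ gives the claim. All the associator combinatorics is absorbed into facts already established (well-definedness of the $\triangleright$-actions and the formula for $\sigma_{\what{A}}^{-1}$), so no pentagon manipulation appears in the proof itself.

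The difficulty with your plan is that the part you defer as ``bookkeeping'' is not bookkeeping but the actual mathematical content, and the analogy you rely on to claim it will work is not available. Your auxiliary identity $\Phi^1a_1S(\Phi^2a_2)\Phi^3=\varepsilon(a)1$ is correct and correctly derived from Lemma \ref{lemma:sigmagen} and Remark \ref{rem:Hopfclassic}. But the classical proof of anti-multiplicativity removes the middle tensorands in $S(a_1b_1)\,a_2b_2S(b_3)\,S(a_3)$ using $c_1S(c_2)=\varepsilon(c)=S(c_1)c_2$ applied \emph{between} the two outer occurrences of $S$. The preantipode satisfies no relation of that shape: the defining identities $S(x_1y)x_2=\varepsilon(x)S(y)=x_1S(yx_2)$ cancel an $x_1\cdots x_2$ pair around a \emph{single} $S$, with one of the two factors forced inside the argument, and the only two-sided substitute is your $\Phi$-sandwiched identity, whose $\Phi^1$ and $\Phi^3$ must sit immediately outside the pattern $c_1S(\cdots c_2)$ --- precisely the positions that are blocked by the neighbouring $S$'s at the step where the classical argument needs the cancellation. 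So ``apply the left and right preantipode relations exactly as $S(a_1)a_2=\varepsilon(a)=a_1S(a_2)$ are used in the Hopf case'' is not a step you can take, and nothing in your sketch indicates how to circumvent this; the concluding appeal to monoidality of $\cl{(-)}$ is only heuristic, since $S$ is not a morphism in any of the relevant categories. A direct pentagon computation may well exist, but until it is exhibited the proposal has a genuine gap; the quickest repair is to switch to the paper's structural argument via the $(A\otimes A)$-linearity of $\sigma_{\what{A}}^{-1}$.
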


\begin{proof}
For every $f\in\Hom{A}{}{A}{A}{A\otimes A}{A\cmptens{}A}$ and $a,b,c\in A$ we have
\begin{align*}
\tau_f(\varphi^1ab) & \varphi^2c_1\otimes \varphi^3c_2 \stackrel{\eqref{eq:ftau}}{=} f(ab\otimes c) = \sigma_{\what{A}}^{-1}\left(\sigma_{\what{A}}(f)\right)(ab\otimes c) = \sigma_{\what{A}}^{-1}\left(\cl{f(1\otimes 1)}\right)(ab\otimes c) \\
  & \stackrel{\eqref{eq:triang}}{=} \left((1\otimes b)\triangleright\sigma_{\what{A}}^{-1}\left(\cl{f(1\otimes 1)}\right)\right)(a\otimes c) \stackrel{\eqref{eq:sigmatriang}}{=} \sigma_{\what{A}}^{-1}\left((1\otimes b)\triangleright\cl{f(1\otimes 1)}\right)(a\otimes c) \\
 & \stackrel{\eqref{eq:triang}}{=} \sigma_{\what{A}}^{-1}\left(\cl{(1\otimes b)f(1\otimes 1)}\right)(a\otimes c) \stackrel{\eqref{eq:alllin}}{=} \sigma_{\what{A}}^{-1}\left(\cl{f(b_1\otimes b_2)}\right)(a\otimes c) \\
 & \stackrel{\eqref{eq:alllin}}{=} \sigma_{\what{A}}^{-1}\left(\cl{f(b_1\otimes 1) \cdot b_2}\right)(a\otimes c) = \sigma_{\what{A}}^{-1}\left(\cl{f(b\otimes 1)}\right)(a\otimes c) \\
 & \stackrel{\eqref{eq:ftau}}{=} \sigma_{\what{A}}^{-1}\left(\cl{\tau_f(\psi^1b)\psi^2\otimes \psi^3}\right)(a\otimes c) \stackrel{\eqref{eq:sigma-1}}{=} \tau_f(\psi^1b)\psi^2S(\varphi^1a\psi^3)\varphi^2c_1\otimes \varphi^3c_2,
\end{align*}
so that, by applying $A\otimes \varepsilon$ to both sides and taking $c=1$, $\tau_f(ab)=\tau_f(\varphi^1b)\varphi^2S(a\varphi^3)$. Since $\tau$ is bijective and $S\in \Hom{}{\star}{}{}{A}{A}$, there exists $f\in \Hom{A}{}{A}{A}{A\otimes A}{A\cmptens{}A}$ such that $\tau_f=S$ and so $S(ab) = S(\varphi^1b)\varphi^2S(a\varphi^3)$ for all $a,b\in A$.
\end{proof}

\begin{remark}
At the present moment it is not clear to us if there exists a quasi-Hopf bimodule $M$ such that $\sigma$ is a natural isomorphism if and only if $\sigma_M$ is an isomorphism. 
\begin{invisible}
Our candidate would have been $A\cmptens{}A$, but proving that $\Phi^1S(\Phi^2)\Phi^3=1$ out of the invertibility of $\sigma_{\what{A}}$ seems to be more involved than expected. 
In fact, consider for example the group $\C$-bialgebra $A=\C G$ over a finite group $G$ with $\Phi\coloneqq 1\otimes 1\otimes 1$. Fix $t\coloneqq |G|^{-1}\sum_{g\in G}g$ a total integral in $A$. The assignment $s(a)=at=\varepsilon(a)t$ for all $a\in B$ satisfies
\[
a_1s(ba_2) = a_1\varepsilon(a_2)\varepsilon(b)t = \varepsilon(a)s(b) = \varepsilon(a_1)\varepsilon(b)ta_2 = s(a_1b)a_2
\]
for all $a,b\in B$, but obviously $s(1)=t\neq 1$.
By running through the whole proof of Theorem \ref{thm:mainThm} again, one may observe that a necessary and sufficient condition for having that $S$ is a preantipode would have been the invertibility of $\sigma_{\what{A}}$, $\sigma_{A_2}$ and $\sigma_{A\otimes A}$. Nevertheless, we don't know if there is some redundancy between them or not.
\end{invisible}
\end{remark}

\begin{personal}
[The following seemed surprising at a first sight, but in fact it is nothing new: already $\eta_M^{-1}(\cl{m}\otimes a) = \Phi^1m_0S(\Phi^2m_1)\Phi^3a = (\theta_M\circ (\sigma_M^{-1}\otimes A))(\cl{m}\otimes a)$ was colinear at the time of the structure theorem, but this didn't help directly to find the anti-comultiplicativity formula.]

{\color{blue}
Since for every $M\in\quasihopf{A}$ and every $m\in M$, $\sigma_M^{-1}(\cl{m})\in\Hom{A}{}{A}{A}{A\otimes A}{M}$, we have that
\begin{gather*} 
\Phi^1_1x_{1_1}\cdot m_{0_0} \cdot S(\Phi^2x_2m_1)_1\Phi^3_1y_1\otimes \Phi^1_2x_{1_2}\cdot m_{0_1} \cdot S(\Phi^2x_2m_1)_2\Phi^3_2y_2 =\delta_M\left(\sigma_M^{-1}(\cl{m})(x\otimes y)\right) \\
= \sigma_M^{-1}(\cl{m})(\varphi^1x\otimes \varphi^2y_1)\otimes \varphi^3y_2 = \Phi^1\varphi^1_1x_1\cdot m_0 \cdot S(\Phi^2\varphi^1_2x_2m_1)\Phi^3\varphi^2y_1\otimes \varphi^3y_2 \\
 \stackrel{\eqref{eq:PhiCocycle}}{=} \Phi^1x_1\cdot m_0\cdot S(\varphi^1\Phi^2x_2m_1)\varphi^2\Phi^3_1y_1\otimes \varphi^3\Phi^3_2y_2
\end{gather*}
In particular,
\[
\Phi^1_1\cdot m_{0_0} \cdot S(\Phi^2m_1)_1\Phi^3_1\otimes \Phi^1_2\cdot m_{0_1} \cdot S(\Phi^2m_1)_2\Phi^3_2 = \Phi^1\cdot m_0\cdot S(\varphi^1\Phi^2m_1)\varphi^2\Phi^3_1\otimes \varphi^3\Phi^3_2.
\]
This should encode somehow a kind of anti-comultiplicativity of the preantipode. Check if the anti-comultiplicativity known works for this case.
}
\end{personal}

Recall that a bialgebra $B$ is in particular a quasi-bialgebra with $\Phi=1\otimes 1\otimes 1$. 
Moreover, $B$ is a Hopf algebra if and only if, as a quasi-bialgebra, it admits a preantipode. Therefore, from Theorem \ref{thm:mainThm} descends the following result.

\begin{theorem}\label{thm:mainThmHopf}
The following are equivalent for a bialgebra $B$:
\begin{enumerate}[label=(\arabic*), ref=\emph{(\arabic*)}, leftmargin=1cm, labelsep=0.3cm]
\item $B$ is a Hopf algebra;
\item $-\otimes B:\left(\Lmod{B},\otimes,\K\right)\to \left(\quasihopf{B},\tensor{B},B\right)$ is a monoidal equivalence of categories;
\item $-\otimes B:\Lmod{B}\to\quasihopf{B}$ is Frobenius;
\item $\sigma_M:\Hom{B}{}{B}{B}{B\otimes B}{M}\to \cl{M}, f\mapsto \cl{f(1\otimes 1)}$ is an isomorphism for all $M\in\quasihopf{B}$.
\end{enumerate}
\end{theorem}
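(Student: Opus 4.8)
The plan is to deduce this theorem directly from Theorem \ref{thm:mainThm} by regarding the bialgebra $B$ as a quasi-bialgebra whose reassociator is trivial, namely $\Phi = 1\otimes 1\otimes 1$. Under this identification, conditions (2), (3) and (4) of the present statement are verbatim conditions \ref{item:main2}, \ref{item:main3} and \ref{item:main4} of Theorem \ref{thm:mainThm} specialized to $A=B$, so nothing further needs to be checked for them. The only point requiring an argument is the translation of condition \ref{item:main1}: one must verify that \emph{$B$ admits a preantipode as a quasi-bialgebra} is equivalent to \emph{$B$ is a Hopf algebra}.

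To establish this equivalence, I would first observe that when $\Phi = 1\otimes 1\otimes 1$ the last defining relation in \eqref{eq:defpreantipode} reduces to $S(1) = 1$, while the first two relations read $S(a_1 b)a_2 = \varepsilon(a)S(b) = a_1 S(b a_2)$ for all $a,b\in B$. Given a preantipode $S$, I would specialize these by setting $b=1$ and invoking $S(1)=1$, obtaining $S(a_1)a_2 = \varepsilon(a)1 = a_1 S(a_2)$; these are precisely the two antipode axioms, so $S$ is an antipode and $B$ is a Hopf algebra. For the converse, given a Hopf algebra with (anti-multiplicative, unital) antipode $s$, I would check directly that $s$ satisfies the preantipode relations: using $s(a_1)a_2 = \varepsilon(a)1 = a_1 s(a_2)$ together with $s(xy)=s(y)s(x)$ one computes $s(a_1 b)a_2 = s(b)s(a_1)a_2 = \varepsilon(a)s(b)$ and $a_1 s(b a_2) = a_1 s(a_2)s(b) = \varepsilon(a)s(b)$, while $s(1)=1$ supplies the normalization. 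Hence $s$ is a preantipode.

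With the equivalence between the two versions of condition \ref{item:main1} in hand, all four statements of the present theorem coincide with the corresponding statements of Theorem \ref{thm:mainThm} for $A=B$, and the result follows at once. I expect no genuine obstacle here: the entire content has already been carried out in the quasi-bialgebra setting, and the present theorem is essentially a dictionary entry between preantipodes and antipodes. The one mild subtlety worth flagging is to ensure the normalization $S(1)=1$ is genuinely available on both sides of the correspondence, since it is exactly the residue left by the reassociator condition $\Phi^1 S(\Phi^2)\Phi^3 = 1$ once $\Phi$ is taken to be trivial.
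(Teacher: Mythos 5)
Your proposal is correct and follows exactly the paper's route: the paper likewise views $B$ as a quasi-bialgebra with $\Phi=1\otimes1\otimes1$, invokes the equivalence ``$B$ is Hopf $\Leftrightarrow$ $B$ admits a preantipode'' (which it cites rather than reproves), and then specializes Theorem \ref{thm:mainThm}. Your explicit verification of that equivalence, including the observation that the normalization $\Phi^1S(\Phi^2)\Phi^3=1$ collapses to $S(1)=1$, is accurate and merely supplies details the paper leaves to the references.
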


\begin{personal}[explicit proof for the bialgebra case]


\section{Hopf algebras, Frobenius property and the free two-sided Hopf module functor}\label{sec:HopfBimod}

Let $\left( B,m,u,\Delta ,\varepsilon \right) $ be a $\K$-bialgebra over a commutative ring $\K$. It is straightforward to check that the category of left $B$-modules is not only monoidal, but in fact a (right) closed monoidal category with internal hom-functor $\Hom{B}{}{}{}{B\otimes N}{-}$ for all $N\in \Lmod{B}$ (a right-handed analogue of this result can be found in \cite[Lemma 3.5]{Saracco-Frobenius}). 

\begin{lemma}\label{lemma:modclosed2}
Let $B$ be a bialgebra. Then the category ${_B\M}$ of left $B$-modules is left and right-closed. Namely, we have bijections
\begin{gather}
\xymatrix{
\Hom{B}{}{}{}{M\otimes N}{P} \ar@<+0.5ex>[r]^-{\varphi} & \Hom{B}{}{}{}{M}{\Hom{B}{}{}{}{B\otimes N}{P}}  \ar@<+0.5ex>[l]^-{\psi},
} \label{eq:lmodclosed2}\\
\xymatrix{
\Hom{B}{}{}{}{N\otimes M}{P} \ar@<+0.5ex>[r]^-{\varphi'} & \Hom{B}{}{}{}{M}{\Hom{B}{}{}{}{N\otimes B}{P}}  \ar@<+0.5ex>[l]^-{\psi'},
}\notag
\end{gather}
natural in $M$ and $P$, given explicitly by
\begin{gather*}
\varphi(f)(m):a\otimes n \mapsto f(a\cdot m\otimes n), \qquad \psi(g):m\otimes n \mapsto g(m)(1\otimes n), \\
\varphi'(f)(m):n\otimes a \mapsto f(n\otimes a\cdot m), \qquad \psi'(g):n\otimes m \mapsto g(m)(n\otimes 1),
\end{gather*}
where the left $B$-module structures on $\Hom{B}{}{}{}{N\otimes B}{P}$ and $\Hom{B}{}{}{}{B\otimes N}{P}$ are induced by the right $B$-module structure on $B$ itself.
\end{lemma}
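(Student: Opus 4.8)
The plan is to observe first that a bialgebra $B$ is nothing but a quasi-bialgebra whose reassociator is trivial, $\Phi=1\otimes 1\otimes 1$, so that the assertion is the literal specialization of Lemma \ref{lemma:modclosed} with the associativity constraint reduced to the identity; invoking that lemma already settles the statement. Nonetheless, for a self-contained argument I would reproduce the two verifications directly, the point being that every occurrence of $\Phi$ and of $\Phi^{-1}$ now equals $1\otimes 1\otimes 1$ and simply drops out of the computations.

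For the first bijection in \eqref{eq:lmodclosed2} I would fix left $B$-modules $M,N,P$, endow $B\otimes N$ and $M\otimes N$ with the diagonal actions $b\cdot(a\otimes n)=b_1a\otimes b_2\cdot n$ and $b\cdot(m\otimes n)=b_1\cdot m\otimes b_2\cdot n$, and make $\Hom{B}{}{}{}{B\otimes N}{P}$ a left $B$-module via $(b\cdot g)(a\otimes n)=g(ab\otimes n)$, the structure induced by the right regular action of $B$ on itself. The verification then splits into four routine checks: that $\varphi(f)(m)$ is $B$-linear, using left $B$-linearity of $f$ against the diagonal action on $B\otimes N$; that $\varphi(f)$ is itself $B$-linear, which is immediate from $a\cdot(b\cdot m)=ab\cdot m$ and the definition of the action on the internal hom; that $\psi(g)(m\otimes n)=g(m)(1\otimes n)$ is $B$-linear, which follows from the $B$-linearity of $g(m)$ together with the identity $b\cdot(1\otimes n)=b_1\otimes b_2\cdot n$; and finally that $\varphi$ and $\psi$ are mutually inverse, where both composites collapse to the identity thanks to the unit axiom $1\cdot m=m$ together with $B$-linearity of $g$.

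Naturality in $M$ and $P$ I would dispatch by the standard chase: for $B$-linear maps $h\colon M'\to M$ and $\ell\colon P\to P'$, both $\Hom{B}{}{}{}{B\otimes N}{\ell}\circ\varphi(f)\circ h$ and $\varphi\bigl(\ell\circ f\circ(h\otimes N)\bigr)$ send $m$ to the morphism $a\otimes n\mapsto \ell\bigl(f(a\cdot h(m)\otimes n)\bigr)$, using only $B$-linearity of $h$ and functoriality of $\otimes$. For the second bijection I would either repeat the above with the two tensor factors interchanged, or follow the route sketched for Lemma \ref{lemma:modclosed}: identify $N\otimes M\cong N\otimes(B\tensor{B}M)\cong(N\otimes B)\tensor{B}M$ through the canonical $B$-linear ``identity'' map and then apply the ordinary hom--tensor adjunction, the composite of the resulting natural isomorphisms yielding exactly $\varphi'$ and $\psi'$.

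The computations are entirely routine, and the only point requiring mild care --- the closest thing to an obstacle --- is keeping the module structures straight: one must check that the relevant left action on the internal hom is the one induced by the \emph{right} regular action of $B$, not the diagonal action, and that the second internal hom $\Hom{B}{}{}{}{N\otimes B}{P}$ carries the analogous structure $(b\cdot f)(n\otimes a)=f(n\otimes ab)$. Once these are pinned down, every identity reduces to associativity of the $B$-action and the unit axiom, with the bialgebra comultiplication entering only through the diagonal actions and $\Phi$ playing no role at all.
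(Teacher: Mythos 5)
Your proposal is correct and follows essentially the same route as the paper's own argument: the same four direct verifications (well-definedness of $\varphi$ and $\psi$, their $B$-linearity, mutual inversion, and the naturality chase in $M$ and $P$), with the second adjunction obtained exactly as in the paper via the canonical $B$-linear identification $(N\otimes B)\tensor{B}M\cong N\otimes(B\tensor{B}M)$ and the classical hom--tensor adjunction. Your preliminary observation that the statement is the specialization of Lemma \ref{lemma:modclosed} to $\Phi=1\otimes1\otimes1$ is also exactly how the paper treats it, so there is nothing to add.
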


\begin{invisible}[Proof.]
Let $M,N,P$ be left $B$-modules. To make the exposition clearer, we will denote them via ${_\bullet M}$, ${_\bullet N}$ and ${_\bullet P}$ to underline the given actions. We are claiming that there is a bijection
\begin{equation*}
\xymatrix{
\Hom{B}{}{}{}{{_\bullet M}\otimes {_\bullet N}}{{_\bullet P}} \ar@<+0.5ex>[r]^-{\varphi} & \Hom{B}{}{}{}{{_\bullet M}}{\Hom{B}{}{}{}{{{_\bullet B}}\otimes {_\bullet N}}{{_\bullet P}}}   \ar@<+0.5ex>[l]^-{\psi}
}
\end{equation*}
natural in $M$ and $P$. Consider a generic $f\in \Hom{B}{}{}{}{{_\bullet M}\otimes {_\bullet N}}{{_\bullet P}}$. For all $m\in M$, $n\in N$ and $a,b,c\in B$ we have that
\begin{align*}
\big(\varphi(f)(c \cdot m)\big)\big(b\cdot(a\otimes n)\big) & = \sum f\Big(\big(b_1a\cdot (c\cdot m)\big)\otimes (b_2\cdot n)\Big) = f\Big(b\cdot \big((ac\cdot m)\otimes n\big)\Big) \\
 & =b \cdot \big(\varphi(f)(m)\big)\left(ac\otimes n\right)= b \cdot \big(c\cdot \varphi(f)(m)\big)\left(a\otimes n\right).
\end{align*}
Taking $c=1$ gives the left $B$-linearity of $\varphi(f)(m)$ while taking $b=1$ gives the left $B$-linearity of $\varphi(f)$, whence $\varphi$ is well-defined. On the other hand, for all $g\in \Hom{B}{}{}{}{{_\bullet M}}{\Hom{B}{}{}{}{{{_\bullet B}}\otimes {_\bullet N}}{{_\bullet P}}} $, $m\in M$, $n\in N$ and $a\in B$ we have also
\begin{align*}
\psi(g) & (a\cdot (m\otimes n)) = \sum g(a_1\cdot m)(1\otimes (a_2\cdot n)) = \sum \left(a_1\cdot g(m)\right)(1\otimes (a_2\cdot n)) \\
 &  = \sum g(m)(a_1\otimes (a_2\cdot n)) = g(m)(a\cdot (1\otimes n)) = a\cdot g(m)(1\otimes n) = a\cdot \psi(g)(m\otimes n),
\end{align*}
which implies that $\psi(g)$ is left $B$-linear and $\psi$ is well-defined as well. To check the naturality in $M$ and $P$ consider two left $B$-linear morphisms $h:M'\to M$ and $l:P\to P'$. Then
\begin{align*}
\Big(\big(\Hom{B}{}{}{}{B\otimes N}{l}\circ\varphi(f)\circ h\big)(m)\Big)(a\otimes n) & = \big(l\circ \varphi(f)(h(m))\big)(a\otimes n)= l\Big(\big(\varphi(f)(h(m))\big)(a\otimes n)\Big) \\
 & = l\big(f(a\cdot h(m)\otimes n)\big) = l\Big(f\big(h(a\cdot m)\otimes n\big)\Big) \\
 & = \Big(\varphi\big(l\circ f\circ (h\otimes N)\big)(m)\Big)(a\otimes n)
\end{align*}
To conclude, it is enough to check that $\varphi$ and $\psi$ are inverses each other. To this aim, we may compute directly
\begin{gather*}
\left(\varphi\psi(g)(m)\right)(a\otimes n) = \psi(g)(a\cdot m\otimes n) = g(a\cdot m)(1\otimes n) = g(m)(a\otimes n), \\
\left(\psi\varphi(f)\right)(m\otimes n) = \left(\varphi(f)(m)\right)(1\otimes n) = f(m\otimes n)
\end{gather*}
for all $m\in N$, $n\in N$, $a\in B$, $f\in \Hom{B}{}{}{}{{_\bullet M}\otimes {_\bullet N}}{{_\bullet P}}$ and $g\in \Hom{B}{}{}{}{{_\bullet M}}{\Hom{B}{}{}{}{{{_\bullet B}}\otimes {_\bullet N}}{{_\bullet P}}}$. Therefore, the first claim holds. The second claim can be proved analogously or may be deduced as follows. Consider the $\K$-modules $(N\otimes B)\tensor{B}M$ and $N\otimes (B\tensor{B}M)$ endowed with the $B$-actions
\begin{gather*}
a\cdot\big(\left(n\otimes b\right)\tensor{B}m\big) \coloneqq  \sum \left(\left(a_1\cdot n\right)\otimes a_2b\right)\tensor{B}m, \\
a\cdot\big(n\otimes \left(b\tensor{B}m\right)\big) \coloneqq  \sum \left(a_1\cdot n\right)\otimes \left(a_2b\tensor{B}m\right),
\end{gather*}
for all $a,b\in B$, $m\in M$ and $n\in N$. The canonical isomorphism $(N\otimes B)\tensor{B}M \cong N\otimes (B\tensor{B}M)$ (``the identity'') is a morphism in ${_B\M}$. Therefore, by the classical hom-tensor adjunction (see \cite[\S3]{Pareigis} for a very general approach), we have a chain of natural isomorphisms
\begin{align*}
\Hom{B}{}{}{}{{_\bullet N}\otimes {_\bullet M}}{{_\bullet P}} & \cong \Hom{B}{}{}{}{{_\bullet N}\otimes \left( {_\bullet B} \tensor{B} {M}\right)}{{_\bullet P}} \cong \Hom{B}{}{}{}{\left( {_\bullet N}\otimes {_\bullet B}\right) \tensor{B} {M}}{{_\bullet P}} \\
 & \cong \Hom{B}{}{}{}{{_\bullet M}}{\Hom{B}{}{}{}{{_\bullet N}\otimes {_\bullet B}}{_\bullet P}}
\end{align*}
whose composition gives exactly $\varphi'$ and $\psi'$.
\end{invisible}

Consider now the category $\quasihopf{B}=\left(\Bim{B}{B}\right)^B$, whose objects will be called \emph{two-sided Hopf modules} (see \cite[Definition 3.2]{Schauenburg-YDHopf}). As in \S\ref{sec:HopfMod}, we have an adjunction
\begin{equation*}
\xymatrix @R=15pt{
\quasihopf{B} \ar@<-0.5ex>[d]_-{\overline{(-)}} \\
\Lmod{B} \ar@<-0.5ex>[u]_-{-\otimes B}
}
\end{equation*}
where for every two-sided Hopf module $M$, $\overline{M}=M/MB^+$ is a $B$-module with $a\cdot \overline{m}=\overline{a\cdot m}$ and for every $B$-module $N$, $N\otimes B$ is a two-sided Hopf module with $a\cdot (n\otimes b) \cdot c = a_1\cdot n \otimes a_2bc$ and $\rho(n\otimes b) = \left(n \otimes b_1\right) \otimes b_2$ for all $m\in M, n\in N$ and $a,b,c\in B$.
Moreover, the bijection \eqref{eq:lmodclosed2} induces a bijection 
\begin{equation*}
\Hom{B}{}{B}{B}{{_\bullet M}\otimes {{}^{\phantom{\bullet}}_\bullet N_{\bullet}^{\bullet}}}{{{}^{\phantom{\bullet}}_\bullet P_{\bullet}^{\bullet}}}\cong \Hom{B}{}{}{}{{_\bullet M}}{\Hom{B}{}{B}{B}{{_\bullet B}\otimes {{}^{\phantom{\bullet}}_\bullet N_{\bullet}^{\bullet}}}{{{}^{\phantom{\bullet}}_\bullet P_{\bullet}^{\bullet}}}}
\end{equation*}
that makes of $\Hom{B}{}{B}{B}{B\otimes B}{-}$ the right adjoint of the functor $-\otimes B$.
Therefore we have another adjoint triple
\begin{equation}\label{eq:adjbimod2}
\overline{\left( -\right) } \ \dashv \ - \otimes B \ \dashv \ \Hom{B}{}{B}{B}{B\otimes B}{-},
\end{equation}
now between $\Lmod{B}$ and $\quasihopf{B}$, with units and counits given by
\begin{equation}\label{eq:unitscounitsquasi2}
\begin{gathered}
\eta_M:M\to \overline{M}\otimes B; \quad m\mapsto \overline{m_0}\otimes m_1, \quad \epsilon_N: \overline{(N\otimes B)}\stackrel{\cong}{\longrightarrow} N; \quad \overline{n\otimes a}\mapsto n\varepsilon(a), \\
\gamma_N: N \stackrel{\cong}{\longrightarrow} \Hom{B}{}{B}{B}{B\otimes B}{N\otimes B}; \quad n\mapsto \left[a\otimes b\mapsto a\cdot n\otimes b\right], \\
\theta_M:\Hom{B}{}{B}{B}{B\otimes B}{M}\otimes B \to M; \quad f\otimes a \mapsto f(1\otimes 1)\cdot a.
\end{gathered}
\end{equation}
What we are going to show now is that, differently from what happened in \S\ref{sec:HopfMod}, being Frobenius for the functor $-\otimes B:\Lmod{B}\to \quasihopf{B}$ is in fact equivalent to the existence of an ordinary antipode for $B$. As in \S\ref{sec:adjtriples}, we have the canonical morphism
\begin{equation*}
\sigma _{M}: \Hom{B}{}{B}{B}{B\otimes B}{M} \rightarrow \overline{M};\quad f\mapsto \overline{f\left(1\otimes 1\right) }.
\end{equation*}

\begin{remark}\label{rem:Hopfclassic2}
Two things deserve to be observed before proceeding.
\begin{enumerate}[label=(\arabic*), ref={(\arabic*)}, leftmargin=1cm, labelsep=0.3cm]
\item\label{item1:Hclassic2} Of course, $B$ admits an antipode $S$ if and only if one of the two adjunctions in \eqref{eq:adjbimod2} is an equivalence (whence both are). This provides a coassociative analogue of what is called the Structure Theorem for quasi-Hopf bimodules in the framework of quasi-bialgebras with preantipodes (see \cite{Saracco} for the one involving only the left-most adjunction and \cite{PhD} for the complete one). In particular, an inverse for $\sigma_M$ is given by $\sigma_M^{-1}(\cl{m})(x\otimes y) = x_1m_0S(x_2m_1)y$, for all $m\in M$, $x,y\in B$.
\item\label{item2:Hclassic2} Since $B \cong \K\otimes B \in\quasihopf{B}$, the component $\sigma_B:\Hom{B}{}{B}{B}{B\otimes B}{B}\to\cl{B}$ is always an isomorphism with inverse given by $\K\to\Hom{B}{}{B}{B}{B\otimes B}{B}, 1_\K\mapsto \left[x\otimes y\mapsto \varepsilon(x)y\right]$, in light of equation \eqref{eq:sigmaF2}.
\end{enumerate}
\end{remark}

\begin{invisible}
Checking \ref{item1:Hclassic2}: If $S$ exists, then $\eta_M^{-1}(\cl{m}\otimes b) = m_0S(m_1)b$. If the left-most adjunction in \eqref{eq:adjbimod2} is an equivalence, then the structure theorem for quasi-Hopf bimodules provides a linear endomorphism $S$ of $B$ such that $S(a_1b)a_2 = \varepsilon(a)S(b) = a_1S(ba_2)$ and $1S(1)1 = 1$, \ie an antipode. Finally $\cl{\sigma_M^{-1}(\cl{m})(1\otimes 1)} = \cl{m_0S(m_1)} = \cl{m}$ and
\[
x_1f(1\otimes 1)_0S(x_2f(1\otimes 1)_2)y = x_1f(1\otimes 1)S(x_2)y = f(x_1\otimes x_2S(x_3)y) = f(x\otimes y).
\]
\end{invisible}

Consider the distinguished two-sided Hopf module $B\cmptens{}B\coloneqq {B_{\bullet }^{\phantom{\bullet}}} \otimes { _{\bullet }^{\phantom{\bullet}}B_{\bullet }^{\bullet}} $ (also denoted simply by $\what{B}$). Both $\overline{B\cmptens{}B}$ and $\Hom{B}{}{B}{B}{B\otimes B}{B\cmptens{}B}$ turn out to be $B\otimes B$-modules via
\begin{equation}\label{eq:BBmodule2}
(a\otimes b)\triangleright \overline{x\otimes y} = \overline{ax\otimes by} \quad \text{and} \quad \big((a\otimes b)\triangleright f\big)(x\otimes y) = (a\otimes 1)f(xb\otimes y)
\end{equation}
respectively, for $a,b,x,y\in B$ and $f\in \Hom{B}{}{B}{B}{B\otimes B}{B\cmptens{}B}$, and the associated component $\sigma_{\what{B}}$ is $B\otimes B$-linear. 
\begin{invisible}
Perhaps, the only non immediate check is that $(a\otimes 1)\triangleright f$ is a well-defined element of $\Hom{B}{}{B}{B}{B\otimes B}{B\cmptens{}B} $ for all $a\in B$ and $f\in \Hom{B}{}{B}{B}{B\otimes B}{B\cmptens{}B}$. To this aim, observe that
\begin{align*}
\left( a\triangleright f\right) \left( b_{1}x\otimes b_{2}yc\right) & = \left( a\otimes 1\right) f\left( b_{1}x\otimes b_{2}yc\right) = \left( a\otimes 1\right) \left( 1\otimes b\right) f\left( x\otimes y\right) \Delta \left( c\right) \\
 & = \left( 1\otimes b\right) \left[ \left( a\otimes 1\right) f\left( x\otimes y\right) \right] \Delta \left( c\right) = b\cdot \left( a\triangleright f\right) \left( x\otimes y\right) \cdot c
\end{align*}
and
\begin{align*}
\big[  & \left( a\triangleright f\right) \left( x\otimes y\right) \big] _{0} \otimes \big[ \left( a\triangleright f\right) \left( x\otimes y\right) \big] _{1} \\
 & = \left[ \left( a\otimes 1\right) f\left( x\otimes y\right) \right] _{0}\otimes \left[ \left( a\otimes 1\right) f\left( x\otimes y\right) \right] _{1} = \left( A\otimes \Delta \right) \left( \left( a\otimes 1\right) f\left( x\otimes y\right) \right) \\
 & = \left( a\otimes 1\otimes 1\right) \left[ \left( A\otimes \Delta \right) \left( f\left( x\otimes y\right) \right) \right] = \left( a\otimes 1\right) f\left( x\otimes y\right) _{0}\otimes f\left( x\otimes y\right) _{1} \\
 & = \left( a\otimes 1\right) f\left( \left( x\otimes y\right)_{0}\right) \otimes \left( x\otimes y\right) _{1} = \left( a\triangleright f\right) \left( \left( x\otimes y\right) _{0}\right) \otimes \left( x\otimes y\right) _{1},
\end{align*}
as expected.
\end{invisible}

\begin{remark}\label{rem:tau2}
Let $N$ be any right $B$-module and $N\cmptens{}B$ the two-sided Hopf module ${N_\bullet^{\phantom{\bullet}}}\otimes \quasihopfmod{B}$. By right $B$-linearity, every $f\in \Hom{B}{}{B}{B}{B\otimes B}{N\cmptens{}B}$ is uniquely determined by $f(a\otimes 1)$ for $a\in B$ and, moreover, right $B$-colinearity implies that $f(a\otimes 1)\otimes 1 = (N\otimes \Delta)\left(f(a\otimes 1)\right)$.
Set $\tau_f(a)\coloneqq (N\otimes \varepsilon)f(a\otimes 1)$. This defines a linear morphism $\tau_f:B\to N$ such that
\begin{equation}\label{eq:ftau2}
f(a\otimes b) = \tau_f(a)b_1\otimes b_2
\end{equation}
for all $a,b\in B$. Left $B$-linearity now entails $\tau_f(b)\otimes a = \tau_f(a_1b)a_2\otimes a_3$ and hence
\begin{equation}\label{eq:quasiantipode2}
\tau_f(a_1b)a_2 = \varepsilon(a)\tau_f(b).
\end{equation}
Denote by ${^\star\Homk({B},{N})}$ the family of $\K$-linear morphisms $g:B\to N$ that satisfies \eqref{eq:quasiantipode2}. Then we have an isomorphism of left $B$-modules
\begin{equation}\label{eq:tau2}
\begin{gathered}
\xymatrix @R=0pt {
{\tau:\Hom{B}{}{B}{B}{B\otimes B}{N\cmptens{}B}} \ar@{<->}[r] & {^\star\Homk({B},{N})} \\
{\phantom{\tau:}f} \ar@{|->}[r] & \tau_f \\
{\phantom{\tau:}\big[a\otimes b\mapsto g(a)b_1\otimes b_2\big]} & g \ar@{|->}[l]
}
\end{gathered}
\end{equation}
where the $B$-module structure on ${^\star\Homk({B},{N})}$ is the one induced by ${\Homk({B},{N})}$, that is, $(a\triangleright g)(x)=g(xa)$ for all $a,x\in B$ and $g\in {\Homk({B},{N})}$.
The isomorphism $\tau$ fits into a commutative diagram of $\K$-modules
\begin{equation}\label{eq:bigdiagram2}
\begin{gathered}
\xymatrix{
\Hom{B}{}{B}{B}{B\otimes B}{N\cmptens{}B} \ar@{<->}[r]^-{\tau} \ar[d]_-{\sigma_{\what{N}}} & {^\star\Homk({B},{N})} \ar[d]^-{\mathsf{ev}_1} \\
\overline{N\cmptens{} B} & N \ar[l]_-{\jmath}
}
\end{gathered}
\end{equation}
where $\jmath(n)=\overline{n\otimes 1}$ and $\mathsf{ev}_1(g) = g(1)$. In the particular case of $N=B$, relation \eqref{eq:quasiantipode2} gives $\tau_f*\id_B = \tau_f(1) u \circ \varepsilon$ and $\tau$ of \eqref{eq:tau2} becomes an isomorphism of $B\otimes B$-modules, where the $B\otimes B$-module structure on ${^\star\End{\K}{B}}$ is the one induced by $\End{\K}{B}$, that is, $((a\otimes b)\triangleright g)(x)=ag(xb)$ for all $a,b,x\in B$ and $g\in \End{\K}{B}$.
Moreover, diagram \eqref{eq:bigdiagram2} becomes a diagram of left $B$-modules, where the left $B$-module structure on the top row is the one induced by $B\to B\otimes B, a\mapsto a\otimes 1$.
\begin{invisible}
Then we have an isomorphism of $B\otimes B$-modules
\begin{equation}\label{eq:tau2}
\begin{gathered}
\xymatrix @R=0pt {
{\tau:\Hom{B}{}{B}{B}{B\otimes B}{B\cmptens{}B}} \ar@{<->}[r] & {^\star\End{\K}{B}} \\
{\phantom{\tau:}f} \ar@{|->}[r] & \tau_f \\
{\phantom{\tau:}\big[a\otimes b\mapsto g(a)b_1\otimes b_2\big]} & g \ar@{|->}[l]
}
\end{gathered}
\end{equation}
where the $B\otimes B$-module structure on ${^\star\End{\K}{B}}$ is the one induced by $\End{\K}{B}$, \ie for all $a,b,x\in B$ and $g\in \End{\K}{B}$
\begin{equation*}
\big((a\otimes b)\triangleright g\big)(x)=ag(xb).
\end{equation*}
The latter isomorphism fits into a commutative diagram of left $B$-modules
\begin{equation*}
\begin{gathered}
\xymatrix{
\Hom{B}{}{B}{B}{B\otimes B}{B\cmptens{}B} \ar@{<->}[r]^-{\tau} \ar[d]_-{\sigma_{\what{B}}} & {^\star\End{\K}{B}} \ar[d]^-{\mathsf{ev}_1} \\
\overline{B\cmptens{} B} & B \ar[l]_-{\jmath}
}
\end{gathered}
\end{equation*}
where $\jmath(a)=\overline{a\otimes 1}$, $\mathsf{ev}_1(g) = g(1)$ and the left $B$-module structure on the top row is the one induced by $B\to B\otimes B, a\mapsto a\otimes 1$.
\end{invisible}
\end{remark}

Let us keep the notation introduced in Remark \ref{rem:tau2} and assume that
\begin{equation*}
\sigma_{\widehat{B }}: \Hom{B}{}{B}{B}{B\otimes B}{B\cmptens{}B} \rightarrow \overline{B\cmptens{}B}, \ f\mapsto \overline{\tau_f(1)\otimes 1},
\end{equation*}
is an isomorphism.

\begin{proposition}\label{prop:stillcomplete2}
If $\sigma_{\widehat{B }}$ is invertible, then the linear endomorphism $s=\tau\left(\sigma_{\widehat{B }}^{-1}\left( \overline{1\otimes 1}\right)\right)$ of $B$ satisfies, for all $a,b\in B$,
\begin{equation}\label{eq:convinv2}
a_1s(ba_2) = \varepsilon(a)s(b) = s(a_1b)a_2.
\end{equation}
\end{proposition}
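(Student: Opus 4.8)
The plan is to follow verbatim the strategy of Proposition~\ref{prop:prenatipode}, taking advantage of the fact that here $\Phi=1\otimes1\otimes1$, which collapses all the associator terms and leaves only clean, associative formulas. Set $f_0\coloneqq\sigma_{\widehat{B}}^{-1}(\overline{1\otimes1})$, an element of $\Hom{B}{}{B}{B}{B\otimes B}{B\cmptens{}B}$, so that by construction $s=\tau_{f_0}$. Because $f_0$ is a genuine morphism in $\Hom{B}{}{B}{B}{B\otimes B}{B\cmptens{}B}$, relation~\eqref{eq:ftau2} of Remark~\ref{rem:tau2} describes it completely: $f_0(a\otimes b)=s(a)b_1\otimes b_2$, and in particular $f_0(a\otimes1)=s(a)\otimes1$.

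The first of the two desired identities, $s(a_1b)a_2=\varepsilon(a)s(b)$, then requires no work: it is exactly relation~\eqref{eq:quasiantipode2} read off for $f_0$, equivalently the assertion that $s=\tau_{f_0}$ lies in ${^\star\End{\K}{B}}$ under the isomorphism~\eqref{eq:tau2}.

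For the second identity, $a_1s(ba_2)=\varepsilon(a)s(b)$, I would first promote the knowledge of $f_0$ at the generator $\overline{1\otimes1}$ to a formula for $\sigma_{\widehat{B}}^{-1}$ everywhere. Since $\sigma_{\widehat{B}}$ is $B\otimes B$-linear and invertible, so is $\sigma_{\widehat{B}}^{-1}$, whence $\sigma_{\widehat{B}}^{-1}(\overline{a\otimes b})=(a\otimes b)\triangleright f_0$; unwinding the actions~\eqref{eq:BBmodule2} together with the description of $f_0$ gives
\[
\sigma_{\widehat{B}}^{-1}(\overline{a\otimes b})(x\otimes y)=(a\otimes1)f_0(xb\otimes y)=as(xb)y_1\otimes y_2.
\]
Now I would evaluate this in two ways at $\overline{a_1\otimes a_2}$ and the argument $b\otimes1$. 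Directly, it returns $a_1s(ba_2)\otimes1$. On the other hand, the right action on $B\cmptens{}B$ is $(x\otimes y)\cdot c=xc_1\otimes yc_2$, so $\overline{a_1\otimes a_2}=\overline{(1\otimes1)\cdot a}=\varepsilon(a)\,\overline{1\otimes1}$, and the same evaluation returns $\varepsilon(a)s(b)\otimes1$. Equating the two and applying $B\otimes\varepsilon$ yields $a_1s(ba_2)=\varepsilon(a)s(b)$.

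The only substantive step --- minor in absolute terms, but the hinge of the argument --- is the passage from the single value $\sigma_{\widehat{B}}^{-1}(\overline{1\otimes1})$ to a formula valid on every $\overline{a\otimes b}$. This is precisely what the $B\otimes B$-module structures on $\overline{B\cmptens{}B}$ and $\Hom{B}{}{B}{B}{B\otimes B}{B\cmptens{}B}$ were introduced for in~\eqref{eq:BBmodule2}, so no new difficulty arises; everything else is a direct specialisation of the quasi-bialgebra computation with $\Phi$ trivial.
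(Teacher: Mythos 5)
Your proposal is correct and follows essentially the same route as the paper's proof: the identity $s(a_1b)a_2=\varepsilon(a)s(b)$ is read off from \eqref{eq:quasiantipode2}, and the other identity is obtained from the $B\otimes B$-linearity of $\sigma_{\widehat{B}}^{-1}$ together with the relation $\overline{m\cdot a}=\varepsilon(a)\overline{m}$ in $\overline{B\cmptens{}B}$. The only cosmetic difference is that you evaluate $\sigma_{\widehat{B}}^{-1}(\overline{a_1\otimes a_2})$ at $b\otimes 1$ where the paper evaluates $\sigma_{\widehat{B}}^{-1}(\overline{a_1\otimes ba_2})$ at $1\otimes 1$; by the established linearity these give the same equality.
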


\begin{proof}
Recall from Remark \ref{rem:tau2} that $s(a) = (B\otimes \varepsilon)\left(\sigma_{\widehat{B }}^{-1}\left( \overline{1\otimes 1}\right)(a\otimes 1)\right)$ for all $a\in A$.
The $B\otimes B$-linearity of $\sigma_{\widehat{B }}$ entails $\sigma_{\widehat{B }}^{-1}( \overline{a\otimes b}) = (a\otimes b)\triangleright \sigma_{\widehat{B} }^{-1}( \overline{1\otimes 1}) $ and so
\begin{equation}\label{eq:sigmaBBlin2}
\sigma_{\widehat{B} }^{-1}\left( \overline{a\otimes b}\right) \left(x\otimes y\right) = \left( a\otimes 1\right) \left( \sigma_{\widehat{B }}^{-1}\left( \overline{1\otimes 1}\right) \left( xb\otimes y\right)\right)
\end{equation}
for all $a,b,x,y\in B$. In particular,
\begin{equation}\label{eq:sigmabalbialg2}
\sigma_{\widehat{B }}^{-1}\left( \overline{1\otimes a}\right) \left( 1\otimes 1\right) =\sigma_{\widehat{B }}^{-1}\left( \overline{1\otimes 1} \right) \left( a\otimes 1\right).
\end{equation}
Now, by rewriting equation \eqref{eq:ftau2} for $\sigma_{\widehat{B }}^{-1}\left( \overline{1\otimes 1}\right)$ we get
\begin{equation} \label{eq:sigmainvsmallS32}
\sigma_{\widehat{B} }^{-1}\left( \overline{a\otimes b}\right) \left(x\otimes y\right) \stackrel{\eqref{eq:sigmaBBlin2}}{=} \left( a\otimes 1\right) \sigma_{\widehat{B }}^{-1}\left( \overline{1\otimes 1}\right) \left( xb\otimes y\right) = as\left( xb\right) y_{1}\otimes y_{2}
\end{equation}
and the right-most equality in \eqref{eq:convinv2} is \eqref{eq:quasiantipode2}.
Moreover, by definition of $\overline{B \cmptens{} B}$,
\begin{equation*}
\varepsilon \left( a\right) \left( \sigma_{\widehat{B} }^{-1}\left( \overline{1\otimes b}\right) \left( 1\otimes 1\right) \right)  =\sigma_{\widehat{B} }^{-1}\left( \overline{a_{1}\otimes ba_{2}}\right) \left( 1\otimes 1\right) =a_{1}s\left( ba_{2}\right) \otimes 1
\end{equation*}
from which it follows that $\varepsilon \left( a\right) s\left( b\right) =a_{1}s\left( ba_{2}\right) $ for all $a,b\in B$.
\end{proof}

\begin{remark}\label{rem:sigmaepsi2}
Since $s\left( b\right) \otimes 1=\sigma_{\widehat{B } }^{-1}( \overline{1\otimes b}) \left( 1\otimes 1\right)$ for all $b\in B$, we have that
\begin{equation}\label{eq:sigmainvsmallS42}
\overline{s\left( b\right) \otimes 1}=\overline{\sigma_{\widehat{B } }^{-1}\left( \overline{1\otimes b}\right) \left( 1\otimes 1\right) }=\sigma_{\widehat{B } }\left(\sigma_{\widehat{B } }^{-1}\left( \overline{1\otimes b}\right)\right) = \overline{1\otimes b}.
\end{equation}
Thus $s\left( b\right) -\varepsilon \left( b\right) 1\in \ker\left(\varepsilon \right) $ and so $\varepsilon \circ s=\varepsilon $. 
\end{remark}

It is self-evident now that $s$ is an antipode if and only if $s(1)=1$. The forthcoming results are all intermediate steps toward the proof of this latter identity.

\begin{lemma}\label{lemma:cltensor2}
For $M\in\Lmod{B}$ and $N\in\Bim{B}{B}$ we have $\cl{M\otimes N}\cong M\otimes \cl{N}$ in $\Lmod{B}$.
\end{lemma}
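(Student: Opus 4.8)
The plan is to follow the proof of Lemma \ref{lemma:cltensor} line by line, with the quasi-bialgebra $A$ replaced by the bialgebra $B$; since the associator is now trivial, nothing essential changes and the argument becomes purely a matter of rearranging tensor factors.

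First I would recall that $\cl{N} = N/NB^+$ and that, more generally, for any right $B$-module $X$ one has a canonical identification $\cl{X}\cong X\tensor{B}\K$, where $\K$ is regarded as a left $B$-module along the counit $\varepsilon$: the relation $xb\tensor{B}1 = \varepsilon(b)\,(x\tensor{B}1)$ shows that the kernel of $X\to X\tensor{B}\K$ is exactly $XB^+$. In our situation $X = \lmod{M}\otimes\bimod{N}$, whose right $B$-action is $(m\otimes n)\cdot b = m\otimes n\cdot b$ and therefore factors through the second tensorand.

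Exploiting this, I would produce the chain of isomorphisms
\[
\cl{M\otimes N}\cong (M\otimes N)\tensor{B}\K\cong M\otimes(N\tensor{B}\K)\cong M\otimes\cl{N},
\]
where the middle isomorphism is the associativity of the tensor product, valid precisely because the right $B$-action on $M\otimes N$ lives only on $N$ (the left tensorand $M$ being acted on trivially via $\varepsilon$ on the right). Tracking a generator $\cl{m\otimes n}$ through this chain shows that the resulting map is $\cl{m\otimes n}\mapsto m\otimes\cl{n}$.

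The last step is to confirm that this assignment respects the left $B$-module structures: on the source one has $a\cdot\cl{m\otimes n} = \cl{a_1 m\otimes a_2 n}$, while on the target $a\cdot(m\otimes\cl{n}) = a_1 m\otimes\cl{a_2 n}$, so the two coincide. I expect no genuine obstacle here; the only point requiring a moment of care is the middle associativity isomorphism, whose $B$-linearity hinges on the observation that the right action on $\lmod{M}\otimes\bimod{N}$ collapses onto the factor $N$.
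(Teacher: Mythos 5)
Your proposal is correct and follows exactly the paper's own argument: the chain $\cl{M\otimes N}\cong (M\otimes N)\tensor{B}\K\cong M\otimes(N\tensor{B}\K)\cong M\otimes\cl{N}$ is precisely the proof given there. The extra verifications you supply (that the right action on $\lmod{M}\otimes\bimod{N}$ lives only on $N$, and that the resulting map $\cl{m\otimes n}\mapsto m\otimes\cl{n}$ is left $B$-linear for the diagonal action) are correct and merely make explicit what the paper leaves implicit.
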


\begin{proof}
Since $\cl{N}=N/NB^+$ and $B/B^+\cong \K$, the thesis follows from the isomorphisms
\[
\cl{M\otimes N} \cong (M\otimes N)\tensor{B}\K\cong M\otimes (N\tensor{B}\K)\cong M\otimes \cl{N}. \qedhere
\]
\end{proof}

\begin{lemma}
If $\sigma$ is a natural isomorphism, then for any $M\in\quasihopf{B}$, $m\in M$ and $x,y\in B$,
\begin{equation}\label{eq:sigmainv2}
\sigma_M^{-1}(\cl{m})(x\otimes y) = x_1m_0s(x_2m_1)y.
\end{equation}
\end{lemma}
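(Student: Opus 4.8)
The plan is to transcribe the proof of Lemma \ref{lemma:sigmagen} word for word, taking advantage of the fact that for a bialgebra $\Phi = 1\otimes 1\otimes 1$ (so $\varphi^i = 1$ as well), which makes every associator factor collapse. First I would introduce the two-sided Hopf module $B_2 \coloneqq \lmod{B}\otimes\rmod{B}\otimes\quasihopfmod{B}\in\quasihopf{B}$, whose structures now read $b\cdot(u\otimes v\otimes w)\cdot b' = b_1 u\otimes v b_1'\otimes b_2 w b_2'$ and $\delta(u\otimes v\otimes w) = u\otimes v\otimes w_1\otimes w_2$. Writing $\iota\colon B\otimes\overline{B\cmptens{}B}\to\overline{B_2}$ for the isomorphism provided by Lemma \ref{lemma:cltensor2}, I would then define the left $B$-linear map
\[
\tilde{\iota}\colon B\otimes\Hom{B}{}{B}{B}{B\otimes B}{B\cmptens{}B}\to\Hom{B}{}{B}{B}{B\otimes B}{B_2},\qquad a\otimes f\mapsto\bigl[x\otimes y\mapsto x_1 a\otimes f(x_2\otimes y)\bigr].
\]
The verification that $\tilde{\iota}(a\otimes f)$ is $B$-bilinear and $B$-colinear, and that $\tilde{\iota}$ itself is $B$-linear, is the exact specialization of the three displayed computations in Lemma \ref{lemma:sigmagen}; here it becomes routine, since the invocations of \eqref{eq:PhiCocycle} and \eqref{eq:quasi-coassociativity} are now vacuous.

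Next I would check that the square relating $\sigma_{\what{B}}$ and $\sigma_{B_2}$ through $\tilde{\iota}$ and $\iota$ commutes, i.e.\ $\sigma_{B_2}\circ\tilde{\iota} = \iota\circ(B\otimes\sigma_{\what{B}})$, by the same short chain used for \eqref{eq:diagram} (now the step marked $(*)$ there is immediate because there is no $\Phi^3$ to absorb). Inverting the square and combining with the bialgebra analogue of \eqref{eq:sigma-1} yields
\[
\sigma_{B_2}^{-1}\bigl(\overline{a\otimes b\otimes c}\bigr)(x\otimes y) = x_1 a\otimes b\,s(x_2 c)\,y_1\otimes y_2
\]
for all $a,b,c,x,y\in B$. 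From here the argument descends by naturality exactly as in Lemma \ref{lemma:sigmagen}: for $N\in\Bim{B}{B}$ and $n\in N$ the $B$-bilinear map $f_n\colon\lmod{B}\otimes\rmod{B}\to N$, $a\otimes b\mapsto a\cdot n\cdot b$, together with naturality of $\sigma^{-1}$ gives $\sigma_{N\otimes B}^{-1}(\overline{n\otimes b})(x\otimes y) = x_1\cdot n\cdot s(x_2 b)\,y_1\otimes y_2$; then naturality of $\sigma^{-1}$ along the coaction $\delta_M\colon\quasihopfmod{M}\to\bimod{M}\otimes\quasihopfmod{B}$ produces $\delta_M\bigl(\sigma_M^{-1}(\overline{m})(x\otimes y)\bigr) = x_1\cdot m_0\cdot s(x_2 m_1)\,y_1\otimes y_2$, and applying $M\otimes\varepsilon$ to both sides delivers the claimed formula $\sigma_M^{-1}(\overline{m})(x\otimes y) = x_1 m_0\, s(x_2 m_1)\, y$.

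I do not expect any genuine obstacle here: the entire difficulty of Lemma \ref{lemma:sigmagen} lay in bookkeeping the $\Phi$-conjugations and invoking the pentagon/cocycle identity \eqref{eq:PhiCocycle} at the right moment, and all of that evaporates when $\Phi$ is trivial. The only point requiring a modicum of care is confirming that $\tilde{\iota}(a\otimes f)$ lands in $\Hom{B}{}{B}{B}{B\otimes B}{B_2}$ (bilinearity and colinearity), but this is a direct check. Consequently the statement follows as the $\Phi = 1\otimes 1\otimes 1$ shadow of Lemma \ref{lemma:sigmagen}, and one could even regard it as an immediate corollary of that lemma once $B$ is viewed as a quasi-bialgebra with trivial associator and $S = s$.
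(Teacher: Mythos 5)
Your proposal is correct and coincides with the paper's own proof: the paper likewise introduces $B_2$, the map $a\otimes f\mapsto[x\otimes y\mapsto x_1a\otimes f(x_2\otimes y)]$, the commuting square with $\sigma_{\what{B}}$ and $\sigma_{B_2}$, and then descends by naturality through $f_n$ and $\delta_M$ before applying $M\otimes\varepsilon$. The intermediate formulas you state ($\sigma_{B_2}^{-1}(\cl{a\otimes b\otimes c})(x\otimes y)=x_1a\otimes bs(x_2c)y_1\otimes y_2$, etc.) are exactly those in the paper, so nothing further is needed.
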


\begin{proof}
Set $B_2 \coloneqq  \lmod{B}\otimes \rmod{B} \otimes \quasihopfmod{B} \in \quasihopf{B}$ and denote by $\varphi: B\otimes \cl{B\cmptens{}B} \to \cl{B_2}$ the isomorphism of Lemma \ref{lemma:cltensor2}. Consider also the left $B$-linear morphism
\begin{gather}\label{eq:homtensor2}
\begin{gathered}
\psi: B \otimes \Hom{B}{}{B}{B}{B\otimes B}{B\cmptens{}B} \to \Hom{B}{}{B}{B}{B\otimes B}{B_2} \\
a\otimes f \longmapsto \left[x\otimes y\mapsto x_1a\otimes f(x_2\otimes y)\right].
\end{gathered}
\end{gather}
\begin{invisible}
It is well-defined since 
\[
\psi(a\otimes f)(b_1x\otimes b_2yc) = b_1x_1a\otimes f(b_2x_2\otimes b_3yc) = b_1x_1a\otimes (1\otimes b_2)f(x_2\otimes y)\Delta(c) = b(\psi(a\otimes f)(x\otimes y))c
\]
and 
\[
\delta(\psi(a\otimes f)(x\otimes y)) = x_1a\otimes (B\otimes \Delta)f(x_2\otimes y) = x_1a\otimes f(x_2\otimes y_1)\otimes y_2 = \psi(a\otimes f)(x\otimes y_1)\otimes y_2.
\]
$B$-linearity follows from
\[
\psi(b_1a\otimes b_2f)(x\otimes y) = x_1b_1a\otimes (b_2f)(x_2\otimes y) = x_1b_1a\otimes f(x_2b_2\otimes y) = \psi(a\otimes f)(xb\otimes y) = (b\psi(a\otimes f))(x\otimes y).
\]
\end{invisible}
Then the following diagram is easily seen to be commutative
\[
\xymatrix @C=40pt @R=15pt {
B \otimes \Hom{B}{}{B}{B}{B\otimes B}{B\cmptens{}B} \ar[r]^-{B\otimes \sigma_{\what{B}}} \ar[d]_-{\psi} & B \otimes \cl{B\cmptens{}B} \ar[d]^-{\varphi} \\
\Hom{B}{}{B}{B}{B\otimes B}{B_2} \ar[r]_-{\sigma_{B_2}} & \cl{B_2}
}
\]
and so, in light of \eqref{eq:sigmainvsmallS32}, for all $a,b,c,x,y\in B$ we have
\begin{equation}\label{eq:sigmaB22}
\sigma_{B_2}^{-1}\left(\cl{a\otimes b\otimes c}\right)(x\otimes y) = x_1a\otimes bs(x_2c)y_1\otimes y_2.
\end{equation}
Now, for any $N\in\Bim{B}{B}$ consider $N\otimes B = \bimod{N}\otimes \quasihopfmod{B} \in\quasihopf{B}$. For every $n\in N$, the assignment $\tilde{n}:\lmod{B}\otimes \rmod{B}\to N, a\otimes b\mapsto anb$, is a well-defined $B$-bilinear morphism. Naturality of $\sigma$ implies then that
\begin{gather}\label{eq:sigmaN2}
\begin{gathered}
\sigma_{N\otimes B}^{-1}(\cl{n\otimes b})(x\otimes y) = \left(\tilde{n}\otimes B\right)\left(\sigma_{B_2}^{-1}\left(\cl{1\otimes 1\otimes b}\right)(x\otimes y)\right) \\
 \stackrel{\eqref{eq:sigmaB22}}{=} \left(\tilde{n}\otimes B\right)\left(x_1\otimes s(x_2b)y_1\otimes y_2\right) = x_1ns(x_2b)y_1\otimes y_2
\end{gathered}
\end{gather}
for all $n\in N$, $b,x,y\in B$. Finally, the coaction $\delta_M:\quasihopfmod{M}\to \bimod{M}\otimes \quasihopfmod{B}$ is a well-defined morphism in $\quasihopf{B}$ and hence we may resort again to naturality of $\sigma$ to get that
\begin{gather*}
\delta_M\left(\sigma_M^{-1}(\cl{m})(x\otimes y)\right) = \sigma_{M\otimes B}^{-1}(\cl{m_0\otimes m_1})(x\otimes y) \stackrel{\eqref{eq:sigmaN2}}{=} x_1m_0s(x_2m_1)y_1\otimes y_2
\end{gather*}
for all $m\in M$, $x,y\in B$. Applying $M\otimes \varepsilon$ to both sides gives the result.
\end{proof}

\begin{lemma}
If $\sigma$ is a natural isomorphism, then $s(1)=1$.
\end{lemma}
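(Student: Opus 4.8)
The plan is to follow verbatim the strategy of Proposition \ref{prop:Phi}, specialising the inverse of $\sigma$ to the monoidal unit. The key observation is that $B$, regarded as a two-sided Hopf module with coaction $\delta = \Delta$, satisfies $1_0 \otimes 1_1 = \Delta(1) = 1 \otimes 1$, and likewise the module Sweedler components of $1$ are trivial. First I would apply the formula \eqref{eq:sigmainv2} of the preceding lemma with $M = B$ and $m = x = y = 1$; after these substitutions every Sweedler component of $1$ collapses to $1$, so the right-hand side reduces to
\[
\sigma_B^{-1}(\cl{1})(1 \otimes 1) \stackrel{\eqref{eq:sigmainv2}}{=} s(1).
\]

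On the other hand, $\sigma_B$ is invertible under no hypothesis on $B$ whatsoever, and by uniqueness of two-sided inverses the map $\sigma_B^{-1}$ occurring above must agree with the explicit inverse recorded in \ref{item2:Hclassic2} of Remark \ref{rem:Hopfclassic2}, namely $\sigma_B^{-1}(\cl{1}) = \left[x \otimes y \mapsto \varepsilon(x) y\right]$. Evaluating the latter at $1 \otimes 1$ gives $\sigma_B^{-1}(\cl{1})(1 \otimes 1) = \varepsilon(1) \cdot 1 = 1$. Comparing the two computations immediately yields $s(1) = 1$.

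I do not expect any genuine obstacle here: the whole argument is a one-line specialisation of \eqref{eq:sigmainv2} at the unit, together with the trivial identity $\sigma_B^{-1}(\cl{1})(1 \otimes 1) = 1$. The only point worth a moment's attention is the bookkeeping of the two-sided Hopf module structure of $B$, so that the comodule Sweedler notation $m_0 \otimes m_1$ really does collapse when $m = 1$. Once $s(1) = 1$ is in hand, combining it with the convolution-type relations of Proposition \ref{prop:stillcomplete2} and with the identity $\varepsilon \circ s = \varepsilon$ from Remark \ref{rem:sigmaepsi2} shows that $s$ is an honest antipode, thereby completing the bialgebra counterpart of Theorem \ref{thm:mainThm}.
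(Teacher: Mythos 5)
Your argument is correct, and it is a genuinely different route from the one the paper takes for this lemma. The paper's proof applies $\delta_M$ to both sides of the formula $\sigma_M^{-1}(\cl{m})(x\otimes y)=x_1m_0s(x_2m_1)y$, compares the two resulting expressions using colinearity of $\sigma_M^{-1}(\cl{m})$, specialises to $M=B$ and $m=x=y=1$ to obtain $s(1)_1\otimes s(1)_2=s(1)\otimes 1$, and then concludes $s(1)=1$ from the previously established identity $\varepsilon\circ s=\varepsilon$. You instead evaluate the formula directly at $M=B$, $m=x=y=1$ to get $\sigma_B^{-1}(\cl{1})(1\otimes 1)=s(1)$, and identify $\sigma_B^{-1}$ with the explicit inverse $\cl{1}\mapsto\left[x\otimes y\mapsto\varepsilon(x)y\right]$ that exists unconditionally because $B\cong\K\otimes B$ lies in the image of $-\otimes B$ (so that $\sigma_B$ is invertible by $\sigma\cF=\epsilon^{-1}\circ\gamma^{-1}$); uniqueness of two-sided inverses then forces $s(1)=\varepsilon(1)1=1$. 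Your route is shorter, avoids both the colinearity computation and the appeal to $\varepsilon\circ s=\varepsilon$, and is in fact exactly the argument the paper uses in the quasi-bialgebra setting (Proposition \ref{prop:Phi}, where the same specialisation yields $\Phi^1S(\Phi^2)\Phi^3=1$); so it has the additional merit of making the bialgebra case a literal specialisation of the general one. The only bookkeeping point, which you flag correctly, is that the coaction on $B\cong\K\otimes B$ is $\Delta$ itself, so the Sweedler components of $1$ collapse to $1\otimes 1$ and the right-hand side reduces to $s(1)$ as claimed.
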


\begin{proof}
For every $M\in\quasihopf{B}$ and for all $m\in M$, $x,y\in B$, colinearity of $\sigma_{M}^{-1}(\cl{m})$ and \eqref{eq:sigmainv2} entail that
$
x_1m_0s(x_3m_2)_1y_1\otimes x_2m_1s(x_3m_2)_2y_2 = \delta_M(\sigma_M^{-1}(\cl{m})(x\otimes y)) = x_1m_0s(x_2m_1)y_1\otimes y_2.
$
By rewriting it for $M=B$ and all $m,x,y$ equal $1$ one gets $s(1)_1\otimes s(1)_2 = s(1)\otimes 1$ and hence $s(1)=1$ by Remark \ref{rem:sigmaepsi2}.
\end{proof}

Summing up, we have proved the following central result.

\begin{theorem}\label{thm:mainThm2}
A bialgebra $B$ is a Hopf algebra if and only if the functor $-\otimes B$ is Frobenius.
\end{theorem}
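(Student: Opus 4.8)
The plan is to obtain the statement by specialising the quasi-bialgebra machinery to the coassociative case $\Phi = 1\otimes 1\otimes 1$, in which the adjoint triple \eqref{eq:adjbimod2} and the canonical natural transformation $\sigma$ are already available. First I would observe that the middle functor $-\otimes B$ is fully faithful: the counit $\epsilon$ and the unit $\gamma$ of \eqref{eq:unitscounitsquasi2} are natural isomorphisms, so $-\otimes B$ is fully faithful by \cite[Theorem IV.3.1]{MacLane}. Consequently Lemma \ref{lemma:frobenius}\ref{item:frobenius5} applies, and the whole statement reduces to proving that $B$ is a Hopf algebra if and only if $\sigma$ is a natural isomorphism.

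For the direction from Hopf to Frobenius, suppose $B$ carries an antipode $S$. I would then verify that the rule $\cl{m}\mapsto\left[x\otimes y\mapsto x_1 m_0 S(x_2 m_1)y\right]$ of Remark \ref{rem:Hopfclassic2}\ref{item1:Hclassic2} is a two-sided inverse of $\sigma_M$, natural in $M\in\quasihopf{B}$. This is a routine computation from the (co)module axioms together with the convolution identities for $S$, and it shows at once that $\sigma$ is a natural isomorphism, hence that $-\otimes B$ is Frobenius.

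Conversely, assume $-\otimes B$ is Frobenius, so that $\sigma$ is a natural isomorphism; in particular $\sigma_{\widehat{B}}$ is invertible. Proposition \ref{prop:stillcomplete2} then furnishes a $\K$-linear endomorphism $s$ of $B$ satisfying the one-sided relations \eqref{eq:convinv2}, namely $a_1 s(ba_2)=\varepsilon(a)s(b)=s(a_1 b)a_2$. Specialising these to $b=1$ collapses both convolution identities to the single normalisation $s(1)=1$, upon which $s*\id_B = u\circ\varepsilon = \id_B*s$ and $s$ becomes an honest antipode, making $B$ a Hopf algebra. This normalisation is the coassociative shadow of the condition $\Phi^1 S(\Phi^2)\Phi^3=1$ handled in Proposition \ref{prop:Phi}.

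The main obstacle is precisely establishing $s(1)=1$, which the invertibility of $\sigma_{\widehat{B}}$ alone does not deliver and which instead calls on the full natural isomorphy of $\sigma$. I would obtain it by first proving the closed formula \eqref{eq:sigmainv2}, that $\sigma_M^{-1}(\cl{m})(x\otimes y)=x_1 m_0 s(x_2 m_1)y$ for every $M\in\quasihopf{B}$; the argument mirrors Lemma \ref{lemma:sigmagen}, transporting the commuting square \eqref{eq:diagram} to the present setting through Lemma \ref{lemma:cltensor2} and then propagating the formula by naturality of $\sigma^{-1}$ from $B_2$ to an arbitrary $N\otimes B$ and finally to $M$ along its coaction $\delta_M$. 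Specialising \eqref{eq:sigmainv2} to $M=B$ and using that $\sigma_B^{-1}(\cl{1})$ is $B$-colinear yields $s(1)_1\otimes s(1)_2 = s(1)\otimes 1$; combining this with $\varepsilon\circ s=\varepsilon$ from Remark \ref{rem:sigmaepsi2} forces $s(1)=1$, which closes the converse and the proof.
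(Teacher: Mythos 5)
Your proposal is correct and follows essentially the same route as the paper's own argument: reduction to the invertibility of $\sigma$ via full faithfulness of $-\otimes B$, the explicit inverse $\cl{m}\mapsto\left[x\otimes y\mapsto x_1m_0S(x_2m_1)y\right]$ for the direct implication, and for the converse the extraction of $s$ from $\sigma_{\widehat{B}}^{-1}(\overline{1\otimes 1})$ followed by the closed formula \eqref{eq:sigmainv2} and colinearity at $M=B$ to force $s(1)_1\otimes s(1)_2=s(1)\otimes 1$ and hence $s(1)=1$. No gaps; the one point deserving care, that invertibility of $\sigma_{\widehat{B}}$ alone does not yield the normalisation $s(1)=1$, is exactly the one you flag and resolve as the paper does.
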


\begin{remark}
At the present moment it is not clear to us if there exists a distinguished two-sided Hopf module $M$ such that $\sigma$ is a natural isomorphism if and only if $\sigma_M$ is an isomorphism. Our candidate would have been $B\cmptens{}B$, but proving that $s(1)=1$ out of the invertibility of $\sigma_{\what{B}}$ seems more involved than expected. By running through the whole proof of Theorem \ref{thm:mainThm2} again, one may observe that a necessary and sufficient condition for having that $s$ is an antipode would have been the invertibility of $\sigma_{\what{B}}$, $\sigma_{B_2}$ and $\sigma_{B\otimes B}$. Indeed, by \ref{item2:Hclassic2} of Remark \ref{rem:Hopfclassic2} and the relations we found in the aforementioned proof,
\begin{align*}
1\otimes 1 & = \Delta\left(\sigma_B^{-1}(\cl{1})(1\otimes 1)\right) = \sigma_{B\otimes B}^{-1}(\cl{1\otimes 1})(1\otimes 1) = (m\otimes B)\left(\sigma_{B_2}^{-1}(\cl{1\otimes 1\otimes 1})(1\otimes 1)\right) \\
 & = (m\otimes B)\left(1\otimes s(1)\otimes 1\right) = s(1)\otimes 1,
\end{align*}
where $m$ here stands again for the multiplication of $B$. Nevertheless, we don't know if there is some redundancy between them or not.
\end{remark}
\end{personal}

\begin{invisible}
\begin{remark}
As the referee correctly suggested, by looking at Theorems \ref{thm:mainThm} and \ref{thm:mainThmHopf} the reader may be curious to see if and how our results so far are connected with those obtained by D.\t Bulacu, S.\t Caenepeel and B.\t Torrecillas in \cite[\S 2.1 and \S3.2]{BulacuCaenepeelTorrecillas2}, in particular Theorems 2.4 and 3.4(iv) therein. The main reason why we see no direct correlation between them is that the above-mentioned results in \cite{BulacuCaenepeelTorrecillas2} are particular instances of the more general \cite[Theorem 5.8]{BulacuCaenepeelTorrecillas1}, which states a necessary and sufficient condition for a certain forgetful functor (always admitting a right adjoint) $F : \cC(\psi)_A^X \to \cC_A$ to be Frobenius in terms of the Frobenius property for the coalgebra $(X,\psi)$ (we refer to the original paper for notations and terminology). In our context, $\cl{(-)}$ cannot be seen as a forgetful functor in general, because it is not faithful. For instance, consider the monoid bialgebra $B:=\C[\Z_4]$ with group-like comultiplication, where the monoid structure on $\Z_4$ is given by the product. For the two-sided Hopf module $M := {_\bullet B^\bullet \otimes B_\bullet^\bullet}$ we have that $\eta_M: {_\bullet B^\bullet \otimes B_\bullet^\bullet} \to {\lmod{B} \otimes \quasihopfmod{B}}, a\otimes b\mapsto a_1\otimes a_2b,$ is not injective ($\eta_M(\cl{2}\otimes\cl{2}) = \eta_M(\cl{2}\otimes\cl{0})$), whence $\cl{(-)}$ cannot be faithful in light of (the dual version of) \cite[Theorem IV.3.1]{MacLane}. The lack of correlation is also supported by the fact that, in \cite{BulacuCaenepeelTorrecillas2}, being Frobenius for $F$ is always connected with $C$ being finite-dimensional, while in our framework the dimension of $A$ plays no role.

A bit more in detail: as mentioned before \cite[Theorem 2.4]{BulacuCaenepeelTorrecillas2}, a quasi-bialgebra $A$ can be seen as a right $A$-comodule algebra in the sense of \cite[Definition 3.3]{HausserNill}, whence we may consider the category of relative Hopf modules $\cM_A^C$ (as in the introduction to \cite[\S2]{BulacuCaenepeelTorrecillas2}) and the forgetful functor $F : \cM_A^C \to \cM_A$, where $C$ is a right $A$-module coalgebra (a coalgebra in the monoidal category of right $A$-modules). By \cite[Theorem 2.4]{BulacuCaenepeelTorrecillas2}, $F$ is a Frobenius functor if and only if $C$ is a Frobenius coalgebra in the category of right $A$-modules. This is not the same context as ours because, in general, $(A,\Delta,\varepsilon)$ is not a right $A$-module coalgebra. On the other hand, $(A,\Delta,\varepsilon)$ is a right $A\otimes \op{A}$-module coalgebra and, in fact, we may consider the category $\cM(A\otimes\op{A})_{A\otimes \op{A}}^A = \quasihopf{A}$ as in \cite[Theorem 3.4(iv)]{BulacuCaenepeelTorrecillas2}. However, in this case, the forgetful functor of \cite{BulacuCaenepeelTorrecillas2} is $F:\quasihopf{A}\to \Bim{A}{A}$, which is not the functor we are interested in so far. We will come back on this at the end of \S\ref{ssec:previous}, when the latter functor will play a role in the study of the Frobenius property for $A$ as a $\K$-algebra. 
\end{remark}
\end{invisible}

\begin{example}
This ``toy example'' is intended to show, in an easy-handled context, some of the facts and the computations presented so far. We point out that it already appeared in this setting in \cite[Example 1]{Saracco} and previously in \cite[Preliminaries 2.3]{EtingofGelaki}. Let $G\coloneqq \langle g\rangle$ be the cyclic group of order 2 with generator $g$ and let $\K$ be a field of characteristic different from 2. Consider the group algebra $A\coloneqq \K G$, which is a commutative algebra of dimension 2. An $A$-bimodule is a $\K$-vector space $V$ endowed with two distinguished commuting automorphisms $\alpha,\beta$ such that $\alpha^2=\id_V=\beta^2$ (which are left and right action by $g$ respectively). Consider the distinguished elements $t\coloneqq \frac{1}{2}(1+g)$ (total integral in $A$) and $p=\frac{1}{2}(1-g)$. They form a pair of pairwise orthogonal idempotents and $A \cong \K t \oplus \K p$ as $\K$-algebras. Moreover, with respect to this new basis, $g=t-p$ and $1=t+p$. 

Now, endow $A$ with the group-like comultiplication $\Delta(g)=g\otimes g$ and counit $\varepsilon(g)=1$ and consider the element
\[
\Phi \coloneqq  1\otimes 1\otimes 1 - 2p\otimes p\otimes p.
\]
This is invertible (with inverse itself) and it satisfies the conditions \eqref{eq:PhiCocycle}, \eqref{eq:PhiCounital} and \eqref{eq:quasi-coassociativity}. These make of $A$ a genuine quasi-bialgebra (with $A^+=\K p$), so that the category of $A$-bimodules is now a monoidal category. Observe that
\begin{gather*}
\Delta(t) = \frac{1}{2}1\otimes 1 + \frac{1}{2}g \otimes g = t \otimes g + 1 \otimes p = t \otimes t + p \otimes p, \\
\Delta(p) = \frac{1}{2}1\otimes 1 - \frac{1}{2}g \otimes g = p \otimes g + 1 \otimes p = p \otimes t + t \otimes p.
\end{gather*}

A bimodule $M$ is a quasi-Hopf bimodule if it comes endowed with an $A$-bilinear coassociative and counital $A$-coaction in $\Bim{A}{A}$. For every $m\in M$, write $\delta(m)\coloneqq m_1\otimes t + m_2\otimes p$. The counitality condition already implies that $m_1 = m$, so that we may write $\delta(m) = m\otimes t + m'\otimes p$ and $\delta(m') = m'\otimes t + m''\otimes p$. Concerning the coassociativity condition, compute
\begin{gather*}
\Phi\cdot (\delta\otimes A)(\delta(m)) = \Phi \cdot \left( m \otimes t \otimes t + m' \otimes p \otimes t + m' \otimes t \otimes p + m'' \otimes p \otimes p\right) \\
 = m \otimes t \otimes t + m' \otimes p \otimes t + m' \otimes t \otimes p + m'' \otimes p \otimes p - 2p m'' \otimes p \otimes p,
\end{gather*}
\begin{gather*}
(M\otimes \Delta)(\delta(m)) \cdot \Phi = \left( m \otimes t \otimes t + m\otimes p \otimes p + m'\otimes p \otimes t + m'\otimes t \otimes p \right) \cdot \Phi \\
 = m\otimes t \otimes t + m\otimes p \otimes p + m'\otimes p \otimes t + m'\otimes t \otimes p - 2m p\otimes p \otimes p.
\end{gather*}
By equating the right-most terms we find
\[
(1-2p) m'' \otimes p \otimes p = m'' \otimes p \otimes p - 2p m'' \otimes p \otimes p = m\otimes p \otimes p - 2m p\otimes p \otimes p = m  (1-2p) \otimes p \otimes p
\]
so that $gm'' = (1-2p) m'' = m  (1-2p) = m  g$ and hence $m'' = g m g$. This allows us to define a $\K$-linear automorphism $\nu:M\to M, m \mapsto m',$ which satisfies $\nu^2(m) = g m g$ and, since
\[
\begin{gathered}
\delta(g m) = (g\otimes g)\delta(m) = (g\otimes g)(m\otimes t + m'\otimes p) = g m\otimes t - g m'\otimes p, \\
\delta(m g) = (m\otimes t + m'\otimes p)(g\otimes g) = m g\otimes t - m' g\otimes p,
\end{gathered}
\]
it satisfies $\nu(g m) = -g  \nu(m)$ and $\nu(m g) = -\nu(m) g$ as well.
In particular, $\nu(m  t) = \nu(m)  p$ and $\nu(m  p) = \nu(m)  t$ for all $m\in M$. Thus, a quasi-Hopf bimodule over $A$ is essentially a vector space with three distinguished automorphisms $\alpha,\beta,\nu$ such that $\alpha^2=\id_V=\beta^2$, $\alpha\circ \beta = \beta\circ\alpha = \nu^2$, $\nu\circ\alpha = - \alpha\circ \nu$ and $\nu\circ \beta = -\beta\circ\nu$. 

Let $M\in\quasihopf{A}$ and pick $f\in\Hom{A}{}{A}{A}{A\otimes A}{M}$. As we have seen, such an $f$ is uniquely determined by $T_f:A\to M$ satisfying \eqref{eq:deltaf} and \eqref{eq:Tpreant}. In particular, since from \eqref{eq:Tpreant} it follows that $T_f(g)=T_f(g) g^2 = g T_f(1) g$, $f$ is uniquely determined by an element $\omega\coloneqq T_f(1)$ satisfying \eqref{eq:deltaf}. If we compute first $2T_f(p) = T_f(1-g) = \omega - g \omega  g$, then \eqref{eq:deltaf} becomes
\[
\begin{gathered}
\omega\otimes t + \nu(\omega)\otimes p = \delta(\omega) \stackrel{\eqref{eq:deltaf}}{=} T_f(1)\otimes 1 - 2T_f(p) p\otimes p = \omega \otimes 1 - \omega p \otimes p + g\omega gp \otimes p \\
 = \omega \otimes t + \omega \otimes p - \omega p \otimes p - g\omega p \otimes p= \omega \otimes t + (\omega -\omega p - g \omega p) \otimes p. 
\end{gathered}
\]
Thus, $\nu(\omega) = \omega - \omega p - g \omega p = \omega - 2t \omega p$. As a consequence, observe that $\nu(\omega) p = \omega p - 2t \omega p = -g \omega p$ and so $\omega p = - g \nu(\omega) p$. Therefore,
$
\omega = \omega t + \omega p = \omega t - g \nu(\omega) p = \omega t + \nu(g \omega) p = \omega t + \nu(g \omega t)
$ and $\omega$ is uniquely determined by $\omega t$. 
The converse is true as well: if $\omega\in M$ satisfies $\omega = \omega t + \nu(g \omega) p$ then the morphism $f_\omega:A\to M$ given by $f_\omega(a) = f_\omega(a_e1+a_gg) \coloneqq  a_e\omega +a_gg\omega g$ satisfies \eqref{eq:deltaf} and \eqref{eq:Tpreant}.\begin{invisible}If $^\dag M$ are the elements of the form $T_f(1)$, then the bijection with $Mt$ is given by $^\dag M\to Mt, m \mapsto mt$ and $Mt\to ^\dag M, mt \mapsto mt + \nu(gmt)$.\end{invisible}
This means that $T_f(1)$ is uniquely determined by its image via the projection $M\to Mt,m\mapsto mt$, which in turn induces an isomorphism of left $A$-modules $\cl{M}\cong Mt$.\begin{invisible}Since $m=m 1 = m t+ m  p$, if $m t=0$ then $m=m p\in Mp=MA^+$.\end{invisible}
Summing up, the existence of the bijective correspondence $T:\Hom{A}{}{A}{A}{A\otimes A}{M} \to Mt, f \mapsto T_f(1)t,$ shows that the canonical map $\sigma_M: \Hom{A}{}{A}{A}{A\otimes A}{M} \to \cl{M}$ is an isomorphism for every $M\in\quasihopf{A}$. Note that we explicitly have 
\begin{gather}\label{eq:T-1}
\begin{gathered}
T^{-1}(mt)(1\otimes 1) = mt+\nu(gmt) = mt-g\nu(m)p, \\
T^{-1}(mt)(g\otimes 1) = g(mt+\nu(gmt))g = gmt + \nu(mt).
\end{gathered}
\end{gather}
To see how the preantipode looks like, first we compute $\nu$ for $A\cmptens{}A$. Since the coaction on the elements of the basis behave as follows:
\begin{gather*}
1\otimes 1\mapsto 1\otimes 1\otimes 1 -2p\otimes p \otimes p = (1\otimes 1) \otimes t + (1\otimes 1 - 2p\otimes p)\otimes p, \\
g\otimes 1\mapsto g\otimes 1\otimes 1 -2gp\otimes p \otimes p = (g\otimes 1) \otimes t + (g\otimes 1 + 2p\otimes p)\otimes p, \\
1\otimes g\mapsto 1\otimes g\otimes g -2p\otimes gp \otimes gp = (1\otimes g) \otimes t - (1\otimes g + 2p\otimes p)\otimes p, \\
g\otimes g\mapsto g\otimes g\otimes g -2gp\otimes gp \otimes gp = (g\otimes g) \otimes t - (g\otimes g - 2p\otimes p)\otimes p,
\end{gather*}
we see that, by definition of $\nu:M\to M, m \mapsto m'$,
\begin{gather*}
\nu(1\otimes 1) = 1\otimes 1 - 2p\otimes p, \qquad \nu(g\otimes 1) =  g\otimes 1 + 2p\otimes p, \\
\nu(1\otimes g) = -1\otimes g - 2p\otimes p, \qquad \nu(g\otimes g) = -g\otimes g + 2p\otimes p.
\end{gather*}
Thus, by resorting to \eqref{eq:T-1} and by writing $a=a_e1+a_gg$ for all $a\in A$, we find out that
\begin{gather*}
S(a) \stackrel{\eqref{eq:S}}{=} (A\otimes \varepsilon)\left(\sigma_{\what{A}}^{-1}\left(\cl{1\otimes 1}\right)(a\otimes 1)\right) = (A\otimes \varepsilon)\left(T^{-1}\left((1\otimes 1)\cdot t\right)(a\otimes 1)\right) \\
= a_e(A\otimes \varepsilon)\left(T^{-1}\left((1\otimes 1)\cdot t\right)(1\otimes 1)\right) + a_g(A\otimes \varepsilon)\left(T^{-1}\left((1\otimes 1)\cdot t\right)(g\otimes 1)\right) \\
\stackrel{\eqref{eq:T-1}}{=} a_e(A\otimes \varepsilon)\left(t_1\otimes t_2 - g(1\otimes 1 - 2p\otimes p)p\right) + a_g(A\otimes \varepsilon)\left(g(t_1\otimes t_2) + (1\otimes 1-2p\otimes p)p\right) \\
= a_e(A \hspace{-1pt} \otimes \hspace{-1pt}  \varepsilon)\left(t_1 \hspace{-1pt} \otimes \hspace{-1pt}  t_2 - p_1 \hspace{-1pt} \otimes \hspace{-1pt}  gp_2 - 2pp_1 \hspace{-1pt} \otimes \hspace{-1pt}  gpp_2\right) + a_g(A \hspace{-1pt} \otimes \hspace{-1pt}  \varepsilon)\left(t_1 \hspace{-1pt} \otimes \hspace{-1pt}  gt_2 + p_1 \hspace{-1pt} \otimes \hspace{-1pt}  p_2-2pp_1 \hspace{-1pt} \otimes  \hspace{-1pt} pp_2\right) \\
= a_e(t-p) + a_g(t+p) = a_eg+a_g = ag,
\end{gather*}
for every $a\in A$, which coincides with the one constructed in \cite[Example 1]{Saracco} as expected.
\end{example}


\subsection{Connections with one-sided Hopf modules and Hopf algebras}\label{ssec:previous}

Given a bialgebra $B$, one can consider its category of (right) Hopf modules $\rhopf{B}$ and we always have an adjoint triple $\inv{(-)}{B} \dashv -\otimes B \dashv \coinv{(-)}{B}$ between $\M$ and $\rhopf{B}$, where $\inv{M}{B} = M/MB^+$ and $\coinv{M}{B} = \left\{m\in M \mid \delta(m) = m\otimes 1\right\}$.
In \cite[Theorem 2.7]{Saracco-Frobenius} we proved that the functor $-\otimes B:\M\to\rhopf{B}$ is Frobenius if and only if the canonical map $\varsigma_M:\coinv{M}{B}\to\inv{M}{B}, m\mapsto \cl{m},$ is an isomorphism for every $M\in\rhopf{B}$, if and only if $B$ is a right Hopf algebra (\ie it admits a right convolution inverse of the identity). 



By working with left Hopf modules instead, recall that the counit of the adjunction $\xymatrix @C=15pt{ B\otimes (-): \M \ar@<+0.4ex>[r]|{} & { _{B}^{B}\M }: {^{coB}\left( -\right)} \ar@<+0.4ex>[l]}$ on the Hopf module $\check{B}\coloneqq { _{\bullet }^{\bullet}B} \otimes { _{\bullet }^{\phantom{\bullet}}B} $ induces
\begin{equation*}
\can : \left(
\begin{gathered}
\xymatrix @R=0pt{
{{ _{\bullet }^{\bullet }B} \otimes B} \ar[r] & { _{\bullet }^{\bullet }B} \otimes { ^{coB}\left( {_{\bullet }^{\bullet }B} \otimes { _{\bullet }^{\phantom{\bullet}}B} \right)} \ar[r]^-{\epsilon_{\check{B }}} & { _{\bullet}^{\bullet }B} \otimes { _{\bullet }^{\phantom{\bullet}}B} \\
a\otimes b \ar@{|->}[r] & a\otimes \left( 1\otimes b\right) \ar@{|->}[r] & a_{1}\otimes a_{2}b
}
\end{gathered}
\right)
\end{equation*}

\begin{lemma}
Let $B$ be a bialgebra. The canonical morphism
\begin{equation*}
\can :B\otimes B\rightarrow B\otimes B:\quad a\otimes b\mapsto a_{1}\otimes a_{2}b
\end{equation*}%
can be considered as a morphism $\can :{ _{\bullet }^{\phantom{\bullet}}B^{\bullet }_{\phantom{\bullet}}} \otimes { B_{\bullet}^{\bullet }} \rightarrow { _{\bullet }^{\phantom{\bullet}}B} \otimes {_{\bullet }^{\phantom{\bullet}}B_{\bullet }^{\bullet }} $ in $\quasihopf{B}$.
\end{lemma}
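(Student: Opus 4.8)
The plan is to make all four (co)actions on each side fully explicit using the bullet conventions of Remark \ref{rem:quasihopf} (specialized to $\Phi=1\otimes1\otimes1$), and then to verify directly that $\can(x\otimes y)=x_1\otimes x_2 y$ is left $B$-linear, right $B$-linear and right $B$-colinear. First I would read off the source. In ${ _{\bullet }^{\phantom{\bullet}}B^{\bullet }_{\phantom{\bullet}}} \otimes { B_{\bullet}^{\bullet }}$ the left copy supplies the regular left action and the regular right coaction, while the right copy supplies the regular right action and the regular right coaction; every remaining action is trivial (via $\varepsilon$) and every remaining coaction is trivial (via $u$). Forming the diagonal structures of \eqref{eq:diagactions} therefore gives
\[
a\cdot(x\otimes y)\cdot b = ax\otimes yb \qquad\text{and}\qquad \delta(x\otimes y)=x_1\otimes y_1\otimes x_2y_2 ,
\]
and a short check (or the bimonoid argument of Remark \ref{rem:quasihopf}) confirms this lies in $\quasihopf{B}$. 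Dually, the target ${ _{\bullet }^{\phantom{\bullet}}B} \otimes {_{\bullet }^{\phantom{\bullet}}B_{\bullet }^{\bullet }}$ is the left module ${_\bullet^{\phantom{\bullet}}B}$ tensored with the regular two-sided Hopf module, i.e.\ exactly $(-\otimes B)({_\bullet^{\phantom{\bullet}}B})$; by \eqref{eq:quasihopfbim} its structures are
\[
a\cdot(x\otimes y)\cdot b = a_1x\otimes a_2yb \qquad\text{and}\qquad \delta(x\otimes y)=x\otimes y_1\otimes y_2 .
\]

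With these in place the checks are routine. Left $B$-linearity is multiplicativity of $\Delta$:
\[
\can\big(a\cdot(x\otimes y)\big)=\can(ax\otimes y)=(ax)_1\otimes(ax)_2y=a_1x_1\otimes a_2x_2y=a\cdot\can(x\otimes y).
\]
Right $B$-linearity is immediate, since $\can\big((x\otimes y)\cdot b\big)=\can(x\otimes yb)=x_1\otimes x_2yb=\can(x\otimes y)\cdot b$. The one identity that genuinely uses coassociativity is right $B$-colinearity: comparing
\[
(\can\otimes B)\big(\delta(x\otimes y)\big)=\can(x_1\otimes y_1)\otimes x_2y_2=(x_1)_1\otimes(x_1)_2y_1\otimes x_2y_2
\]
with
\[
\delta\big(\can(x\otimes y)\big)=\delta(x_1\otimes x_2y)=x_1\otimes x_2y_1\otimes x_3y_2 ,
\]
the two coincide after reindexing $(x_1)_1\otimes(x_1)_2\otimes x_2=x_1\otimes x_2\otimes x_3$ by coassociativity of $\Delta$. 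Hence $\can$ respects all three structures and is a morphism in $\quasihopf{B}$.

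I expect the only real difficulty to be bookkeeping rather than mathematics: correctly decoding which of the four decorations on each copy of $B$ is a \emph{given} (co)action and filling the remaining ones in trivially, so that the two displays of structures above are pinned down unambiguously. Once that is done, each of the three conditions collapses to one of the two bialgebra axioms — multiplicativity of $\Delta$ for the module-linearity and coassociativity for the colinearity — with no essential obstacle remaining.
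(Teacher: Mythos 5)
Your proof is correct and follows essentially the same route as the paper: the paper likewise dismisses the $B$-bilinearity as clear and verifies right $B$-colinearity by the identical Sweedler computation, reducing to coassociativity of $\Delta$. Your explicit unpacking of the bullet decorations and of the two trivial structures is just a more detailed write-up of the same argument.
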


\begin{proof}
The $B$-bilinearity is clear. For colinearity, we compute
\begin{align*}
\left(\can \otimes B\right)\delta\left(a\otimes b\right) & = \can \left( a_{1}\otimes b_{1}\right) \otimes a_{2}b_{2} =a_{1_{1}}\otimes a_{1_{2}}b_{1}\otimes a_{2}b_{2} =a_{1}\otimes a_{2_{1}}b_{1}\otimes a_{2_{2}}b_{2} \\
&=\left( a_{1}\otimes a_{2}b\right) _{0}\otimes \left( a_{1}\otimes a_{2}b\right) _{1} = \delta\left(\can(a\otimes b)\right). \qedhere
\end{align*}
\end{proof}

\begin{lemma}\label{lemma:adjoint}
The assignments $M_{\bullet }^{\bullet }\mapsto B^{\bullet }\otimes M_{\bullet }^{\bullet }$ and $f\mapsto B^{\bullet }\otimes f$ provide a monad $B^{\bullet }\otimes -$ on $\M _{B}^{B}$. Its Eilenberg-Moore category of algebras is $\quasihopf{B}$. In particular, the functor $B\otimes-:\rhopf{B}\to\quasihopf{B},M^\bullet_\bullet\mapsto {_\bullet^{\phantom{\bullet}}B_{\phantom{\bullet}}^\bullet\otimes M^\bullet_\bullet}$ is left adjoint to the forgetful functor ${_BU}:\quasihopf{B}\to\rhopf{B}$:
\begin{equation}\label{eq:adjoint}
\Hom{B}{}{B}{B}{ { _{\bullet }^{\phantom{\bullet}}B^{\bullet }_{\phantom{\bullet}}} \otimes { M_{\bullet }^{\bullet }}}{\quasihopfmod{N} }  \cong \Hom{}{}{B}{B}{ M_{\bullet }^{\bullet }}{N_{\bullet}^{\bullet }}
\end{equation}
\end{lemma}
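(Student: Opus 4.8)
The plan is to follow the three assertions in order: first verify that $T := B^\bullet \otimes -$ is a monad on $\rhopf{B}=\M_B^B$, then identify its Eilenberg--Moore algebras with two-sided Hopf modules, and finally read off the adjunction \eqref{eq:adjoint} from the general Eilenberg--Moore machinery. For the first stage, the endofunctor sends a right Hopf module $M_\bullet^\bullet$ to $B\otimes M$ equipped with the right action $(b\otimes m)\cdot c = b\otimes mc$ carried by $M$ alone and the diagonal coaction $\delta(b\otimes m)=b_1\otimes m_0\otimes b_2m_1$; one checks directly that this is again a right Hopf module, the compatibility $\delta(xc)=\delta(x)\Delta(c)$ reducing to coassociativity of $\Delta$ together with the Hopf condition on $M$. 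On morphisms $T(f)=\mathsf{Id}_B\otimes f$, which is again right $B$-linear and colinear. The unit $\eta_M:m\mapsto 1\otimes m$ and multiplication $\mu_M:b\otimes c\otimes m\mapsto bc\otimes m$ are induced by the unit and multiplication of $B$, are morphisms in $\M_B^B$, and satisfy the monad axioms precisely because $B$ is an associative unital algebra. This stage is routine.

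The heart of the argument is the second stage. A $T$-algebra is a right Hopf module $M$ together with a structure morphism $\lambda:B\otimes M\to M$ in $\M_B^B$ satisfying $\lambda\circ\eta_M=\mathsf{Id}$ and $\lambda\circ\mu_M=\lambda\circ(B\otimes\lambda)$. Writing $b\cdot m:=\lambda(b\otimes m)$, the two monad axioms say exactly that $(-)\cdot(-)$ is a unital associative left $B$-action. The remaining content is to unwind the requirement that $\lambda$ respect the right Hopf module structure: right $B$-linearity of $\lambda$ becomes $(b\cdot m)c=b\cdot(mc)$, so that the two actions commute and $M$ is a $B$-bimodule, while right $B$-colinearity of $\lambda$ becomes $\delta(b\cdot m)=b_1\cdot m_0\otimes b_2m_1$, so that the coaction is left $B$-linear for the diagonal action on $M\otimes B$. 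Combined with the right Hopf module axioms already present, these are precisely the defining conditions of an object of $\quasihopf{B}=(\Bim{B}{B})^B$ recalled in Remark \ref{rem:quasihopf}. Since a morphism of $T$-algebras is a morphism in $\M_B^B$ commuting with the structure maps, hence a left- and right-$B$-linear, right-$B$-colinear map, this yields an isomorphism of categories $(\rhopf{B})^T\cong\quasihopf{B}$.

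Under this identification the free functor $F^T:M\mapsto(TM,\mu_M)$ is exactly the functor $B\otimes-:\rhopf{B}\to\quasihopf{B}$ of the statement (the left action on $B\otimes M$ induced by $\mu_M$ is left multiplication on the tensor factor $B$), while the forgetful functor $U^T$ is the functor ${_B U}$ that discards the left action. The Eilenberg--Moore adjunction $F^T\dashv U^T$ then produces the natural bijection \eqref{eq:adjoint}, with unit $\eta_M:m\mapsto 1\otimes m$ and counit $\lambda_N:b\otimes n\mapsto b\cdot n$; explicitly a morphism $g:M\to{_B U}(N)$ in $\rhopf{B}$ corresponds to $b\otimes m\mapsto b\cdot g(m)$ in $\quasihopf{B}$, with inverse $f\mapsto f(1\otimes-)$.

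I expect the main obstacle to be the bookkeeping in the second stage: correctly translating the two distinct ways in which $\lambda$ is required to be a morphism in $\M_B^B$ into, respectively, the bimodule condition and the left-colinearity condition, and in particular confirming that the diagonal coaction placed on $TM$ matches the coaction of a two-sided Hopf module with no sign or ordering mismatch. Once this dictionary is in place, the identification of algebras and of their morphisms, and the resulting adjunction, are formal consequences of the Eilenberg--Moore construction.
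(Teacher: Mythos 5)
Your proposal is correct and follows essentially the same route as the paper: the paper likewise observes that $B^\bullet$ is an algebra in $(\M^B,\otimes,\K)$ (so that $B^\bullet\otimes-$ with diagonal coaction, unit $m\mapsto 1\otimes m$ and multiplication $a\otimes b\otimes m\mapsto ab\otimes m$ is a monad on $\M_B^B$), identifies a $T$-algebra structure $\lambda$ with a left $B$-action that is right $B$-linear and right $B$-colinear, hence with an object of $\quasihopf{B}$, and reads off \eqref{eq:adjoint} as the Eilenberg--Moore free--forgetful adjunction. Your explicit description of the unit, counit and the bijection $g\mapsto\left[b\otimes m\mapsto b\cdot g(m)\right]$ matches the paper's (more terse) argument.
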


\begin{proof}
In a nutshell, the comodule $B^{\bullet }$ is an algebra in the monoidal category $\left( \M ^{B},\otimes ,\K \right) $. Given any monoidal category $\cC$ and $A,A^{\prime }$ two algebras, the endofunctors $A\otimes -$ and $-\otimes A^{\prime }$ provide monads on $\cC_{A^{\prime }}$ and $_{A}\cC$ respectively. In particular, $B^{\bullet }\otimes -$ does on $\M _{B}^{B}$. A direct check is also possible: recall that the $B$-comodule structure on the tensor product of two comodules is given by the diagonal coaction, \ie $\delta(n\otimes p) = n_0\otimes p_0 \otimes n_1p_1$ for all $n\in N^\bullet$, $p\in P^\bullet$. Consider the assignments
\begin{gather*}
\rho:B^\bullet_{\phantom{\bullet}} \otimes \left( B^\bullet_{\phantom{\bullet}} \otimes M^\bullet_\bullet \right)  \to B^\bullet_{\phantom{\bullet}}\otimes M^\bullet_\bullet :a\otimes b\otimes m\mapsto ab\otimes m \\
M^\bullet_\bullet \rightarrow B^\bullet _{\phantom{\bullet}} \otimes M^\bullet_\bullet:m\mapsto 1\otimes m
\end{gather*}
for every $M$ in $\M _{B}^{B}$. They are morphism of right Hopf modules since
\begin{equation*}
\left(\rho\otimes B\right)\delta\left(a\otimes \left(b\otimes m\right)\right) = a_{1}b_{1}\otimes m_{0}\otimes a_{2}\left( b_{2}m_{1}\right) =\left( ab\right) _{1}\otimes m_{0}\otimes \left( ab\right) _{2}m_{1} = \delta(ab\otimes m)
\end{equation*}
(the other three compatibilities are trivial). Therefore $B^{\bullet }\otimes -$ is indeed a monad on $\M _{B}^{B}$. An algebra $\left( M,\mu \right) $ for this monad is an object $M$ in $\rhopf{B}$, whose underlying vector space admits a left $B$-module structure $b\triangleright m\coloneqq \mu \left( b\otimes m\right) $ which is $B$-linear and $B$-colinear:
\begin{equation*}
b\triangleright \left( ma\right) =\left( b\triangleright m\right) a,\qquad b_{1}\triangleright m_{0}\otimes b_{2}m_{1}=\left( b\triangleright m\right) _{0}\otimes \left( b\triangleright m\right) _{1},
\end{equation*}
\ie it is an object in $\quasihopf{B}$, and viceversa.
\end{proof}

\begin{remark}\label{rem:freemod}
The fact that $B^\bullet$ is an algebra in the monoidal category $\left( \M ^{B},\otimes ,\K \right)$, mentioned in the proof of Lemma \ref{lemma:adjoint}, implies also that the functor $-\otimes B:\Rcomod{B} \to \rhopf{B}$ is left adjoint to the corresponding forgetful functor $\rhopf{B} \to \Rcomod{B}$, forgetting the module structure (see, for example, \cite[\S VII.4]{MacLane}).
\end{remark}

As a consequence of Lemma \ref{lemma:adjoint} and Remark \ref{rem:freemod}, for all $M$ in $\quasihopf{B}$ we have a $\K $-linear map $\Lambda_M:\Hom{B}{}{B}{B}{B\otimes B}{M}\to \coinv{M}{B}$, natural in $M$, given by the composition of the chain of isomorphisms
\[
\Hom{B}{}{B}{B}{ { _{\bullet }^{\phantom{\bullet}}B^{\bullet }_{\phantom{\bullet}}} \otimes { B_{\bullet }^{\bullet }}}{\quasihopfmod{M} }  \stackrel{\eqref{eq:adjoint}}{\cong} \Hom{}{}{B}{B}{ B_{\bullet }^{\bullet }}{M_{\bullet}^{\bullet }} \cong \Hom{}{}{}{B}{ \K ^{\bullet}}{M^{\bullet }} \cong \coinv{M}{B}
\]
with the morphism $\Hom{B}{}{B}{B}{{ _{\bullet }^{\phantom{\bullet}}B} \otimes \quasihopfmod{B}}{\quasihopfmod{M}} \to \Hom{B}{}{B}{B}{ { _{\bullet }^{\phantom{\bullet}}B^{\bullet }_{\phantom{\bullet}}} \otimes { B_{\bullet }^{\bullet }}}{\quasihopfmod{M} }$ induced by $\can$.
It is given by the assignment $f\mapsto f\left( 1\otimes 1\right) $, whence the following diagram in $\M$ commutes
\begin{equation*}
\begin{gathered}
\xymatrix @!0 @=40pt{ 
 & \Hom{B}{}{B}{B}{ B\otimes B}{M} \ar[dl]_-{\Lambda_M} \ar[dr]^-{\sigma _{M}} & \\
\coinv{M}{B} \ar[rr]_-{\varsigma _{M}} & & \inv{M}{B}
}
\end{gathered}.
\end{equation*}

Recall that a bialgebra $B$ is a Hopf algebra if and only if $\can$ is invertible (in light, for example, of a left-handed version of \cite[Example 2.1.2]{Schauenburg-Galois}).

\begin{proposition}
The following are equivalent for a bialgebra $B$:
\begin{enumerate}[label=(\arabic*), ref=\emph{(\arabic*)}, leftmargin=1cm, labelsep=0.3cm]
\item\label{item1:useless} $B$ is a Hopf algebra;
\item\label{item2:useless} $\sigma$ is a natural isomorphism;
\item\label{item3:useless} $\Lambda$ is a natural isomorphism.
\end{enumerate}
If any of the foregoing conditions holds, then $\varsigma$ is a natural isomorphism.
\end{proposition}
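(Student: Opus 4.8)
The plan is to reduce the three equivalences to Theorem~\ref{thm:mainThmHopf} and to the invertibility of $\can$, exploiting both the commutative triangle $\sigma_M = \varsigma_M\circ\Lambda_M$ and the explicit factorization of $\Lambda_M$. First I would dispose of \ref{item1:useless}~$\Leftrightarrow$~\ref{item2:useless}: since $\sigma$ is automatically natural, the statement that $\sigma$ is a natural isomorphism is verbatim condition (4) of Theorem~\ref{thm:mainThmHopf}, which is there shown to be equivalent to $B$ being a Hopf algebra. No further argument is needed for this equivalence.

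The heart of the matter is \ref{item1:useless}~$\Leftrightarrow$~\ref{item3:useless}. By its very definition, $\Lambda_M$ is the composite of the natural isomorphism
\[
\Hom{B}{}{B}{B}{ { _{\bullet }^{\phantom{\bullet}}B^{\bullet }_{\phantom{\bullet}}} \otimes { B_{\bullet }^{\bullet }}}{\quasihopfmod{M}} \cong \coinv{M}{B}
\]
with the precomposition map $\can^{*}\colon \Hom{B}{}{B}{B}{B\otimes B}{M}\to \Hom{B}{}{B}{B}{{ _{\bullet }^{\phantom{\bullet}}B^{\bullet }_{\phantom{\bullet}}} \otimes { B_{\bullet }^{\bullet }}}{M}$, $f\mapsto f\circ\can$. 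Hence $\Lambda$ is a natural isomorphism exactly when the family $\{\can^{*}_M\}_{M\in\quasihopf{B}}$ is. At this point I would invoke the Yoneda lemma inside $\quasihopf{B}$: as observed in the lemma preceding the proposition, $\can$ is a genuine morphism of $\quasihopf{B}$ with source and target both in $\quasihopf{B}$, so $\{\can^{*}_M\}_{M}$ is the image of $\can$ under the (fully faithful) contravariant Yoneda embedding; it is therefore a natural isomorphism if and only if $\can$ is an isomorphism in $\quasihopf{B}$. Since the inverse of a bijective morphism of quasi-Hopf bimodules is again a morphism of $\quasihopf{B}$, invertibility of $\can$ in $\quasihopf{B}$ coincides with its invertibility as a $\K$-linear map. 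Combining this with the recalled fact that $B$ is a Hopf algebra if and only if $\can$ is invertible closes the chain $\ref{item1:useless}\Leftrightarrow \can\text{ invertible}\Leftrightarrow \can^{*}\text{ natural iso}\Leftrightarrow \ref{item3:useless}$. (For the forward direction one can also argue directly: if $B$ is Hopf then $\can$ is invertible, so $\can^{*}$ and hence $\Lambda$ are natural isomorphisms.)

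Finally, for the concluding assertion I would use the commutative triangle $\sigma_M = \varsigma_M\circ\Lambda_M$. Under any (equivalently, all) of the hypotheses, both $\sigma$ and $\Lambda$ are natural isomorphisms, whence $\varsigma_M = \sigma_M\circ\Lambda_M^{-1}$ exhibits $\varsigma$ as a natural isomorphism as well.

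The step I expect to be the main obstacle is the direction \ref{item3:useless}~$\Rightarrow$~\ref{item1:useless}: one must ensure that invertibility of the precomposition maps $\can^{*}_M$ for every test object $M\in\quasihopf{B}$ genuinely forces $\can$ itself to be invertible. This is precisely where the Yoneda argument is indispensable, together with the facts that $\can$ lives in $\quasihopf{B}$ with both endpoints in $\quasihopf{B}$ and that invertibility there agrees with $\K$-linear invertibility; the remaining bookkeeping---naturality in $M$ of the displayed chain defining $\Lambda_M$---is routine.
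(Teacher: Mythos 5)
Your proposal is correct and follows essentially the same route as the paper: the equivalence of \ref{item1:useless} and \ref{item2:useless} is quoted from Theorem \ref{thm:mainThmHopf}, the equivalence with \ref{item3:useless} is reduced via the defining factorization of $\Lambda_M$ to the statement that $\Hom{B}{}{B}{B}{\can}{-}$ is a natural isomorphism iff $\can$ is invertible (which the paper also justifies by the Yoneda argument you describe), and the final claim about $\varsigma$ follows from the commutative triangle $\sigma_M=\varsigma_M\circ\Lambda_M$. No gaps.
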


\begin{proof}
The equivalence between \ref{item1:useless} and \ref{item2:useless} comes from Theorem \ref{thm:mainThmHopf}. Concerning the equivalence between \ref{item1:useless} and \ref{item3:useless}, $\Lambda$ is a natural isomorphism if and only if $\Hom{B}{}{B}{B}{\can}{-}$ is a natural isomorphism, if and only if $\can$ is an isomorphism.
\end{proof}

\begin{personal}
Since, to be precise, $\varsigma_M:\coinv{{_BU(M)}}{B}\to{U(\cl{M})}$, where ${_BU}:\quasihopf{B}\to \rhopf{B}$ and $U:\Lmod{B}\to\M$ are the forgetful functors, if it is an iso for every $M\in\rhopf{B}$ then it is also for every $M\in \quasihopf{B}$. Moreover, if $B$ is Hopf then obviously it is an iso.

Concerning the equivalence with the last point: $\varphi_M$ is an isomorphism if and only if $\Hom{B}{}{B}{B}{\can}{-}$ is a natural isomorphism, if and only if $\can$ is an isomorphism.

The latter follows by the subsequent argument. Assume $\cC$ is a category and $f:a\to b$ in $\cC$. Then $\cC(f,-):\cC(b,-)\to \cC(a,-)$ is a natural isomorphism iff exists $\tau:\cC(a,-) \to \cC(b,-)$ natural such that $\tau \circ \cC(f,-) = \id_{\cC(b,-)}$ and $\cC(f,-)\circ \tau = \id_{\cC(a,-)}$. Since $\Nat\left(\cC(a,-), \cC(b,-)\right) \cong \cC(b,a)$, there exists a unique $g:b\to a$ such that $\tau = \cC(g,-)$. Therefore, by uniqueness, $\cC(fg,-) = \cC(g,-) \circ \cC(f,-) = \id_{\cC(b,-)}$ implies $fg=\id_b$ and $\cC(gf,-) = \cC(f,-)\circ \cC(g,-) = \id_{\cC(a,-)}$ implies $gf=\id_a$.
\end{personal}

\begin{remark}
Note however that being $\varsigma_M$ an isomorphism for every $M\in\quasihopf{B}$ is not enough to have that $B$ is a Hopf algebra. In fact, denote by ${_BU}:\quasihopf{B}\to \rhopf{B}$ and by $U:\Lmod{B}\to\M$ the forgetful functors. If $B$ is a right Hopf algebra, then $\varsigma_N$ an isomorphism for every $N\in\rhopf{B}$ and hence, in particular, $\varsigma_M:\coinv{\left({_BU}(M)\right)}{B}\to U\left(\cl{M}\right)$ is an isomorphism for every $M\in\quasihopf{B}$. Since there exist right Hopf algebras that are not Hopf, the latter cannot imply that $B$ is Hopf.
\end{remark}


\subsection{Frobenius functors and unimodularity}\label{ssec:unimodularity}

It would be interesting, in light of the similarity between Theorem \ref{thm:mainThm} and \cite[Theorem 2.7]{Saracco-Frobenius}, to look for an analogue of \cite[Theorem 3.12]{Saracco-Frobenius}. Let us report briefly on some partial achievements in this direction.

For a quasi-bialgebra $A$ one can consider its space of \emph{right integrals} $\int_rA$, which is the the $\K$-module $\left\{t\in A\mid ta=t\varepsilon(a) \text{ for all } a\in A\right\}$, and its space of \emph{left integrals} $\int_lA$, which is the the $\K$-module $\left\{s\in A\mid as=\varepsilon(a)s \text{ for all }a\in A\right\}$. As in \cite[page 14]{HausserNill}, we say that $A$ is \emph{unimodular} if $\int_lA=\int_rA$. The following fact will be used in the forthcoming results.

\begin{lemma}\label{lemma:integrals}
If $\K$ is a field and $A$ is a finite-dimensional quasi-bialgebra with preantipode over $\K$, then $\dim_\K\left(\int_lA\right) = 1 = \dim_\K\left(\int_rA\right)$.
\end{lemma}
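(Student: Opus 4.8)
The plan is to combine Proposition \ref{prop:PreQuasiHopf} with the Structure Theorem of Theorem \ref{thm:mainThm} and a dimension count against the linear dual of $A$. First I would invoke Proposition \ref{prop:PreQuasiHopf}: since $\K$ is a field and $A$ is finite-dimensional with a preantipode, $A$ is a quasi-Hopf algebra, hence comes equipped with an antipode $(s,\alpha,\beta)$; in the finite-dimensional case $s$ is moreover bijective and satisfies $\varepsilon\circ s=\varepsilon$. The existence of a preantipode gives, through Theorem \ref{thm:mainThm}, that $-\otimes A\colon\Lmod{A}\to\quasihopf{A}$ is an equivalence with quasi-inverse $\overline{(-)}$; hence every $M\in\quasihopf{A}$ satisfies $M\cong\overline{M}\otimes A$ in $\quasihopf{A}$, so that, when $M$ is finite-dimensional, the underlying space is $\overline{M}\otimes_\K A$ and $\dim_\K M=\dim_\K\overline{M}\cdot\dim_\K A$ (consistently with Lemma \ref{lemma:cltensor}, which yields $\overline{\,\overline{M}\otimes A\,}\cong\overline{M}$ as $\overline{A}\cong\K$).

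The heart of the argument is to realize a space of integrals as the image $\overline{M}$ of a well-chosen quasi-Hopf bimodule of dimension $\dim_\K A$. The natural candidate is $A^{*}=\Homk(A,\K)$: using the antipode one equips $A^{*}$ with an $A$-bimodule structure together with a coaction making $A^{*}\in\quasihopf{A}$ (the dual quasi-Hopf bimodule in the sense of \cite{HausserNill}). I would then identify the coinvariant space by descending the evaluation pairing $A^{*}\times A\to\K$ to a perfect pairing between $\overline{A^{*}}=A^{*}/A^{*}A^{+}$ and the space of right integrals, so that $\int_rA\cong(\overline{A^{*}})^{*}$. Since $\dim_\K A^{*}=\dim_\K A$, the dimension identity above forces
\[
\dim_\K\overline{A^{*}}=\frac{\dim_\K A^{*}}{\dim_\K A}=1,
\]
whence $\dim_\K\bigl(\int_rA\bigr)=1$.

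For the left integrals I would transport this through the antipode rather than repeat the construction: if $t\in\int_rA$, then since $s$ is an algebra anti-homomorphism with $\varepsilon\circ s=\varepsilon$, one checks $b\,s(t)=\varepsilon(b)\,s(t)$ for all $b\in A$, so $s$ carries $\int_rA$ into $\int_lA$, and bijectively because $s$ is bijective; hence $\dim_\K\bigl(\int_lA\bigr)=\dim_\K\bigl(\int_rA\bigr)=1$. Alternatively one may run the same dual-module argument for the opposite quasi-bialgebra $A^{\mathrm{op}}$, which again carries a preantipode (see \cite{Saracco}) and whose right integrals are the left integrals of $A$.

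The step I expect to be the main obstacle is the explicit quasi-Hopf bimodule structure on $A^{*}$ together with the verification that $\overline{A^{*}}$ pairs perfectly with $\int_rA$: unlike the classical Hopf case, the coassociativity and bimodule compatibilities are twisted by $\Phi$ and by $\alpha,\beta$, so producing the coaction on $A^{*}$ and checking the axiom \eqref{eq:coasscomod} is the delicate computational core, while the reduction to the quasi-Hopf case, the dimension count, and the antipode symmetry are formal. I note finally that, if one is content to cite the integral theory of \cite{HausserNill} directly, Proposition \ref{prop:PreQuasiHopf} reduces the statement at once to the one-dimensionality of the spaces of integrals for finite-dimensional quasi-Hopf algebras established there.
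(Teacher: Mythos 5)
Your strategy is sound but genuinely different from what the paper does: the paper's entire proof is the reduction you mention only in your final sentence, namely apply Proposition \ref{prop:PreQuasiHopf} to see that $A$ is a finite-dimensional quasi-Hopf algebra and then cite the known one-dimensionality of the spaces of integrals for such algebras (\cite[Theorem 2.2]{BulacuCaenepeel}). What you propose instead is to unfold the proof of that cited result: apply the Structure Theorem (Theorem \ref{thm:mainThm}) to the dual quasi-Hopf bimodule $A^{*}$, read off $\dim_\K\cl{A^{*}}=\dim_\K A^{*}/\dim_\K A=1$, and identify $\cl{A^{*}}$ with the linear dual of a space of integrals via evaluation. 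This is essentially the Larson--Sweedler argument as adapted by Hausser and Nill, and it does work; your route is self-contained modulo one construction, while the paper's is a two-line citation.

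Two points in your sketch deserve flagging. First, the quasi-Hopf bimodule structure on $A^{*}$, which you correctly identify as the computational core, is not a routine verification: the actions and the coaction involve $s$, $\alpha$, $\beta$ and $\Phi$ simultaneously, and checking \eqref{eq:coasscomod} is the bulk of the corresponding section of \cite{HausserNill}; moreover the right $A$-action one is forced to use is twisted by $s$, so the annihilator of $A^{*}A^{+}$ under evaluation is not literally $\int_rA$ but an image of a space of integrals under the antipode, and one must track which of $\int_lA$, $\int_rA$ the pairing actually detects. Second, your transfer from right to left integrals invokes bijectivity of $s$ in finite dimensions; in the literature that bijectivity is itself \emph{deduced from} the one-dimensionality of the integral spaces (following Radford's argument), so to avoid circularity you should either run the dual-module argument twice, with the ``left'' and ``right'' quasi-Hopf bimodule structures on $A^{*}$, or order the two statements carefully; the detour through $A^{\mathrm{op}}$ has the same issue, since without bijectivity of $s$ it is not clear that $A^{\mathrm{op}}$ carries a preantipode. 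None of this invalidates the approach, but the honest short proof is the one in your closing remark, which is exactly the proof the paper gives.
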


\begin{proof}
In view of Proposition \ref{prop:PreQuasiHopf}, $A$ is a finite-dimensional quasi-Hopf algebra. Thus the result follows from \cite[Theorem 2.2]{BulacuCaenepeel}.
\end{proof}

Consider the adjunctions
\begin{equation*}
\xymatrix{
\quasihopf{A} \ar@/_/[d]_-{\cl{(-)}} \\
\Lmod{A} \ar@/_/[u]_-{-\otimes A}
} \qquad\quad
\xymatrix{
\quasihopf{A} \ar@/_/[d]_-{U} \\
\Bimod{A} \ar@/_/[u]_-{-\tildetens{}A}
} \qquad
\xymatrix{
\Bimod{A} \ar@/_/[d]_-{-\tensor{A}\K} \\
\Lmod{A} \ar@/_/[u]_-{-\otimes\K}
}
\end{equation*}
where for every $A$-bimodule $N$, $N\tildetens{}A$ denotes the quasi-Hopf bimodule $\bimod{N}\otimes\quasihopfmod{A}$, $U$ is the functor forgetting the coaction and $\K$ is considered as a left or right $A$-module via $\varepsilon$.

For $V\in\Lmod{A}$, recall that we set $V_\varepsilon\coloneqq V\otimes\K\in\Bimod{A}$. An easy observation allows us to conclude that $\cl{M}=U(M)\tensor{A}\K$ and that $V\otimes A = V_\varepsilon\tildetens{}A$ for all $M\in\quasihopf{A}, V\in\Lmod{A}$. Therefore, similarly to what was proven in \cite[Proposition 3.3]{Saracco-Frobenius}, if $\left( U,-\tildetens{}A\right) $ is Frobenius and if $\Hom{A}{}{A}{}{V_\varepsilon}{U(M)} \cong \Hom{A}{}{}{}{V}{\cl{M}}$ naturally in $V\in \Lmod{A} $ and $M\in \quasihopf{A}$, then 
\begin{equation*}
\Hom{A}{}{A}{A}{V\otimes A}{M} = \Hom{A}{}{A}{A}{V_\varepsilon\tildetens{} A}{M} \cong \Hom{A}{}{A}{}{V_\varepsilon}{U(M)} \cong \Hom{A}{}{}{}{V}{\cl{M}}
\end{equation*}
and so $\left( \cl{(-)},-\otimes A\right)$ is Frobenius, which in turn implies that $A$ admits a preantipode. 

\begin{lemma}\label{lemma:unimodular}
Any bijection $\Hom{A}{}{A}{}{V_\varepsilon}{U(M)}\cong\Hom{A}{}{}{}{V}{\cl{M}}$ natural in $V\in\Lmod{A}$ and $M\in\quasihopf{A}$ is a $\K$-linear natural isomorphism
\[
\Theta_{V,M}:\Hom{A}{}{}{}{V}{\cl{M}} \to \Hom{A}{}{A}{}{V_\varepsilon}{U(M)}, \qquad f \mapsto \tau_M \circ f_\varepsilon
\]
where $\tau_M \coloneqq \Theta_{\cl{M},M}\left(\id_{\cl{M}}\right):\cl{M}\to M$. Moreover, when a $\K$-linear natural isomorphism $\Theta_{V,M}$ exists, then $A$ is unimodular and $\int_lA=\int_rA\cong\K$.
\end{lemma}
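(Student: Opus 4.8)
The plan is to split the statement into two essentially independent assertions — the rigidity of $\Theta$ (that any natural bijection must equal the $\K$-linear map $f\mapsto\tau_M\circ f_\varepsilon$) and the integral computation — obtaining the first by a Yoneda argument and the second by specialising the resulting natural isomorphism.

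\emph{Form of $\Theta$.} Fix $M\in\quasihopf{A}$. Both $V\mapsto\Hom{A}{}{}{}{V}{\cl{M}}$ and $V\mapsto\Hom{A}{}{A}{}{V_\varepsilon}{U(M)}$ are contravariant functors on $\Lmod{A}$, and the former is represented by $\cl{M}$. Writing $\tau_M\coloneqq\Theta_{\cl{M},M}\left(\id_{\cl{M}}\right)$ and evaluating the naturality square of $\Theta$ (in the variable $V$) on $\id_{\cl{M}}$ along the morphism $f\colon V\to\cl{M}$ yields $\Theta_{V,M}(f)=\tau_M\circ f_\varepsilon$; this is precisely the (pointwise) Yoneda lemma. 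The displayed expression is additive and $\K$-homogeneous in $f$, so $\Theta_{V,M}$ is $\K$-linear, and being bijective it is a $\K$-linear isomorphism, natural in $V$ and, reading the naturality square in the second variable, in $M$.

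\emph{Reduction to right integrals.} A bimodule map $\phi\colon V_\varepsilon\to U(M)$ satisfies $\phi(v)\cdot a=\phi(v\cdot a)=\varepsilon(a)\phi(v)$, so its image lies in the left $A$-submodule $\int_r M\coloneqq\{m\in M\mid m\cdot a=\varepsilon(a)m\text{ for all }a\in A\}$; conversely every left $A$-linear map $V\to\int_r M$ arises this way. Hence $\Hom{A}{}{A}{}{V_\varepsilon}{U(M)}\cong\Hom{A}{}{}{}{V}{\int_r M}$ naturally, the two sides of $\Theta$ are represented by $\cl{M}$ and by $\int_r M$, and by Yoneda $\Theta$ is induced by an isomorphism of left $A$-modules $\tau_M\colon\cl{M}\xrightarrow{\sim}\int_r M$, natural in $M$. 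Applying this to the regular quasi-Hopf bimodule $A$ (coaction $\Delta$) gives $\int_r A\cong\cl{A}$. Since $ab-\varepsilon(a)b\in A^{+}$, the module $\cl{A}=A/A^{+}$ is free of rank one over $\K$ with the \emph{trivial} left action $a\cdot x=\varepsilon(a)x$; transporting along $\tau_A$ shows $\int_r A=\K t$ is free of rank one, generated by $t\coloneqq\tau_A(\cl{1})$. Moreover left $A$-linearity forces $at=\tau_A(a\cdot\cl{1})=\varepsilon(a)\,\tau_A(\cl{1})=\varepsilon(a)t$, so $t$ is simultaneously a left integral; thus $\int_r A=\K t\subseteq\int_l A$.

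\emph{Unimodularity (the crux).} It remains to prove the reverse inclusion $\int_l A\subseteq\int_r A$, after which $\int_l A=\int_r A=\K t\cong\K$ and unimodularity hold by definition. Here I would realise left integrals inside the machinery: for $s\in\int_l A$ the map $g_s\colon A\to\lmod{A}\otimes\quasihopfmod{A}$, $x\mapsto s\otimes x$, is a morphism in $\quasihopf{A}$ (right $A$-linearity is immediate, while left $A$-linearity and $A$-colinearity reduce, via $as=\varepsilon(a)s$ and \eqref{eq:epsivarphi}, to the counit axioms), and $s\mapsto g_s$ realises $\int_l A$ as a space of quasi-Hopf bimodule morphisms $\Hom{A}{}{A}{A}{A}{\lmod{A}\otimes\quasihopfmod{A}}$. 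Feeding the $g_s$ into the naturality of $\tau$ and comparing with the values of $\tau$ on the \emph{free} quasi-Hopf bimodule $\lmod{A}\otimes\quasihopfmod{A}$ — whose coaction is the only structure coupling the left and right actions — should force each $s$ to be proportional to the two-sided generator $t$, hence to lie in $\int_r A$. This transfer from the right-handed data controlled by $\cl{(-)}$ and $\int_r(-)$ to the left integrals is the delicate point, and the place where the coassociativity constraint \eqref{eq:coasscomod} of the coaction must genuinely be exploited; everything preceding it is formal.
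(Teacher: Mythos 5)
Your first two steps reproduce the paper's proof, merely repackaged through representability. The paper likewise obtains $\Theta_{V,M}(f)=\tau_M\circ f_\varepsilon$ by evaluating naturality in $V$ on $\id_{\cl{M}}$, and then specialises to $V=M=A$: it identifies $\Hom{A}{}{A}{}{A_\varepsilon}{U(A)}$ with $\int_rA$ via $f\mapsto f(1)$ and $\Hom{A}{}{}{}{A}{\cl{A}}$ with $\K$, concluding that $\int_rA=\K\,\tau_A(\cl{1_A})$ is free of rank one and that its generator (hence every right integral) is simultaneously a left integral. Your formulation ``$\tau_M\colon\cl{M}\xrightarrow{\ \sim\ }\int_rM$ naturally in $M$'' is a clean equivalent of this.

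The divergence is your third step, and it cuts both ways. You are right that the definition $\int_lA=\int_rA$ literally asks for the reverse inclusion $\int_lA\subseteq\int_rA$; but this is exactly where the paper's own proof stops as well --- it deduces unimodularity from the single statement that every right integral is a left integral, together with $\int_rA\cong\K$, and in the one place the lemma is applied (Proposition 3.16) the missing inclusion is anyway recovered from finite-dimensionality via Lemma \ref{lemma:integrals}, which makes $\int_lA$ one-dimensional. So everything you actually prove coincides with what the paper actually proves. What you add is an honest acknowledgement that the reverse inclusion needs an argument, followed by a sketch that is not a proof: ``should force'' hides all the content. Concretely, if you run naturality of $\tau$ along $g_s\colon x\mapsto s\otimes x$ into $N=\lmod{A}\otimes\quasihopfmod{A}$, then (over a field) left $A$-linearity and $\tau_N(\cl{1\otimes 1})\in\int_rN$ pin down $\tau_N$ up to an element $u\in A$, and the constraint you extract is $su\otimes t=s\otimes t$ for all $s\in\int_lA$ --- which gives $su=s$ but not $sb=\varepsilon(b)s$. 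So the mechanism you propose does not visibly close the gap. The safe course is to drop the third step and align with the paper (prove $\int_rA\cong\K$ generated by a two-sided integral, hence $\int_rA\subseteq\int_lA$), noting explicitly that the equality as stated is obtained in the intended application from finite-dimensionality; if you want the lemma in its literal generality you need a genuinely new argument for $\int_lA\subseteq\int_rA$.
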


\begin{proof}
Assume firstly that a natural bijection $\Theta_{V,M} : \Hom{A}{}{}{}{V}{\cl{M}} \cong \Hom{A}{}{A}{}{V_\varepsilon}{U(M)}$ exists. Since $\cl{M}\in\Lmod{A}$, for every $f\in \Hom{A}{}{}{}{V}{\cl{M}}$ we may compute
\begin{gather*}
\Theta_{V,M}(f) = \Theta_{V,M}(\id_{\cl{M}}\circ f) = \Theta_{V,M}\left(\Hom{A}{}{}{}{f}{\cl{M}}\left(\id_{\cl{M}}\right)\right) \\
= \Hom{A}{}{A}{}{f_\varepsilon}{U({M})}\left(\Theta_{\cl{M},M}(\id_{\cl{M}})\right) = \Theta_{\cl{M},M}(\id_{\cl{M}}) \circ f_\varepsilon.
\end{gather*}
By setting $\tau_M\coloneqq \Theta_{\cl{M},M}(\id_{\cl{M}}) : \cl{M}_\varepsilon\to U(M)$, we have that $\Theta_{V,M}(f) = \tau_M\circ f_\varepsilon$ for all $f\in \Hom{A}{}{}{}{V}{\cl{M}}$, which is $\K$-linear. Now, assume that a natural isomorphism $\Theta_{V,M}$ exists and consider the particular case $V=A\in\Lmod{A}$ and $M=A\in\quasihopf{A}$. On the one hand, the assignment $\Hom{A}{}{A}{}{A_\varepsilon}{U(A)} \to \int_{r}A: f\mapsto f(1)$ is invertible with explicit inverse $\int_{r}A \to \Hom{A}{}{A}{}{A_\varepsilon}{U(A)} : t\mapsto\left[ a\mapsto at\right]$. On the other hand,
\[
\Hom{A}{}{}{}{A}{\cl{A}} \cong \Hom{A}{}{}{}{A}{\K} \cong \Hom{}{}{}{}{\K}{\K} =\K,
\]
so that any element $f\in \Hom{A}{}{}{}{A}{\cl{A}}$ is of the form $f_k:a\mapsto \varepsilon(a)k\,\cl{1_A}$ for some $k\in\K$. Therefore, since $\Theta_{A,A}$ is an isomorphism, for every $t\in\int_rA$ there exists a (unique) $k\in\K$ such that
\[
at = \Theta_{A,A}(f_k)(a) = \tau_{A}\left(f_k(a)\right) = \varepsilon(a)k\tau_A(1_A)
\]
for every $a\in A$. In particular, for $a=1_A$, $t = k\tau_A(1_A)$ and so it is a left integral as well, showing that $A$ is unimodular. Moreover, we have the $\K$-linear isomorphism
\begin{equation*}
\int_{r}A \cong \Hom{A}{}{A}{}{A_\varepsilon}{U(A)} \cong \Hom{A}{}{}{}{A}{\cl{A}} \cong \K
\end{equation*}
and hence $\int_{r}A$ is free of rank $1$ over $\K$.
\end{proof}

\begin{remark}
The interested reader may check that there is a bijection between natural transformations $\Theta_{V,M}:\Hom{A}{}{}{}{V}{\cl{M}} \to \Hom{A}{}{A}{}{V_\varepsilon}{U(M)}$ and $\K$-linear morphisms $\partial:A\to A\otimes A, a \mapsto \partial^{(1)}(a) \otimes \partial^{(2)}(a)$ satisfying, for all $a,b\in A$,
\begin{gather*}
a\partial^{(1)}(b) \otimes \partial^{(2)}(b) = \partial^{(1)}(a_2b)a_1 \otimes \partial^{(2)}(a_2b), \\
\partial^{(1)}(a) \otimes \partial^{(2)}(a)\varepsilon(b) = \partial^{(1)}(a) \otimes \partial^{(2)}(a)b = \partial^{(1)}(ab_2) \otimes b_1\partial^{(2)}(ab_2).
\end{gather*}
This is given by $\Theta \mapsto \left[a \mapsto \left( (A\otimes A)\otimes \varepsilon \right) \left( \tau _{(A\otimes A)\otimes A} \left( \cl{(1\otimes 1)\otimes a}\right) \right)\right]$ and $\partial\mapsto \Theta^{(\partial)}$, where
\[
\Theta^{(\partial)}_{V,M}(f):V_\varepsilon\to U(M), \quad v\mapsto \partial^{(1)}(f(v)_1)\cdot f(v)_0\cdot \partial^{(2)}(f(v)_1).
\]
\begin{invisible}
Assume that we have $\Theta :\Hom{A}{}{}{}{V}{\cl{M}} \to \Hom{A}{}{A}{}{V_\varepsilon}{U(M)}$ natural in $M\in \quasihopf{A}$ and $V\in \Lmod{A}$ and set $\tau _{M}\coloneqq \Theta \left( \id_{\cl{M}}\right) $ as above. It is natural in $M\in\quasihopf{A}$ as for every $f:M\to P \in\quasihopf{A}$, we have
\[
\tau_P\circ \cl{f} = \Theta_{\cl{M},P} (\cl{f}) = \Theta_{\cl{M},P} (\cl{f}\circ \id_{\cl{M}}) = U(f) \circ \Theta_{\cl{M},M}(\id_{\cl{M}}) = U(f)\circ \tau_M.
\]
Since $\delta _{M}:\quasihopfmod{M}\to \bimod{M}\otimes \quasihopfmod{A}\in \quasihopf{A}$ we have that
\begin{equation*}
\tau _{M}=\left( M\otimes \varepsilon \right) \circ \tau _{M\otimes A}\circ \cl{\delta _{M}}.
\end{equation*}
Since $\mu _{M}:\lmod{M}\otimes \rmod{A} \to \bimod{M}\in \Bimod{A}$ and $f_m:\lmod{A}\to\lmod{M},a\mapsto am\in\Lmod{A}$ we also have that for all $m\in M$
\begin{align*}
\tau _{M}\left( \cl{m}\right) & = \left( \left( M\otimes \varepsilon\right) \circ \tau _{M\otimes A}\circ \cl{\delta _{M}}\right) \left( \cl{m}\right) = \left( \left( M\otimes \varepsilon \right) \circ \tau_{M\otimes A}\right) \left( \cl{m_{0}\otimes m_{1}}\right) \\
 & = \left( \left( M\otimes \varepsilon \right) \circ \tau _{M\otimes A}\right) \left( \cl{\mu _{M}\left( m_{0}\otimes 1\right) \otimes m_{1}}\right) \\
 & = \left( \left( M\otimes \varepsilon \right) \circ \left( \mu _{M}\otimes A\right) \circ \tau _{(M\otimes A)\otimes A}\right) \left( \cl{(m_{0}\otimes 1)\otimes m_{1}}\right) \\
 & = \left( \left( M\otimes \varepsilon \right) \circ \left( \mu _{M}\otimes A\right) \circ \left( f_{m_{0}}\otimes A\otimes A\right) \circ \tau_{(A\otimes A)\otimes A}\right) \left( \cl{(1\otimes 1)\otimes m_{1}}\right) \\
 & = \left( \mu _{M}\circ \left( f_{m_{0}}\otimes A\right) \circ \left((A\otimes A)\otimes \varepsilon \right) \circ \tau _{(A\otimes A)\otimes A}\right) \left( \cl{(1\otimes 1)\otimes m_{1}}\right)
\end{align*}
If we set $\partial:A\to A\otimes A, a \mapsto \left( (A\otimes A)\otimes \varepsilon \right) \left( \tau _{(A\otimes A)\otimes A} \left( \cl{(1\otimes 1)\otimes a }\right)\right)=:\partial^{(1)}(a)\otimes \partial^{(2)}(a)$ then
\begin{equation*}
\tau _{M}\left( \cl{m}\right) =\partial^{(1)}(m_1)m_{0}\partial^{(2)}(m_1) .
\end{equation*}
This has to satisfy
\begin{align*}
a\partial^{(1)}(m_1)m_{0}\partial^{(2)}(m_1) b & = a\tau _{M}\left( \cl{m}\right) b=\tau_{M}\left( \cl{am}\varepsilon \left( b\right)\right)  = \tau_{M}\left( \cl{amb}\right) \\
 & = \partial^{(1)}\left( a_2m_{1}b_{2}\right)a_1m_{0}b_{1}\partial^{(2)}\left( a_2m_{1}b_{2}\right) .
\end{align*}

We claim that there exists a bijective correspondence between maps $\tau$ as above and maps $\partial:A\to A\otimes A$ which satisfy
\begin{gather*}
a\partial^{(1)}(b) \otimes \partial^{(2)}(b) = \partial^{(1)}(a_2b)a_1 \otimes \partial^{(2)}(a_2b), \\
\partial^{(1)}(a) \otimes \partial^{(2)}(a)b = \partial^{(1)}(ab_2) \otimes b_1\partial^{(2)}(ab_2), \\
\partial^{(1)}(a) \otimes \partial^{(2)}(a)b = \partial^{(1)}(a) \otimes \partial^{(2)}(a)\varepsilon(b).
\end{gather*}
In fact, if such a $\partial$ exists then 
\[
\tau_M(\cl{m}) \coloneqq  \partial^{(1)}(m_1)m_{0}\partial^{(2)}(m_1)
\]
is well-defined because if $m_ib_i\in MB^+$ then
\[
\partial^{(1)}(m_{i_1}b_{i_2})m_{i_0}b_{i_1}\partial^{(2)}(m_{i_1}b_{i_2}) = \partial^{(1)}(m_{i_1})m_{i_0}\partial^{(2)}(m_{i_1})\varepsilon(b_i) = 0,
\]
and it is bilinear because
\[
a\tau_M(\cl{m})b = a\partial^{(1)}(m_1)m_{0}\partial^{(2)}(m_1)b = \partial^{(1)}(a_2m_1)a_1m_{0} \otimes \partial^{(2)}(a_2m_1)\varepsilon(b) = \tau_M(\cl{am}\varepsilon(b)).
\]
Conversely, if we consider $\partial:A\to A\otimes A, a \mapsto \left( (A\otimes A)\otimes \varepsilon \right) \left( \tau _{(A\otimes A)\otimes A} \left( \cl{(1\otimes 1)\otimes a}\right) \right)=:\partial^{(1)}(a)\otimes \partial^{(2)}(a)$ then
\begin{align*}
a\partial^{(1)}(b) \otimes \partial^{(2)}(b) & = \left( (A\otimes A)\otimes \varepsilon \right) \left( (a_1\otimes 1\otimes a_2)\tau _{(A\otimes A)\otimes A} \left(\cl{ (1\otimes 1)\otimes b }\right)\right) \\
 & = \left( (A\otimes A)\otimes \varepsilon \right) \left(\tau _{(A\otimes A)\otimes A} \left(\cl{ (a_1\otimes 1)\otimes a_2b }\right)\right) \\
 & = \left( (A\otimes A)\otimes \varepsilon \right) \left((f_{a_1}\otimes A\otimes A)\tau _{(A\otimes A)\otimes A} \left(\cl{ (1\otimes 1)\otimes a_2b }\right)\right) \\
 & = (f_{a_1}\otimes A)\left(\partial^{(1)}(a_2b) \otimes \partial^{(2)}(a_2b)\right) = \partial^{(1)}(a_2b)a_1 \otimes \partial^{(2)}(a_2b),
\end{align*}
\begin{align*}
\partial^{(1)}(a) \otimes \partial^{(2)}(a)b & = \left( (A\otimes A)\otimes \varepsilon \right) \left( \tau _{(A\otimes A)\otimes A} \left( \cl{(1\otimes 1)\otimes a }\right)(1\otimes b_1\otimes b_2)\right) \\
 & = \left( (A\otimes A)\otimes \varepsilon \right) \left( \tau _{(A\otimes A)\otimes A} \left( \cl{(1\otimes 1)\otimes a }\right)\varepsilon(b)\right) \\
 & = \left( (A\otimes A)\otimes \varepsilon \right) \left( \tau _{(A\otimes A)\otimes A} \left( \cl{(1\otimes b_1)\otimes ab_2 }\right)\right) \\
 & = \left( (A\otimes A)\otimes \varepsilon \right) \left((A\otimes f_{b_1}\otimes A) \tau _{(A\otimes A)\otimes A} \left( \cl{(1\otimes 1)\otimes ab_2 }\right)\right) \\
 & = (A\otimes f_{b_1})\left( (A\otimes A)\otimes \varepsilon \right) \left( \tau _{(A\otimes A)\otimes A} \left( \cl{(1\otimes 1)\otimes ab_2 }\right)\right) \\ 
 & = \partial^{(1)}(ab_2) \otimes b_1\partial^{(2)}(ab_2). \qed
\end{align*}
Notice also that if $A$ admits a two-sided integral $t$, then $\partial:A\to A\otimes A, a\mapsto \varepsilon(a)1_A\otimes t$ satisfies the requirements and hence it corresponds to a morphism $\Hom{A}{}{}{}{V}{\cl{M}} \to \Hom{A}{}{A}{}{V_\varepsilon}{U(M)}$. If furthermore $A$ admits a preantipode $S$ and if $\lambda_M:\cl{M} \to M, \cl{m}\mapsto \Phi^1m_0S(\Phi^2m_1)\Phi^3$ then to every $f:V\to \cl{M}$ one can associate $v\mapsto \lambda_M(f(v))t$ as in \cite[Lemma 3.11]{Saracco-Frobenius}, which now however assume the form $v\mapsto m_vt$ where $m_v\in M$ is any preimage of $f(v)$ in $M$ (in the previous paper, we had the canonical morphism $m\mapsto m_0S(m_1)$ multiplied by the one-sided integral $\varepsilon(e^1)e^2$). This would have admitted a kind of left inverse given by $\Hom{A}{}{A}{}{V_\varepsilon}{U(M)} \to \Hom{}{}{}{}{V}{\cl{M}}, g\mapsto \left[v\mapsto \cl{g(v)_0}\psi\left(g(v)_1\right)\right]$ exactly as in \cite[Lemma 3.11]{Saracco-Frobenius}. In fact
\[
\cl{(m_v)_0t_1}\psi\left((m_v)_1t_2\right) = \cl{(m_v)_0}\psi\left((m_v)_1t\right) = \cl{m_v}\psi\left(t\right) = f(v)\psi(t)
\]
and as $t$ we could have chosen $\varepsilon(e^1)e^2$ itself, without loss of generality, so that $\psi(t) = 1$. Nevertheless, there's no evidence that $v\mapsto \cl{g(v)_0}\psi\left(g(v)_1\right)$ is left $A$-linear, which is the main obstacle here.\end{invisible}
\end{remark}

\begin{proposition}
Assume that $\K$ is a field. Then the following assertions are equivalent for a quasi-bialgebra $A$ over $\K$: 
\begin{enumerate}[label=(\arabic*), ref=\emph{(\arabic*)}, leftmargin=1cm, labelsep=0.3cm]
\item\label{item:3.14-1} $\left( U,-\tildetens{}A\right) $ is Frobenius and $\Hom{A}{}{A}{}{V_\varepsilon}{U(M)} \cong \Hom{A}{}{}{}{V}{\cl{M}}$ naturally in $V\in \Lmod{A} $ and $M\in \quasihopf{A}$;
\item\label{item:3.14-2} $-\otimes A$ is Frobenius, $A$ is finite-dimensional and unimodular, and $\int_{l}A=\int_{r}A\cong \Bbbk $;
\item\label{item:3.14-3} $A$ is a finite-dimensional unimodular quasi-bialgebra with preantipode;
\item\label{item:3.14-4} $A$ is a finite-dimensional unimodular quasi-Hopf algebra.
\end{enumerate}
Moreover, any one of the above implies
\begin{enumerate}[resume*]
\item\label{item:3.14-5} $A$ is a unimodular Frobenius $\K$-algebra whose Frobenius homomorphism $\psi$ is a left cointegral in the sense of \cite[Definition 4.2]{HausserNill}.
\end{enumerate}
\end{proposition}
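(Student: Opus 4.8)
The plan is to close the cycle of equivalences (1)--(4) by reducing everything to the results already proved for the functor $-\otimes A$, and then to read off (5) from (4). First I would dispose of the inexpensive equivalences. Both (3) and (4) posit that $A$ is finite-dimensional and unimodular and differ only in requiring a preantipode rather than a quasi-antipode, so Proposition~\ref{prop:PreQuasiHopf} gives (3)$\Leftrightarrow$(4) at once. For (2)$\Leftrightarrow$(3), Theorem~\ref{thm:mainThm} identifies ``$-\otimes A$ is Frobenius'' with ``$A$ admits a preantipode'', so the two assertions differ only in how they bookkeep integrals: assuming (3), Lemma~\ref{lemma:integrals} yields $\dim_\K\left(\int_l A\right)=1=\dim_\K\left(\int_r A\right)$, and unimodularity promotes this to $\int_l A=\int_r A\cong\K$, giving (2); conversely (2) already carries finite-dimensionality, unimodularity and the Frobenius property, hence yields (3).

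Next I would treat (1)$\Rightarrow$(3). The discussion preceding Lemma~\ref{lemma:unimodular} shows that the Frobenius property of $\left(U,-\tildetens{}A\right)$ together with the natural isomorphism $\Hom{A}{}{A}{}{V_\varepsilon}{U(M)}\cong\Hom{A}{}{}{}{V}{\cl{M}}$ makes $\left(\cl{(-)},-\otimes A\right)$ a Frobenius pair; in particular $-\otimes A$ is Frobenius, and Theorem~\ref{thm:mainThm} produces a preantipode. The same natural isomorphism feeds Lemma~\ref{lemma:unimodular}, giving unimodularity and $\int_l A=\int_r A\cong\K$. Finite-dimensionality I would read off from the Frobenius hypothesis on $\left(U,-\tildetens{}A\right)$ alone: it forces $U$ to be simultaneously a left and a right adjoint, hence to preserve all limits, whereas the forgetful functor from the comodules of the comonoid $(A,\Delta,\varepsilon)$ in $(\Bimod{A},\otimes,\K)$ is continuous (it preserves infinite products) only when $A$ is finite-dimensional over $\K$. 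Collecting these three facts yields (3).

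The converse (3)$\Rightarrow$(1) is where I expect the main obstacle to lie. Here $A$ is a finite-dimensional unimodular quasi-Hopf algebra, hence (as in the proof of (5) below) a Frobenius $\K$-algebra; this makes $(A,\Delta,\varepsilon)$ a Frobenius coalgebra in $\Bimod{A}$, so that $U:\quasihopf{A}\to\Bimod{A}$ is Frobenius by \cite[Theorem~3.4(iv)]{BulacuCaenepeelTorrecillas2} (see also \cite[Theorem~5.8]{BulacuCaenepeelTorrecillas1}); that is, $\left(U,-\tildetens{}A\right)$ is a Frobenius pair. It then remains to produce the natural isomorphism $\Theta$. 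By Lemma~\ref{lemma:unimodular} any candidate must have the form $f\mapsto\tau_M\circ f_\varepsilon$ for a natural $\tau_M:\cl{M}\to U(M)$, so it suffices to exhibit such a $\tau_M$ and prove it bijective. I would build $\tau_M$ from a nonzero integral $t$ (two-sided, since $A$ is unimodular with $\int_l A=\int_r A\cong\K$) combined with the preantipode $S$, using the structure isomorphism $M\cong\cl{M}\otimes A$ coming from $S$ to write down a candidate inverse $\cl{g(-)_0}\,\psi\big(g(-)_1\big)$ in terms of a cointegral $\psi$ normalised by $\psi(t)=1$. Checking that this inverse is left $A$-linear is precisely the delicate point (the integral-only transformation of the remark after Lemma~\ref{lemma:unimodular} need not be invertible), and it is here that finite-dimensionality together with the sharp equality $\int_l A=\int_r A\cong\K$, rather than mere nonvanishing of the integral, must be used; once one side of the verification is secured, a dimension count on the finite-dimensional Frobenius algebra $A$ confirms bijectivity.

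Finally, (4)$\Rightarrow$(5): a finite-dimensional quasi-Hopf algebra is a Frobenius $\K$-algebra, with a nonzero left cointegral in the sense of \cite[Definition~4.2]{HausserNill} serving as Frobenius homomorphism (see \cite{HausserNill,BulacuCaenepeel}), while unimodularity is inherited directly from (4). This furnishes the asserted implication and completes the argument, the chain (1)$\Leftrightarrow$(2)$\Leftrightarrow$(3)$\Leftrightarrow$(4) being closed and (5) following from it.
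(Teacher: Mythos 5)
Your handling of (2)$\Leftrightarrow$(3)$\Leftrightarrow$(4), of most of (1)$\Rightarrow$(2), and of (4)$\Rightarrow$(5) coincides with the paper's proof (same appeals to Proposition \ref{prop:PreQuasiHopf}, Theorem \ref{thm:mainThm}, Lemmas \ref{lemma:integrals} and \ref{lemma:unimodular}). For finite-dimensionality in (1)$\Rightarrow$(2) the paper cites \cite[Theorem 5.8]{BulacuCaenepeelTorrecillas1} and \cite[Proposition 1.3]{BulacuCaenepeelTorrecillas2}; your limit-preservation heuristic points at the same underlying fact but is only a sketch, since you would still have to prove that the forgetful functor from $\quasihopf{A}$ fails to preserve infinite products when $A$ is infinite-dimensional.

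The genuine gap is in (3)$\Rightarrow$(1) (equivalently (4)$\Rightarrow$(1)). You correctly obtain that $\left(U,-\tildetens{}A\right)$ is Frobenius from \cite[Theorem 3.4(iv)]{BulacuCaenepeelTorrecillas2}, but for the natural isomorphism $\Theta$ you propose to build $\tau_M:\cl{M}\to U(M)$ explicitly out of an integral $t$, the preantipode $S$ and a cointegral $\psi$, and you yourself concede that the left $A$-linearity of the candidate inverse $g\mapsto\left[v\mapsto \cl{g(v)_0}\,\psi\left(g(v)_1\right)\right]$ is ``precisely the delicate point'' --- and you never establish it. A ``dimension count'' cannot close this, because the issue is not the bijectivity of a well-defined map but whether the proposed inverse is a morphism of the right kind at all. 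The paper avoids the explicit construction entirely: since $\left(U,-\tildetens{}A\right)$ is Frobenius, $-\tildetens{}A$ is also a \emph{left} adjoint of $U$, giving a natural bijection $\Hom{A}{}{A}{}{V_\varepsilon}{U(M)}\cong\Hom{A}{}{A}{A}{V_\varepsilon\tildetens{}A}{M}$; one then observes that $V_\varepsilon\tildetens{}A=V\otimes A$ as quasi-Hopf bimodules, and since $A$ has a preantipode the Structure Theorem makes $-\otimes A$ an equivalence, so $\Hom{A}{}{A}{A}{V\otimes A}{M}\cong\Hom{A}{}{}{}{V}{\cl{M}}$ naturally. Composing these two natural isomorphisms produces $\Theta$ with no computation. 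You should replace your constructive attempt by this two-step adjunction argument.
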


\begin{proof}
We know that $-\otimes A$ is Frobenius if and only if $A$ admits a preantipode by Theorem \ref{thm:mainThm} and that the spaces of integrals over a finite-dimensional quasi-bialgebra with preantipode are always $1$-dimensional (see Lemma \ref{lemma:integrals}), whence $\ref{item:3.14-2} \Leftrightarrow \ref{item:3.14-3}$. The equivalence \ref{item:3.14-3} $\Leftrightarrow$ \ref{item:3.14-4} follows from the fact that, in the finite-dimensional case, quasi-Hopf algebras and quasi-bialgebras with preantipode are equivalent notions (see Proposition \ref{prop:PreQuasiHopf}). It follows from Lemma \ref{lemma:unimodular} and our observations preceding it that if \ref{item:3.14-1} holds then $-\otimes A$ is Frobenius, $A$ is unimodular and $\int_{l}A=\int_{r}A\cong \Bbbk $. In addition, in view of \cite[Theorem 5.8]{BulacuCaenepeelTorrecillas1} and \cite[Proposition 1.3]{BulacuCaenepeelTorrecillas2}, if $\left( U,-\tildetens{}A\right) $ is Frobenius then $A$ is finite-dimensional. Therefore \ref{item:3.14-1} $\Rightarrow$ \ref{item:3.14-2}. Let us conclude by showing that \ref{item:3.14-4} implies \ref{item:3.14-1}. \begin{enumerate*}[label=(\textbf{\alph*}),ref={\textbf{\alph*}}] \item\label{item:BCT} In light of \cite[Theorem 3.4(iv)]{BulacuCaenepeelTorrecillas2}, since $A$ is a finite-dimensional unimodular quasi-Hopf algebra, the pair $\left( U,-\tildetens{}A\right) $ is Frobenius. \item\label{item:io} Since $A$ is a quasi-Hopf algebra, in particular it is a quasi-bialgebra with preantipode (see \cite[Theorem 6]{Saracco}) and hence $-\otimes A$ is an equivalence of categories.\end{enumerate*} Therefore
\[
\Hom{A}{}{A}{}{V_\varepsilon}{U(M)} \stackrel{\eqref{item:BCT}}{\cong} \Hom{A}{}{A}{A}{V_\varepsilon\tildetens{}A}{M} = \Hom{A}{}{A}{A}{V\otimes A}{M} \stackrel{\eqref{item:io}}{\cong} \Hom{A}{}{}{}{V}{\cl{M}}.
\]
Finally, $\ref{item:3.14-4} \Rightarrow \ref{item:3.14-5}$ follows from \cite[Theorem 4.3]{HausserNill} (together with \cite[Theorem 2.2]{BulacuCaenepeel}. See also \cite[Lemma 3.2]{Kadison2}).
\end{proof}

\begin{remark}
It is still an open question if \ref{item:3.14-5} implies any of the other assertions or which additional conditions on $A$ in \ref{item:3.14-5} would allow us to prove that.
\end{remark}

In this direction, and for the sake of future investigations on the subject, let us provide the explicit details of an equivalent description of when the pair $\left( U,-\tildetens{}A\right) $ is Frobenius.

\begin{theorem}[{\cite[Theorem 5.8]{BulacuCaenepeelTorrecillas1}}]\label{thm:FrogetFron}
For a quasi-bialgebra $A$, the pair $( U,-\tildetens{} A) $ is Frobenius if and only if there exists $z\coloneqq z^{( 1) }\otimes z^{( 2) }\otimes z^{( 3) }\in A\otimes A\otimes A$ and $\omega :A\otimes A\to A\otimes A,a\otimes b\mapsto \omega ^{( 1) }( a\otimes b) \otimes \omega ^{( 2) }( a\otimes b) $ such that for all $a,b\in A$
\begin{gather*}
a_{1}z^{( 1) }\otimes z^{( 2) }b_{1}\otimes a_{2}z^{( 3) }b_{2}=z^{( 1) }a\otimes bz^{(
2) }\otimes z^{( 3) }, \\
\omega ^{( 1) }( x_{1_{2}}ay_{1_{2}}\otimes x_{2}by_{2}) x_{1_{1}}\otimes y_{1_{1}}\omega ^{( 2) }( x_{1_{2}}ay_{1_{2}}\otimes x_{2}by_{2}) =x\omega ^{( 1) }( a\otimes b) \otimes \omega ^{( 2) }( a\otimes b) y, \\
\begin{split}
\omega ^{( 1) }( \varphi ^{3}a_{2}\Phi ^{3}\otimes b) _{1}\varphi ^{1}\otimes \Phi ^{1}\omega ^{( 2) }( \varphi ^{3}a_{2}\Phi ^{3}\otimes b) _{1}\otimes \omega ^{( 1) }( \varphi ^{3}a_{2}\Phi ^{3}\otimes b) _{2}\varphi ^{2}a_{1}\Phi ^{2}\omega ^{( 2) }( \varphi ^{3}a_{2}\Phi ^{3}\otimes b) _{2} \\ 
=\omega ^{( 1) }( \varphi _{2}^{1}a\Phi _{2}^{1}\otimes \varphi ^{2}b_{1}\Phi ^{2}) \varphi _{1}^{1}\otimes \Phi _{1}^{1}\omega^{( 2) }( \varphi _{2}^{1}a\Phi _{2}^{1}\otimes \varphi^{2}b_{1}\Phi ^{2}) \otimes \varphi ^{3}b_{2}\Phi ^{3},
\end{split} \\
\omega ^{( 1) }( z_{2}^{( 1) }az_{2}^{(2) }\otimes z^{( 3) }) z_{1}^{( 1)}\otimes z_{1}^{( 2) }\omega ^{( 2) }(z_{2}^{( 1) }az_{2}^{( 2) }\otimes z^{( 3)}) =\varepsilon ( a) 1\otimes 1, \\
\omega ^{( 1) }( \varphi _{2}^{1}z^{( 3) }\Phi_{2}^{1}\otimes \varphi ^{2}a_{1}\Phi ^{2}) \varphi _{1}^{1}z^{(1) }\otimes z^{( 2) }\Phi _{1}^{1}\omega ^{( 2)}( \varphi _{2}^{1}z^{( 3) }\Phi _{2}^{1}\otimes \varphi^{2}a_{1}\Phi ^{2}) \otimes \varphi ^{3}a_{2}\Phi ^{3}=1\otimes 1\otimes a.
\end{gather*}
\end{theorem}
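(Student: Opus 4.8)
The plan is to use that the cofree functor $-\tildetens{}A$ is \emph{always} a right adjoint of the forgetful functor $U\colon\quasihopf{A}\to\Bimod{A}$: the coaction $\delta_M\colon M\to U(M)\tildetens{}A$ and the map $U(N\tildetens{}A)=N\otimes A\xrightarrow{N\otimes\varepsilon}N$ are the unit and counit of $U\dashv -\tildetens{}A$. Consequently $(U,-\tildetens{}A)$ is a Frobenius pair exactly when the \emph{same} functor $-\tildetens{}A$ is also a \emph{left} adjoint of $U$, i.e.\ when there are natural transformations $\bar\eta\colon\id_{\Bimod{A}}\Rightarrow U(-\tildetens{}A)$ and $\bar\epsilon\colon(-\tildetens{}A)\,U\Rightarrow\id_{\quasihopf{A}}$ satisfying the two triangle identities. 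I would therefore spend the proof encoding this categorical data as the pair $(z,\omega)$ and reading the five displayed relations off as its coherence conditions, establishing both implications at once.

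First I would parametrise the unit. As $U(N\tildetens{}A)=N\otimes A$ carries the diagonal bimodule structure \eqref{eq:diagactions}, an $A$-bilinear natural family $\bar\eta_N\colon N\to N\otimes A$ is determined, upon evaluation at the free bimodule $A\otimes A$ at $1\otimes1$, by a single $z=z^{(1)}\otimes z^{(2)}\otimes z^{(3)}\in A\otimes A\otimes A$; bilinearity forces $\bar\eta_N(n)=z^{(1)}\cdot n\cdot z^{(2)}\otimes z^{(3)}$ and naturality against the bimodule endomorphisms $x\otimes y\mapsto xa\otimes by$ of $A\otimes A$ is precisely the first relation. Dually I would set $\bar\epsilon_M(m\otimes a)=\omega^{(1)}(m_1\otimes a)\cdot m_0\cdot\omega^{(2)}(m_1\otimes a)$ for a $\K$-linear $\omega\colon A\otimes A\to A\otimes A$; naturality in $M$ is then automatic from colinearity of morphisms in $\quasihopf{A}$, while the requirement that each $\bar\epsilon_M$ be a morphism of $\quasihopf{A}$ splits into its bimodule-linearity — which, written out against the diagonal action and the comodule structure, is the second relation — and its colinearity against the cofree coaction $\delta(m\otimes a)=\varphi^1m\Phi^1\otimes\varphi^2a_1\Phi^2\otimes\varphi^3a_2\Phi^3$ of \eqref{eq:freecomod}, which is the long third relation (the $\varphi$'s and $\Phi$'s being inherited verbatim from that coaction). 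Surjectivity of both parametrisations rests on the coaction colinearly splitting every $M$ off the cofree object $U(M)\tildetens{}A$, so that $\bar\eta$ and $\bar\epsilon$ are pinned down by their values on free bimodules and cofree comodules.

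It then remains to impose the two triangle identities of $-\tildetens{}A\dashv U$. Substituting the explicit shapes of $\bar\eta$ and $\bar\epsilon$ and evaluating the resulting natural identities on generating objects turns them into the fourth and fifth relations, the two ``Frobenius equations'' binding $z$ and $\omega$, whose right-hand sides $\varepsilon(a)\,1\otimes1$ and $1\otimes1\otimes a$ record the identities of $U$ and of $-\tildetens{}A$ respectively; the appearance of $\varepsilon$ in the fourth relation is exactly what lets the associators collapse there through \eqref{eq:PhiCounital} and \eqref{eq:epsivarphi}, whereas the cofree coaction keeps them all alive in the fifth. Conversely, given $(z,\omega)$ subject to the five relations, the first three make $\bar\eta$ and $\bar\epsilon$ well-defined natural transformations of the correct variance and the last two are their triangle identities, so $-\tildetens{}A\dashv U$ and $U$ is Frobenius.

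The hard part is purely the quasi-bialgebra bookkeeping. Because the coaction on every cofree object is a \emph{conjugation} by $\Phi$, verifying the colinearity relation and the cofree triangle identity forces one to reassociate repeatedly by $\Phi$ and $\Phi^{-1}=\varphi$, invoking the cocycle identity \eqref{eq:PhiCocycle} together with the normalisations \eqref{eq:PhiCounital} and \eqref{eq:epsivarphi}; correctly threading the nested Sweedler legs $\Phi^1_1,\Phi^3_2,\varphi^1_2,\dots$ through these manipulations is the delicate, computation-heavy core of the argument, while the bimodule-linearity relation and the associator-free triangle identity are routine.
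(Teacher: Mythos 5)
Your proposal is correct in outline, but it takes a genuinely different route from the paper. The paper's proof is essentially a citation-and-translation argument: it invokes \cite[Proposition 3.2]{BulacuCaenepeelTorrecillas2} to view $(A,\psi)$ as a coalgebra in the category $\cT^{\#}_{\op{A}\otimes A}$, identifies $\quasihopf{A}$ with the associated category of entwined (Doi--Hopf) modules $\cM(\op{A}\otimes A)_{\op{A}\otimes A}^{A}$, and then applies the general criterion of \cite[Theorem 5.8]{BulacuCaenepeelTorrecillas1}: the forgetful functor is Frobenius if and only if $(A,\psi)$ is a Frobenius coalgebra there. The five displayed relations are obtained by unwinding the abstract data, with $z$ playing the role of the Frobenius element and $\omega$ that of the Casimir morphism. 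You instead prove the statement directly: since $-\tildetens{}A$ is always right adjoint to $U$, the Frobenius property amounts to exhibiting a second adjunction $-\tildetens{}A\dashv U$, and you parametrise its unit by $z$ (via evaluation on the free bimodule $A\otimes A$, the first relation being naturality/bilinearity), its counit by $\omega$ (relations two and three being bilinearity and colinearity of the components on cofree objects), and read the last two relations as the triangle identities. This is sound --- one only has to be a little careful that the fifth relation as printed is the cofree-side triangle identity \emph{already rewritten} using the colinearity relation, and that reducing the triangle identities to their values on the generating objects $A\otimes A$ and $(A\otimes A)\otimes A$ requires the splitting of $\delta_M$ by $M\otimes\varepsilon$, both of which you implicitly acknowledge. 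Your approach buys a self-contained, elementary proof at the cost of the heavy $\Phi$-bookkeeping you flag; the paper's buys brevity and places the result inside the general entwining-module framework, at the cost of importing the machinery of \cite{BulacuCaenepeelTorrecillas1,BulacuCaenepeelTorrecillas2}.
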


\begin{proof}
We refer to \cite{BulacuCaenepeelTorrecillas2} for the notations. In view of \cite[Proposition 3.2]{BulacuCaenepeelTorrecillas2},
\[
\left(A,\psi:A\otimes \left(\op{A}\otimes A\right)\to \left(\op{A}\otimes A\right)\otimes A,\ x\otimes (a\otimes b) \mapsto (a_1\otimes b_1)\otimes a_2xb_2\right)
\]
is a coalgebra in the category $\cT^{\#}_{\op{A}\otimes A}$ and the associated category of Doi-Hopf modules is exactly $\cM(\op{A}\otimes A)_{\op{A}\otimes A}^A\cong \quasihopf{A}$. According to \cite[Theorem 5.8]{BulacuCaenepeelTorrecillas1}, the forgetful functor $U$ is Frobenius if and only if $(A,\psi)$ is a Frobenius coalgebra in $\cT^{\#}_{\op{A}\otimes A}$. By writing explicitly the conditions reported in \cite[\S1.2]{BulacuCaenepeelTorrecillas2}, one finds exactly the ones in the statement, with $z^{(1)}\otimes z^{(2)} \otimes z^{(3)}$ playing the role fo the Frobenius element and $\omega$ the role of the Casimir morphism.
\end{proof}

\begin{invisible}
\begin{corollary}\label{cor:ForgetFrob}
For a bialgebra $B$, the pair $\left( U,-\tildetens{}B\right) $ is Frobenius if and only if there exists $z\coloneqq z^{\left( 1\right) }\otimes z^{\left( 2\right) }\otimes z^{\left( 3\right) }\in B\otimes B\otimes B$ and $\omega :B\otimes B\rightarrow B\otimes B,a\otimes b\mapsto \omega ^{\left( 1\right) }\left( a\otimes b\right) \otimes \omega ^{\left( 2\right) }\left( a\otimes b\right) $ such that for all $a,b\in B$
\begin{gather}
a_{1}z^{\left( 1\right) }\otimes z^{\left( 2\right) }b_{1}\otimes a_{2}z^{\left( 3\right) }b_{2}=z^{\left( 1\right) }a\otimes bz^{\left( 2\right) }\otimes z^{\left( 3\right) },  \label{eq:corFF1} \\
\omega ^{\left( 1\right) }\left( x_{2}ay_{2}\otimes x_{3}by_{3}\right)x_{1}\otimes y_{1}\omega ^{\left( 2\right) }\left( x_{2}ay_{2}\otimes x_{3}by_{3}\right) =x\omega ^{\left( 1\right) }\left( a\otimes b\right)\otimes \omega ^{\left( 2\right) }\left( a\otimes b\right) y,   \label{eq:corFF2} \\
\begin{split}\omega ^{\left( 1\right) }\left( a_{2}\otimes b\right) _{1}\otimes \omega^{\left( 2\right) }\left( a_{2}\otimes b\right) _{1}\otimes \omega ^{\left(1\right) }\left( a_{2}\otimes b\right) _{2}a_{1}\omega ^{\left( 2\right)}\left( a_{2}\otimes b\right) _{2} \\ =\omega ^{\left( 1\right) }\left( a\otimes b_{1}\right) \otimes \omega ^{\left( 2\right) }\left( a\otimes b_{1}\right)\otimes b_{2},  \label{eq:corFF3}\end{split} \\
\omega ^{\left( 1\right) }\left( z_{2}^{\left( 1\right) }az_{2}^{\left(2\right) }\otimes z^{\left( 3\right) }\right) z_{1}^{\left( 1\right)}\otimes z_{1}^{\left( 2\right) }\omega ^{\left( 2\right) }\left(z_{2}^{\left( 1\right) }az_{2}^{\left( 2\right) }\otimes z^{\left( 3\right)}\right) =\varepsilon \left( a\right) 1\otimes 1,  \label{eq:corFF4} \\
\omega ^{\left( 1\right) }\left( z^{\left( 3\right) }\otimes a_{1}\right)z^{\left( 1\right) }\otimes z^{\left( 2\right) }\omega ^{\left( 2\right)}\left( z^{\left( 3\right) }\otimes a_{1}\right) \otimes a_{2}=1\otimes 1\otimes a.  \label{eq:corFF5}
\end{gather}
\end{corollary}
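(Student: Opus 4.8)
The plan is to obtain this statement as the specialization of Theorem~\ref{thm:FrogetFron} to the coassociative case, in which the reassociator is trivial. Indeed, a bialgebra $B$ is precisely a quasi-bialgebra with $\Phi=1\otimes 1\otimes 1$, whence also $\Phi^{-1}=1\otimes 1\otimes 1$; in the notation of that theorem this means that each of the components $\Phi^1,\Phi^2,\Phi^3$ and $\varphi^1,\varphi^2,\varphi^3$ may be replaced by $1$. Since the cocycle identity \eqref{eq:PhiCocycle} and the counital identity \eqref{eq:PhiCounital} are satisfied trivially for $\Phi=1\otimes 1\otimes 1$, Theorem~\ref{thm:FrogetFron} applies verbatim to $B$, and the datum $(z,\omega)$ and the functors $U$, $-\tildetens{}B$ are unchanged; the whole task therefore reduces to rewriting the five defining conditions under these substitutions.

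I would then treat the conditions one at a time. The first and fourth conditions of Theorem~\ref{thm:FrogetFron} contain no occurrence of the reassociator, so they pass unchanged to \eqref{eq:corFF1} and \eqref{eq:corFF4}. In the second condition, setting the reassociator components to $1$ removes the conjugating factors; crucially, because $\Delta$ is now genuinely coassociative, the nested coproducts $x_{1_1}\otimes x_{1_2}\otimes x_2$ and $y_{1_1}\otimes y_{1_2}\otimes y_2$ may be flattened to the unambiguous three-fold coproducts $x_1\otimes x_2\otimes x_3$ and $y_1\otimes y_2\otimes y_3$, producing \eqref{eq:corFF2}. For the third condition the substitutions collapse $\varphi^3a_2\Phi^3\mapsto a_2$ and $\varphi^2a_1\Phi^2\mapsto a_1$ on the left, while the split factors $\varphi^1_1,\Phi^1_1$ and the flanking reassociator components on the right all become $1$, leaving exactly \eqref{eq:corFF3}. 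The fifth condition simplifies in the same way, the surviving data being $\omega^{(1)}(z^{(3)}\otimes a_1)$ and $\omega^{(2)}(z^{(3)}\otimes a_1)$, which gives \eqref{eq:corFF5}. Conversely, reinserting the trivial reassociator shows that these five identities imply the hypotheses of Theorem~\ref{thm:FrogetFron} for $\Phi=1\otimes 1\otimes 1$, so the equivalence transports faithfully in both directions.

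The argument is essentially bookkeeping, and the only point requiring genuine care is the flattening of the nested Sweedler indices. In the honest quasi case one is not permitted to reassociate coproducts freely, which is exactly why Theorem~\ref{thm:FrogetFron} is stated with the specific nestings $x_{1_2}$, $\varphi^1_2$, and so on; the single place where the passage to a bialgebra does more than erase symbols is the second condition, where coassociativity is precisely what legitimises writing $x_1\otimes x_2\otimes x_3$. I would verify explicitly that each split component of a trivial reassociator, such as $\varphi^1_1\otimes\varphi^1_2=1\otimes 1$, evaluates to $1$ so that no stray factor is lost, and confirm that no property of $\Phi$ beyond \eqref{eq:PhiCocycle} and \eqref{eq:PhiCounital} was ever invoked, both of which degenerate harmlessly. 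No finiteness or Hopf-theoretic input enters the specialization itself.
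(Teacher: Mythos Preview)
Your proposal is correct and is precisely the intended argument: the corollary is obtained from Theorem~\ref{thm:FrogetFron} by setting $\Phi=1\otimes 1\otimes 1$ and using coassociativity to flatten the nested Sweedler indices, exactly as you describe. The paper presents no separate proof for the corollary beyond this specialization; it does, however, append a remark observing that the same statement can alternatively be read off from \cite[Theorem 38]{CaenepeelMilitaruZhu} by realising $(\op{B}\otimes B,B,\psi)$ with $\psi(x\otimes(a\otimes b))=(a_1\otimes b_1)\otimes a_2xb_2$ as a right-right entwining structure and identifying $z$ with an element of $W_5$ and $\omega$ with an element of $V_5$, but this is offered as an independent cross-check rather than the proof.
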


\begin{remark}
It is noteworthy that Corollary \ref{cor:ForgetFrob} can be deduced from \cite[Theorem 38]{CaenepeelMilitaruZhu} as follows. By resorting to the notation used therein, take $A\coloneqq \op{B}\otimes B$, $C\coloneqq B$ and
\begin{equation}\label{eq:inter}
\psi:B\otimes (\op{B}\otimes B)\to (\op{B}\otimes B)\otimes B, \quad x\otimes (a\otimes b)\mapsto (a_1\otimes b_1)\otimes a_2xb_2.
\end{equation}
Write $(a\otimes b)_\psi \otimes x^\psi = (a\otimes b)_\Psi \otimes x^\Psi = \cdots$ for $\psi(x\otimes (a\otimes b))$ in $(\op{B}\otimes B)\otimes B$ and $a\cdot a'\coloneqq a'a$ for the multiplication in $\op{B}$. Since for all $a,b,x\in B$ we have
\begin{gather*}
\begin{split}
& (a\cdot a'\otimes bb')_\psi\otimes x^\psi = \psi(x\otimes (a\cdot a'\otimes bb')) = \psi(x\otimes (a'a\otimes bb')) \stackrel{\eqref{eq:inter}}{=} (a'_1a_1\otimes b_1b'_1)\otimes a'_2a_2xb_2b'_2 \\
 & = (a_1\otimes b_1)(a'_1\otimes b'_1)\otimes a'_2a_2xb_2b'_2 \stackrel{\eqref{eq:inter}}{=} (a\otimes b)_\psi(a'_1\otimes b'_1)\otimes a'_2x^\psi b'_2 \stackrel{\eqref{eq:inter}}{=} (a\otimes b)_\psi(a'\otimes b')_\Psi\otimes x^{\psi\Psi},
\end{split} \\
(1\otimes 1)_\psi\otimes x^\psi = \psi(x\otimes (1\otimes 1)) \stackrel{\eqref{eq:inter}}{=} (1\otimes 1)\otimes x, \\
\begin{split}
(a\otimes b)_\psi\otimes \Delta(x^\psi) & \stackrel{\eqref{eq:inter}}{=} (a_1\otimes b_1)\otimes \Delta(a_2xb_2) = (a_1\otimes b_1)\otimes a_2x_1b_2\otimes a_3x_2b_3 \\
 & \stackrel{\eqref{eq:inter}}{=} (a_1\otimes b_1)_\Psi\otimes x_1^{\Psi}\otimes a_2x_2b_2 \stackrel{\eqref{eq:inter}}{=} (a\otimes b)_{\psi\Psi}\otimes x_1^\Psi\otimes x_2^\psi, 
\end{split} \\
(a\otimes b)_\psi\varepsilon\left(x^\psi\right) \stackrel{\eqref{eq:inter}}{=} (a_1\otimes b_1)\varepsilon(a_2xb_2) = (a\otimes b)\varepsilon(x),
\end{gather*}
it follows that $(\op{B}\otimes B,B,\psi)$ is a right-right entwining structure on $\K$ in the sense of \cite[\S2.1]{CaenepeelMilitaruZhu}. An element $z=a^1\otimes c^1\in W_5$ (see \cite[Proposition 71, 1.]{CaenepeelMilitaruZhu}) is an element $z=z^{\left( 1\right) }\otimes z^{\left( 2\right) }\otimes z^{\left( 3\right) }\in (\op{B}\otimes B)\otimes B$ such that 
\begin{gather*}
z^{\left( 1\right) }a\otimes bz^{\left( 2\right) } \otimes z^{\left( 3\right) } =a\cdot z^{\left( 1\right) }\otimes bz^{\left( 2\right) } \otimes z^{\left( 3\right) } = (a\otimes b)\left(z^{\left( 1\right) }\otimes z^{\left( 2\right) }\right) \otimes z^{\left( 3\right) } \\
 \stackrel{\left(W_5\right)}{=} \left(z^{\left( 1\right) }\otimes z^{\left( 2\right) }\right)(a\otimes b)_\psi \otimes z^{\left( 3\right)\psi } \stackrel{\eqref{eq:inter}}{=} z^{\left( 1\right) } \cdot a_1\otimes z^{\left( 2\right) }b_1 \otimes a_2z^{\left( 3\right) }b_2= a_1z^{\left( 1\right) }\otimes z^{\left( 2\right) }b_1 \otimes a_2z^{\left( 3\right) }b_2,
\end{gather*}
which is \eqref{eq:corFF1}. An element $\vartheta\in V_5$ (see \cite[Proposition 70, 2.]{CaenepeelMilitaruZhu}) is a map $\vartheta:B\otimes B\to\op{B}\otimes B, x\otimes y\mapsto \vartheta^{(1)}(x\otimes y)\otimes \vartheta^{(2)}(x\otimes y)$ such that
\begin{gather*}
a\vartheta^{(1)}(x\otimes y)\otimes \vartheta^{(2)}(x\otimes y)b = \vartheta^{(1)}(x\otimes y)\cdot a\otimes \vartheta^{(2)}(x\otimes y)b = \vartheta(x\otimes y)(a\otimes b) \\
 \stackrel{\left(V_5\right)}{=} (a\otimes b)_{\psi\Psi}\vartheta(x^{\Psi}\otimes y^{\psi}) \stackrel{\eqref{eq:inter}}{=} (a_1\otimes b_1)\vartheta(a_2xb_2\otimes a_3yb_3) \\
= a_1\cdot \vartheta^{(1)}(a_2xb_2\otimes a_3yb_3) \otimes b_1\vartheta^{(2)}(a_2xb_2\otimes a_3yb_3) \\
= \vartheta^{(1)}(a_2xb_2\otimes a_3yb_3)a_1\otimes b_1\vartheta^{(2)}(a_2xb_2\otimes a_3yb_3),
\end{gather*}
which is \eqref{eq:corFF2}, and 
\begin{gather*}
\vartheta ^{\left( 1\right) }\left( x\otimes y_{1}\right) \otimes \vartheta ^{\left( 2\right) }\left( x\otimes y_{1}\right)\otimes y_{2} = \vartheta(x\otimes y_1)\otimes y_2 \stackrel{\left(V_5\right)}{=} \vartheta(x_2\otimes y)_\psi\otimes x_1^\psi \\
 \stackrel{\eqref{eq:inter}}{=} \vartheta^{(1)}(x_2\otimes y)_1 \otimes \vartheta^{(2)}(x_2\otimes y)_1 \otimes \vartheta^{(1)}(x_2\otimes y)_2x_1\vartheta^{(2)}(x_2\otimes y)_2,
\end{gather*}
which is \eqref{eq:corFF3}. In addition, they have to satisfy (see \cite[Theorem 38, (3.54)]{CaenepeelMilitaruZhu})
\[
\vartheta ^{\left( 1\right) }\left( z_{2}^{\left( 1\right) }az_{2}^{\left(2\right) }\otimes z^{\left( 3\right) }\right) z_{1}^{\left( 1\right)}\otimes z_{1}^{\left( 2\right) }\vartheta ^{\left( 2\right) }\left(z_{2}^{\left( 1\right) }az_{2}^{\left( 2\right) }\otimes z^{\left( 3\right)}\right) \stackrel{\eqref{eq:inter}}{=} (z^{(1)}\otimes z^{(2)})_\psi\vartheta\left(a^\psi\otimes z^{(3)}\right) \stackrel{(3.54)}{=} \varepsilon(a)1\otimes 1,
\]
which is \eqref{eq:corFF4}, and
\begin{gather*}
\vartheta ^{\left( 1\right) }\left( z^{\left( 3\right) }\otimes a\right)z^{\left( 1\right) }\otimes z^{\left( 2\right) }\vartheta ^{\left( 2\right)}\left( z^{\left( 3\right) }\otimes a\right) \stackrel{\eqref{eq:inter}}{=} \left(z^{(1)}\otimes z^{(2)}\right)\vartheta\left(z^{(3)}\otimes a\right) \stackrel{(3.54)}{=} \varepsilon(a)1\otimes 1
\end{gather*}
which is equivalent to \eqref{eq:corFF5}.
\end{remark}

\begin{theorem}[Compare with {\cite[Lemma 3.2]{Saracco-Frobenius}}]
For every quasi-Hopf bimodule $N\in\quasihopf{A}$ we have a bijection
\begin{equation}\label{eq:superadj}
\xymatrix@R=0pt{
\Hom{A}{}{A}{A}{\bimod{M}\otimes \quasihopfmod{N}}{\quasihopfmod{P}} \ar@{<->}[r] & \Hom{A}{}{A}{}{\bimod{M}}{\Hom{A}{}{A}{A}{\lmod{A}\otimes \rmod{A}\otimes \quasihopfmod{N}}{\quasihopfmod{P}}} \\
f \ar@{|->}[r] & \left[m \mapsto \left[a\otimes b\otimes n \mapsto f\left(a \cdot m\cdot b\otimes n\right)\right]\right] \\
\left[m\otimes n\mapsto g(m)(1\otimes 1\otimes n)\right] & g \ar@{|->}[l]
}
\end{equation}
which is natural in $M\in \Bimod{A}$ and $P\in\quasihopf{A}$. The left and right $A$-module structures on $\Hom{A}{}{A}{A}{\lmod{A}\otimes \rmod{A}\otimes \quasihopfmod{N}}{\quasihopfmod{P}}$ are explicitly given by 
\begin{equation}\label{eq:2triang}
\left(x\triangleright f\triangleleft y \right)(a\otimes b \otimes n) \coloneqq f\left(ax\otimes yb \otimes n\right)
\end{equation}
for every $a,b,x,y\in A$, $n\in N$ and $f\in \Hom{A}{}{A}{A}{\lmod{A}\otimes \rmod{A}\otimes \quasihopfmod{N}}{\quasihopfmod{P}}$. Therefore, the functor $-\tildetens{}N:\Bimod{A}\to\quasihopf{A}$ is left adjoint to the functor 
\[
\Hom{A}{}{A}{A}{\lmod{A} \otimes \rmod{A} \otimes \quasihopfmod{N}}{-} : \quasihopf{A} \longrightarrow \Bimod{A}.
\]
In particular, the functor $-\tildetens{}A:\Bimod{A}\to\quasihopf{A}$ always admits a right adjoint, given by
\[
\Hom{A}{}{A}{A}{\lmod{A} \otimes \rmod{A} \otimes \quasihopfmod{A}}{-} : \quasihopf{A} \longrightarrow \Bimod{A},
\]
and therefore $(U,-\tildetens{}A)$ is Frobenius if and only if there is a natural isomorphism
\begin{equation}\label{eq:Xi}
\Xi : \Hom{A}{}{A}{A}{\lmod{A} \otimes \rmod{A} \otimes \quasihopfmod{A}}{-} \cong U.
\end{equation}
\end{theorem}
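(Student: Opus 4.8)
The plan is to establish the natural bijection \eqref{eq:superadj} by hand, in the exact spirit of Lemma \ref{lemma:modclosed} and of \cite[Lemma 3.2]{Saracco-Frobenius}, thereby exhibiting $\Hom{A}{}{A}{A}{\lmod{A}\otimes\rmod{A}\otimes\quasihopfmod{N}}{-}$ as a right adjoint to $-\tildetens{}N$; the closing equivalence then drops out formally from uniqueness of adjoints. First I would fix a morphism $f\in\Hom{A}{}{A}{A}{\bimod{M}\otimes\quasihopfmod{N}}{\quasihopfmod{P}}$ and verify, for each $m\in M$, that $a\otimes b\otimes n\mapsto f(a\cdot m\cdot b\otimes n)$ is a genuine morphism in $\quasihopf{A}$. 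Its $A$-bilinearity is immediate from the bimodule structure on $\lmod{A}\otimes\rmod{A}\otimes\quasihopfmod{N}$ (the evident analogue of \eqref{eq:strA2}) together with the bilinearity of $f$, whereas its $A$-colinearity is the point where the associator $\Phi$ enters and must be matched against the coaction of $\bimod{M}\otimes\quasihopfmod{N}$ using the colinearity of $f$. That the outer assignment $m\mapsto[\,\cdot\,]$ is $A$-bilinear for the structure \eqref{eq:2triang} is then a direct rewriting.

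For the reverse assignment I would take $g$ in the right-hand Hom set and check that $m\otimes n\mapsto g(m)(1\otimes 1\otimes n)$ is $A$-bilinear and $A$-colinear as a map $\bimod{M}\otimes\quasihopfmod{N}\to\quasihopfmod{P}$: here the bimodule structure \eqref{eq:2triang} on the target of $g$ is precisely what converts the left/right $A$-actions on $M$ into evaluations at shifted arguments, and counitality \eqref{eq:PhiCounital} of $\Phi$ is what makes the coaction come out correctly after substituting $1\otimes 1$. Mutual inverseness is then a short computation from the two explicit formulas: one composite returns $f(a\cdot m\cdot b\otimes n)$ evaluated at $a=b=1$, while the other recovers $g(m)$ from its value on $1\otimes 1\otimes n$ by bilinearity. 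Naturality in $M\in\Bimod{A}$ and $P\in\quasihopf{A}$ is routine and follows from the functoriality of the defining expressions.

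Specializing $N=A$ yields that $-\tildetens{}A$ always admits the right adjoint $\Hom{A}{}{A}{A}{\lmod{A}\otimes\rmod{A}\otimes\quasihopfmod{A}}{-}$. For the final equivalence I would invoke that $-\tildetens{}A$ is, as the cofree quasi-Hopf bimodule functor, already the right adjoint of the forgetful functor $U:\quasihopf{A}\to\Bimod{A}$ (the middle adjunction displayed at the start of \S\ref{ssec:unimodularity}). Thus $(U,-\tildetens{}A)$ is a Frobenius pair precisely when $-\tildetens{}A$ is in addition a \emph{left} adjoint of $U$, that is, when $-\tildetens{}A\dashv U$; since a functor determines its right adjoint up to natural isomorphism and the right adjoint of $-\tildetens{}A$ has just been identified with $\Hom{A}{}{A}{A}{\lmod{A}\otimes\rmod{A}\otimes\quasihopfmod{A}}{-}$, this holds exactly when $U\cong\Hom{A}{}{A}{A}{\lmod{A}\otimes\rmod{A}\otimes\quasihopfmod{A}}{-}$, which is the asserted isomorphism $\Xi$ of \eqref{eq:Xi}.

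The main obstacle I anticipate is the colinearity verification in both directions: matching the coaction of $\lmod{A}\otimes\rmod{A}\otimes\quasihopfmod{N}$, with its triple occurrences of $\Phi$ and $\Phi^{-1}$, against the coactions of $\bimod{M}\otimes\quasihopfmod{N}$ and of $\quasihopfmod{P}$ requires careful bookkeeping with the cocycle identity \eqref{eq:PhiCocycle} and the counit identity \eqref{eq:PhiCounital}, rather than presenting any conceptual difficulty. Everything else is bilinearity bookkeeping of the kind already carried out in Lemma \ref{lemma:modclosed} and Lemma \ref{lemma:sigmagen}.
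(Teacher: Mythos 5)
Your proposal is correct and follows essentially the same route as the paper: a direct verification that both assignments in \eqref{eq:superadj} land in the right Hom-sets (bilinearity plus colinearity against the explicit coactions), that they are mutually inverse and natural, followed by the purely formal observation that, since $U\dashv -\tildetens{}A$ always holds, the pair is Frobenius exactly when $-\tildetens{}A\dashv U$, i.e.\ when $U$ is naturally isomorphic to the right adjoint just constructed. The only minor remark is that the colinearity checks turn out not to require the cocycle identity \eqref{eq:PhiCocycle} at all --- the coactions on $\bimod{M}\otimes\quasihopfmod{N}$ and on $\lmod{A}\otimes\rmod{A}\otimes\quasihopfmod{N}$ are rigged so that the $\Phi$'s pass through verbatim --- so the step you flag as the main obstacle is in fact the easiest part.
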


\begin{proof}
For the sake of the reader, let us recall that the structures on $\bimod{M}\otimes \quasihopfmod{N}$ are given by
\begin{equation}\label{eq:trimodM}
\begin{gathered}
x\cdot (m\otimes n)\cdot y = x_1\cdot m\cdot y_1 \otimes x_2\cdot n \cdot y_2, \\
\delta(m\otimes n) = \varphi^1\cdot m \cdot \Phi^1 \otimes \varphi^2\cdot n_0 \cdot \Phi^2 \otimes \varphi^3 \cdot n_1\cdot \Phi^3 
\end{gathered}
\end{equation}
for all $m\in M, n\in N$, $x,y\in A$, while those on $\lmod{A}\otimes \rmod{A}\otimes \quasihopfmod{N}$ are given by
\begin{equation}\label{eq:trimodA}
\begin{gathered}
x\cdot (a\otimes b\otimes n)\cdot y = x_1a \otimes by_1 \otimes x_2\cdot n \cdot y_2, \\
\delta(a\otimes b\otimes n) = \varphi^1 a \otimes  b\Phi^1 \otimes \varphi^2\cdot n_0 \cdot \Phi^2 \otimes \varphi^3 \cdot n_1\cdot \Phi^3
\end{gathered} 
\end{equation}
for all $a,b,x,y\in A$ and $n\in N$. If $f$ is a morphism in $\Hom{A}{}{A}{A}{\bimod{M}\otimes \quasihopfmod{N}}{\quasihopfmod{P}}$, then
\begin{equation} \label{eq:tildelin}
\begin{gathered}
f\left(x_1\cdot m \cdot y_1\otimes x_2\cdot n \cdot y_2\right) \stackrel{\eqref{eq:trimodM}}{=} f\left(x\cdot \left( m \otimes n\right)\cdot y\right) = x\cdot f(m \otimes n)\cdot y \\
\delta_P\left(f\left(m\otimes n \right)\right) \stackrel{\eqref{eq:trimodM}}{=} f\left(\varphi^1\cdot m \cdot \Phi^1 \otimes \varphi^2\cdot n_0 \cdot \Phi^2\right)\otimes \varphi^3 \cdot n_1\cdot \Phi^3 
\end{gathered}
\end{equation}
Write $\tilde{f}$ for its image in $\Hom{A}{}{A}{}{\bimod{M}}{\Hom{A}{}{A}{A}{\lmod{A}\otimes \rmod{A}\otimes \quasihopfmod{N}}{\quasihopfmod{P}}}$ via \eqref{eq:superadj}. Since $f$ is bilinear we have
\begin{gather*}
\tilde{f}(m)\left(x\cdot (a\otimes b\otimes n)\cdot y\right) \stackrel{\eqref{eq:trimodA}}{=} \tilde{f}(m)(x_1a\otimes by_1\otimes x_2\cdot n\cdot y_2) \stackrel{\eqref{eq:superadj}}{=} f\left(x_1a\cdot m \cdot by_1\otimes x_2\cdot n \cdot y_2\right) \\
\stackrel{\eqref{eq:tildelin}}{=} x\cdot f(a\cdot m\cdot  b \otimes n)\cdot y \stackrel{\eqref{eq:superadj}}{=} x\cdot \tilde{f}(m)\left(a\otimes b\otimes n\right)\cdot y,
\end{gather*}
so that $\tilde{f}(m)$ is bilinear, and since $f$ is right colinear we have
\begin{align*}
\delta_P\left(\tilde{f}(m)\left(a\otimes b\otimes n\right)\right) & \stackrel{\eqref{eq:superadj}}{=} \delta_P\left(f\left(a\cdot m\cdot b\otimes n\right)\right) \\
 & \stackrel{\eqref{eq:tildelin}}{=} f\left(\varphi^1a\cdot m \cdot b\Phi^1 \otimes \varphi^2\cdot n_0 \cdot \Phi^2\right)\otimes \varphi^3 \cdot n_1\cdot \Phi^3 \\
 & \stackrel{\eqref{eq:superadj}}{=} \tilde{f}(m)\left(\varphi^1a \otimes b\Phi^1 \otimes \varphi^2\cdot n_0 \cdot \Phi^2\right)\otimes \varphi^3 \cdot n_1\cdot \Phi^3,
\end{align*}
so that $\tilde{f}(m)$ is colinear as well, for every $m\in M$. Moreover,
\begin{gather*}
\tilde{f}(x\cdot m\cdot y)(a\otimes b\otimes n) \stackrel{\eqref{eq:superadj}}{=} f\left(ax\cdot m\cdot yb \otimes n\right) \stackrel{\eqref{eq:superadj}}{=} \tilde{f}(m)\left(ax \otimes yb \otimes n\right) \\
\stackrel{\eqref{eq:2triang}}{=} \left(x\triangleright \tilde{f}(m)\triangleleft y \right)\left(a \otimes b \otimes n\right)
\end{gather*}
for all $a,b,x,y\in A$, $m\in M$, $n\in N$, so that $\tilde{f}$ is $A$-bilinear. The other way around, if $g\in \Hom{A}{}{A}{}{\bimod{M}}{\Hom{A}{}{A}{A}{\lmod{A}\otimes \rmod{A}\otimes \quasihopfmod{N}}{\quasihopfmod{P}}}$, then
\begin{equation}\label{eq:hatlin}
\begin{gathered}
g(x\cdot m\cdot y)(a\otimes b\otimes n) = \left(x\triangleright g \triangleleft y \right)\left(a \otimes b \otimes n\right) \stackrel{\eqref{eq:2triang}}{=} g(m)(ax\otimes yb\otimes n), \\
g(m)(x_1a\otimes by_1\otimes x_2\cdot n\cdot y_2) \stackrel{\eqref{eq:trimodA}}{=} g(m)\left(x\cdot (a\otimes b\otimes n)\cdot y\right) = x\cdot g(m)\left(a\otimes b\otimes n\right)\cdot y, \\
\delta_P\left(g(m)\left(a\otimes b\otimes n\right)\right) \stackrel{\eqref{eq:trimodA}}{=} g(m)\left(\varphi^1a \otimes b\Phi^1 \otimes \varphi^2\cdot n_0 \cdot \Phi^2\right)\otimes \varphi^3 \cdot n_1\cdot \Phi^3,
\end{gathered}
\end{equation}
for all $a,b,x,y\in A$, $m\in M$, $n\in N$. Write $\hat{g}$ for its image in $\Hom{A}{}{A}{A}{\bimod{M}\otimes \quasihopfmod{N}}{\quasihopfmod{P}}$ via \eqref{eq:superadj}. Then
\begin{gather*}
\hat{g}(x\cdot(m\otimes n)\cdot y) \stackrel{\eqref{eq:trimodM}}{=} \hat{g}(x_1\cdot m\cdot y_1\otimes x_2\cdot n\cdot y_2) \stackrel{\eqref{eq:superadj}}{=} g(m)(x_1\otimes y_1\otimes x_2\cdot n\cdot y_2) \\
\stackrel{\eqref{eq:hatlin}}{=} x\cdot g(m)\left(1\otimes 1\otimes n\right)\cdot y \stackrel{\eqref{eq:superadj}}{=} x\cdot \hat{g}\left(m\otimes n\right)\cdot y
\end{gather*}
and 
\begin{align*}
\delta_P\left(\hat{g}(m\otimes n)\right) & \stackrel{\eqref{eq:superadj}}{=} \delta_P\left(g(m)(1\otimes 1\otimes n)\right) \\
 & \stackrel{\eqref{eq:hatlin}}{=} g(m)\left(\varphi^1 \otimes \Phi^1 \otimes \varphi^2\cdot n_0 \cdot \Phi^2\right)\otimes \varphi^3 \cdot n_1\cdot \Phi^3 \\
 & \stackrel{\eqref{eq:superadj}}{=} \hat{g}\left(\varphi^1 \cdot m \cdot \Phi^1 \otimes \varphi^2\cdot n_0 \cdot \Phi^2\right)\otimes \varphi^3 \cdot n_1\cdot \Phi^3.
\end{align*}
Summing up, \eqref{eq:superadj} is well-defined and an easy check shows that the two assignments are each other inverses.
\end{proof}

\begin{remark}
The existence of a natural isomorphism $\Xi$ as in \eqref{eq:Xi} allows us to recover many of the expected properties. Set $A_2\coloneqq \lmod{A}\otimes \rmod{A}\otimes \quasihopfmod{A}$ as in the proof of Lemma \ref{lemma:sigmagen}, for the sake of brevity. First of all, since $\Xi$ is natural,
\[
\Xi_M(f) = \left(\Xi_M \circ \Hom{A}{}{A}{A}{A_2}{f}\right)(\id_{A_2}) = U(f)\left(\Xi_{A_2}\left(\id_{A_2}\right)\right).
\]
If we introduce $z^{(1)}\otimes z^{(2)} \otimes z^{(3)} \coloneqq \Xi_{A_2}\left(\id_{A_2}\right) \in A_2$, then
\[
\Xi_M(f) = f\left(z^{(1)}\otimes z^{(2)} \otimes z^{(3)}\right),
\]
for all $M$ in $\quasihopf{A}$ and every $f\in \Hom{A}{}{A}{A}{A_2}{M}$, and since it is $A$-bilinear we also have that
\begin{equation}\label{eq:Xif}
\begin{gathered}
f\left(z^{(1)}a\otimes bz^{(2)} \otimes z^{(3)}\right) \stackrel{\eqref{eq:2triang}}{=} (a\triangleright f\triangleleft b)\left(z^{(1)}\otimes z^{(2)} \otimes z^{(3)}\right) = \Xi_M(a\triangleright f\triangleleft b) \\
= a\cdot \Xi_M(f)\cdot b \stackrel{\eqref{eq:tildelin}}{=} f\left(a_1z^{(1)}\otimes z^{(2)}b_1 \otimes a_2z^{(3)}b_2\right).
\end{gathered}
\end{equation}
In particular, for $M=A_2$ and $f=\id_{A_2}$,
\[
z^{(1)}a\otimes bz^{(2)} \otimes z^{(3)} \stackrel{\eqref{eq:Xif}}{=} a_1z^{(1)}\otimes z^{(2)}b_1 \otimes a_2z^{(3)}b_2
\]
for all $a,b\in A$. In addition, for $M=A$ and $f=\varepsilon\otimes \varepsilon\otimes A$, 
\begin{gather*}
\varepsilon(a)\varepsilon\left(z^{(1)}\right)\varepsilon\left(z^{(2)}\right)z^{(3)} \varepsilon(b) = \varepsilon\left(z^{(1)}a\right)\varepsilon\left(bz^{(2)}\right)z^{(3)} \stackrel{\eqref{eq:Xif}}{=} \varepsilon\left(a_1z^{(1)}\right)\varepsilon\left(z^{(2)}b_1\right)a_2z^{(3)}b_2 \\
= a\varepsilon\left(z^{(1)}\right)\varepsilon\left(z^{(2)}\right)z^{(3)}b
\end{gather*}
shows that $t\coloneqq \varepsilon\left(z^{(1)}\right)\varepsilon\left(z^{(2)}\right)z^{(3)} = \Xi_A(\varepsilon\otimes \varepsilon\otimes A)$ is a non-zero two-sided integral in $A$. Set also $\Lambda\coloneqq \Xi_A^{-1}(1)$. Then, by applying $\varepsilon\otimes A$ to both sides of
\[
\Delta\left(\Lambda(a \otimes b\otimes c)\right) \stackrel{\eqref{eq:hatlin}}{=} \Lambda\left(\varphi^1a \otimes b\Phi^1\otimes \varphi^2c_1\Phi^2\right) \otimes \varphi^3c_2\Phi^3,
\]
we get that
\begin{equation}\label{eq:Lambda}
\Lambda(a \otimes b\otimes c) = \varepsilon\left(\Lambda\left(\varphi^1a \otimes b\Phi^1\otimes \varphi^2c_1\Phi^2\right)\right)\varphi^3c_2\Phi^3
\end{equation}
and hence
\begin{gather*}
a = \Xi_M\left(\Xi_M^{-1}(a)\right) = \Xi_M\left(a\triangleright\Lambda\right) \stackrel{\eqref{eq:Xif}}{=} \Lambda\left(z^{(1)}a\otimes z^{(2)} \otimes z^{(3)}\right) \\
\stackrel{\eqref{eq:Lambda}}{=} \varepsilon\left(\Lambda\left(\varphi^1z^{(1)}a \otimes z^{(2)}\Phi^1\otimes \varphi^2z^{(3)}_1\Phi^2\right)\right)\varphi^3z^{(3)}_2\Phi^3
\end{gather*}
for all $a\in A$, which entails that $A$ is finite-dimensional. Now, observe that
\begin{equation}\label{eq:Lambda2side}
x\triangleright \Lambda = x\triangleright \Xi_A^{-1}(1) = \Xi_A^{-1}(x) = \Xi_A^{-1}(1)\triangleleft x = \Lambda\triangleleft x
\end{equation}
for all $x\in A$, \ie
\[
\Lambda(ax\otimes b\otimes c) = (x\triangleright \Lambda) (a\otimes b\otimes c) = (\Lambda \triangleleft x)(a\otimes b\otimes c) = \Lambda(a\otimes xb\otimes c)
\]
for all $a,b,c,x\in A$, which implies that $\Lambda$ induces a well-defined 
\begin{equation}\label{eq:lambda}
\lambda : A\otimes A \to A, \qquad a\otimes b \mapsto \Lambda(a\otimes 1\otimes b).
\end{equation}
It satisfies
\begin{align*}
\lambda(x_1ay_1\otimes x_2by_2) & \stackrel{\eqref{eq:lambda}}{=} \Lambda(x_1ay_1\otimes 1 \otimes x_2by_2) \stackrel{\eqref{eq:Lambda2side}}{=} \Lambda(x_1a\otimes y_1 \otimes x_2by_2) \\
 & \stackrel{\eqref{eq:hatlin}}{=} x\Lambda(a\otimes 1\otimes b)y \stackrel{\eqref{eq:lambda}}{=} x\lambda (a\otimes b)y, \\
\Delta\left(\lambda\left(a\otimes b \right)\right) & \stackrel{\eqref{eq:lambda}}{=} \Delta\left(\Lambda(a\otimes 1\otimes b)\right) \stackrel{\eqref{eq:hatlin}}{=}\Lambda\left(\varphi^1a \otimes \Phi^1\otimes \varphi^2b_1\Phi^2\right) \otimes \varphi^3b_2\Phi^3 \\
 & \stackrel{\eqref{eq:lambda}}{=} \lambda\left(\varphi^1a\Phi^1\otimes \varphi^2b_1\Phi^2\right) \otimes \varphi^3b_2\Phi^3,
\end{align*}
that is to say, $\lambda \in\Hom{A}{}{A}{A}{\bimod{A}\otimes \quasihopfmod{A}}{\quasihopfmod{A}}$.
\end{remark}

\begin{personal}
See also the computations from 27/28 February 2019 and 12/13 February 2020.
\end{personal}
\end{invisible}


\section{Preantipodes and Hopf monads}\label{sec:HopfMonads}

We conclude this paper with one last condition equivalent to the existence of a preantipode for a quasi-bialgebra. It showed up while addressing the question in \S\ref{sec:HopfBimod}, but it is independent from the results therein and hence we dedicate to it this small section.


Recall from \cite[\S2.7]{BruguieresLackVirelizier} that a \emph{Hopf monad} on a monoidal category $(\cM,\otimes,\I)$ is a monad $(T,\mu,\nu)$ on $\cM$ such that the functor $T$ is a colax monoidal functor with $\upphi_0:T(\I)\to\I$, $\upphi_{X,Y}:T\left(X\otimes Y\right)\to T(X)\otimes T(Y)$, the natural transformations $\mu:T^2\to T,\nu:T\to \id_{\cM}$ are morphisms of colax monoidal functors and the \emph{fusion operators}
\begin{gather*}
H_{X,Y}^l \coloneqq  \left(
\xymatrix@C=50pt{
T\left(X\otimes T(Y)\right) \ar[r]^-{\upphi_{X,T(Y)}} & T(X)\otimes T^2(Y) \ar[r]^-{T(X)\otimes \mu_Y} & T(X)\otimes T(Y)
} 
\right), \\
H_{X,Y}^r \coloneqq  \left(
\xymatrix@C=50pt{
T\left(T(X)\otimes Y\right) \ar[r]^-{\upphi_{T(X),Y}} & T^2(X)\otimes T(Y) \ar[r]^-{\mu_X\otimes T(Y)} & T(X)\otimes T(Y)
} 
\right)
\end{gather*}
are natural isomorphisms in $X,Y\in\cM$.

Similarly, consider a colax-colax adjunction $\xymatrix@C=15pt{\cL:\cM \ar@<+0.3ex>[r] & \cM' :\cR\ar@<+0.3ex>[l]}$ between monoidal categories $(\cM,\otimes,\I),(\cM',\otimes',\I')$, with colax monoidal structures $(\cL,\cpsi_0,\cpsi)$ and $(\cR,\cvarphi_0,\cvarphi)$. In \cite[\S2.8]{BruguieresLackVirelizier}, the pair $(\cL,\cR)$ is called a \emph{Hopf adjunction} if the \emph{Hopf operators}
\begin{gather}\label{eq:Hopfop}
\begin{gathered}
\bH^l_{X,X'} \coloneqq  \left( 
\xymatrix@C=50pt{
\cL\left(X\otimes \cR(X')\right) \ar[r]^-{\cpsi_{X,\cR(X')}} & \cL(X)\otimes' \cL\cR(X') \ar[r]^-{\cL(X)\otimes'\epsilon_{X'}} & \cL(X)\otimes' X'
} \right), \\
\bH^r_{X',X} \coloneqq  \left(
\xymatrix@C=50pt{
\cL\left(\cR(X')\otimes X\right) \ar[r]^-{\cpsi_{\cR(X'),X}} & \cL\cR(X')\otimes'\cL(X) \ar[r]^-{\epsilon_{X'} \otimes' \cL(X)} & X'\otimes' \cL(X)
}
\right),
\end{gathered}
\end{gather}
are natural isomorphisms in $X\in\cM,X'\in\cM'$.

Let $A$ be a quasi-bialgebra over a commutative ring $\K$. In view of \cite[Proposition 3.84]{Aguiar} and the fact that $-\otimes A:\Lmod{A}\to\quasihopf{A}$ is strong monoidal with $\xi_{V,W}:(V\otimes A)\tensor{A}(W\otimes A) \to (V\otimes W)\otimes A$ as in \eqref{eq:tensAstrmon}, the functor $\overline{(-)}$ enjoys a colax monoidal structure (unique such that the adjunction $\left(\overline{(-)},-\otimes A\right)$ is colax-lax) where $\epsilon_{\K}$ provides the (iso)morphism $\cl{A}\cong\K$ connecting the unit objects and
\begin{equation}
\cpsi_{M,N}:\overline{M\tensor{A}N} \to \overline{M}\otimes \overline{N}, \quad \overline{m\tensor{A}n}\mapsto \overline{m_0}\otimes \overline{m_1n} \label{eq:natpsi}
\end{equation}
provides the natural transformation connecting the tensor products, where $M,N\in\quasihopf{A}$.\begin{invisible}
On the other hand, $\Hom{A}{}{A}{A}{A\otimes A}{-}$ enjoy a lax monoidal structure. Namely, $\gamma_{\K}$ provides the (iso)morphisms $\K\cong\Hom{A}{}{A}{A}{A\otimes A}{A}$ connecting the unit objects while
\begin{gather*}
\Hom{A}{}{A}{A}{A\otimes A}{M}\otimes \Hom{A}{}{A}{A}{A\otimes A}{N} \to \Hom{A}{}{A}{A}{A\otimes A}{M\tensor{A}N}, \\
f\otimes g \mapsto \Big[a\otimes b\mapsto f\left(\Phi^1a_1\otimes 1\right)\tensor{A}g\left(\Phi^2a_2\otimes \Phi^3b\right)\Big]
\end{gather*}
provides the natural transformations connecting the tensor products.
\end{invisible} This, in particular, makes of $\left(\cl{(-)},-\otimes A\right)$ a colax-colax adjunction (in light of \cite[Proposition 3.93]{Aguiar}, for example). 

Consider the monad $T=\cl{(-)}\otimes A$ on $\quasihopf{A}$ associated to the adjunction $\left(\cl{(-)},-\otimes A\right)$. The natural transformations $\mu$ and $\nu$ are provided by
\begin{gather}
\mu_M:\cl{\cl{M}\otimes A}\otimes A \stackrel{\cong}{\longrightarrow} \cl{M}\otimes A; \quad \cl{\cl{m}\otimes a}\otimes b\mapsto \cl{m}\otimes \varepsilon(a)b \label{eq:mu} \\
\text{and} \qquad \nu_M: M \to \cl{M}\otimes A; \quad m \mapsto \cl{m_0}\otimes m_1, \notag
\end{gather}
where $\mu$ is invertible because the counit $\epsilon$ from \eqref{eq:unitscounitsquasi} is so. It is an \emph{opmonoidal monad} by \cite[\S2.5]{BruguieresLackVirelizier} with
\begin{gather}
\upphi_{M,N}\coloneqq  \left(
\begin{gathered}
\xymatrix @C=15pt @R=0pt {
\cl{M\tensor{A}N}\otimes A \ar[rr]^-{\cpsi_{M,N}\otimes A} && \cl{M}\otimes \cl{N}\otimes A \ar[r]^-{\xi_{\cl{M},\cl{N}}^{-1}} & \left(\cl{M}\otimes A\right) \tensor{A} \left(\cl{N}\otimes A\right) \\
\cl{m\tensor{A}n}\otimes a \ar@{|->}[rrr] & & & \left(\cl{\Phi^1\cdot m_0}\otimes 1\right)\tensor{A}\left(\cl{\Phi^2m_1\cdot n}\otimes \Phi^3a\right)
}
\end{gathered}
\right) \notag \\
\text{and} \qquad \upphi_0\coloneqq \left( 
\begin{gathered}
\xymatrix @C=15pt @R=0pt{
\cl{A}\otimes A \ar[rr]^-{\epsilon_\K\otimes A} && \K\otimes A \ar[r]^-{\cong} & A \\
\cl{a}\otimes b \ar@{|->}[rrr] & & & \varepsilon(a)b
}
\end{gathered}
\right). \label{eq:phi0phi}
\end{gather}

\begin{remark}
Opmonoidal monads are monads and colax monoidal functors at the same time such that the multiplication and unit of the monad are morphisms of colax monoidal functors. They have been called \emph{Hopf monads} in \cite[Definition 1.1]{Moerdijk} and \emph{bimonads} in \cite[\S2.5]{BruguieresLackVirelizier}, \cite[\S2.3]{BruguieresVirelizier}, but we decided to adhere to the terminology introduced by \cite[page 472]{McCrudden} because it is nowadays the most widely used in the subject (see, for example, \cite[Chapter 3]{Bohm}). In particular, a Hopf monad here is an opmonoidal monad whose fusion operators are natural isomorphisms.
\end{remark}

The following is the main result of the present section.

\begin{theorem}\label{thm:main2}
For a quasi-bialgebra $A$ the following are equivalent
\begin{enumerate}[label=(\alph*), ref=\emph{(\alph*)}, leftmargin=1cm, labelsep=0.3cm]
\item\label{item:mainthm1} $A$ admits a preantipode;
\item\label{item:mainthm2} the natural transformation $\cpsi$ of equation \eqref{eq:natpsi} is a natural isomorphism;
\item\label{item:mainthm3} the component $\cpsi_{A\widehat{\otimes}A,A\otimes A}$ of $\cpsi$ is invertible;
\item\label{item:mainthm4} $\left(\cl{(-)},-\otimes A\right)$ is a lax-lax adjunction;
\item\label{item:mainthm5} $\left(\cl{(-)},-\otimes A\right)$ is a Hopf adjunction;
\item\label{item:mainthm6} $T=\cl{(-)}\otimes A$ is a Hopf monad on $\quasihopf{A}$.
\end{enumerate}
\end{theorem}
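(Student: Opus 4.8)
The plan is to deduce the whole statement from Theorem \ref{thm:mainThm} together with the general theory of (op)monoidal monads and Hopf adjunctions of \cite{BruguieresLackVirelizier} and \cite{Aguiar}. I would organise the argument around the cycle \ref{item:mainthm1} $\Rightarrow$ \ref{item:mainthm2} $\Rightarrow$ \ref{item:mainthm5} $\Rightarrow$ \ref{item:mainthm3} $\Rightarrow$ \ref{item:mainthm1}, treating \ref{item:mainthm4} and \ref{item:mainthm6} as ``soft'' reformulations attached to \ref{item:mainthm2} and \ref{item:mainthm5} respectively. The two soft equivalences are the quickest. For \ref{item:mainthm4} $\Leftrightarrow$ \ref{item:mainthm2} I would run a doctrinal-adjunction argument: the adjunction $\left(\cl{(-)},-\otimes A\right)$ becomes lax-lax exactly when the left adjoint $\cl{(-)}$ is strong monoidal, that is, when its comonoidal constraint $\cpsi$ of \eqref{eq:natpsi} is invertible, in which case the lax structure is $\cpsi^{-1}$ (cf. \cite{Aguiar}). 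The equivalence \ref{item:mainthm5} $\Leftrightarrow$ \ref{item:mainthm6} is precisely the identification of a comonoidal adjunction being Hopf with its induced opmonoidal monad $T=\cl{(-)}\otimes A$ being Hopf, as in \cite[\S2.7--2.8]{BruguieresLackVirelizier}, the fusion operators of $T$ being the Hopf operators \eqref{eq:Hopfop} of the adjunction.

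For the cycle itself, \ref{item:mainthm1} $\Rightarrow$ \ref{item:mainthm2} is immediate from Theorem \ref{thm:mainThm}: a preantipode makes $-\otimes A$ a monoidal equivalence, so the left adjoint $\cl{(-)}$ is strong monoidal and its constraint $\cpsi$ is a natural isomorphism. For \ref{item:mainthm2} $\Rightarrow$ \ref{item:mainthm5} I would exploit that $-\otimes A$ is fully faithful (Remark \ref{rem:fullyfaith}), so the counit $\epsilon$ occurring in \eqref{eq:Hopfop} is invertible; hence the Hopf operators $\bH^l$ and $\bH^r$ are isomorphisms if and only if the components of $\cpsi$ with a free quasi-Hopf bimodule $N\otimes A$ in the relevant slot are invertible, which certainly holds once $\cpsi$ is a natural isomorphism. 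The step \ref{item:mainthm5} $\Rightarrow$ \ref{item:mainthm3} is then purely a matter of unwinding notation: the distinguished component $\cpsi_{A\cmptens{}A,A\otimes A}$ is, up to the invertible $\epsilon$, exactly $\bH^l$ evaluated at the pair $(A\cmptens{}A,A)$, so it is invertible as soon as $\left(\cl{(-)},-\otimes A\right)$ is a Hopf adjunction.

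The substance of the theorem, and the step I expect to be the main obstacle, is \ref{item:mainthm3} $\Rightarrow$ \ref{item:mainthm1}: extracting a genuine preantipode from the invertibility of the single component $\cpsi_{A\cmptens{}A,A\otimes A}$. Here I would mimic the analysis of Proposition \ref{prop:prenatipode} and Proposition \ref{prop:Phi}, with $\cpsi_{A\cmptens{}A,A\otimes A}$ playing the role of $\sigma_{\widehat A}$. Concretely, I would transport the $A\otimes A$-linearity propagation of Remark \ref{rem:sigmabal} to the present setting so that the inverse of $\cpsi_{A\cmptens{}A,A\otimes A}$ is determined by its value at $\overline{1\otimes 1}$, read off from that value a candidate $\K$-linear endomorphism $S$ of $A$, and verify the defining identities \eqref{eq:defpreantipode}. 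The key point, and the reason a single component now suffices whereas for $\sigma$ one component was not visibly enough (see the Remark following Theorem \ref{thm:mainThm}), is that $\cpsi$ carries the comonoidal (hence associativity) data through the coaction $m\mapsto m_0\otimes m_1$ and the multiplication $m_1 n$, so that the normalisation $\Phi^1 S(\Phi^2)\Phi^3=1$ should be forced by the monoidal coherence encoded in $\cpsi_{A\cmptens{}A,A\otimes A}$ rather than needing to be imposed separately. Carrying out this verification cleanly, keeping track of every occurrence of $\Phi$ and $\Phi^{-1}$, is the technical heart of the argument.

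Finally I would assemble the implications \ref{item:mainthm1} $\Leftrightarrow$ \ref{item:mainthm2} $\Leftrightarrow$ \ref{item:mainthm4} and \ref{item:mainthm2} $\Rightarrow$ \ref{item:mainthm5} $\Leftrightarrow$ \ref{item:mainthm6}, together with \ref{item:mainthm5} $\Rightarrow$ \ref{item:mainthm3} $\Rightarrow$ \ref{item:mainthm1}, to conclude that all six conditions are equivalent. As a consistency check I would confirm that the preantipode produced in \ref{item:mainthm3} $\Rightarrow$ \ref{item:mainthm1} coincides with the one obtained from $\sigma$ in Theorem \ref{thm:mainThm}, and that in the coassociative case $\Phi=1\otimes 1\otimes 1$ the component $\cpsi_{A\cmptens{}A,A\otimes A}$ reduces to the familiar Galois map $a\otimes b\mapsto a_1\otimes a_2 b$, whose inverse recovers the ordinary antipode.
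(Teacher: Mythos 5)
Your handling of the soft equivalences essentially matches the paper: \ref{item:mainthm2} $\Leftrightarrow$ \ref{item:mainthm4} is the doctrinal-adjunction argument (Lemma \ref{lemma:monAdj}), and the passage \ref{item:mainthm2} $\Rightarrow$ \ref{item:mainthm5} $\Rightarrow$ \ref{item:mainthm6} with a return to \ref{item:mainthm2} is the content of Proposition \ref{prop:Hopfmonad}. One caveat there: \cite[Proposition 2.14]{BruguieresLackVirelizier} gives Hopf adjunction $\Rightarrow$ Hopf monad, but the converse is not a bare citation; the paper recovers \ref{item:mainthm2} from \ref{item:mainthm6} by unwinding the fusion operator $H^l_{M,N}$ and propagating invertibility of $\cpsi_{M,\cl{N}\otimes A}\otimes A$ back to $\cpsi_{M,N}$ through naturality of $\epsilon$, $\chi$ and $\eta$. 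Your appeal to ``\S 2.7--2.8'' alone does not cover this, so you would need to supply that propagation argument explicitly.

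The genuine gap is in \ref{item:mainthm3} $\Rightarrow$ \ref{item:mainthm1}. You propose to rerun Propositions \ref{prop:prenatipode} and \ref{prop:Phi} with $\cpsi_{A\cmptens{}A,A\otimes A}$ in the role of $\sigma_{\what{A}}$ and assert that the normalisation $\Phi^1S(\Phi^2)\Phi^3=1$ ``should be forced'' by the comonoidal data. But Proposition \ref{prop:Phi} is not a single-component argument: Lemma \ref{lemma:sigmagen} uses naturality of $\sigma^{-1}$ across $A_2$, $N\otimes A$ and the coactions $\delta_M$ of arbitrary quasi-Hopf bimodules, and the paper explicitly records that it is unknown whether invertibility of the single component $\sigma_{\what{A}}$ suffices to produce a preantipode. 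Your heuristic gives no reason to expect the analogous single-component claim for $\cpsi$ to behave better, and the verification is deferred rather than carried out. The observation that actually closes this implication --- and which your plan misses --- is elementary: by \eqref{eq:psikappa}, equivalently by the identity $\eta_M\circ(M\tensor{A}\epsilon_A)\circ\chi_{M,A\otimes A}=\left(\cl{M}\otimes\epsilon_A\right)\circ\cpsi_{M,A\otimes A}$ established in Proposition \ref{prop:PreLaxLax}, the component $\cpsi_{M,A\otimes A}$ is conjugate by isomorphisms to the unit $\eta_M$ of the adjunction \eqref{eq:adjtrQuasi}. Hence \ref{item:mainthm3} is equivalent to invertibility of $\eta_{A\cmptens{}A}$, and the Structure Theorem for quasi-Hopf bimodules \cite[Theorem 4]{Saracco} already asserts that this single component controls the existence of the preantipode, normalisation included; no new computation with $\Phi$ is required. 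Without this bridge to $\eta$, your cycle is not closed.
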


The proof of Theorem \ref{thm:main2} is postponed to \S\ref{ssec:proof}. We decided to split it in some smaller intermediate results for the sake of clearness.

\begin{corollary}\label{cor:main2Hopf}
For a bialgebra $B$ the following are equivalent
\begin{enumerate}[label=(\alph*), ref=\emph{(\alph*)}, leftmargin=1cm, labelsep=0.3cm]
\item $B$ is a Hopf algebra;
\item the natural transformation $\cpsi$ of equation \eqref{eq:natpsi} is an isomorphism;
\item the component $\cpsi_{B\widehat{\otimes}B,B\otimes B}$ of $\cpsi$ is invertible;
\item $\left(\cl{(-)},-\otimes B\right)$ is a lax-lax adjunction;
\item $\left(\cl{(-)},-\otimes B\right)$ is a Hopf adjunction;
\item $T=\cl{(-)}\otimes B$ is a Hopf monad on $\quasihopf{B}$.
\end{enumerate}
\end{corollary}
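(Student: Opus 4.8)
The plan is to deduce the corollary directly from Theorem \ref{thm:main2} by specialising it to the case where the quasi-bialgebra $A$ is the bialgebra $B$, regarded as a quasi-bialgebra with trivial reassociator $\Phi=1\otimes 1\otimes 1$. Once $\Phi$ is trivial, every ingredient entering Theorem \ref{thm:main2} collapses to its coassociative counterpart: the category $\quasihopf{B}$ becomes the category of two-sided Hopf modules, the free functor $-\otimes B\colon\Lmod{B}\to\quasihopf{B}$, the natural transformation $\cpsi$ of \eqref{eq:natpsi} (whose defining formula $\cl{m\tensor{A}n}\mapsto\cl{m_0}\otimes\cl{m_1n}$ does not even mention $\Phi$), the distinguished object $B\cmptens{}B$, and the opmonoidal monad $T=\cl{(-)}\otimes B$ are literally the bialgebra versions of the objects appearing in the theorem. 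Consequently, conditions \emph{(b)}--\emph{(f)} of the corollary are exactly conditions \emph{(b)}--\emph{(f)} of Theorem \ref{thm:main2} read for $A=B$, and nothing must be re-proved for them.

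The only point requiring attention is the translation of condition \emph{(a)}: Theorem \ref{thm:main2} phrases it as ``$B$ admits a preantipode'', while the corollary phrases it as ``$B$ is a Hopf algebra''. I would therefore verify that these coincide for a bialgebra, as already recalled in the text preceding Theorem \ref{thm:mainThmHopf}. First, with $\Phi=1\otimes 1\otimes 1$ the last preantipode axiom $\Phi^1S(\Phi^2)\Phi^3=1$ of \eqref{eq:defpreantipode} reduces to $S(1)=1$. Then, setting $b=1$ in the two convolution-type identities of \eqref{eq:defpreantipode}, one obtains $S(a_1)a_2=\varepsilon(a)S(1)=\varepsilon(a)1$ and $a_1S(a_2)=\varepsilon(a)S(1)=\varepsilon(a)1$; that is, $S$ is simultaneously a left and a right convolution inverse of the identity, hence an antipode. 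Conversely, starting from an antipode $S$ and using its anti-multiplicativity $S(a_1b)=S(b)S(a_1)$ together with $S*\id=u\varepsilon=\id*S$, one recovers $S(a_1b)a_2=\varepsilon(a)S(b)=a_1S(ba_2)$ and $S(1)=1$, so the two notions agree.

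Having matched conditions \emph{(a)}--\emph{(f)} of the corollary with the corresponding conditions of Theorem \ref{thm:main2}, the statement follows at once by applying that theorem to $B$. I do not expect any genuine obstacle here: the argument is a pure specialisation, and the single computation involved --- the reduction of the preantipode axioms to the antipode axioms under $\Phi=1\otimes 1\otimes 1$ --- is short and elementary. In fact this corollary is to Theorem \ref{thm:main2} precisely what Theorem \ref{thm:mainThmHopf} is to Theorem \ref{thm:mainThm}, so the same remark that produced the former passage yields the present one.
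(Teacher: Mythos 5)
Your proposal is correct and matches the paper's own treatment: the corollary is obtained there as an immediate specialisation of Theorem \ref{thm:main2} to $A=B$ with $\Phi=1\otimes1\otimes1$, invoking the fact (recalled just before Theorem \ref{thm:mainThmHopf}) that a bialgebra is a Hopf algebra if and only if it admits a preantipode as a quasi-bialgebra. Your explicit check of that equivalence is the standard argument and is sound.
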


\begin{remark}\label{rem:HopfFrob}
Let $A$ be a quasi-bialgebra. Observe that we implicitly proved the following noteworthy fact: the monad $T=\cl{(-)}\otimes A$ is a Frobenius monad if and only if it is a Hopf monad, if and only if it is naturally isomorphic to the identity functor. In fact, on the one hand $T$ is a Frobenius monad if and only if $-\otimes A$ is a Frobenius functor (since we know from Remark \ref{rem:fullyfaith} that $-\otimes A$ is fully faithful, it is monadic. Therefore the claim can be easily deduced from \cite[Theorem 1.6]{Street}. For the details, see \cite[Proposition 1.5]{Saracco-Frobenius}). On the other hand, by Theorem \ref{thm:mainThm} the latter is equivalent to the existence of a preantipode for $A$ and this, in turn, is equivalent to $T$ being Hopf by Theorem \ref{thm:main2}.
\end{remark}


\subsection{Proof of Theorem \ref{thm:main2}}\label{ssec:proof}

In this subsection, we will often make use of the following isomorphism of left $A$-modules
\begin{equation}\label{eq:phi}
\chi_{M,N}\coloneqq \Big(\cl{M\tensor{A}N} \cong (M\tensor{A}N)\tensor{A}\K \cong M\tensor{A}(N\tensor{A}\K) \cong M\tensor{A}\cl{N}\Big),
\end{equation}
natural in $M,N\in\quasihopf{A}$, which closely resembles the one we used to prove Lemma \ref{lemma:cltensor}.
\begin{invisible}
The first and the third are a consequence of Snake Lemma: namely, $M/MI\cong M\tensor{A}A/I$ is an isomorphism of left $A'$-modules for all algebras $A,A'$, ideals $I\subseteq A$ and $(A',A)$-bimodules $M$.
\end{invisible}

\begin{lemma}\label{lemma:monAdj}
Let $\xymatrix@C=15pt{\cL:\cM \ar@<+0.3ex>[r] & \cM' :\cR\ar@<+0.3ex>[l]}$ be a colax-colax adjunction between monoidal categories $(\cM,\otimes,\I),(\cM',\otimes',\I')$, with colax monoidal structures $(\cL,\cpsi_0,\cpsi)$ and $(\cR,\cvarphi_0,\cvarphi)$. Then $\cpsi_0$ and $\cpsi$ are natural isomorphisms if and only if $(\cL,\cR)$ is a lax-lax adjunction.
\end{lemma}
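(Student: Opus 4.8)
The plan is to reduce the statement to the doctrinal (mate) correspondence for monoidal adjunctions developed in \cite{Aguiar}. Recall that for any adjunction $\cL\dashv\cR$ with unit $\eta$ and counit $\epsilon$, taking mates sets up a bijection between colax monoidal structures on $\cL$ and lax monoidal structures on $\cR$, under which a matched pair forms a colax-lax adjunction; crucially, each of the two structures in a matched pair is uniquely determined by the other (this is the content used in \cite[Proposition 3.84]{Aguiar}). The first preliminary step is therefore to pin down the lax structure on $\cR$ hidden in the colax-colax datum. Since $(\cL,\cR)$ is a colax-colax adjunction, the dual of the doctrinal adjunction argument (the variance-reversed counterpart of the situation in \cite[Proposition 3.93]{Aguiar}) shows that the right adjoint $\cR$ is in fact \emph{strong} monoidal, i.e. the colax structure maps $\cvarphi_0$ and $\cvarphi_{X',Y'}$ are invertible. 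Consequently $\cvarphi^{-1}$ is a lax structure on $\cR$, and by uniqueness in the mate correspondence it is precisely the lax structure matched to the given colax structure $\cpsi$ on $\cL$; in other words $(\cL,\cpsi)$ and $(\cR,\cvarphi^{-1})$ assemble into a colax-lax adjunction.

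With this normal form in hand, both implications follow from the characterization of when a colax-lax adjunction upgrades to a lax-lax (monoidal) adjunction, namely exactly when the left adjoint is strong. For the direct implication I would argue as follows: if $\cpsi_0$ and $\cpsi$ are isomorphisms, then $\cL$ is strong with lax structure $\cpsi^{-1}$, so that $\cL$ and $\cR$ are now both strong. Since $\eta$ and $\epsilon$ are morphisms of colax monoidal functors, they are automatically morphisms of lax monoidal functors as well, because for a natural transformation between strong monoidal functors the colax and lax coherence conditions are mutually inverse to each other. Hence $(\cL,\cpsi^{-1})\dashv(\cR,\cvarphi^{-1})$ is a lax-lax adjunction.

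For the converse I would start from a lax-lax adjunction, whose lax structures I denote $\widehat{\cpsi}$ on $\cL$ and (using the strongness of $\cR$ established above) $\cvarphi^{-1}$ on $\cR$. The doctrinal adjunction theorem, in the form recorded in \cite{Aguiar}, asserts that the left adjoint of a lax-lax adjunction is strong monoidal; thus $\widehat{\cpsi}$ is invertible and its inverse $\widehat{\cpsi}^{-1}$ is a colax structure on $\cL$ matched, under the mate bijection, to the lax structure $\cvarphi^{-1}$ on $\cR$. But we saw that the given colax structure $\cpsi$ is matched to the \emph{same} lax structure $\cvarphi^{-1}$; by the uniqueness clause of the mate correspondence we conclude $\cpsi=\widehat{\cpsi}^{-1}$, so that $\cpsi_0$ and $\cpsi$ are isomorphisms.

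The routine but slightly delicate point, which I would isolate as a separate observation, is exactly the identity $\widehat{\cpsi}^{-1}=\cpsi$: it rests on the uniqueness in the mate correspondence together with the fact that $\cR$ carries a single strong monoidal structure, so that both $\cpsi$ and $\widehat{\cpsi}^{-1}$ are forced to be the mate of $\cvarphi^{-1}$. The main obstacle is thus not a computation but the correct bookkeeping of variances: keeping straight which functor is lax and which is colax, in which direction the mate is taken, and verifying that the colax coherence conditions on $\eta$ and $\epsilon$ transpose to lax coherence conditions precisely when the relevant structure maps are invertible. Once the two doctrinal inputs are in place, that the right adjoint of a colax-colax adjunction is strong and that the left adjoint of a lax-lax adjunction is strong, the equivalence is immediate.
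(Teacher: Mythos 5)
Your proposal is correct and follows essentially the same route as the paper's own proof: both establish via (the dual of) doctrinal adjunction that $\cR$ is strong with lax structure $\cvarphi^{-1}$, deduce the forward implication from strongness of $\cL$, and obtain the converse from the strongness of the left adjoint of a lax-lax adjunction together with the uniqueness clause of the mate correspondence, which forces $\cpsi$ to be the inverse of the lax structure on $\cL$. The only cosmetic difference is that you re-derive the step ``left adjoint strong $\Rightarrow$ lax-lax'' by transposing the coherence squares for $\eta$ and $\epsilon$, where the paper simply cites \cite[Proposition 3.93~(1)]{Aguiar}.
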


\begin{proof}
The proof is already contained in \cite[Propositions 3.93 and 3.96]{Aguiar}. Let us sketch it anyway, for the sake of the reader. Since $\cR$ is right adjoint to an colax monoidal functor, in light of \cite[Proposition 3.84]{Aguiar} it naturally inherits a unique lax monoidal structure such that the pair $(\cL,\cR)$ is a colax-lax adjunction. Moreover, by the (dual of the) proof of \cite[Proposition 3.96]{Aguiar}, this unique lax monoidal structure is provided by the inverses of $\cvarphi_0$ and $\cvarphi$, thus making of $\cR$ a strong monoidal functor. 
Now, if $\cpsi_0$ and $\cpsi$ are natural isomorphisms, then $\cL$ is a strong monoidal functor. By the direct implication of \cite[Proposition 3.93 (1)]{Aguiar}, $(\cL,\cR)$ is a lax-lax adjunction. Conversely, assume that $(\cL,\cR)$ is a lax-lax adjunction where the lax monoidal structure on $\cL$ is denoted by $(\cL,\upgamma_0,\upgamma)$. As left adjoint of a lax monoidal functor, $\cL$ inherits a unique colax monoidal structure such that $(\cL,\cR)$ is a colax-lax adjunction, by \cite[Proposition 3.84]{Aguiar} again, and this has to be provided by the inverses of $\upgamma_0$ and $\upgamma$. However, $\cL$ already has a colax monoidal structure such that $(\cL,\cR)$ is a colax-lax adjunction: $(\cL,\cpsi_0,\cpsi)$. Therefore, $\upgamma_0^{-1}=\cpsi_0$ and $\upgamma^{-1}=\cpsi$.
\end{proof}

Since in the context of Theorem \ref{thm:main2} we have that $\upphi_0=\epsilon_\K$ is always invertible, the equivalence between \ref{item:mainthm2} and \ref{item:mainthm4} follows from Lemma \ref{lemma:monAdj}: $\left(\cl{(-)},-\otimes A\right)$ is a lax-lax adjunction if and only if $\upphi = \xi^{-1}\circ\cpsi$ is a natural isomorphism, if and only if $\cpsi$ is.

\begin{personal}
\begin{proposition}\label{prop:catHopfmonad}
Let $\xymatrix@C=15pt{\cL:\cM \ar@<+0.3ex>[r] & \cM' :\cR\ar@<+0.3ex>[l]}$ be an opmonoidal adjunction between monoidal categories $(\cM,\otimes,\I),(\cM',\otimes',\I')$, with unit $\eta$, counit $\epsilon$ and opmonoidal structures $(\cL,\cpsi_0,\cpsi)$ and $(\cR,\cvarphi_0,\cvarphi)$. Assume in addition that $\cR$ is fully faithful (\ie $\epsilon$ is a natural isomorphism). The following assertions are equivalent.
\begin{enumerate}[label=(\arabic*), ref=\emph{(\arabic*)}, leftmargin=1cm, labelsep=0.3cm]
\item\label{item:catHopfmonad1} $\cpsi_0$ and $\cpsi$ are natural isomorphisms;
\item\label{item:catHopfmonad2} $(\cL,\cR)$ is a monoidal adjunction;
\item\label{item:catHopfmonad3} $(\cL,\cR)$ is a Hopf adjunction.
\end{enumerate}
\end{proposition}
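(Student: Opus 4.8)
The plan is to prove the cycle $\ref{item:catHopfmonad1}\Leftrightarrow\ref{item:catHopfmonad2}$, $\ref{item:catHopfmonad1}\Rightarrow\ref{item:catHopfmonad3}$ and $\ref{item:catHopfmonad3}\Rightarrow\ref{item:catHopfmonad1}$. The equivalence $\ref{item:catHopfmonad1}\Leftrightarrow\ref{item:catHopfmonad2}$ is nothing but Lemma \ref{lemma:monAdj}, which already records that $\cpsi_0,\cpsi$ being natural isomorphisms is the same as $(\cL,\cR)$ being a lax-lax (i.e.\ monoidal) adjunction; here the fully faithfulness of $\cR$ is not needed. For $\ref{item:catHopfmonad1}\Rightarrow\ref{item:catHopfmonad3}$ I would simply read off the definitions \eqref{eq:Hopfop}: each Hopf operator is the composite of a component of $\cpsi$ with a morphism of the form $\cL(X)\otimes'\epsilon_{X'}$ (respectively $\epsilon_{X'}\otimes'\cL(X)$). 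Since $\cR$ is fully faithful, $\epsilon$ is a natural isomorphism, so those second factors are invertible; if in addition $\cpsi$ is a natural isomorphism then $\bH^l$ and $\bH^r$ are composites of isomorphisms, whence $(\cL,\cR)$ is a Hopf adjunction. The very same observation shows conversely that $\bH^l_{X,X'}$ is invertible if and only if $\cpsi_{X,\cR(X')}$ is, and this is the seed of the hard direction.

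The substance is $\ref{item:catHopfmonad3}\Rightarrow\ref{item:catHopfmonad1}$. First I would record the factorisation $\cpsi_{X,Y}=\bH^l_{X,\cL(Y)}\circ\cL(X\otimes\eta_Y)$, obtained by inserting $X'=\cL(Y)$ into \eqref{eq:Hopfop} and applying the triangle identity $\epsilon_{\cL(Y)}\circ\cL(\eta_Y)=\id_{\cL(Y)}$; together with the symmetric identity for $\bH^r$ this reduces invertibility of $\cpsi$ to invertibility of the morphisms $\cL(X\otimes\eta_Y)$ and $\cL(\eta_X\otimes Y)$. Next I would exploit that a fully faithful $\cR$ forces the induced monad $T=\cR\cL$ to be idempotent: the multiplication $\mu=\cR\epsilon\cL$ and every unit component $\cL(\eta_Y)=\epsilon_{\cL(Y)}^{-1}$ are invertible, so that each object in the image of $\cR$ is $T$-local and the Hopf hypothesis \eqref{eq:Hopfop} yields exactly that $\cpsi_{X,Z}$ and $\cpsi_{Z,X}$ are invertible for every local $Z$.

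The remaining point is to propagate invertibility of $\cL(X\otimes\eta_Y)$ from local $Y$ to arbitrary $Y$. This is the monoidal counterpart of showing that the reflective subcategory $\cR(\cM')\subseteq\cM$ is an exponential ideal for the tensor action, and it is precisely what the invertibility of the Hopf operators encodes, beyond the mere restriction of $\cpsi$ to local arguments. I expect this to be the main obstacle: all the naturality squares relating $\cL(X\otimes\eta_Y)$ and $\cL(\eta_X\otimes Y)$ are mutually reducible and hence do not, by themselves, pin down either morphism. I would resolve it through the mate/doctrinal-adjunction calculus of \cite[\S3]{Aguiar}: since by Lemma \ref{lemma:monAdj} the colax structure $\cvarphi$ of $\cR$ is invertible, $\cpsi$ is the mate of the lax structure $\cvarphi^{-1}$, and the invertibility of $\bH^l,\bH^r$ is the Day-reflection condition ensuring that this mate is invertible on all of $\cM$.

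Alternatively, and perhaps more transparently, I would pass to the opmonoidal monad $T=\cR\cL$ and invoke \cite[\S2.7--2.8]{BruguieresLackVirelizier}, according to which $(\cL,\cR)$ is a Hopf adjunction if and only if $T$ is a Hopf monad (its fusion operators being invertible). It then remains to prove the monoidal reflection statement that, for the \emph{idempotent} opmonoidal monad arising from a fully faithful right adjoint, being a Hopf monad forces the comonoidal structure to be strong — equivalently $\cpsi$ to be a natural isomorphism — which closes the cycle. I would regard verifying this last implication carefully, rather than merely citing it, as the genuinely delicate step of the whole argument.
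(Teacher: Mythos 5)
Your treatment of the two easy parts coincides with the paper's: the equivalence of (1) and (2) is Lemma \ref{lemma:monAdj} (i.e.\ \cite[Propositions 3.84, 3.93 and 3.96]{Aguiar}), and (1) $\Rightarrow$ (3) is immediate from \eqref{eq:Hopfop} once $\epsilon$ is known to be invertible. Your factorisation $\cpsi_{X,Y}=\bH^l_{X,\cL(Y)}\circ\cL(X\otimes\eta_Y)$ is also correct and isolates exactly where the difficulty sits. The problem is that the step you defer — showing that the Hopf hypothesis forces $\cL(X\otimes\eta_Y)$ to be invertible for \emph{arbitrary} $Y$ — is a genuine gap, and neither of the devices you invoke closes it. The mate calculus of \cite{Aguiar} only reproduces the equivalence of (1) with (2) and says nothing about objects outside the essential image of $\cR$; and \cite[\S\S 2.7--2.8]{BruguieresLackVirelizier} translates the Hopf adjunction property into invertibility of the fusion operators of $T=\cR\cL$, which again only controls the components $\cpsi_{X,\cR(X')}$, not $\cpsi_{X,Y}$. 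The ``monoidal reflection statement'' you propose to verify at the end \emph{is} the missing content, and there is no reason for it to hold in this generality: the Hopf hypothesis constrains $\cpsi$ only on arguments in the image of $\cR$, and nothing forces $\cL$ to invert $X\otimes\eta_Y$.

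You should also know that the paper does not prove this abstract proposition either: it survives only as a discarded internal note, with the author's own comment that the implication from (3) to (1) ``doesn't seem to be true at this level of generality''. What is actually published is Proposition \ref{prop:Hopfmonad}, stated for the specific adjunction $\left(\cl{(-)},-\otimes A\right)$ between $\quasihopf{A}$ and $\Lmod{A}$. There the missing step is supplied by an ingredient external to the abstract adjunction, namely the natural isomorphism $\chi_{M,N}:\cl{M\tensor{A}N}\to M\tensor{A}\cl{N}$ of \eqref{eq:phi}: naturality gives $\chi_{M,\cl{N}\otimes A}\circ\cl{M\tensor{A}\eta_N}=\left(M\tensor{A}\cl{\eta_N}\right)\circ\chi_{M,N}$, and $\cl{\eta_N}$ is invertible by the triangle identity, so $\cL(X\otimes\eta_Y)=\cl{M\tensor{A}\eta_N}$ is invertible for every $N$ — with no appeal to the Hopf hypothesis at all. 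To repair your argument you must either restrict the statement to that concrete adjunction (importing $\chi$), or add an abstract hypothesis playing the role of $\chi$ (for instance, that $\cL(X\otimes f)$ is invertible whenever $\cL(f)$ is); as stated, you should expect (3) $\Rightarrow$ (1) to fail in general.
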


\begin{proof}
The equivalence between \ref{item:catHopfmonad1} and \ref{item:catHopfmonad2} is essentially \cite[Proposition 3.93]{Aguiar}. Let us sketch it anyway, for the sake of the reader. Since $\cR$ is right adjoint to an opmonoidal functor, it naturally inherits a unique monoidal structure such that the pair $(\cL,\cR)$ is a colax-lax adjunction in the sense of \cite{Aguiar}\footnote{We may have resoundingly called such a pair an \emph{opmonoidal-monoidal adjunction}, but in this case we prefer to use the original terminology.}. Moreover, by the (dual of the) proof of \cite[Proposition 3.96]{Aguiar}, this unique monoidal structure is provided by the inverses of $\cvarphi_0$ and $\cvarphi$, thus making of $\cR$ a strong monoidal functor in fact. 
Now, if $\cpsi_0$ and $\cpsi$ are natural isomorphisms, then $\cL$ is a strong monoidal functor. By the direct implication of \cite[Proposition 3.93 (1)]{Aguiar}, $(\cL,\cR)$ is a monoidal adjunction. Conversely, assume that $(\cL,\cR)$ is a monoidal adjunction where the monoidal structure on $\cL$ is denoted by $(\cL,\gamma_0,\gamma)$. As left adjoint of a monoidal functor, $\cL$ inherits a unique opmonoidal structure such that $(\cL,\cR)$ is a colax-lax adjunction and this has to be provided by the inverses of $\gamma_0$ and $\gamma$. However, $\cL$ already has an opmonoidal structure such that $(\cL,\cR)$ is a colax-lax adjunction: $(\cL,\cpsi_0,\cpsi)$. Therefore, $\gamma_0^{-1}=\cpsi_0$, $\gamma^{-1}=\cpsi$ and \ref{item:catHopfmonad1} holds.

Concerning the implication from \ref{item:catHopfmonad1} to \ref{item:catHopfmonad3}, this follows by looking at the expression \eqref{eq:Hopfop} of the Hopf operators and by recalling that, under the stated hypotheses, $\epsilon$ is a natural isomorphism.

\textbf{However, it doesn't seem to be true at this level of generality. The magic trick in the particular case of interest is the fact that $\cL(X\otimes \eta_Y)$ is an iso since $\chi$ is a natural isomorphism. It seems we don't have anything similar in general case, so that from the invertibility of $\cpsi_{X,\cR(X')}$ is not easy to deduce the invertibility of any $\cpsi_{X,Y}$.}
\end{proof}
\end{personal}

\begin{proposition}\label{prop:Hopfmonad}
The following assertions are equivalent for a quasi-bialgebra $A$.
\begin{enumerate}[label=(\arabic*), ref=\emph{(\arabic*)}, leftmargin=1cm, labelsep=0.3cm]
\item\label{item:corHopfmonad1} The natural transformation $\cpsi$ of equation \eqref{eq:natpsi} is a natural isomorphism;
\item\label{item:corHopfmonad2} $\left(\cl{(-)},-\otimes A\right)$ is a Hopf adjunction between $\quasihopf{A}$ and $\Lmod{A}$;
\item\label{item:corHopfmonad3} $T=\cl{(-)}\otimes A$ is a Hopf monad on $\quasihopf{A}$.
\end{enumerate}
\end{proposition}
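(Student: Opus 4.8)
The plan is to prove the three conditions equivalent by isolating a single reduction step and then reading off each operator from its definition. The central observation is that the natural isomorphism $\chi_{M,N}\colon\cl{M\tensor{A}N}\cong M\tensor{A}\cl{N}$ of \eqref{eq:phi} lets me rewrite $\cpsi$ in a form that forgets the comodule structure of the second argument. Concretely, for $M\in\quasihopf{A}$ and an arbitrary left $A$-module $V$ I would introduce
\[
\Psi_{M,V}\colon M\tensor{A}V\to\cl{M}\otimes V,\qquad m\tensor{A}v\mapsto\cl{m_0}\otimes m_1 v,
\]
check that it is well defined, $A$-linear and natural in $V$, and verify the factorisation $\cpsi_{M,N}=\Psi_{M,\cl{N}}\circ\chi_{M,N}$ (compare \eqref{eq:natpsi}). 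Since $\chi$ is always invertible, $\cpsi_{M,N}$ is invertible if and only if $\Psi_{M,\cl{N}}$ is.

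The key point is that $\Psi_{M,V}$ depends on $N$ only through the left module $\cl{N}$, and that every left $A$-module occurs in this way: indeed $\cl{(-)}$ is essentially surjective because $\epsilon_{V}\colon\cl{V\otimes A}\xrightarrow{\cong}V$ by \eqref{eq:unitscounitsquasi}. Hence \ref{item:corHopfmonad1} holds if and only if $\Psi_{M,V}$ is invertible for all $M\in\quasihopf{A}$ and $V\in\Lmod{A}$, equivalently (via $\chi$ and $\epsilon$) if and only if $\cpsi_{X,\,X'\otimes A}$ is invertible for all $X\in\quasihopf{A}$ and $X'\in\Lmod{A}$. This is what makes the passage from ``free'' second arguments to arbitrary objects possible, and it is the step I expect to carry the real weight.

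With the reduction in hand the implications become bookkeeping about which maps in the definitions are already invertible. For \ref{item:corHopfmonad1}$\Rightarrow$\ref{item:corHopfmonad2}, the formulas \eqref{eq:Hopfop} give $\bH^{l}_{X,X'}=(\cl{X}\otimes\epsilon_{X'})\circ\cpsi_{X,X'\otimes A}$ and $\bH^{r}_{X',X}=(\epsilon_{X'}\otimes\cl{X})\circ\cpsi_{X'\otimes A,X}$, so invertibility of $\cpsi$ together with that of $\epsilon$ forces both Hopf operators to be invertible. Conversely, \ref{item:corHopfmonad2}$\Rightarrow$\ref{item:corHopfmonad1} follows because invertibility of $\bH^{l}$, again using that $\epsilon$ is an isomorphism, yields exactly ``$\cpsi_{X,X'\otimes A}$ invertible for all $X,X'$'', which is the right-hand side of the reduction. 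For the monad, I would note that in $H^{l}_{X,Y}=(T(X)\tensor{A}\mu_{Y})\circ\xi^{-1}\circ(\cpsi_{X,T(Y)}\otimes A)$ every factor except $\cpsi_{X,T(Y)}\otimes A$ is invertible (by \eqref{eq:mu}, by \eqref{eq:tensAstrmon}, and because $-\otimes A$ reflects isomorphisms, being fully faithful by Remark \ref{rem:fullyfaith}); since $T(Y)=\cl{Y}\otimes A$ and $\cl{Y}$ again ranges over all of $\Lmod{A}$, invertibility of $H^{l}$ for all $X,Y$ is once more equivalent to the right-hand side of the reduction, and likewise for $H^{r}$. This gives \ref{item:corHopfmonad1}$\Leftrightarrow$\ref{item:corHopfmonad3} and completes the three-way equivalence.

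The main obstacle is precisely the gap that the reduction closes: the Hopf and fusion operators only ever test $\cpsi$ on arguments of the form $X'\otimes A$ (or $\cl{Y}\otimes A$), so a naive argument would establish \ref{item:corHopfmonad1} only on the full subcategory of free quasi-Hopf bimodules. Upgrading to arbitrary $M,N$ requires genuinely exploiting $\chi$ to see that $\cpsi$ is controlled by the purely module-theoretic map $\Psi$, whose second variable is an honest left $A$-module, after which the essential surjectivity of $\cl{(-)}$ removes the freeness restriction. I would therefore establish the well-definedness and naturality of $\Psi$, and the identity $\cpsi_{M,N}=\Psi_{M,\cl{N}}\circ\chi_{M,N}$, carefully before touching the operator computations, since everything downstream rests on them.
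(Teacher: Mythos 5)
Your proposal is correct, and its engine is the same as the paper's --- transferring invertibility between $\cpsi_{M,N}$ and the components with free second argument by means of $\chi$ of \eqref{eq:phi} and the unit/counit of the adjunction --- but the organisation is genuinely different. The paper proves the cycle \ref{item:corHopfmonad1}$\Rightarrow$\ref{item:corHopfmonad2}$\Rightarrow$\ref{item:corHopfmonad3}$\Rightarrow$\ref{item:corHopfmonad1}, outsourcing \ref{item:corHopfmonad2}$\Rightarrow$\ref{item:corHopfmonad3} to \cite[Proposition 2.14]{BruguieresLackVirelizier} and concentrating the work in \ref{item:corHopfmonad3}$\Rightarrow$\ref{item:corHopfmonad1}, where it runs a four-step naturality chase: strip $-\otimes A$ off $\cpsi_{M,\cl{N}\otimes A}\otimes A$ via naturality of $\epsilon$, then pass from $\cpsi_{M,\cl{N}\otimes A}$ to $\cpsi_{M,N}$ using naturality of $\cpsi$ along $\eta_N$, the triangle identity (so that $\cl{\eta_N}$ is invertible) and naturality of $\chi$ (so that $\cl{M\tensor{A}\eta_N}$ is invertible). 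Your reduction lemma packages this once and for all: the factorisation $\cpsi_{M,N}=\Psi_{M,\cl{N}}\circ\chi_{M,N}$ is the paper's identity \eqref{eq:psikappa} in disguise (your $\Psi_{M,V}$ is $\kappa_{\cl{M},V}\circ(\eta_M\tensor{A}V)$, which the paper only writes down later, in the proof of Proposition \ref{prop:PreLaxLax}), and combining it with essential surjectivity of $\cl{(-)}$ gives symmetric, citation-free proofs of \ref{item:corHopfmonad1}$\Leftrightarrow$\ref{item:corHopfmonad2} and \ref{item:corHopfmonad1}$\Leftrightarrow$\ref{item:corHopfmonad3}; replacing the paper's $\epsilon$-naturality step by ``$-\otimes A$ is fully faithful, hence reflects isomorphisms'' is also fine. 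One small caveat: your ``likewise for $H^{r}$'' should be read with care, since $H^{r}$ (and $\bH^{r}$) tests $\cpsi$ on components $\cpsi_{\cl{X}\otimes A,Y}$ with free \emph{first} argument; these are invertible under \ref{item:corHopfmonad1}, which is all the forward implications need, but they are not the family your reduction controls, so the converse implications must --- and in your argument do --- rest on $H^{l}$ (resp.\ $\bH^{l}$) alone.
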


\begin{proof}
Since the functor $-\otimes A$ is fully faithful (see Remark \ref{rem:fullyfaith}), the counit $\epsilon$ of the adjunction $\left(\cl{(-)},-\otimes A\right)$ is a natural isomorphism. Thus the implication from \ref{item:corHopfmonad1} to \ref{item:corHopfmonad2} follows by looking at the explicit form \eqref{eq:Hopfop} of the Hopf operators: if $\epsilon$ and $\cpsi$ are natural isomorphisms, then $\bH^l$ and $\bH^r$ are natural isomorphisms as well. The implication from \ref{item:corHopfmonad2} to \ref{item:corHopfmonad3} is \cite[Proposition 2.14]{BruguieresLackVirelizier}. Finally, let us show that \ref{item:corHopfmonad3} implies \ref{item:corHopfmonad1}. By using the explicit form \eqref{eq:phi0phi} of $\upphi$, the left fusion operator can be rewritten as
\[
H_{M,N}^l = \left(\left(\cl{M}\otimes A\right) \tensor{A} \mu_{N}\right)\circ \upphi_{M,\cl{N}\otimes A} \stackrel{\eqref{eq:phi0phi}}{=} \left(\left(\cl{M}\otimes A\right) \tensor{A} \mu_{N}\right)\circ \xi^{-1}_{\cl{M},\cl{\cl{N}\otimes A}} \circ \left(\cpsi_{M,\cl{N}\otimes A}\otimes A\right),
\]
whence if $H^l_{M,N}$ is invertible then $\cpsi_{M,\cl{N}\otimes A}\otimes A$ is invertible as well (because both $\xi$ of \eqref{eq:tensAstrmon} and $\mu$ of \eqref{eq:mu} are). Now, consider the following facts: for every $M,N$ quasi-Hopf $A$-bimodules,
\begin{enumerate}[label=(\roman*), ref={(\roman*)}, leftmargin=1cm, labelsep=0.3cm]
\item\label{item1:psi} we have that $\cpsi_{M,\cl{N}\otimes A} \circ \epsilon_{M\tensor{A}(\cl{N}\otimes A)} = \epsilon_{\cl{M}\otimes \cl{\cl{N}\otimes A}} \circ \cl{\cpsi_{M,\cl{N}\otimes A}\otimes A}$ by naturality of $\epsilon$, so that if $\cpsi_{M,\cl{N}\otimes A}\otimes A$ is an isomorphism then $\cpsi_{M,\cl{N}\otimes A}$ is an isomorphism (because $\epsilon$ is always an isomorphism);
\item\label{item2:psi} in view of the triangular identity $\epsilon_{\cl{N}} \circ \cl{\eta_N} = \id_{\cl{N}}$ for the adjunction $\cl{(-)} \dashv -\otimes A$, $\cl{\eta_N}$ is an isomorphism with inverse $\epsilon_{\cl{N}}$; 
\item\label{item3:psi} we have that $\chi_{M,\cl{N}\otimes A}\circ\cl{M\tensor{A}\eta_N} = \left(M\tensor{A}\cl{\eta_N}\right)\circ \chi_{M,N}$ by naturality of $\chi$ of \eqref{eq:phi}, so that $\cl{M\tensor{A}\eta_N}$ is an isomorphism by \ref{item2:psi} and 
\item\label{item4:psi} $\left(\cl{M}\otimes \cl{\eta_N}\right)\circ\cpsi_{M,N} = \cpsi_{M,\cl{N}\otimes A}\circ \cl{M\tensor{A}\eta_N}$ by naturality of $\cpsi$. Thus, by \ref{item2:psi} and \ref{item3:psi}, if $\cpsi_{M,\cl{N}\otimes A}$ is an isomorphism then $\cpsi_{M,N}$ is an isomorphism.
\end{enumerate}
Therefore, by \ref{item1:psi} and \ref{item4:psi}, if $\cpsi_{M,\cl{N}\otimes A}\otimes A$ is invertible for all $M,N\in\quasihopf{A}$ then $\cpsi_{M,N}$ is invertible as well, concluding the proof.
\end{proof}

\begin{invisible}[Old proof of the equivalence between \ref{item:corHopfmonad1} and \ref{item:corHopfmonad3} in Proposition \ref{prop:Hopfmonad}]

Observe that the fusion operators can be rewritten as
\begin{gather*}
H_{M,N}^l = \left(\left(\cl{M}\otimes A\right) \tensor{A} \mu_{N}\right)\circ \xi^{-1}_{\cl{M},\cl{\cl{N}\otimes A}} \circ \left(\cpsi_{M,\cl{N}\otimes A}\otimes A\right), \\
H_{M,N}^r = \left(\mu_{M} \tensor{A} \left(\cl{N}\otimes A\right)\right)\circ \xi^{-1}_{\cl{\cl{M}\otimes A},\cl{N}} \circ \left(\cpsi_{\cl{M}\otimes A,N}\otimes A\right),
\end{gather*}
whence if $\cpsi$ is a natural isomorphism then both $H^l$ and $H^r$ are (because both $\xi$ and $\mu$ are). Conversely, if $H^l_{M,N}$ is invertible then $\cpsi_{M,\cl{N}\otimes A}\otimes A$ is as well. Now, consider the following facts: for every $M,N\in\quasihopf{A}$ we have that $\cpsi_{M,\cl{N}\otimes A} \circ \epsilon_{M\tensor{A}(\cl{N}\otimes A)} = \epsilon_{\cl{M}\otimes \cl{\cl{N}\otimes A}} \circ \cl{\cpsi_{M,\cl{N}\otimes A}\otimes A}$ by naturality of $\epsilon$, that $\cl{\eta_N}$ is an isomorphism with inverse $\epsilon_{\cl{N}}$, that $\chi_{M,\cl{N}\otimes A}\circ\cl{M\tensor{A}\eta_N} = \left(M\tensor{A}\cl{\eta_N}\right)\circ \chi_{M,N}$ by naturality of $\chi$ and that $\left(\cl{M}\otimes \cl{\eta_N}\right)\circ\cpsi_{M,N} = \cpsi_{M,\cl{N}\otimes A}\circ \cl{M\tensor{A}\eta_N}$ by naturality of $\cpsi$. Therefore, it follows that if $\cpsi_{M,\cl{N}\otimes A}\otimes A$ is invertible for all $M,N\in\quasihopf{A}$ then $\cpsi_{M,N}$ is as well, concluding the proof.
\end{invisible}

As a consequence of Proposition \ref{prop:Hopfmonad}, we have that \ref{item:mainthm2} $\Leftrightarrow$ \ref{item:mainthm5} $\Leftrightarrow$ \ref{item:mainthm6} in Theorem \ref{thm:main2}.

\begin{proposition}\label{prop:PreLaxLax}
The natural transformation $\cpsi$ of equation \eqref{eq:natpsi} is a natural isomorphism if and only if the unit $\eta$ of the adjunction $\left(\cl{(-)},-\otimes A\right)$ is a natural isomorphism. Moreover, the component $\cpsi_{A\widehat{\otimes}A,A\otimes A}$ is invertible if and only if the component $\eta_{A\widehat{\otimes}A}$ is.
\end{proposition}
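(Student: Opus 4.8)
The plan is to factor the colax structure map $\cpsi$ through the unit $\eta$, so that invertibility of the former is governed componentwise by invertibility of the latter. The key observation I would establish first is that, for $M,N\in\quasihopf{A}$, the component $\cpsi_{M,N}$ decomposes as
\[
\cpsi_{M,N} = \omega_{M,N}\circ\left(\eta_M\tensor{A}\cl{N}\right)\circ\chi_{M,N},
\]
where $\chi_{M,N}\colon\cl{M\tensor{A}N}\xrightarrow{\cong}M\tensor{A}\cl{N}$ is the natural isomorphism of \eqref{eq:phi}, the middle arrow $\eta_M\tensor{A}\cl{N}\colon M\tensor{A}\cl{N}\to\left(\cl{M}\otimes A\right)\tensor{A}\cl{N}$ is well defined because $\eta_M$ is a morphism in $\quasihopf{A}$ (hence in particular right $A$-linear), and $\omega_{M,N}\colon\left(\cl{M}\otimes A\right)\tensor{A}\cl{N}\xrightarrow{\cong}\cl{M}\otimes\cl{N}$ is the canonical cancellation isomorphism for the free quasi-Hopf bimodule $\cl{M}\otimes A$, sending $\left(\cl{m}\otimes a\right)\tensor{A}\cl{n}\mapsto\cl{m}\otimes\cl{an}$. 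I would record that $\omega_{M,N}$ is a well-defined left $A$-linear natural isomorphism (a short check on generators, in which the associator $\Phi$ plays no role since $\omega_{M,N}$ only sees the underlying module structures), and then verify the displayed identity by chasing a generator: $\cl{m\tensor{A}n}\mapsto m\tensor{A}\cl{n}\mapsto\left(\cl{m_0}\otimes m_1\right)\tensor{A}\cl{n}\mapsto\cl{m_0}\otimes\cl{m_1 n}$, which is exactly $\cpsi_{M,N}(\cl{m\tensor{A}n})$.

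Granting the factorization, since $\chi_{M,N}$ and $\omega_{M,N}$ are isomorphisms, $\cpsi_{M,N}$ is invertible if and only if $\eta_M\tensor{A}\cl{N}$ is, and both halves of the first equivalence follow at once. If $\eta$ is a natural isomorphism, then each $\eta_M$ is invertible in $\quasihopf{A}$, so each $\eta_M\tensor{A}\cl{N}$ is invertible by functoriality of $-\tensor{A}\cl{N}$, whence $\cpsi$ is a natural isomorphism. Conversely, if $\cpsi$ is a natural isomorphism, I would evaluate at the second variable $N=A\otimes A$, the quasi-Hopf bimodule ${}_{\bullet}^{}A_{\bullet}^{}\otimes\quasihopfmod{A}$, for which Lemma \ref{lemma:cltensor} yields $\cl{A\otimes A}\cong A\otimes\cl{A}\cong A$ as left $A$-modules; then $\eta_M\tensor{A}\cl{A\otimes A}$ is identified with $\eta_M\tensor{A}A\cong\eta_M$, so every $\eta_M$ is invertible, i.e. $\eta$ is a natural isomorphism.

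The ``moreover'' part is the very same argument read off a single component. Taking $M=\what{A}=A\cmptens{}A$ and $N=A\otimes A$ in the factorization, and again invoking $\cl{A\otimes A}\cong A$, the middle map is identified (up to the isomorphisms $\chi$ and $\omega$) with $\eta_{\what{A}}\tensor{A}A\cong\eta_{\what{A}}$; hence $\cpsi_{\what{A},A\otimes A}$ is invertible if and only if $\eta_{\what{A}}$ is. I would emphasise here that the choice of the plain (non-$\cmptens{}$) structure on the second factor is essential, precisely because it is the one whose $\cl{(-)}$ collapses to $A$, allowing the tensor factor to be cancelled.

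The step I expect to be most delicate is the verification of the factorization identity itself: one must keep straight the several distinct $A$-actions in play (the bimodule action on $M$, the left action on $\cl{N}$, and the free quasi-Hopf bimodule structure on $\cl{M}\otimes A$), confirm that $\omega_{M,N}$ is well defined and natural independently of $\Phi$, and check that the element-level computation is compatible with the passage to the quotients defining $\cl{(-)}$. Once this is in place, everything else is formal, resting only on the functoriality of $-\tensor{A}\cl{N}$ and on the computation $\cl{A\otimes A}\cong A$ supplied by Lemma \ref{lemma:cltensor}.
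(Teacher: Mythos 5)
Your proposal is correct and follows essentially the same route as the paper: the factorization $\cpsi_{M,N}=\omega_{M,N}\circ\left(\eta_M\tensor{A}\cl{N}\right)\circ\chi_{M,N}$ is exactly the paper's identity \eqref{eq:psikappa} (with your cancellation map $\omega_{M,N}$ being the paper's $\kappa_{\cl{M},\cl{N}}$), and the converse direction likewise specializes to $N=\lmod{A}\otimes\quasihopfmod{A}$ and uses $\cl{A\otimes A}\cong A$ via $\epsilon_A$ to recover $\eta_M$ from $\cpsi_{M,A\otimes A}$ up to isomorphism. The ``moreover'' clause is obtained in both cases by reading the same identification off the single component $M=A\cmptens{}A$.
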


\begin{proof}
Denote by $\kappa_{V,W}$ the obvious isomorphism $\left(V\otimes A\right)\tensor{A}W \cong V\otimes W$, which is natural in $V,W\in\Lmod{A}$. One can check by a direct computation that
\begin{equation}\label{eq:psikappa}
\cpsi_{M,N} = \kappa_{\cl{M},\cl{N}} \circ \left(\eta_M\tensor{A}\cl{N}\right)\circ \chi_{M,N},
\end{equation}
so that $\cpsi$ is a natural isomorphism if $\eta$ is. Conversely, take $N=\lmod{A}\otimes \quasihopfmod{A}$. 
For every $\cl{m\tensor{A}(a\otimes b)} \in \cl{M\tensor{A}(A\otimes A)}$, compute
\begin{gather*}
\left(\eta_M\circ (M\tensor{A}\epsilon_A) \circ \chi_{M,A\otimes A}\right)\left(\cl{m\tensor{A}(a\otimes b)}\right) \stackrel{\eqref{eq:phi}}{=} \left(\eta_M\circ (M\tensor{A}\epsilon_A)\right)\left(m\tensor{A}\cl{(a\otimes b)}\right) \\
\stackrel{\eqref{eq:unitscounitsquasi}}{=} \eta_M(m\cdot a \varepsilon(b)) \stackrel{\eqref{eq:unitscounitsquasi}}{=} \cl{m_0\cdot a_1}\otimes m_1a_2\varepsilon(b) = \cl{m_0} \otimes m_1a\varepsilon(b) \\
 \stackrel{\eqref{eq:unitscounitsquasi}}{=} \left(\cl{M}\otimes \epsilon_A\right)\left(\cl{m_0}\otimes \cl{m_{1_1}a\otimes m_{1_2}b}\right) \stackrel{\eqref{eq:natpsi}}{=} \left(\left(\cl{M}\otimes \epsilon_A\right) \circ \cpsi_{M,A\otimes A}\right)\left(\cl{m\tensor{A}(a\otimes b)}\right).
\end{gather*}
Therefore $\eta_M\circ (M\tensor{A}\epsilon_A) \circ \chi_{M,A\otimes A} = \left(\cl{M}\otimes \epsilon_A\right) \circ \cpsi_{M,A\otimes A}$ and hence $\eta$ is a natural isomorphism if $\cpsi$ is.\begin{invisible}
In fact, there is a commutative diagram
\[
\xymatrix{
\cl{M\tensor{A}(A\otimes A)} \ar[rrr]^-{\chi_{M,A\otimes A}} \ar[ddd]_-{\cpsi_{M,A\otimes A}} & & & M\tensor{A}\cl{A\otimes A} \ar[ddd]^-{\eta_{M}\tensor{A}\cl{A\otimes A}} \ar[dl]|-{M\tensor{A}\epsilon_A} \\
 & M \ar[d]_-{\eta_M} & M\tensor{A}A \ar[l]_-{\cong} \ar[d]^-{\eta_M\tensor{A}A} & \\
 & \cl{M}\otimes A & (\cl{M}\otimes A)\tensor{A}A \ar[l]^-{\kappa_{\cl{M},A}} & \\
\cl{M}\otimes \cl{A\otimes A} \ar[ur]|-{\cl{M}\otimes \epsilon_A} & & & (\cl{M}\otimes A)\tensor{A}\cl{A\otimes A} \ar[lll]^-{\kappa_{\cl{M},\cl{A\otimes A}}} \ar[ul]|-{(\cl{M}\otimes A)\tensor{A}\epsilon_A}
}
\]
\end{invisible}
In particular, for $M=A\cmptens{}A = \rmod{A}\otimes \quasihopfmod{A}$, $\eta_{A\widehat{\otimes}A}$ is invertible if and only if $\cpsi_{A\widehat{\otimes}A,A\otimes A}$ is.
\end{proof}

In light of \cite[Theorem 4]{Saracco}, $A$ admits a preantipode if and only if $\eta$ is a natural isomorphism (because the counit $\epsilon$ is always a natural isomorphism), if and only if the distinguished component $\eta_{A\widehat{\otimes}A}$ is invertible. Therefore, if follows from Proposition \ref{prop:PreLaxLax} that \ref{item:mainthm1} $\Leftrightarrow$ \ref{item:mainthm2} $\Leftrightarrow$ \ref{item:mainthm3} in Theorem \ref{thm:main2} and this concludes its proof.

\begin{remark}
Concerning the implication from \ref{item:mainthm1} to \ref{item:mainthm2}, it follows from \cite[Equations (16) and (28)]{Saracco} that if $A$ admits a preantipode $S$, then $\eta_M^{-1}(\cl{m}\otimes a) = \Phi^1m_0S(\Phi^2m_1)\Phi^3a$ for all $m\in M, a\in A$. In this case, an explicit inverse for $\cpsi_{M,N}$ is given by
\[
\cl{M}\otimes \cl{N}\to \cl{M\tensor{A}N}: \qquad \cl{m}\otimes \cl{n}\mapsto \cl{\Phi^1m_0S(\Phi^2m_1)\Phi^3\tensor{A}n}.
\]
\end{remark}

\begin{invisible}
For our own sake, the canonical projection $N\to \cl{N}$ is left $A$-linear, whence we may consider $m\tensor{A} n\mapsto m\tensor{A}\cl{n}$ and it satisfies
\begin{gather*}
m_i\tensor{A}n_iz_i\mapsto m_i\tensor{A}\cl{n_iz_i} = m_i\tensor{A}\cl{n_i}\varepsilon(z_i)=0,
\end{gather*}
so that it factors through $\cl{M\tensor{A}N} \to M\tensor{A}\cl{N}$ and the latter one is a left $A$-linear epimorphism (tensoring is exact on the right). To show that it is an isomorphism, we consider the projection $\pi: M\tensor{A}N\to \cl{M\tensor{A}N}$. It satisfies
\[
\pi \circ (M\tensor{A}\iota_{NA^+}) = 0 
\]
and hence, since $M\tensor{A}-$ is right exact, it factors through $M\tensor{A}\cl{N}$ giving the inverse.
\end{invisible}
\begin{personal}
The composite $\chi_{M,N}$ is natural in $M,N$ and hence the following diagram commutes
\[
\xymatrix @C=50pt @R=15pt {
\cl{M\tensor{A}N} \ar[rr]^-{\cl{\eta_M\otimes \eta_N}} \ar[d]_-{\chi_{M,N}} & & \cl{(\cl{M}\otimes A)\tensor{A}(\cl{N}\otimes A)} \ar[d]^-{\chi_{\cl{M}\otimes A,\cl{N}\otimes A}} \\
M\tensor{A}\cl{N} \ar[r]_-{\eta_M\tensor{A} \cl{N}} & (\cl{M}\otimes A)\tensor{A}\cl{N} \ar[r]_-{(M\otimes A)\tensor{A}\cl{\eta_N}} & (\cl{M}\otimes A)\tensor{A} \cl{(\cl{N}\otimes A)}
}
\]
Since $\cl{\eta_N}$ is an isomorphism for every $N$ and $\cpsi_{M,N} = \epsilon_{\cl{M}\otimes \cl{N}} \circ \cl{\xi^{-1}_{M,N}} \circ \cl{\eta_M\tensor{A}\eta_N}$, we conclude that $\eta$ is a natural isomorphism if and only if $\cl{\eta_M\otimes \eta_N}$ is an isomorphism for all $M,N\in\quasihopf{A}$ (for the \emph{if} part, take $N=\lmod{A}\otimes \quasihopfmod{A}$), if and only if $\cpsi$ is a natural isomorphism.
\end{personal}

\begin{invisible}[Old proof of Theorem \ref{thm:main2}]

We already know that \ref{item:mainthm4} $\Leftrightarrow$ \ref{item:mainthm2} $\Leftrightarrow$ \ref{item:mainthm1} by Propositions \ref{prop:Hopfmonad} and \ref{prop:PreLaxLax}, whence we are left to prove that \ref{item:mainthm3} is equivalent to \ref{item:mainthm2}. Since $\cpsi$ is the unique opmonoidal structure on $\cl{(-)}$ that makes of $\left(\cl{(-)},-\otimes A\right)$ a colax-lax adjunction in the sense of \cite{Aguiar}, by (the proof of) \cite[Proposition 3.96]{Aguiar} we can claim that if $\big(\cl{(-)},-\otimes A\big)$ is a monoidal adjunction, then $\cpsi$ is a natural isomorphism, proving that \ref{item:mainthm3} implies \ref{item:mainthm2}. Conversely, assume that $\cpsi$ is a natural isomorphism. This means that $\cl{(-)}$ is a strong monoidal functor. Since $\left(\cl{(-)},-\otimes A\right)$ is still a colax-lax adjunction, \cite[Proposition 3.93 (1)]{Aguiar} entails that $\left(\cl{(-)},-\otimes A\right)$ is a monoidal adjunction.
\end{invisible}

\end{document}